\documentclass[AFST]{cedram}
\usepackage{stackrel}
\usepackage{mathrsfs}
\usepackage{eucal}
\usepackage{framed}
\usepackage{pstricks}
\newcommand{\Hilbert}{\mathcal{H}}
\newcommand{\Nat}{\mathbb{N}}
\newcommand{\Int}{\mathbb{Z}}
\newcommand{\Real}{\mathbb{R}}
\newcommand{\Com}{\mathbb{C}}
\newcommand{\Sphere}{\mathbb{S}}
\newcommand{\sii}{L^2}
\newcommand{\dom}{\mathop{\mathrm{dom}}\nolimits}
\newcommand{\ran}{\mathop{\mathrm{ran}}\nolimits}
\newcommand{\supp}{\mathop{\mathrm{supp}}\nolimits}
\newcommand{\der}{\mathrm{d}}
\newcommand{\dist}{\mathop{\mathrm{dist}}\nolimits}
\newcommand{\obal}{\mathop{\mathrm{span}}\nolimits}
\newcommand{\const}{\mathrm{const}}    
\newcommand{\eps}{\varepsilon}
\newcommand{\interior}{\mathrm{int}}
\newcommand{\esssup}{\mathop{\mathrm{ess\;\!sup}}}
\newcommand{\diag}{\mathop{\mathrm{diag}}\nolimits}
\newcommand{\divergence}{\mathop{\mathrm{div}}\nolimits}
\newcommand{\cf}{\emph{cf}}
\newcommand{\ie}{\emph{i.e.}}
\newcommand{\eg}{\emph{e.g.}}
\newcommand{\etc}{\emph{etc}}

\newcommand{\vertiii}[1]{{\left\vert\kern-0.25ex\left\vert\kern-0.25ex\left\vert #1 
    \right\vert\kern-0.25ex\right\vert\kern-0.25ex\right\vert}}
\newenvironment{psmallmatrix}
  {\left(\begin{smallmatrix}}
  {\end{smallmatrix}\right)}
\begingroup
    \makeatletter
    \@for\theoremstyle:=definition,remark,plain\do{%
        \expandafter\g@addto@macro\csname th@\theoremstyle\endcsname{%
            \addtolength\thm@preskip\parskip
             }%
        }
\endgroup
\theoremstyle{remark}
\newtheorem{Problem}{Open Problem} 
\newenvironment{OProblem}{\begin{framed}\begin{Problem}}{\end{Problem}\end{framed}}
\title[Spectral geometry]{Spectral geometry: old questions and new answers}
\author{\firstname{David} \lastname{Krej\v{c}i\v{r}\'{\i}k}}
\address{Department of Mathematics, 
Faculty of Nuclear Sciences and Physical Engineering, 
Czech Technical University in Prague, 
Trojanova 13, 12000 Prague 2, Czech Republic}
\email{david.krejcirik@fjfi.cvut.cz}

\begin{document}

\begin{abstract}
These are extended notes based on a series of lectures given by the author at the \emph{Institut de Math\'ematiques de Toulouse} in June 2025. The goal is to present classical problems as well as the most recent developments in spectral geometry of the Laplace operator in Euclidean domains, subject to Dirichlet, Neumann and Robin boundary conditions. Among the topics covered, there are embedded eigenvalues in unbounded domains, spectral isoperimetric inequalities, nodal-line and hot-spots conjectures, inverse problems of hearing the shape of a drum and geometrically induced eigenvalues and Hardy-type inequalities in curved tubes. 
\end{abstract}

\maketitle



\section{Introduction}
%
Spectral geometry is concerned with the interaction between the spectrum of linear differential operators and the geometry of underlying Riemannian manifolds. 
How does the geometry influence the spectrum?
Which geometric shapes are spectrally optimal?
What can be deduced about the geometry from the spectral data?
These are just examples of a variety of questions covered by the field,
and partially by these notes.

Among the many books on the subject, 
let us point out the classical masterpiece~\cite{Polya-Szego} 
by P{\'o}lya and Szeg{\H{o}}, 
Henrot's compact book~\cite{Henrot} about shape optimisation
as well as his editorial follow-up~\cite{Henrot2}
and the most recent textbook~\cite{Levitin-Mangoubi-Polterovich}
by Levitin, Mangoubi and Polterovich.
For Riemannian manifolds, 
the classical books  
are due to Chavel~\cite{Chavel-evs} 
and Schoen and Yau~\cite{Schoen-Yau}.

In these notes, we exclusively focus on 
the Laplace operator in Euclidean domains, 
subject to Dirichlet, Neumann and Robin boundary conditions. 
Our goal is to acquaint the reader with the realm of spectral geometry 
by a selection of some classical mysteries 
and recent progresses in the field,
biasedly chosen by the taste of the author 
and his own contributions. 
A collection of persisting and new open problems are identified in the text.

\paragraph{Acknowledgement}
%
I would like to thank the organisers of the summer school  
\emph{Control, Inverse Problems and Spectral Theory} 
(Toulouse, June 2025) for their kind invitation, their hospitality and the opportunity to give these lectures;
I am particularly indebted to Julien Royer. 
I would also like to thank Monika Winklmeier
for inviting me to the summer school 
\emph{Aspects of spectral theory for linear operators}
(Bogot\'a, June 2025),
where these lecture notes started to crystallise.
Finally, I am grateful to Rami Band and Pedro Freitas 
for useful remarks on a previous version of these notes.


\addtolength{\parskip}{-0.15cm}
\setcounter{tocdepth}{2}
\tableofcontents
\addtolength{\parskip}{0.15cm}


\subsection{The problem and motivations}
We are concerned with the boundary value problem
\begin{equation}\label{problem}
\left\{
\begin{aligned}
  -\Delta \psi &= \lambda \;\! \psi
  && \mbox{in} \quad \Omega \subset \Real^d 
  \,, \\
  \frac{\partial \psi}{\partial n} + \alpha \, \psi &= 0
  && \mbox{on} \quad \partial\Omega 
  \,, 
\end{aligned}  
\right.
\end{equation}
where $d \geq 1$ is the dimension, $\Omega$~is an open set, 
$n$ the outward unit normal vector field of~$\partial\Omega$ 
and
\begin{center}
\begin{tabular}{ccccccccc}
  $\alpha$ & $\in$ & $(-\infty,0)$ 
  & $\cup$ & $\{0\}$ 
  & $\cup$ & $(0,\infty)$ 
  & $\cup$ & $\{\infty\}$\,.
  \\
  &&
  \mbox{\small negative Robin}
  && \mbox{\small Neumann}
  &&
  \mbox{\small positive Robin}
  && \mbox{\small Dirichlet}
\end{tabular}  
\end{center}  
The Dirichlet boundary condition is understood as $\psi=0$
(formally obtained after dividing 
the Robin boundary condition of~\eqref{problem} 
by~$\alpha$ and sending the parameter to infinity).
The function $\psi:\Omega\to\Com$ plays the role of an ``eigenvector''
and the number $\lambda \in \Com$ plays the role of an ``eigenvalue''.

The differential (Helmholtz) equation in~\eqref{problem} 
is a stationary counterpart 
of fundamental evolution equations in physics
modelling various phenomena in nature: 

\begin{itemize} 
\item
The wave equation ($\partial_t^2 u -\Delta u=0$)
is a classical model for a vibrating string,
membrane or elastic solid, but it also models propagation
of electromagnetic waves, moreover it arises 
in relativistic quantum mechanics and cosmology.
\item
The heat equation ($\partial_t u -\Delta u=0$), 
also known as the diffusion equation,
describes in typical applications the evolution in time 
of the density of some quantity such as the heat,
chemical concentration, \etc. It also represents 
the simplest version of the Fokker--Planck equation
describing the stochastic motion of a Brownian particle
(see Figure~\ref{Fig.Brown}).
\item
Finally, the Schr\"odinger equation ($i\partial_t u = -\Delta u$)
is the fundamental equation
of quantum theory, which is probably the best physical theory 
mankind has ever had (at least from the point of view of 
the technological impact and the number of experiments 
confirming it).  
\end{itemize}
\begin{figure}[h!]
\begin{center}
\includegraphics[width=0.4  \textwidth]{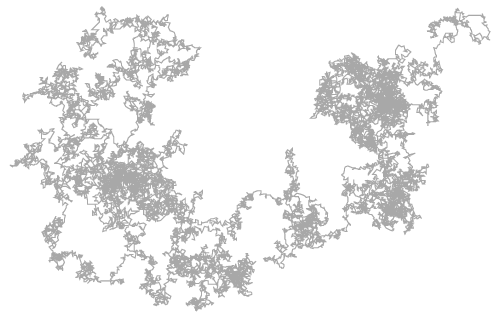}
\end{center}
\caption{The Brownian motion in~$\Real^2$.}
\label{Fig.Brown}
\end{figure}

The solutions~$\lambda$ and~$\psi$ of~\eqref{problem}
usually have direct physical interpretations,
depending on the given situation.
For instance, the numbers~$\lambda$  have the meaning of
\begin{itemize} 
\item
squares of resonant frequences for vibrating systems,
\item
decay rates for dissipative systems,
\item
bound-state energies for quantum systems.
\end{itemize}

The Dirichlet boundary conditions are used to describe 
vibrations of an elastic membrane whose boundary is fixed,
heat flow in a medium whose boundary is kept at zero temperature 
(a cooling mug),
killing boundary conditions for the Brownian motion,
the motion of a quantum particle
which is confined to a region by the barrier associated 
with a large chemical potential (nanostructures), \etc. 

The Neumann boundary conditions are used to describe
the vibration of a membrane at those parts of the boundary
which are free to move,
the flow of a fluid through a channel or past an obstacle,
the flow of heat in a medium with an insulated boundary (a vacuum flask),
reflecting boundary conditions for the Brownian motion, \etc. 

In electromagnetism and quantum mechanics, 
the Robin boundary conditions are used as an approximation 
for materials with thin layers (\eg~stealth aircrafts)
and strongly localised potentials, respectively.
More visually, the positive Robin boundary conditions 
approximate the vibration of a membrane attached 
to the drum shell by spring of positive friction.
The negative Robin boundary conditions appear
in acoustics in connection with
propagation of sonic waves through elastic cylinders. 

In summary,
the ``spectral problem''~\eqref{problem}
represents a unifying mathematical framework
for various (possibly very different) physical systems.

The objective of these notes is to study 
the interplay between the geometry of~$\Omega$ 
and the spectrum of~\eqref{problem}. 
Since~$\Omega$ is allowed to be unbounded 
and/or possibly irregular, 
first we have to provide an adequate 
interpretation of the boundary value problem~\eqref{problem}.

\subsection{The operator-theoretic framework}\label{Sec.Sobolev}
To interpret~\eqref{problem} as a spectral problem 
for a self-adjoint operator in a Hilbert space,
we define 
$$
  \eqref{problem} 
  \quad :\Longleftrightarrow \quad
  \lambda \in \sigma(-\Delta_\alpha^\Omega)
  \,.
$$
Here 
$
  -\Delta_\alpha^\Omega : 
  \dom(-\Delta_\alpha^\Omega) \subset \sii(\Omega) \to \sii(\Omega)
$
is an operator, called the Robin Laplacian,
acting in the Lebesgue space $\sii(\Omega)$ 
of square-integrable functions $\psi:\Omega\to\Com$.

The Robin Laplacian $-\Delta_\alpha^\Omega$ is introduced 
as the operator associated with a sesquilinear form 
$
  \delta_\alpha^\Omega: 
  \dom(\delta_\alpha^\Omega) 
  \times \dom(\delta_\alpha^\Omega) \subset
  \sii(\Omega) \times \sii(\Omega) \to \Com
$ 
via the representation relationship
\cite[Sec.~VI.2.1]{Kato}
\begin{equation}\label{representation}
  \forall \phi \in \dom(\delta_\alpha^\Omega) \,, \
  \psi \in \dom(-\Delta_\alpha^\Omega) \,, \qquad
  (\phi,-\Delta_\alpha^\Omega\psi) 
  = \delta_\alpha^\Omega(\phi,\psi) 
  \,.
\end{equation}
Here $(\cdot,\cdot)$ denotes the inner product of $\sii(\Omega)$.
Depending on the boundary conditions, we set 
\begin{equation}\label{depending}
\begin{aligned}
  \delta_D^\Omega(\phi,\psi) 
  &:= \int_\Omega \overline{\nabla\phi}\cdot\nabla\psi
  \,, \quad
  &\dom(\delta_D^\Omega) &:= W_0^{1,2}(\Omega)
  \,,
  && (\alpha=\infty)
  \\
  \delta_\alpha^\Omega(\phi,\psi) 
  &:= \int_\Omega \overline{\nabla\phi}\cdot\nabla\psi
  + \alpha \int_{\partial\Omega} \overline{\phi}\,\psi
  \,, \quad
  &\dom(\delta_\alpha^\Omega) &:= W^{1,2}(\Omega)
  \,.
  && (\alpha \in \Real)
\end{aligned}    
\end{equation}

The Sobolev space $W^{1,2}(\Omega)$ is composed of functions
$\psi \in \sii(\Omega)$ such that also $\nabla\psi \in \sii(\Omega)$. 
The other Sobolev space $W_0^{1,2}(\Omega)$ is defined as the closure 
of $C_0^\infty(\Omega)$ with respect to the norm 
$(\|\cdot\|^2 + \|\nabla\cdot\|^2)^{1/2}$,
where $\|\cdot\|$ denotes the norm of $\sii(\Omega)$.
It is customary to think of $W_0^{1,2}(\Omega)$ 
as the subspace of $W^{1,2}(\Omega)$ composed of functions 
vanishing on the boundary~$\partial\Omega$ in a very weak sense
(\cf~\cite[Thm.~VI.3.5]{Edmunds-Evans}).

Note that the actions of $\delta_D^\Omega$ and $\delta_N^\Omega$ are the same,
but the latter has a larger domain,
$\dom(\delta_D^\Omega) \subset \dom(\delta_N^\Omega)$.
Here (and in the sequel), we (usually) write~$D$ and~$N$
instead of $\alpha=0$ and $\alpha=\infty$ for the Dirichlet
and Neumann boundary conditions, respectively.

In this way,  
the Dirichlet and Neumann Laplacians 
are defined for arbitrary open sets
(since it is the case of the  Sobolev spaces 
$W_0^{1,2}(\Omega)$ and $W^{1,2}(\Omega)$).
For the Robin Laplacian with $\alpha \in \Real \setminus \{0\}$,
we implicitly require that the boundary term 
is a relatively small perturbation:
\begin{equation}\label{Ass.Robin}
  \forall \delta > 0 \,, \ \exists C_\delta \geq 0 \,, \ 
  \forall \psi \in W^{1,2}(\Omega) \,, \qquad
  \int_{\partial\Omega} |\psi|^2
  \leq \delta \int_\Omega |\nabla\psi|^2 
  + C_\delta \int_\Omega |\psi|^2 
  \,,
\end{equation}
where~$\psi$ on~$\partial\Omega$ is interpreted 
as the boundary trace of $\psi \in W^{1,2}(\Omega)$.

Inequality~\eqref{Ass.Robin} is particularly satisfied for 
Lipschitz sets~$\Omega$ with compact boundary~$\partial\Omega$ 
due to the well-known boundary-trace embedding 
$W^{1,2}(\Omega) \hookrightarrow \sii(\partial\Omega)$ 
(see, \eg, \cite[Thm.~3.37]{McLean} or~\cite[Thm.~1.5.1.3]{Grisvard})
together with the Ehrling inequality \cite[Lem.~1.5.3]{Schwarz}.
It also holds for smooth sets admitting a uniform tubular neighbourhood,
which can be verified with the help of 
parallel (also called Fermi) coordinates~\cite{KRT}. 

The operator $-\Delta_\alpha^\Omega$ is fully characterised 
by~\eqref{representation}. 
It is easy to see that 
\begin{equation}\label{Laplace}
\begin{aligned} 
  -\Delta_\alpha^\Omega \psi &= -\Delta\psi \,,
  \\
  \dom(-\Delta_\alpha^\Omega) &=
  \left\{
  \psi \in  \dom(\delta_\alpha^\Omega) : \ \Delta\psi \in \sii(\Omega)
  \ \land \ 
  \left.
  \frac{\partial \psi}{\partial n} + \alpha \, \psi 
  \,\right|_{\partial\Omega} 
  = 0
  \right\} 
  ,  
\end{aligned}
\end{equation}
where $\Delta\psi$ is the distributional Laplacian of~$\psi$
and the ``boundary condition'' is interpreted in a weak sense:
\begin{equation}\label{bc}
  \left.
  \frac{\partial \psi}{\partial n} + \alpha \, \psi 
  \,\right|_{\partial\Omega}
  = 0
  \quad :\Longleftrightarrow \quad
  \forall \phi \in \dom(\delta_\alpha^\Omega) \,, \quad
  \delta_\alpha^\Omega(\phi,\psi) = (\phi,-\Delta\psi)
  \,.
\end{equation}
Note that this is automatically satisfied in the Dirichlet case,
because of the definition of $W_0^{1,2}(\Omega)$ 
and the distributional Laplacian,
so  
$$  
  \dom(-\Delta_D^\Omega) = \{\psi \in W_0^{1,2}(\Omega):
  \ \Delta\psi \in \sii(\Omega)\}
  \,.
$$ 

If~$\Omega$ is ``nice'' (\eg, smooth and bounded), 
then 
\begin{equation}\label{nice}
  \dom(-\Delta_\alpha^\Omega) =
  \left\{
  \psi \in W^{2,2}(\Omega) : \ 
  \frac{\partial \psi}{\partial n} + \alpha \, \psi
  = 0
  \quad\mbox{on}\quad \partial\Omega
  \right\} 
  ,  
\end{equation}
where the boundary condition holds in the sense of traces
(and should be interpreted as $\psi=0$ on $\partial\Omega$
if $\alpha=\infty$). 
In general, however, it is not true that 
$\dom(-\Delta_\alpha^\Omega) \subset W^{2,2}(\Omega)$.   

\subsection{Point and continuous spectra}\label{Sec.spectrum}
Let~$H$ be any self-adjoint operator 
in a Hilbert space~$\Hilbert$,
which is always assumed to be complex and separable throughout these notes.
Recall that the self-adjointness means $H=H^*$.
Here the adjoint~$H^*$ satisfies the usual relationship
$(\phi,H\psi) = (H^*\phi,\psi)$
for every $\psi \in \dom H$ and 
$$
  \phi \in \dom H^*
  := \{\phi \in \Hilbert: \ \exists \eta \in \Hilbert, \
  \forall \psi \in \dom H, \ (\phi,H\psi) = (\eta,\psi)\}  
  \,;
$$
then the action $H^*\phi:=\eta$ is well defined 
for any~$H$ with $\dom H$ dense in~$\Hilbert$.
The spectrum of~$H$ is the disjoint union 
\begin{equation}\label{spectrum}
  \sigma(H) := \sigma_\mathrm{p}(H) \ \dot\cup \ \sigma_\mathrm{c}(H)
  \,,
\end{equation}
where the \emph{point} and \emph{continuous} spectra
are respectively defined by
$$
\begin{aligned}
  \sigma_\mathrm{p}(H) &:= \big\{
  \lambda \in \Com : \, \exists 
  \stackrel[\psi\not=0 \qquad\qquad]{}{\psi \in \dom H}, 
  \
  H\psi=\lambda\psi
  \big\} 
  ,
  \\
  \sigma_\mathrm{c}(H) &:= \Big\{
  \lambda \in \Com\setminus\sigma_\mathrm{p}(H) : \, 
  \exists 
  \stackrel[\|\psi_n\|=1 \qquad\qquad]{}{\{\psi_n\}_{n\in\Nat} \subset \dom H}, 
  \
  \|H\psi_n-\lambda\psi_n\| \xrightarrow[n\to\infty]{} 0
  \Big\} .
\end{aligned}
$$
Here~$\|\cdot\|$ denotes the norm of~$\Hilbert$.
Any point $\lambda \in \sigma_\mathrm{p}(H)$ 
(respectively, $\lambda \in \sigma_\mathrm{c}(H)$)
is called an \emph{eigenvalue} 
(respectively, \emph{approximate eigenvalue}) of~$H$. 
The corresponding vector~$\psi$
(respectively, the sequence $\{\psi_n\}_{n\in\Nat}$)
is called an \emph{eigenvector} 
(respectively, \emph{approximate eigenvector}) of~$H$. 

We stress that our definition~\eqref{spectrum}
does coincide with the usual definition of the spectrum 
of a general self-adjoint operator~$H$ in a Hilbert space~$\Hilbert$.
For an operator which is merely assumed to be closed,
it is true that there is additionally the so-called \emph{residual spectrum},
which is formed by those complex numbers 
$\lambda \not\in \sigma_\mathrm{p}(H)$ 
for which the closure of $\ran(H-\lambda I)$
does not coincide with~$\Hilbert$
(\ie~the inverse operator $(H-\lambda I)^{-1}$
is not densely defined).
However, this pathological part of the spectrum 
is always empty in the important case
of self-adjoint operators.

The Robin Laplacian $-\Delta_\alpha^\Omega$ is self-adjoint by definition.
It follows that $\sigma(-\Delta_\alpha^\Omega) \subset \Real$
(see the proof of Proposition~\ref{Prop.positivity}
for the idea of a proof). 

\subsection{The geometric layout}
We use the following classification 
of Euclidean open sets due to Glazman~\cite{Glazman} 
(see also \cite[Sec.~X.6.1]{Edmunds-Evans}):
$\Omega \subset \Real^d$ is
\begin{itemize}
\item
\emph{quasi-conical}
if it contains arbitrarily large balls;
\item
\emph{quasi-cylindrical}
if it is not quasi-conical
but it contains infinitely many (pairwise) disjoint
identical (\ie~congruent) balls;
\item
\emph{quasi-bounded}
if it is neither quasi-conical nor quasi-cylindrical.
\end{itemize}

Bounded sets 
(\ie~those contained in a ball)
represent a subset of quasi-bounded sets,
but the latter class is much larger
(it includes unbounded sets which are ``narrow at infinity'').
The whole Euclidean space~$\Real^d$ or its conical sector
are examples of quasi-conical sets.
The infinite sequence of disjoint 
identical (respectively, expanding) balls
is an example of a quasi-cylindrical (respectively, quasi-conical) set. 
Finally, an infinite (solid) cylinder $\Real \times \omega$,
where~$\omega$ is a $(d-1)$-dimensional open set,
is a quasi-cylindrical set.
See Figure~\ref{Fig.Glazman} for typical examples in~$\Real^2$.

\begin{figure}[h!]
\begin{center}
\begin{tabular}{ccc}
\includegraphics[width=0.3\textwidth]{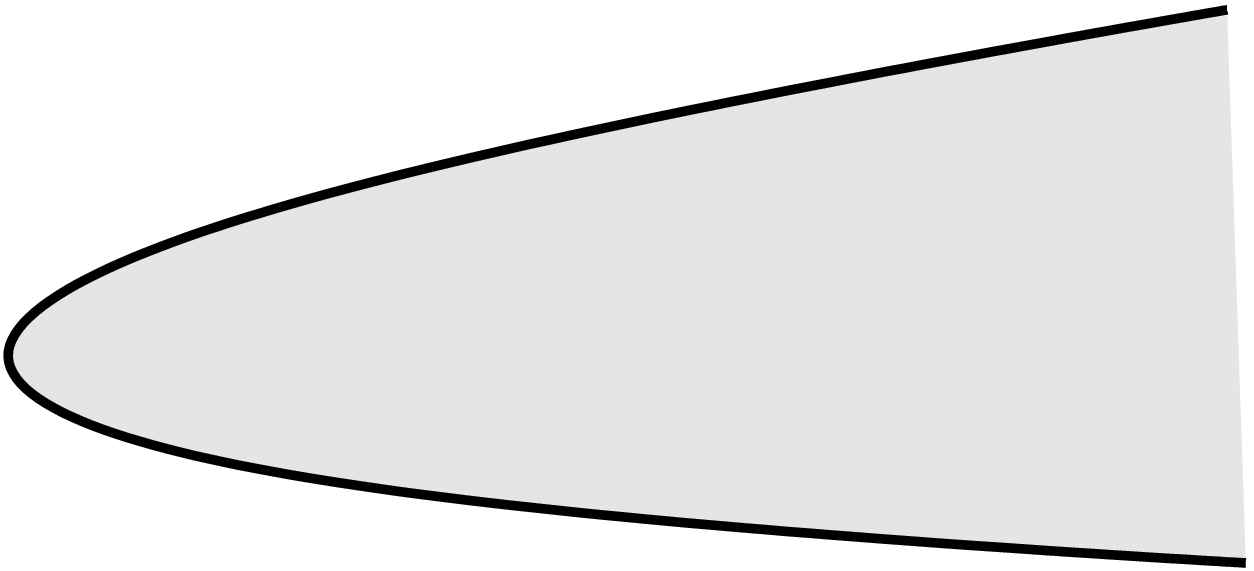}
&\includegraphics[width=0.3\textwidth]{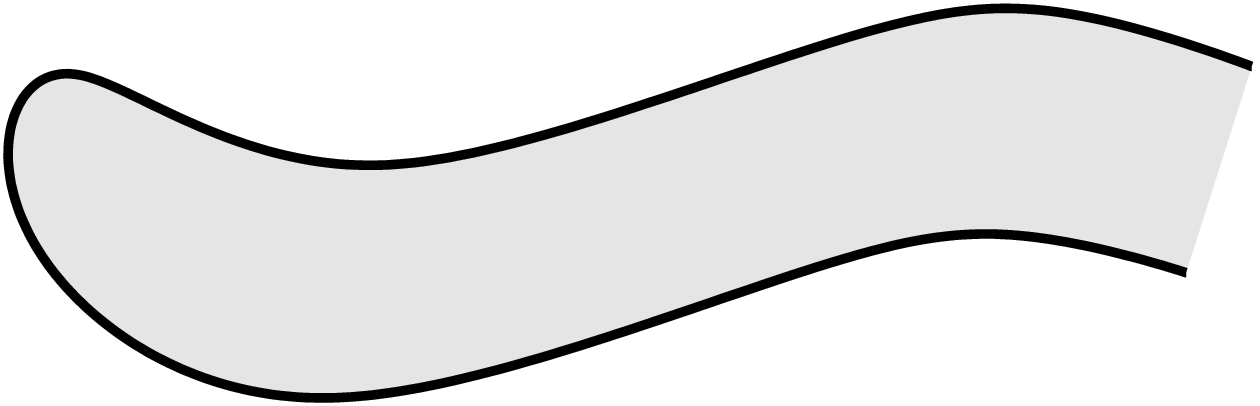}
&\includegraphics[width=0.3\textwidth]{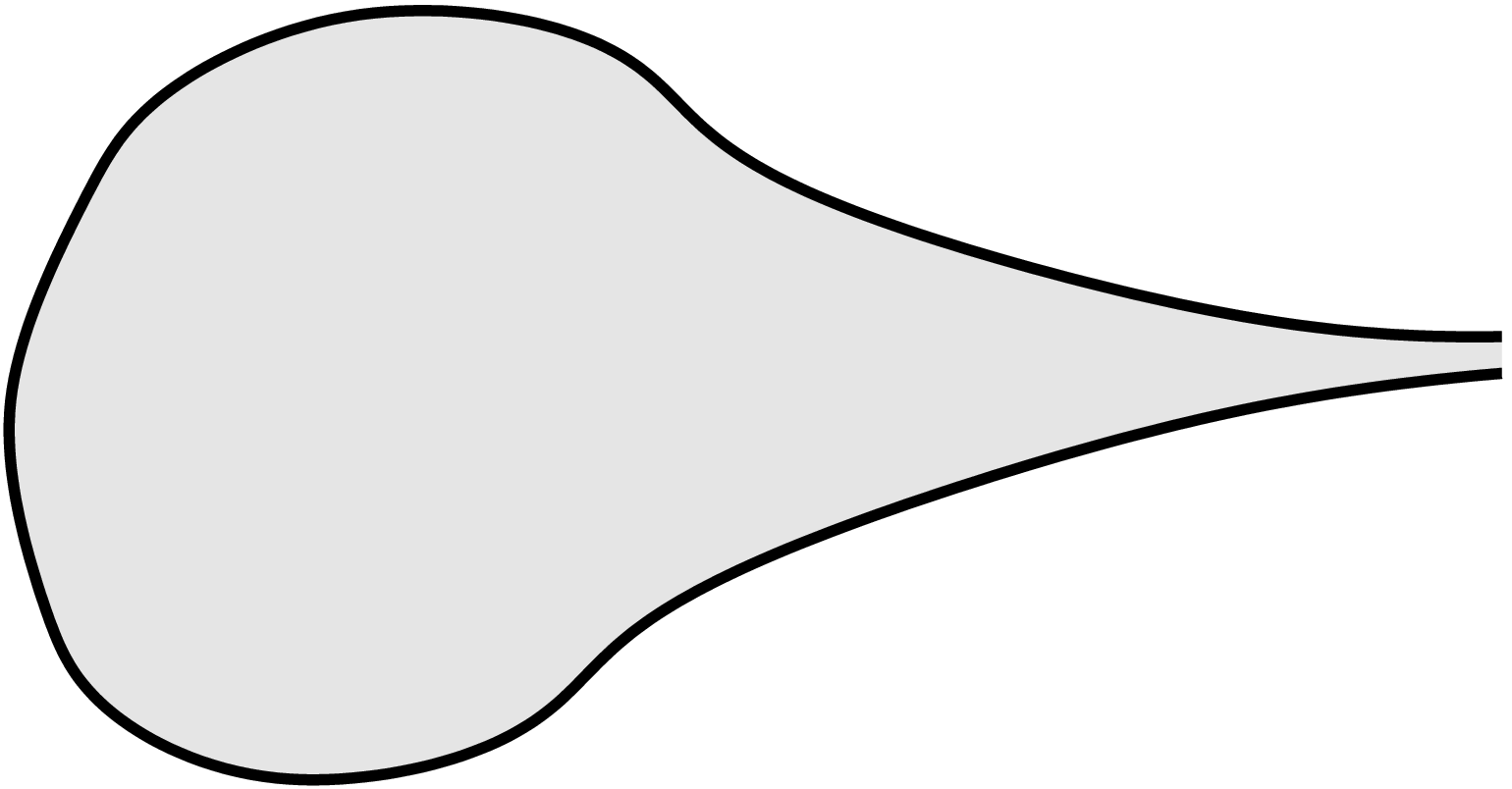}
\\
quasi-conical
& quasi-cylindrical
& quasi-bounded
\end{tabular}
\end{center}
\caption{Examples of planar domains as regards the Glazman classification.}
\label{Fig.Glazman}
\end{figure}

In physics, the particular subclasses of open sets 
are particularly related to the following problems:

\begin{center}
\begin{tabular}{|c|c|c|}
\hline
quasi-conical 
& quasi-cylindrical 
& quasi-bounded
\\ \hline\hline
\begin{tabular}{c}
scatterers
\end{tabular}
&
\begin{tabular}{c}
waveguides 
\end{tabular}
&
resonators
\\ \hline
\end{tabular}  
\end{center}  

In the following sections, 
we are interested in spectral properties of
the Robin Laplacian as regards the above classification.

\section{Quasi-conical domains or Scatterers}
%
Let~$\Omega$ be an arbitrary quasi-conical open set.
By definition, there exist sequences
of centres $\{x_j\}_{j\in\Nat} \subset \Omega$ 
and radii $\{R_j\}_{j\in\Nat} \subset (0,\infty)$
such that 
\begin{equation}\label{centres}
  B_{R_j}(x_j) \subset \Omega
  \qquad \mbox{and} \qquad
  R_j \xrightarrow[j\to\infty]{} \infty
  \,.
\end{equation}
Here $B_R(x_0) := \{x\in\Real^d: |x-x_0|<R\}$
denotes the open ball of radius $R>0$ centred at $x_0 \in \Real^d$.
Occasionally, we abbreviate $B_R := B_R(0)$.
Notice that~$\Omega$ is necessarily unbounded.

\subsection{Construction of approximate eigenfunctions}
The differential equation of~\eqref{problem} admits 
a classical solution (plane waves)
\begin{equation}\label{plane.waves}
  w_k(x) := e^{i k \cdot x}
  \qquad\mbox{with any}\quad
  k\in\Real^d
  \quad\mbox{such that}\quad
  |k|^2 = \lambda \in [0,\infty)
  \,.
\end{equation}
However, $w_k \not\in \sii(\Omega)$, so~$w_k$ is not an eigenfunction.
Anyway, the observation enables us to construct 
approximate eigenfunctions and prove the following result.
\begin{thm}\label{Thm.conical}
If~$\Omega$ is a quasi-conical open set 
and $\alpha \in \Real \cup \{\infty\}$, 
then
$$
  \sigma(-\Delta_\alpha^\Omega)
  \supset [0,\infty)
  \,.
$$
\end{thm}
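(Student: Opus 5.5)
Fix $\lambda\in[0,\infty)$ and $k\in\Real^d$ with $|k|^2=\lambda$. The plan is to show that $\lambda$ is an approximate eigenvalue in the sense of Section~\ref{Sec.spectrum}. Either $\lambda\in\sigma_\mathrm{p}(-\Delta_\alpha^\Omega)$, in which case there is nothing to prove, or we exhibit a Weyl sequence. This yields $\lambda\in\sigma(-\Delta_\alpha^\Omega)$ by~\eqref{spectrum}.

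We localise the plane wave~\eqref{plane.waves} inside the large balls~\eqref{centres}. Fix a cut-off $\varphi\in C_0^\infty(B_1)$ with $\varphi\not\equiv0$, and set
$$
  \psi_j(x) := c_j\,\varphi\Big(\frac{x-x_j}{R_j}\Big)\, e^{ik\cdot x},
  \qquad c_j := \|\varphi((\cdot-x_j)/R_j)\|^{-1} = R_j^{-d/2}\|\varphi\|_{\sii(\Real^d)}^{-1},
$$
so that $\|\psi_j\|=1$. Since $\supp\psi_j\subset B_{R_j}(x_j)\subset\Omega$ is compact, we have $\psi_j\in C_0^\infty(\Omega)$. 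We must check that $\psi_j\in\dom(-\Delta_\alpha^\Omega)$ for every $\alpha\in\Real\cup\{\infty\}$. Clearly $\psi_j\in W_0^{1,2}(\Omega)\subset W^{1,2}(\Omega)$ and $\Delta\psi_j\in\sii(\Omega)$. The weak boundary condition~\eqref{bc} holds for every $\phi\in W^{1,2}(\Omega)$, because $\int_\Omega\overline{\nabla\phi}\cdot\nabla\psi_j=(\phi,-\Delta\psi_j)$ by the definition of weak derivatives, using that $\psi_j$ has compact support in~$\Omega$. For the same reason the boundary integral $\int_{\partial\Omega}\overline\phi\,\psi_j$ vanishes, since the trace of~$\psi_j$ is zero. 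Hence, by~\eqref{Laplace}, $\psi_j\in\dom(-\Delta_\alpha^\Omega)$ and $-\Delta_\alpha^\Omega\psi_j=-\Delta\psi_j$. This step is the only place where the boundary condition enters, and the compact support makes it trivial. I expect the main care to be needed here, in justifying that the trace term vanishes for irregular~$\Omega$. For $\alpha\in\Real\setminus\{0\}$ this is implicit in assumption~\eqref{Ass.Robin} and the trace interpretation, so the point is really a domain-membership check rather than an obstacle.

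Next we compute the residual. Write $\varphi_j(x):=\varphi((x-x_j)/R_j)$. Using $-\Delta w_k=\lambda w_k$ and the Leibniz rule,
$$
  (-\Delta-\lambda)\psi_j = c_j\big(-\Delta\varphi_j - 2ik\cdot\nabla\varphi_j\big)e^{ik\cdot x},
$$
with $\nabla\varphi_j=R_j^{-1}(\nabla\varphi)(\cdot)$ and $\Delta\varphi_j=R_j^{-2}(\Delta\varphi)(\cdot)$. Scaling the integrals by the change of variables $y=(x-x_j)/R_j$ gives
$$
  \|(-\Delta_\alpha^\Omega-\lambda)\psi_j\|
  \le \frac{\|\Delta\varphi\|}{R_j^{2}\|\varphi\|}+\frac{2|k|\,\|\nabla\varphi\|}{R_j\|\varphi\|}
  \xrightarrow[j\to\infty]{}0 .
$$
So $\{\psi_j\}$ is an approximate eigenvector, which gives $\lambda\in\sigma(-\Delta_\alpha^\Omega)$. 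As $\lambda\ge0$ was arbitrary, $[0,\infty)\subset\sigma(-\Delta_\alpha^\Omega)$.
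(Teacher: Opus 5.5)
Your proposal is correct and is essentially the paper's argument: you cut off the plane wave $e^{ik\cdot x}$ by a rescaled, $L^2$-normalised bump placed inside the large balls of~\eqref{centres}, and the residual is of order $R_j^{-1}$. The differences are cosmetic. You scale by $R_j$ directly, whereas the paper scales by $n$ and picks a sufficiently large ball. You also spell out two points the paper leaves implicit: the inclusion $C_0^\infty(\Omega)\subset\dom(-\Delta_\alpha^\Omega)$, and the $\sigma_\mathrm{p}$ versus $\sigma_\mathrm{c}$ dichotomy.
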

\begin{proof}
Let~$\varphi$ be a function from $C_0^\infty(\Real^d)$,
normalised to~$1$ in $\sii(\Real^d)$, \ie~$\|\varphi\|_{\sii(\Real^d)}=1$.
For any $n\in\Nat^* := \Nat \setminus \{0\}$
(following the French nature of the journal, 
natural numbers contain zero in our convention) 
and $\{a_n\}_{n\in\Nat^*} \subset \Real^d$, 
we set
$$
  \varphi_n(x) := N_n \, \varphi\!\left(\frac{x-a_n}{n}\right)
  \qquad \mbox{with} \qquad
  N_n := n^{-d/2}
  \,.
$$
The prefactor~$N_n$ is chosen in such a way that
also each~$\varphi_n$ is normalised to~$1$ in $\sii(\Real^d)$.
Indeed, by an obvious change of variables, we have
\begin{equation}\label{normalisation}
\begin{aligned}
  \|\varphi_n\|_{\sii(\Real^d)}^2
  &= |N_n|^2 \int_{\Real^d} 
  \left|\varphi\!\left(\frac{x-a_n}{n}\right)\right|^2 
  \, \der x
  \\
  &= \int_{\Real^d} |\varphi(y)|^2 \, \der y
  = \|\varphi\|_{\sii(\Real^d)}^2 
  = 1
  \,.
\end{aligned}  
\end{equation}
With respect to the support of~$\varphi$,
the support of~$\varphi_n$ is translated by the vector~$a_n$
and scaled by~$n$
(see Figure~\ref{Fig.rozpliz}):
\begin{equation}\label{support}
  \supp \varphi_n = a_n + n \, \supp \varphi
  \,.
\end{equation}
By the property of the set~$\Omega$ being quasi-conical,
for each $n \in \Nat^*$ there exists $j_n \in \Nat^*$   
such that 
$$
  \supp \varphi_n \subset B_{R_{j_n}}(x_{j_n}) \subset \Omega
  \qquad \mbox{with} \qquad
  a_n := x_{j_n}
  \,,
$$
where the radii~$R_j$ and centres~$x_j$ satisfy~\eqref{centres}.
Hence, 
$
  \varphi_n 
  \in C_0^\infty(\Omega)
  \in \dom(-\Delta_\alpha^\Omega)
$.
For any $n\in\Nat^*$, we define
\begin{equation}\label{singular}
  \psi_n(x) := \varphi_n(x) \, e^{i k \cdot x}
  , 
\end{equation}
which also belongs to $\dom(-\Delta_\alpha^\Omega)$.
By~\eqref{normalisation}, $\|\psi_n\|=1$ for every $n \in \Nat^*$.

\begin{figure}[h!] 
\begin{center}
\includegraphics[width=0.7\textwidth]{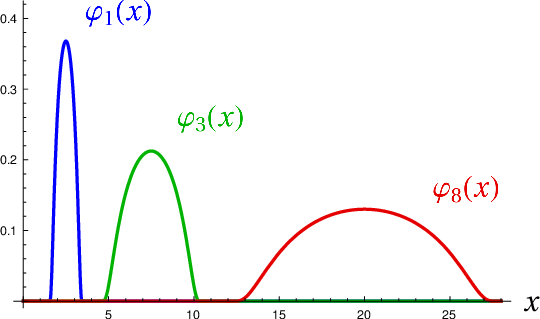} 
\caption{The sequence of functions $\varphi_n$ for $d=1$
smearing out and travelling to $+\infty$ as $n \to \infty$.}\label{Fig.rozpliz}
\end{center}
\end{figure}

In order to ensure that $\{\psi_n\}_{n \in \Nat^*}$
is the approximate eigenfunction corresponding 
to the approximate eigenvalue~$|k|^2$,
it remains to verify that 
$-\Delta_\alpha^\Omega \psi_n - |k|^2 \psi_n \to 0$
in $\sii(\Omega)$ as $n \to \infty$.
Since $\psi_n \in C_0^\infty(\Omega)$, the action of $-\Delta_\alpha^\Omega$
is that of the classical Laplacian.
We compute
$$
\begin{aligned}
  \nabla\psi_n(x) 
  &= \left[ \nabla\varphi_n(x) + i k \, \varphi_n(x) \right] e^{ik\cdot x} \,,
  \\
  \Delta\psi_n(x) &= \nabla \cdot \nabla \psi_n(x)
  = \left[ \Delta\varphi_n(x) + 2ik\cdot\nabla\varphi_n(x)
  -|k|^2 \, \varphi_n(x) \right] e^{ik\cdot x} \,.
\end{aligned}
$$
Consequently,
$$
  -\Delta_\alpha^\Omega \psi_n(x) - |k|^2 \psi_n(x) 
  = \left[-\Delta\varphi_n(x) - 2ik\cdot\nabla\varphi_n(x)\right] e^{ik\cdot x}
  \,,
$$
and therefore 
$$  
  \|-\Delta_\alpha^\Omega \psi_n - k^2 \psi_n\|
  \leq \|\Delta\varphi_n\| + 2\,|k| \, \|\nabla\varphi_n\|
  \,.
$$
The right-hand side vanishes as $n \to \infty$, indeed:
\begin{equation*}
\begin{aligned}
  \|\nabla\varphi_n\|^2
  &= |N_n|^2 \int_{\Real^d} 
  \left|\frac{1}{n}\nabla\varphi\!\left(\frac{x-a_n}{n}\right)\right|^2 
  \, \der x
  \\
  &= \frac{1}{n^2} \int_{\Real^d} |\nabla\varphi(y)|^2 \, \der y
  = \frac{1}{n^2} \|\nabla\varphi\|^2 
  \,,
  \\
  \|\Delta\varphi_n\|^2
  &= |N_n|^2 \int_{\Real^d} 
  \left|\frac{1}{n^2}\Delta\varphi\!\left(\frac{x-a_n}{n}\right)\right|^2 
  \, \der x
  \\
  &= \frac{1}{n^4} \int_{\Real^d} |\Delta\varphi(y)|^2 \, \der y
  = \frac{1}{n^4} \|\Delta\varphi\|^2 
  \,.
\end{aligned}
\end{equation*}
This concludes the proof of the theorem.
\end{proof}

\subsection{Is there a negative spectrum ?}
First of all, this is never the case if $\alpha \geq 0$
(including $\alpha=\infty$),
irrespectively of the geometry.

\begin{prop}\label{Prop.positivity}
Let $\Omega$ be any open set 
and $\alpha \in [0,\infty) \cup \{\infty\}$. 
Then
$$
  \sigma(-\Delta_\alpha^\Omega) \subset [0,\infty) \,.
$$
\end{prop}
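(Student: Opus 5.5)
The plan is to show that every $\lambda \in \Com \setminus [0,\infty)$ lies outside both the point and the continuous spectrum in the decomposition~\eqref{spectrum}. The tool is the representation relationship~\eqref{representation} combined with the non-negativity of the form $\delta_\alpha^\Omega$ for $\alpha \ge 0$. This also delivers the promised argument that $\sigma(-\Delta_\alpha^\Omega)\subset\Real$ for self-adjoint operators.

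\emph{Step 1: non-negativity of the form.} For $\psi \in \dom(-\Delta_\alpha^\Omega)$, take $\phi=\psi$ in~\eqref{representation}. This gives
$$
  (\psi,-\Delta_\alpha^\Omega\psi) = \delta_\alpha^\Omega(\psi,\psi)
  = \|\nabla\psi\|^2 + \alpha \int_{\partial\Omega}|\psi|^2 \geq 0 ,
$$
with the boundary term absent in the Dirichlet case. The quadratic form is non-negative because $\alpha\ge0$. Assumption~\eqref{Ass.Robin} is only needed to give the boundary term a meaning when $\alpha>0$, and it does not affect the sign.

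\emph{Step 2: a lower bound on $\|(H-\lambda)\psi\|$.} Write $H:=-\Delta_\alpha^\Omega$ and $\lambda=\mu+i\nu$, and take $\psi\in\dom H$ with $\|\psi\|=1$.

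First consider the imaginary part. Since $(\psi,H\psi)$ is real by Step~1, we have $\Im(\psi,(H-\lambda)\psi) = -\nu$. Cauchy--Schwarz then gives $\|(H-\lambda)\psi\|\ge|\nu|$.

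Next suppose $\nu=0$ and $\mu<0$. Then $\Re(\psi,(H-\mu)\psi) = \delta_\alpha^\Omega(\psi,\psi)-\mu \ge -\mu=|\mu|$, so $\|(H-\mu)\psi\|\ge |\mu|$.

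Combining the two cases, $\|(H-\lambda)\psi\| \ge \dist(\lambda,[0,\infty))\,\|\psi\|$ when $\mu\le 0$, and $\|(H-\lambda)\psi\|\ge|\nu|\,\|\psi\|$ in general. Either way the constant is strictly positive whenever $\lambda\notin[0,\infty)$.

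\emph{Step 3: conclusion.} A lower bound of the form $\|(H-\lambda)\psi\|\ge c\|\psi\|$ with $c>0$ rules out $H\psi=\lambda\psi$ for $\psi\neq0$, so $\lambda\notin\sigma_\mathrm{p}(H)$. It also rules out any normalised sequence with $\|(H-\lambda)\psi_n\|\to0$, so $\lambda\notin\sigma_\mathrm{c}(H)$.

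Since the paper asserts that~\eqref{spectrum} exhausts the spectrum of a self-adjoint operator, with empty residual spectrum, this completes the proof. The only delicate point is relying on that assertion. If one wants to justify it directly, self-adjointness gives $\ker(H^*-\bar\lambda)=\ker(H-\bar\lambda)=\{0\}$ by the same bound applied to $\bar\lambda$. Hence $\ran(H-\lambda)$ is dense, and it is also closed by the lower bound together with the closedness of $H$. So $H-\lambda$ is boundedly invertible, and no residual spectrum can occur.
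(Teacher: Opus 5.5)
Your proof is correct and is essentially the paper's argument run contrapositively. The paper takes a normalised approximate eigenvector sequence for $\lambda\in\sigma(-\Delta_\alpha^\Omega)$ and concludes $\lambda=\lim_{n}(\psi_n,-\Delta_\alpha^\Omega\psi_n)=\lim_{n}\delta_\alpha^\Omega[\psi_n]\geq 0$; you instead use $|(\psi,(H-\lambda)\psi)|\leq\|(H-\lambda)\psi\|$ to show that no such sequence exists when $\lambda\notin[0,\infty)$, which also makes explicit the reality of $\lambda$ (left implicit in the paper's $\liminf$ chain) and the absence of residual spectrum.
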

\begin{proof}
Let $\lambda \in \sigma(-\Delta_\alpha^\Omega)$. 
By our definition of the spectrum~\eqref{spectrum},
there exists a sequence $\{\psi_n\}_{n \in \Nat} \subset \dom(-\Delta_\alpha^{\Omega})$ 
such that $\|\psi_n\|=1$ for every $n \in \Nat$
and $-\Delta_\alpha^{\Omega}\psi_n-\lambda\psi_n \to 0$ as $n \to \infty$.  
We have
$$
\begin{aligned}
  \lambda 
  &= \lambda \, \|\psi_n\|^2 \\
  &= \liminf_{n\to\infty} \lambda \, \|\psi_n\|^2 \\
  &= \liminf_{n\to\infty} (\psi_n,\lambda\psi_n) \\
  &= \liminf_{n\to\infty} (\psi_n,-\Delta_\alpha^\Omega\psi_n) \\
  &= \liminf_{n\to\infty} \delta_\alpha^\Omega(\psi_n,\psi_n) \\
  &\geq \liminf_{n\to\infty} \|\nabla\psi_n\|^2 \\
  &\geq 0
  \,,
\end{aligned}
$$
where the last but one inequality employs $\alpha \geq 0$.
\end{proof}
\begin{coro}\label{Corol.positivity}
Let~$\Omega$ be any quasi-conical open set 
and $\alpha \in [0,\infty) \cup \{\infty\}$.
Then
$$
  \sigma(-\Delta_\alpha^\Omega)
  = [0,\infty)
  \,.
$$
\end{coro}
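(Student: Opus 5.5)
The corollary follows by combining the two preceding results, each of which supplies one of the two inclusions. No new construction is needed.

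First, for the inclusion $\sigma(-\Delta_\alpha^\Omega) \supset [0,\infty)$, I will invoke Theorem~\ref{Thm.conical}. Its hypotheses are that $\Omega$ is quasi-conical and $\alpha \in \Real \cup \{\infty\}$. Both hold, since $[0,\infty) \cup \{\infty\} \subset \Real \cup \{\infty\}$. Concretely, for each $\lambda \ge 0$ the theorem provides the approximate eigenfunctions $\psi_n = \varphi_n e^{ik\cdot x}$ with $|k|^2=\lambda$. These are supported in the large balls $B_{R_{j_n}}(x_{j_n}) \subset \Omega$, so the boundary condition plays no role.

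Second, for the inclusion $\sigma(-\Delta_\alpha^\Omega) \subset [0,\infty)$, I will invoke Proposition~\ref{Prop.positivity}. It holds for any open set and any $\alpha \in [0,\infty)\cup\{\infty\}$, so in particular for our quasi-conical $\Omega$. Its proof rests on the non-negativity of the form: $\delta_\alpha^\Omega(\psi,\psi) \ge \|\nabla\psi\|^2 \ge 0$ whenever $\alpha \ge 0$. In the Dirichlet case $\alpha = \infty$ there is no boundary term, and the form is simply $\|\nabla\psi\|^2$.

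Combining the two inclusions gives the equality $\sigma(-\Delta_\alpha^\Omega) = [0,\infty)$. There is no real obstacle here. The only point to check is that the parameter range in the corollary lies in the intersection of the ranges allowed by the two earlier results, which is immediate.
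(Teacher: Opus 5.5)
Your proposal is correct and matches the paper: the corollary is stated there without a separate proof because it is the immediate combination of Theorem~\ref{Thm.conical}, which gives $[0,\infty)\subset\sigma(-\Delta_\alpha^\Omega)$ for all $\alpha\in\Real\cup\{\infty\}$, and Proposition~\ref{Prop.positivity}, which gives the reverse inclusion for $\alpha\in[0,\infty)\cup\{\infty\}$. Your check that the parameter range lies in both hypotheses is the only thing to verify, and you verified it.
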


If~$\alpha$ is negative, however, there might be some negative spectrum.

\begin{exam}[Half-axis]\label{Ex.half}
Let us look for negative eigenvalues of the Robin Laplacian
$-\Delta_\alpha^{(0,\infty)}$
with $\alpha < 0$.
The eigenvalue problem~\eqref{problem} for $\Omega:=(0,\infty)$ reads 
\begin{equation*} 
\left\{
\begin{aligned}
  -\psi'' &= \lambda \;\! \psi
  && \mbox{in} \quad (0,\infty)
  \,, \\
  -\psi' + \alpha \, \psi &= 0
  && \mbox{at} \quad 0
  \,.
\end{aligned}  
\right.
\end{equation*}
Assuming $\lambda = -k^2$ with $k > 0$,
the differential equation admits the general solution
$
  \psi(x) = A e^{kx} + B e^{-kx}
$
with $A,B \in \Com$. 
The requirement $\psi \in \sii((0,\infty))$ dictates $A=0$.
The boundary condition at~$0$ requires $k = -\alpha$,
which leads to the normalised eigenfunction
$$
  \psi(x) = \sqrt{-2\alpha} \, e^{\alpha x}
$$
corresponding to the negative eigenvalue 
$-\alpha^2 \in \sigma_\mathrm{p}(-\Delta_\alpha^{(0,\infty)})$.
Looking similarly for non-negative eigenvalues,
it is easily seen that 
$
  \sigma_\mathrm{p}(-\Delta_\alpha^{(0,\infty)})
  \cap [0,\infty) = \varnothing
$
for every $\alpha \in \Real \cup \{\infty\}$.
In summary,
$$
\begin{aligned}
  \sigma_\mathrm{p}(-\Delta_\alpha^{(0,\infty)}) 
  &=
  \begin{cases}
    \{-\alpha^2\} 
    & \mbox{if} \quad \alpha \in (-\infty, 0) \,, 
    \\
    \varnothing 
    & \mbox{if} \quad \alpha \in [0,+\infty) \cup \{\infty\} \,,
  \end{cases}
  \\
  \sigma_\mathrm{c}(-\Delta_\alpha^{(0,\infty)}) 
  &= [0,+\infty) \,.
\end{aligned}  
$$
The $\alpha$-dependence of the spectrum 
can be seen in Figure~\ref{Fig.half}.
\end{exam}
\begin{figure}[h!] 
\begin{center}
\includegraphics[width=0.7\textwidth]{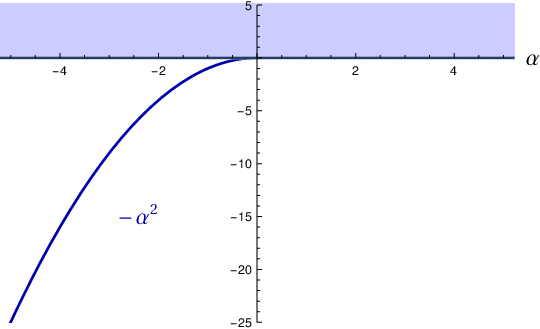} 
\caption{The spectrum of $-\Delta_\alpha^{(0,\infty)}$ 
as a function of~$\alpha$.}\label{Fig.half}
\end{center}
\end{figure}

\subsection{Embedded eigenvalues}
Example~\ref{Ex.half} shows that the half-axis admits no
eigenvalues inside the interval $[0,\infty)$.
The latter always belongs to the spectrum of $-\Delta_\alpha^\Omega$
for any quasi-conical set due to Theorem~\ref{Thm.conical}. 
\begin{center}
\fbox{Is there a quasi-conical set 
with eigenvalues inside $[0,\infty)$ ?}
\end{center}
Since these eigenvalues are necessarily not isolated points in the spectrum,
they are called \emph{embedded} eigenvalues
(in the ``continuum'' $[0,\infty)$).
It turns out that the existence/ab\-sence of embedded eigenvalues
is a highly non-trivial question in spectral theory.

\begin{exam}[Disconnected sets]
Let $\omega \subset \Real^{d-1}$ 
be any non-empty smooth bounded open set.
Then~\eqref{Ass.Robin} with~$\omega$ instead of~$\Omega$ holds true 
and the embedding $W^{1,2}(\omega) \hookrightarrow \sii(\omega)$
is compact (see \cite[Thm.~V.4.17]{Edmunds-Evans}
or the proof of Theorem~\ref{Thm.bounded.Neumann} below).
It follows that~$-\Delta_\alpha^\Omega$ is an operator 
with compact resolvent, so it possesses an infinite sequence 
of eigenvalues accumulating at~$+\infty$.  
If $\Omega := \omega \cup (\Real^d\setminus\overline{\omega})$,
all the eigenvalues of $-\Delta_\alpha^{\omega}$ 
are eigenvalues of $-\Delta_\alpha^\Omega$.
In particular, non-negative eigenvalues of $-\Delta_\alpha^{\omega}$ 
are embedded eigenvalues of $-\Delta_\alpha^\Omega$. 
(There are infinitely many of them.)
\end{exam}

In view of this example, let us henceforth restrict
to~$\Omega$ being a \emph{domain},
\ie, an open connected set.
There is a subclass of quasi-conical domains 
for which the question of embedded eigenvalues
is relatively easy to study.
Let~$\Omega$ be an \emph{exterior domain},
\ie, a domain $\Omega\subset\Real^d$ satisfying that
there exists a positive number~$R$ such that 
$\Omega \setminus B_R(0) = \Real^d \setminus B_R(0)$,
where $B_R(0)$ is the open ball of radius~$R$
centred at the origin of~$\Real^d$.
For such domains, 
already in 1943 Rellich~\cite{Rellich_1943} proved that there are 
no \emph{positive} eigenvalues,
irrespectively of the boundary conditions imposed on~$\partial\Omega$. 

\begin{theo}[Rellich 1943~\cite{Rellich_1943}]\label{Thm.Rellich}
Let $\Omega$ be an exterior domain
and $\alpha \in \Real \cup \{\infty\}$. 
Then
$$
  \sigma_\mathrm{p}(-\Delta_\alpha^\Omega) 
  \cap (0,\infty)
  = \varnothing \,.
$$
\end{theo}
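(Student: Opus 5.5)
The key observation is that the boundary condition is irrelevant: outside the ball $B_R(0)$ the domain coincides with $\Real^d$, so an eigenfunction is there an $\sii$ solution of the free Helmholtz equation. Suppose $\psi \in \dom(-\Delta_\alpha^\Omega)$ satisfies $-\Delta_\alpha^\Omega\psi = \lambda\psi$ with $\lambda = k^2 > 0$. Testing \eqref{representation} against $\phi \in C_0^\infty(\Real^d\setminus \overline{B_R(0)})$ shows that $-\Delta\psi = k^2\psi$ in the distributional sense on $\{|x|>R\}$. By elliptic regularity, $\psi$ is then real-analytic there. The goal is to show $\psi = 0$ on $\{|x|>R\}$. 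Since $\Omega$ is connected, unique continuation for $-\Delta - k^2$ (analyticity in the interior of $\Omega$) then gives $\psi \equiv 0$ in $\Omega$.

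To show $\psi$ vanishes outside the ball, I would expand in spherical harmonics. For $r>R$ write
\[
  \psi(r\theta) = \sum_{\ell,m} u_{\ell m}(r)\, Y_{\ell m}(\theta) .
\]
Each coefficient $u_{\ell m}(r) = \int_{\Sphere^{d-1}} \psi(r\theta)\overline{Y_{\ell m}(\theta)}\,\der\theta$ solves the radial Bessel-type equation
\[
  -u'' - \frac{d-1}{r}u' + \frac{\ell(\ell+d-2)}{r^2}u = k^2 u .
\]
Substituting $v(r) := r^{(d-1)/2}u_{\ell m}(r)$ gives
\[
  -v'' + \frac{c_\ell}{r^2} v = k^2 v, \qquad c_\ell := \frac{(2\ell+d-1)(2\ell+d-3)}{4}.
\]
On the other side, $\psi\in\sii(\Omega)$ implies, via Parseval on each sphere, that
\[
  \int_R^\infty |v(r)|^2\,\der r = \int_R^\infty |u_{\ell m}(r)|^2 r^{d-1}\,\der r < \infty .
\]

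The main step, and the one I expect to be the real obstacle, is to show that no nonzero solution of this one-dimensional equation is square-integrable at infinity when $k^2>0$. I would try an energy (Wronskian-type) argument. Define
\[
  E(r) := |v'(r)|^2 + \Big(k^2 - \frac{c_\ell}{r^2}\Big)|v(r)|^2 .
\]
Differentiating and using the equation gives
\[
  E'(r) = \frac{2c_\ell}{r^3}|v(r)|^2 .
\]
Then split into cases.

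\emph{Case $c_\ell\ge 0$.} Here $E$ is nondecreasing.

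\emph{Case $c_\ell<0$.} This only occurs for $d=2$, $\ell=0$, where $c_0=-1/4$. In this case $E'$ is integrable, because
\[
  \int_R^\infty r^{-3}|v|^2\,\der r \le R^{-3}\int_R^\infty |v|^2\,\der r < \infty ,
\]
so $E$ has a limit at infinity.

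In both cases, if $v\not\equiv0$, I will argue that $\liminf_{r\to\infty}E(r)>0$. When $c_\ell>0$, choose $r_0$ large with $E(r_0)>0$: if $E\le0$ for all large $r$, then $k^2|v|^2\le c_\ell r^{-2}|v|^2$, forcing $v=0$ for large $r$ and hence $v\equiv0$ by ODE uniqueness. In the borderline cases ($c_\ell\le 0$) one argues similarly, using that $E\ge k^2|v|^2/2$ eventually and that the limit of $E$ exists. Once $E\ge\eta>0$ on $[r_0,\infty)$, we get $|v'|^2+k^2|v|^2\ge\eta/2$ for large $r$. Integrating this over $[r_0,\infty)$ would contradict $v\in\sii$ provided we also know $v'\in\sii(r_0,\infty)$.

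That last fact is the delicate technical point. I would obtain it from the equation: multiply by $\bar v$ times a cutoff and integrate by parts, which bounds $\int|v'|^2$ on dyadic intervals by $\int|v|^2$ on slightly larger ones. This shows $\int_{r_0}^\infty |v'|^2 < \infty$, giving the contradiction. Hence every $u_{\ell m}$ vanishes for $r>R$, so $\psi=0$ there, which proves Theorem~\ref{Thm.Rellich}.
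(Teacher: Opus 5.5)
Your proof is correct in outline and reaches the key step by a different route from the paper. Both arguments use the spherical-harmonic expansion to reduce the problem to showing that the radial equation has no nontrivial solution in $\sii((R,\infty),r^{d-1}\,\der r)$ when $\lambda=k^2>0$.

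The paper identifies the two solutions as $r^{-(d-2)/2}J_\nu(kr)$ and $r^{-(d-2)/2}Y_\nu(kr)$. It then reads off from their large-$r$ asymptotics, $r^{-(d-1)/2}\cos(\cdot)$ and $r^{-(d-1)/2}\sin(\cdot)$, that no nontrivial combination is square integrable. You instead use the monotonicity identity $E'=2c_\ell r^{-3}|v|^2$.

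The paper's route is quicker once the Bessel asymptotics are granted. Yours is self-contained. It also adapts to radial potentials $V(|x|)$ with $V\to0$ and $V'$ integrable at infinity: with $q:=c_\ell r^{-2}+V$ one has $E'=-q'|v|^2$. This is in the spirit of the Kato-type extensions cited after the theorem.

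Your last step, real analyticity together with connectedness of $\Omega$, is the appropriate unique-continuation tool. The Harnack inequality and strong maximum principle invoked in the paper are designed for one-signed solutions.

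Two points need tightening.

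First, the case $c_\ell<0$ (only $d=2$, $\ell=0$) is not ``similar'' to $c_\ell>0$. There $E$ is nonincreasing with $E\geq k^2|v|^2\geq0$, so its limit $L$ exists. But nothing you wrote rules out $L=0$, which is compatible with $v,v'\to0$. You can close this as follows. If $L=0$, then $E(r)=\int_r^\infty|v(s)|^2(2s^3)^{-1}\,\der s$. Set $M(r):=\sup_{s\geq r}|v(s)|^2$, which is finite since $|v|^2\leq E/k^2$. Then $k^2|v(r')|^2\leq E(r')\leq M(r)/(4r^2)$ for every $r'\geq r$, that is, $k^2M(r)\leq M(r)/(4r^2)$. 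Hence $v=0$ for $r>1/(2k)$, and $v\equiv0$ by ODE uniqueness.

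Second, $v'\in\sii(R,\infty)$ needs no Caccioppoli argument. Since $\psi$ lies in the form domain, $\partial_r\psi\in\sii$. Bessel's inequality on each sphere then gives $\int_R^\infty|u_{\ell m}'|^2r^{d-1}\,\der r\leq\|\nabla\psi\|^2$. Finally, $v'=r^{(d-1)/2}u_{\ell m}'+\frac{d-1}{2}\,r^{-1}v$, so $v'$ is square integrable on $(R,\infty)$.
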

\begin{proof}
The argument is elementary if $d=1$, 
so let us assume $d \geq 2$.
We are inspired by the proof of~\cite[Thm.~XIII.56]{RS4}.
Let us consider a solution $\psi \in \sii(\Omega)$ 
of $-\Delta\psi=\lambda\psi$ in $\Omega$ with $\lambda>0$.
By definition, there exists an open ball $B_R(0) \supset \Omega^c$ 
such that $-\Delta\psi=\lambda\psi$
in $\Real^d \setminus B_R(0)$.
In spherical coordinates
$(r,\theta) \in (R,\infty)\times \Sphere^{d-1}$, 
the action of the Laplacian reads 
$$
  -\Delta 
  = - r^{-(d-1)} \partial_r r^{d-1} \partial_r
  - r^{-2} \Delta_\theta
$$
and $\sigma(-\Delta^{\Sphere^{d-1}}) = \{l(l+d-2)\}_{l\in\Nat}$.
Expanding~$\psi$ 
in the eigenfunctions of $-\Delta^{\Sphere^{d-1}}$,
it follows that there must necessarily exist $l\in\Nat$ 
with a non-trivial solution 
$\varphi \in \sii((R,\infty),r^{d-1}\, \der r)$ to
\begin{equation}\label{Rellich}
  - r^{-(d-1)} [ r^{d-1} \varphi'(r) ]' 
  + l(l+d-2) r^{-2} = \lambda \varphi(r)
\end{equation}
for $r \geq R$.
There exist two independent solutions
$$
\begin{aligned}
  \varphi_1(r) &:=
  r^{-(d-2)/2} \, J_{(2l+d-2)/2}\big(\sqrt{\lambda} \, r \big)
  \,,
  \\
  \varphi_2(r) &:=
  r^{-(d-2)/2} \, Y_{(2l+d-2)/2}\big(\sqrt{\lambda} \, r \big)
  \,,
\end{aligned}   
$$
where $J_\nu$ and $Y_\nu$ are Bessel functions of order~$\nu$,
see \cite[Sec.~9]{Abramowitz-Stegun}.
Using the asymptotics \cite[Sec.~9.2]{Abramowitz-Stegun}
$$
\begin{aligned}
  \varphi_1(r) &= 
  \sqrt{\frac{2}{\pi\lambda}} \,
  \frac{\cos\big(\sqrt{\lambda} \, r-\frac{\pi}{4}(2l+d-1)\big)}
  {r^{(d-1)/2}} 
  + O(r^{-{d/2}})
  \,, 
  \\
  \varphi_2(r) &=
  \sqrt{\frac{2}{\pi\lambda}} \,
  \frac{\sin\big(\sqrt{\lambda} \, r-\frac{\pi}{4}(2l+d-1)\big)}
  {r^{(d-1)/2}} 
  + O(r^{-{d/2}})
  \,,
\end{aligned}  
$$
as $r \to \infty$, 
we see that none of the solutions belongs to 
$\sii((R,\infty),r^{d-1}\, \der r)$.
Hence $\psi = 0$ in $\Real^d \setminus B_R(0)$.
By the Harnack inequality 
(see, \eg, \cite[Thm.~8.20]{Gilbarg-Trudinger}),
necessarily $\psi = 0$ in $\Omega$
(alternatively, the same conclusion can be achieved 
by the classical strong maximum principle 
\cite[Sec.~3.2]{Gilbarg-Trudinger}).
\end{proof}

Other quasi-conical domains were considered by Jones in 1953 \cite{Jones_1953}.
The idea of Theorem~\ref{Thm.Rellich} was further developed
by Kato in 1959 \cite{Kato_1959} in more general situations
(including Schr\"o\-ding\-er operators).
More recently, D'Ancona and Racke in 2012
\cite{D'Ancona-Racke_2012} 
excluded embedded eigenvalues of the Dirichlet Laplacian
by imposing a repulsive-type condition on the geometry of the boundary
of tubular-type quasi-conical sets.
Finally, Bonnet-Ben Dhia, Fliss, Hazard and Tonnoir 
in 2016 \cite{Bonnet-Fliss-Hazard-Tonnoir_2016}
excluded the existence of non-zero eigenvalues 
in non-convex conical sectors.

In Theorem~\ref{Thm.Rellich}, 
it is essential that the zero eigenvalue
is excluded from the statement.
\begin{exam}[Exterior of a ball]
Let $d \geq 2$.
To construct an example of an exterior domain 
possessing zero as an embedded eigenvalue, 
note that for $\lambda=0$ 
the equation~\eqref{Rellich} admits two different 
independent solutions 
$$
  \varphi_1(r) := r^{l}
  \qquad\mbox{and}\qquad
  \varphi_2(r) := 
  \begin{cases}
  \log(r) & \mbox{if} \quad l=0 \ \land \ d=2 \,,
  \\
  r^{-(l+d-2)} & \mbox{otherwise} \,.
  \end{cases}
$$
In particular, now
$\varphi_2 \in \sii((R,\infty),r^{d-1}\, \der r)$
whenever $d \geq 5$ 
(or~$l$ is sufficiently large irrespectively 
of the dimension $d \geq 2$).
At the same time, if $l=0$ and $d \geq 5$,
it is straightforward to verify that 
$-\varphi_2'(r) + \alpha\varphi_2(r) = 0$ 
if, and only if, $d-2+\alpha r = 0$.
It follows that~$\psi$ defined by $\psi(x):=\varphi_2(|x|)$ 
belongs to $W^{1,2}(\Omega)$ with 
$\Omega := \Real^d \setminus \overline{B_R(0)}$
and solves the Robin problem 
$$
\left\{
\begin{aligned}
  -\Delta\psi &= 0 
  && \mbox{in} \quad \Omega  
  \,,
  \\
  \frac{\partial \psi}{\partial n}
  + \alpha \psi &= 0
  && \mbox{on} \quad \partial\Omega 
  \,,
\end{aligned}  
\right.
\qquad \mbox{with} \qquad
  \alpha := - \frac{d-2}{R} 
  \,.
$$
Consequently, $0 \in \sigma_\mathrm{p}(-\Delta_\alpha^\Omega)$.
Since $[0,\infty) \subset \sigma(-\Delta_\alpha^\Omega)$
by Theorem~\ref{Thm.conical}, the exterior of 
large-dimensional balls is an example of domains
with zero being an embedded eigenvalue.~%
\end{exam}

The previous example entails the Robin Laplacian
with a negative boundary parameter. 
For non-negative parameters, in fact,
Theorem~\ref{Thm.Rellich} can be improved
by excluding the zero eigenvalue as well. 

\begin{prop}\label{Thm.Rellich.bis}
Let $\Omega \subset \Real^d$ be an exterior domain
and $\alpha \in [0,\infty) \cup \{\infty\}$. 
Then
$$
  \sigma_\mathrm{p}(-\Delta_\alpha^\Omega) 
  \cap [0,\infty)
  = \varnothing \,.
$$
\end{prop}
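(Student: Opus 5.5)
By Theorem~\ref{Thm.Rellich}, positive eigenvalues are already excluded. It therefore remains to show that $0$ is not an eigenvalue when $\alpha \in [0,\infty) \cup \{\infty\}$. Suppose $\psi \in \dom(-\Delta_\alpha^\Omega)$ satisfies $-\Delta_\alpha^\Omega\psi = 0$. Testing the representation relation~\eqref{representation} with $\phi := \psi$ gives
$$
  0 = (\psi,-\Delta_\alpha^\Omega\psi) = \delta_\alpha^\Omega(\psi,\psi)
  = \|\nabla\psi\|^2 + \alpha \int_{\partial\Omega} |\psi|^2 ,
$$
where the boundary term is absent in the Dirichlet case $\alpha=\infty$. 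Since $\alpha \geq 0$, both terms are non-negative. Hence $\nabla\psi = 0$ in $\sii(\Omega)$.

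Next, because $\Omega$ is a domain (open and connected) and $\nabla\psi=0$ in the distributional sense, $\psi$ is almost everywhere equal to a constant $c$ on $\Omega$. On an exterior domain, however, $\Omega \supset \Real^d \setminus B_R(0)$ has infinite Lebesgue measure. A nonzero constant is therefore not in $\sii(\Omega)$, so $c=0$ and $\psi=0$. This contradicts $\psi$ being an eigenfunction, so $0 \notin \sigma_\mathrm{p}(-\Delta_\alpha^\Omega)$.

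Combining the two parts gives $\sigma_\mathrm{p}(-\Delta_\alpha^\Omega)\cap[0,\infty)=\varnothing$. There is no serious obstacle; the only points needing care are routine. The first is that in the Robin case the form identity is legitimate, because~\eqref{Ass.Robin} is implicitly assumed so that $\delta_\alpha^\Omega$ is well defined on $W^{1,2}(\Omega)$. The second is the standard fact that a distribution with zero gradient on a connected open set is constant, which follows for instance by mollification on balls together with a chain-of-balls argument.

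As a remark, the same argument shows that on any connected open set of infinite measure, $0$ is never an eigenvalue for $\alpha\ge 0$. The exterior-domain hypothesis is used only through Theorem~\ref{Thm.Rellich} for $\lambda>0$, and through $|\Omega|=\infty$ for $\lambda=0$.
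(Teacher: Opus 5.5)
Your proposal is correct and follows essentially the paper's argument. Positive eigenvalues are handled by Rellich's theorem. For $\lambda=0$, testing the form with the eigenfunction and using $\alpha \geq 0$ gives $\nabla\psi = 0$, and a constant lying in $\sii(\Omega)$ on an exterior domain must vanish.
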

\begin{proof}
By contradiction, assume that 
$0 \in \sigma_\mathrm{p}(-\Delta_\alpha^\Omega)$.
Let~$\psi$ be the corresponding eigenfunction.
Then 
$$
  0 = \delta_\alpha^\Omega[\psi] 
  = \int_\Omega |\nabla\psi|^2 
  + \alpha \int_{\partial\Omega} |\psi|^2
  \geq \int_\Omega |\nabla\psi|^2
  \,.
$$
Consequently, $\nabla\psi = 0$ in~$\Omega$,
which implies that~$\psi$ is constant 
in the exterior domain~$\Omega$,
which is impossible unless $\psi = 0$ 
identically in~$\Omega$.
\end{proof}

Leaving the realm of exterior domains,
the challenge persists whether 
there exist examples of quasi-conical domains~$\Omega$ 
for which the Robin Laplacian $-\Delta_\alpha^\Omega$
with non-negative~$\alpha$ possesses embedded eigenvalues.
This long-standing question was given a positive answer only recently
for the Dirichlet Laplacian by the present lecturer~\cite{KL4}.

\begin{figure}[h!] 
\begin{center}
\includegraphics[width=0.98\textwidth]{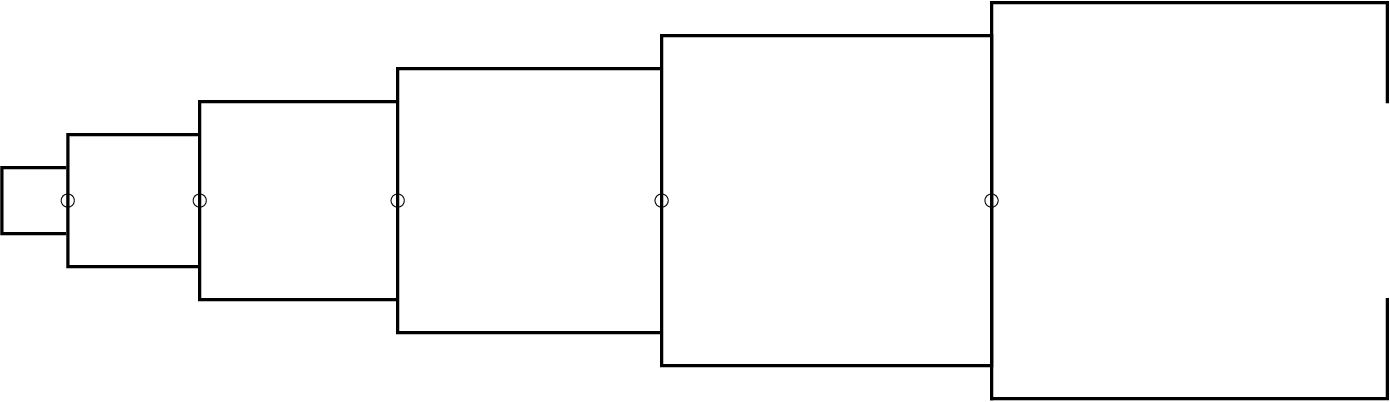} 
\caption{A quasi-conical domain with embedded eigenvalues
due to~\cite{KL4}.}\label{Fig.embedded}
\end{center}
\end{figure}
\begin{theo}[Krej\v{c}i\v{r}\'ik \& Lotoreichik 2024~\cite{KL4}]\label{Thm.embedded}
Given any positive number~$\lambda$,
there exists a quasi-conical domain~$\Omega$ such that 
$$
  \lambda \in \sigma_\mathrm{p}(-\Delta_D^\Omega) 
  \,.
$$
\end{theo}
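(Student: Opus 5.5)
We construct $\Omega$ explicitly so that an eigenfunction is built by hand, following the spirit of the figure: a quasi-conical domain made of a sequence of disjoint bounded ``cells'' of growing size, joined by thin passages. The simplest mechanism exploits the disconnected-set example and then repairs connectedness. Fix $\lambda>0$ and choose a bounded domain, say a square $Q$, such that $\lambda$ is a Dirichlet eigenvalue of $-\Delta_D^{Q}$; by scaling this is always possible, since $\lambda_1(sQ)=s^{-2}\lambda_1(Q)$. Let $\psi_0$ be a corresponding eigenfunction. The task is to glue $Q$ to a quasi-conical domain without destroying the eigenfunction.

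The key idea is to attach the rest of the domain along the \emph{nodal set} of an eigenfunction. On the square $Q=(0,a)^2$, take a higher eigenfunction, e.g.\ $\psi_0(x)=\sin(\pi x_1/b)\sin(\pi x_2/b)$ with $a=mb$ and $2\pi^2/b^2=\lambda$; its nodal lines are the grid lines $x_j\in b\Int$. We take the domain $\Omega$ to be the union of $Q$, a huge quasi-conical region $U$ (for instance a half-plane or a sequence of growing balls connected by corridors), and a connecting corridor. We design this so that the interface between $Q$ and the rest lies inside the zero set of $\psi_0$ except for small gaps, or, more cleanly, so that the corridor is a strip $\{x_2\in(b\,k,b\,(k+1))\}$ in which $\psi_0$ extends by odd reflection. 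A cleaner variant is to take $\Omega$ as the complement in $\Real^2$ of a closed set $\Gamma$ consisting of the nodal lines of $\psi_0$ outside a small opening, with $\psi_0$ extended by zero outside $Q$. Then $\psi:=\psi_0\,1_Q$ has to satisfy $\psi\in W_0^{1,2}(\Omega)$ and $-\Delta\psi=\lambda\psi$ in the distributional sense on $\Omega$.

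The steps are as follows.
\begin{enumerate}
\item Choose $\Omega=\Real^2\setminus\Gamma$, where $\Gamma$ is a closed piece of the boundary of a nodal cell $C=(0,b)^2$ of $\psi_0$. We require that $\Gamma$ be not the whole of $\partial C$, so that $\Omega$ is connected; we leave an opening through a corner point, or deal with a genuinely open gap as below. Then $\Omega$ is connected, unbounded and contains arbitrarily large balls, so it is quasi-conical.
\item Verify $\psi=\psi_0 1_C\in W_0^{1,2}(\Omega)$. This holds because $\psi_0$ vanishes on $\partial C$.
\item Verify the eigen-equation weakly: for $\phi\in C_0^\infty(\Omega)$, Green's formula gives
\[
\int_C\nabla\bar\phi\cdot\nabla\psi_0=\lambda\int_C\bar\phi\psi_0+\int_{\partial C}\bar\phi\,\partial_n\psi_0 .
\]
The boundary term vanishes on $\Gamma$, where $\phi=0$, but \emph{not} across the opening, where the normal derivative of $\psi_0$ is non-zero.
\end{enumerate}

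The last point is the main obstacle. Cutting a hole in the boundary of a nodal domain generically destroys the eigenvalue. To overcome it, I would make the opening a set of zero capacity: in $d=2$, a single point (a corner of $C$ touching the outside). Removing a point does not change $W_0^{1,2}$, so $\Omega=\Real^2\setminus(\partial C\setminus\{p\})$ is connected, and $C_0^\infty(\Omega)$ functions are dense in those vanishing near $p$. Hence the boundary term is controlled by a capacity cut-off argument near $p$, using that $\partial_n\psi_0$ is bounded there (in fact it vanishes at a corner). If one insists on a genuine open domain with a Lipschitz-like connection, I would instead use a symmetric construction: reflect across the gap so that the odd extension of $\psi_0$ across the corridor, cut off, produces an eigenfunction. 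This is where the delicate part of \cite{KL4} presumably lies, and where I expect most of the work.
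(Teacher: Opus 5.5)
There is a genuine gap, and it lies exactly at the step you flagged as the main obstacle: the proposed remedy fails. A zero-capacity opening cannot connect anything. The set $\Real^2\setminus(\partial C\setminus\{p\})=(\Real^2\setminus\partial C)\cup\{p\}$ is not open, because every neighbourhood of $p$ meets the two sides of $\partial C$ issuing from $p$. Its interior $\Real^2\setminus\partial C$ is disconnected.

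More generally, an open connected set containing $C$ and meeting $\Real^2\setminus\overline{C}$ must contain a point of $\partial C$ together with a ball around it. So any opening contains a relatively open arc of $\partial C$, which has positive capacity, and there $\partial_n\psi_0\neq0$, so your boundary term survives. The capacity fact you invoke in fact works against you. A junction of zero capacity is invisible to $W_0^{1,2}$, so your operator would just be the Dirichlet Laplacian of the disconnected set $C\cup(\Real^2\setminus\overline{C})$. That is precisely the trivial example which the statement rules out by requiring a domain.

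More fundamentally, the ansatz $\psi=\psi_0\,1_C$ cannot work on any domain. Eigenfunctions are real-analytic inside $\Omega$, so by unique continuation an eigenfunction vanishing on the non-empty open set $\Omega\setminus\overline{C}$ of a connected $\Omega$ vanishes identically. The eigenfunction must spread into the unbounded part. The whole difficulty is to keep it square-integrable there while $\lambda$ sits inside the continuum $[0,\infty)$. Your fallback does not address this: odd reflection across nodal lines merely reproduces the non-decaying product of sines, and multiplying by a cut-off destroys the equation $-\Delta\psi=\lambda\psi$.

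The missing ingredient is a perturbative limiting argument, which is what the paper (after \cite{KL4}) uses. Start from a disconnected union of adjacent cubes of increasing size. Its Dirichlet spectrum fills $[0,\infty)$ and contains the lowest eigenvalue of the smallest cube as an embedded eigenvalue. Then drill small windows between consecutive cubes, one at a time, with sizes shrinking fast enough.

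At each finite stage the connected piece built so far is bounded, so the eigenvalue is isolated. A sufficiently small window (small capacity: this is where your intuition is correct) moves the eigenvalue and eigenfunction only slightly and keeps the eigenfunction concentrated in the smallest cube. Passing to the limit then yields a genuine $\sii$ eigenfunction on the resulting connected quasi-conical set.

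In this scheme the eigenvalue is not fixed in advance, since it drifts during the construction. The prescribed $\lambda$ is reached only at the end by rescaling the domain, which preserves quasi-conicity.
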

\begin{proof}
The idea of the construction of the example of~\cite{KL4} 
is as follows,
see Figure~\ref{Fig.embedded}.

We begin by considering a union~$\Omega'$ 
of disjoint adjacent enlarging cubes,
for which 
$
  \sigma(-\Delta_D^{\Omega'})
  = \overline{\sigma_\mathrm{p}(-\Delta_D^{\Omega'})} 
  = [0,\infty)
$. 
Here the equality 
$\sigma(-\Delta_D^{\Omega'}) = [0,\infty)$
follows by the fact that~$\Omega'$ is quasi-conical
(see Corollary~\ref{Corol.positivity}),
while 
$
  \sigma(-\Delta_D^{\Omega'})
  = \overline{\sigma_\mathrm{p}(-\Delta_D^{\Omega'})} 
$
is a consequence of the property that 
the spectrum of the Dirichlet Laplacian in cubes 
(or, more generally, bounded sets)
is exhausted by eigenvalues only
(see Theorem~\ref{Thm.bounded} below).

Taking the lowest eigenvalue in the smallest cube 
and drilling a small hole into the neighbouring larger cube,
the eigenvalue is perturbed into an eigenvalue 
of the two connected cubes, while its eigenfunction 
is mostly located in the smallest cube.
Continuing this procedure of drilling smaller and smaller holes 
between adjacent cubes iteratively till infinity,
it is possible to ensure that the eigenvalue remains
an eigenvalue in the obtained connected set~$\Omega$.

The extension to any positive number~$\lambda$ is achieved by scaling.  
\end{proof}

As a matter of fact, in~\cite{KL4}, we also argue that 
the domain~$\Omega$ can be constructed in such a way that 
the absolutely continuous spectrum of the Dirichlet Laplacian is empty:
$$
  \sigma_\mathrm{ac}(-\Delta_D^\Omega) = \varnothing
  \,.
$$
We leave as an open problem whether the singularly continuous 
spectrum is empty as well. 

\begin{OProblem}
In the example~$\Omega$ of~\cite{KL4} (see Figure~\ref{Fig.embedded}),
is it possible to select the sizes of the windows so small that
$$
  \sigma_\mathrm{sc}(-\Delta_D^\Omega) = \varnothing
  \,?
$$
\end{OProblem}

In the case of affirmative answer,
we would get the Dirichlet Laplacian on 
a connected quasi-conical open set having only purely point
spectrum, which densely fills the non-negative semi-axis.

\section{The Hardy inequality}
%
According to Corollary~\ref{Corol.positivity},
the spectrum of the Dirichlet Laplacian $-\Delta_D^\Omega$
equals $[0,\infty)$ for any quasi-conical set~$\Omega$, 
irrespectively of the geometry of~$\Omega$.
This apparently boring nature of quasi-conical geometries 
is just illusive, as we have seen in the preceding subsection
devoted to embedded eigenvalues.
Now we focus on other fine spectral properties,
which are non-trivial even in the case 
the whole Euclidean space $\Omega=\Real^d$
(for which the role of geometry is played by the dimension~$d$).

\subsection{Subcriticality of high dimensions} 
The following theorem is one of the most important results 
established in this course.
(By recalling the definition of the Sobolev space $W_0^{1,2}(\Real^d)$
given in Section~\ref{Sec.Sobolev}, the set of test functions 
in~\eqref{Hardy1} can be replaced by $C_0^\infty(\Real^d)$.)
\begin{theo}[Hardy inequality]\label{Thm.Hardy1}
Let $d \geq 3$. Then
\begin{equation}\label{Hardy1}
  \forall \psi \in W_0^{1,2}(\Real^d)
  \,, \qquad
  \int_{\Real^d} |\nabla\psi(x)|^2 \, \der x
  \geq \frac{(d-2)^2}{4} \int_{\Real^d} \frac{|\psi(x)|^2}{|x|^2} \, \der x
  \,.
\end{equation}
\end{theo}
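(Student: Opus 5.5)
By density of $C_0^\infty(\Real^d)$ in $W_0^{1,2}(\Real^d)$, as noted just before the statement, it suffices to prove \eqref{Hardy1} for $\psi \in C_0^\infty(\Real^d)$. The extension to all of $W_0^{1,2}(\Real^d)$ then goes as follows. Take $\psi_n \in C_0^\infty(\Real^d)$ with $\psi_n \to \psi$ in the norm $(\|\cdot\|^2+\|\nabla\cdot\|^2)^{1/2}$ and, after passing to a subsequence, almost everywhere. The left-hand sides converge, and Fatou's lemma on the right-hand side gives the inequality in the limit.

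For smooth compactly supported $\psi$, I will use the ``completing the square'' (ground state substitution) argument with the vector field
$$
  F(x) := \frac{d-2}{2}\,\frac{x}{|x|^2}\,.
$$
Its key properties are $|F|^2 = \frac{(d-2)^2}{4}|x|^{-2}$ and, away from the origin, $\divergence F = \frac{d-2}{2}\,\frac{d-2}{|x|^2}$. Indeed, $\divergence(x/|x|^2) = d/|x|^2 - 2/|x|^2 = (d-2)/|x|^2$. Starting from the pointwise inequality $0 \le |\nabla\psi + F\psi|^2$ and expanding, we get
$$
  |\nabla\psi|^2 \geq -|F|^2|\psi|^2 - F\cdot\nabla|\psi|^2 ,
$$
using $2\,\mathrm{Re}(\overline{\psi}\,\nabla\psi) = \nabla|\psi|^2$. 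Integrating by parts, $-\int F\cdot\nabla|\psi|^2 = \int (\divergence F)\,|\psi|^2$, so the right-hand side integrates to $\big(\frac{(d-2)^2}{2}-\frac{(d-2)^2}{4}\big)\int |\psi|^2/|x|^2 = \frac{(d-2)^2}{4}\int|\psi|^2/|x|^2$, which is \eqref{Hardy1}.

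The one delicate step is the integration by parts, because $F$ is singular at the origin. I would do it on $\Real^d\setminus \overline{B_\eps(0)}$ and let $\eps\to0$. The boundary term on $\partial B_\eps$ is bounded by $\sup|\psi|^2\cdot \frac{d-2}{2}\,\eps^{-1}\cdot |\Sphere^{d-1}|\,\eps^{d-1} = O(\eps^{d-2})$, which tends to $0$ precisely because $d\ge3$; this is where the dimensional restriction enters. We also need $|x|^{-2}$ to be locally integrable, which holds since $d\ge 3>2$, so all integrals are finite and the limits are justified by dominated convergence.

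Alternatively, one could argue in spherical coordinates with a one-dimensional weighted Hardy inequality along rays. However, the vector-field argument above is shorter and also suggests the optimality of the constant via the formal ground state $|x|^{-(d-2)/2}$.
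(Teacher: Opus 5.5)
Your proof is correct and is essentially the paper's argument: completing the square with the field $x/|x|^2$, where you plug in the optimal value $c=-(d-2)/2$ directly and the paper keeps $c$ free and optimises the parabola afterwards. The only difference is how the singular integration by parts is justified. You work with $\psi\in C_0^\infty(\Real^d)$, excise $B_\eps(0)$ and show the boundary term is $O(\eps^{d-2})$, then close with an explicit Fatou argument. The paper instead restricts to $C_0^\infty(\Real^d\setminus\{0\})$ and invokes the density of that space in $W_0^{1,2}(\Real^d)$ for $d\geq 2$.
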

\begin{proof}
For any $c\in\Real$, we have
\begin{align*}
  0 &\leq
  \int_{\Real^d} 
  \left|\nabla\psi(x) - c \, \frac{x}{|x|^2} \, \psi(x)\right|^2 \der x
  \\
  &= \int_{\Real^d} |\nabla\psi(x)|^2 \, \der x
  + c^2 \int_{\Real^d} \frac{|\psi(x)|^2}{|x|^2} \, \der x
  - c \int_{\Real^d} \frac{x}{|x|^2} \cdot \nabla|\psi|^2(x) \, \der x
  \\
  &= \int_{\Real^d} |\nabla\psi(x)|^2 \, \der x
  + c^2 \int_{\Real^d} \frac{|\psi(x)|^2}{|x|^2} \, \der x
  + c \int_{\Real^d} \divergence\!\left(\frac{x}{|x|^2}\right) 
  \, |\psi(x)|^2 \, \der x
  \\
  &= \int_{\Real^d} |\nabla\psi(x)|^2 \, \der x
  + [c^2 + c \, (d-2)] \int_{\Real^d} \frac{|\psi(x)|^2}{|x|^2} \, \der x
  \,,
\end{align*}
where the second equality employs an integration by parts
(or, more precisely, the divergence theorem).
Consequently,
\begin{equation}\label{parabola}
  \int_{\Real^d} |\nabla\psi(x)|^2 \, \der x
  \geq -[c^2 + c (d-2)] 
  \int_{\Real^d} \frac{|\psi(x)|^2}{|x|^2} \, \der x
\end{equation}
for every $c\in\Real$.
Optimising with respect to~$c$ 
(the parabola achieves its (positive) maximum for $c=-(d-2)/2$,
see Figure~\ref{Fig.parabola}),
we arrive at the desired inequality with the right constant. 
\end{proof}
\begin{figure}[h!] 
\begin{center}
\includegraphics[width=0.6\textwidth]{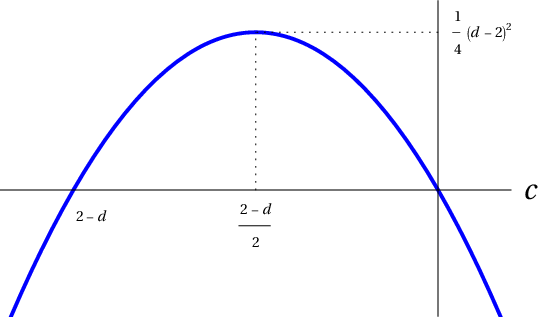} 
\caption{The maximum of the parabola corresponds
to the best constant in~\eqref{parabola}.}\label{Fig.parabola}
\end{center}
\end{figure}
\begin{rema}[Points are negligible except for $d=1$]\label{Rem.Hardy}
Where did we use the requirement $d \geq 3$ in the proof?
The inequality~\eqref{Hardy1} is trivial if $d=2$
(interpreting the right-hand side of~\eqref{Hardy1} as being zero),
so we should comment on the case $d=1$.
The point is that the vector field $x \mapsto x/|x|^2$ is too singular
in dimension one, in order to justify the usage of the divergence theorem.
More specifically, one customarily justifies the manipulations above 
by using functions $\psi \in C_0^\infty(\Real^d \setminus \{0\})$
instead of $\psi \in W_0^{1,2}(\Real^d)$ 
and employs the density result 
of $C_0^\infty(\Real^d \setminus \{0\})$ in $W_0^{1,2}(\Real^d)$.
However, this density holds if, and only if, $d \geq 2$
(see, \eg, \cite[Corol.~VIII.6.4]{Edmunds-Evans}). 

In other words, $W_0^{1,2}(\Real^d) = W_0^{1,2}(\Real^d \setminus \{0\})$
if, and only if, $d \geq 2$. 
That is, points are ``negligible'' for the Sobolev space $W_0^{1,2}$.
More generally, sets of \emph{capacity} zero 
are negligible for the Sobolev space $W_0^{1,2}$.
This statement is an analogue of the fact that
sets of measure zero are negligible for the Lebesgue space $\sii$.
Physically, points cannot carry charge if $d \geq 2$.
\end{rema}

The Hardy inequality~\eqref{Hardy1} is related to spectral properties
of the Dirichlet Laplacian in the following way.
The left-hand side of~\eqref{Hardy1} is just the quadratic form 
$\delta_D^\Omega[\psi] := \delta_D^\Omega(\psi,\psi)$
of the Dirichlet Laplacian in~$\Real^d$
with $\psi \in \dom(\delta_D^{\Real^d})$.
The right-hand side of~\eqref{Hardy1} is the quadratic form
of the operator of multiplication by the function
\begin{equation}\label{rho}
  \rho(x) := \frac{(d-2)^2}{4} \frac{1}{|x|^2}
  \,.	
\end{equation}
Hence, we can write
\begin{equation}\label{Hardy.bis}
  -\Delta_D^{\Real^d} \geq \rho
\end{equation}
in the sense of quadratic forms in $\sii(\Real^d)$.
We also have $-\Delta_\alpha^{\Real^d} \geq \rho$
for every $\alpha\in\Real$,
just because $W_0^{1,2}(\Real^d)=W^{1,2}(\Real^d)$
(there is no boundary for~$\Real^d$).

By Corollary~\ref{Corol.positivity}, 
the spectrum of $-\Delta_D^{\Real^d}$ starts by zero,
so it is impossible that~\eqref{Hardy.bis} holds with~$\rho$ being
replaced by a positive constant.
Anyway, if $d \geq 3$,
inequality~\eqref{Hardy.bis} with a positive function~$\rho$
vanishing at infinity is admissible.
In summary, although the spectrum of the Dirichlet Laplacian 
$-\Delta_D^{\Real^d}$ starts by zero, there is a ``sort of repulsivity''
at the zero energy if $d \geq 3$.
We say that $-\Delta_D^{\Real^d}$ is \emph{subcritical}
(or \emph{satisfies a Hardy inequality}), 
meaning that there exists a positive function~$\rho$ 
satisfying~\eqref{Hardy.bis}.  

\subsection{General notion of subcriticality} 
The notion of subcriticality can be generalised to abstract operators,
at least if the Hilbert space is a function space.

First of all, let us recall that there is a natural order relation
between operators by means of the corresponding quadratic forms.
The following definition takes domains into account,
which is necessary for unbounded operators. 
This notion was already used in~\eqref{Hardy.bis}.
\begin{defi}[Operator inequality]\label{Def.operator.ineq}
Let $H_-$, $H_+$ be two self-ad\-joint operators in~$\Hilbert$
that are bounded from below,
and let $h_-$, $h_+$ be the associated sesquilinear forms.
\begin{center}
$H_- \leq H_+$ 
\quad $:\Longleftrightarrow$ \quad
$
\begin{aligned}
(i) \ & \ \dom h_-  \supset \dom h_+ 
  \,,
  \\
(ii) \ & \ \forall \psi \in \dom h_+, \quad
  h_-[\psi] \leq h_+[\psi]
  \,.
\end{aligned}
$
\end{center}
We say that the inequality $H_- \leq H_+$
holds \emph{in the sense of quadratic forms}.
\end{defi}

Let~$H$ be any non-negative self-adjoint operator in $\sii(\Omega)$   
and let~$h$ be its associated sesquilinear form. 
Given any positive function $\rho \in L_\mathrm{loc}^1(\Omega)$,
we denote by~$M_\rho$ the operator of multiplication in $\sii(\Omega)$  
generated by~$\rho$.
With an abuse of notation, we often write~$\rho$ instead of~$M_\rho$.
The operator~$M_\rho$ is associated with the quadratic form
$$
\begin{aligned}
  m_\rho[\psi] &:= \int_\Omega \rho(x) \, |\psi(x)|^2 \, \der x
  \,, \\
  \dom(m_\rho) &:= 
  \left\{\psi \in \sii(\Omega): 
  \int_\Omega \rho(x) \, |\psi(x)|^2 \, \der x < \infty \right\}
  .
\end{aligned}  
$$
We say that~$H$ is \emph{subcritical} if $H \geq \rho$.
The inequality $H \geq \rho$ is called 
the \emph{(generalised) Hardy inequality},
so we alternatively say that~$H$ 
\emph{satisfies a (generalised) Hardy inequality}.

If the spectrum of~$H$ starts by zero 
but there is no positive $\rho\in L_\mathrm{loc}^1(\Omega)$ 
such that $H \geq \rho$ (\ie~$H$ is not subcritical),
we say that~$H$ is \emph{critical}.
If the spectrum of~$H$ starts below zero,
we say that~$H$ is \emph{supercritical}. 

Hence, if $d \geq 3$,
the Dirichlet Laplacian $-\Delta_D^{\Real^d}$ is subcritical
and satisfies the Hardy inequality~\eqref{Hardy1}.
The Hardy inequality finds applications
in many areas of mathematics and physics.
Here we just mention its role in the quantum stability of matter.

\subsection{Stability of matter}
There is a strong experimental evidence that our world
is composed of atoms and that an atom looks like a microscopic
planetary system 
(\cf~Ruth\-er\-ford's gold-foil experiment with $\alpha$ particles),
see Figure~\ref{Fig.atom}.
There is a heavy, positively charged nucleus,
made of protons and neutrons, which is surrounded by light,
negatively charged electrons.
Although the proton is much (about 1800 times) heavier than the electron,
the gravitational force is negligible on the microscopic level
and it is rather the electrostatic, Coulomb force that bound the electrons
to orbit around the nucleus.

\begin{figure}[h!] 
\begin{center}
\includegraphics[width=0.4\textwidth]{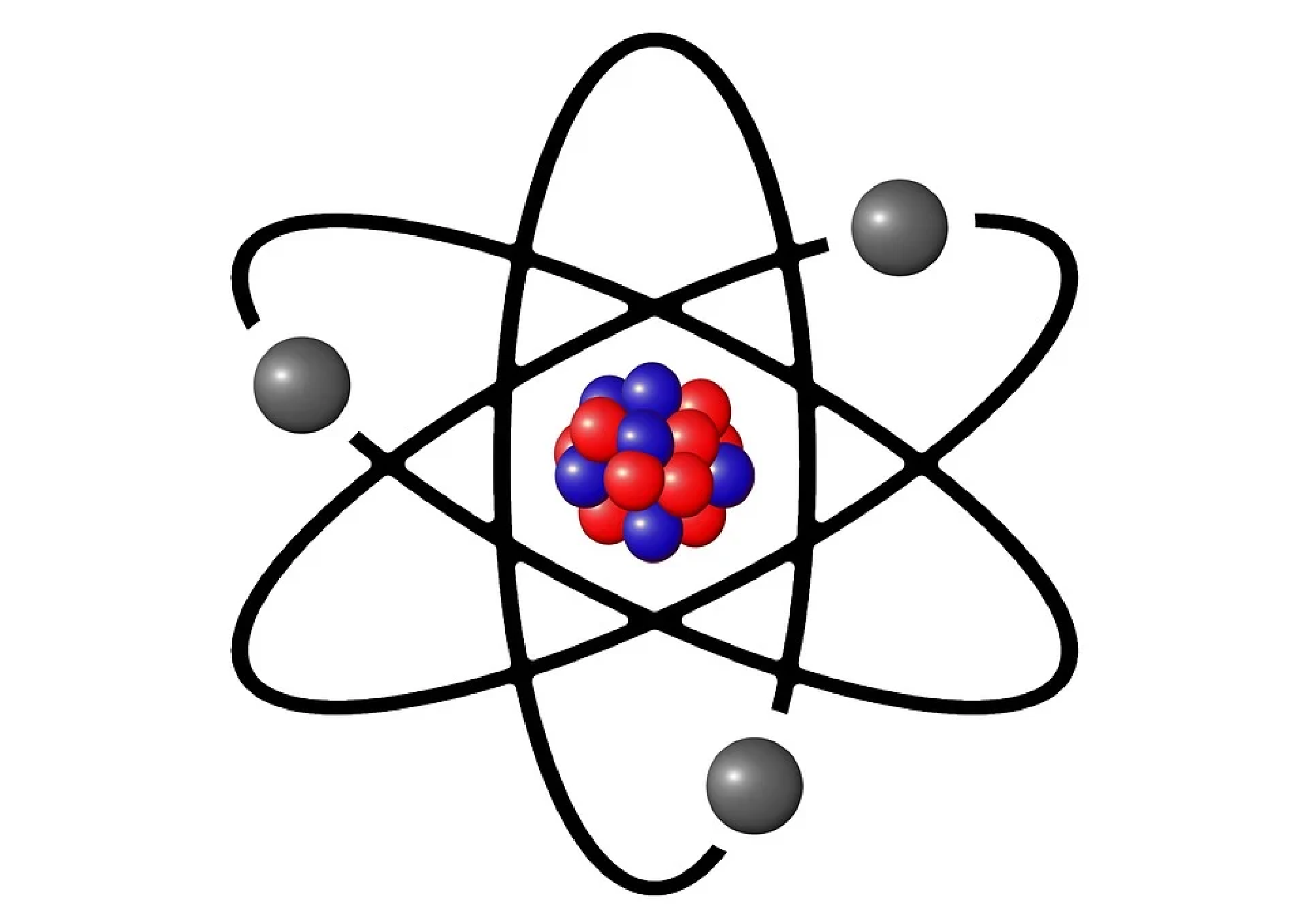} 
\caption{Rutherford's planetary model of the atom.}\label{Fig.atom}
\end{center}
\end{figure}

Now, the following classical paradox arises:
According to the laws of classical electrodynamics,
an accelerated charged particle emits electromagnetic radiation
and loses in this way its total energy.
Consequently, the electron particle would move on a spiral trajectory
and finally \textbf{collapse} on the nucleus.
The atoms should not be stable.
(For instance, the lifetime of a hydrogen atom calculated according
to the classical electrodynamics is less than $1$~nanosecond!)
 
Let us look at the simplest chemical element - hydrogen -
and argue that it cannot be classically stable. 
In classical physics,
the hydrogen atom is described by the Hamilton function
\begin{equation}\label{Hydrogen.clas}
  H(x,p) := \frac{|p|^2}{2m} - \frac{e^2}{|x|}
\end{equation}
in the phase space $\Real^3\times\Real^3 \ni (x,p)$.
Here~$x$ and~$p$ is the position and momentum, respectively, 
$m$~is the reduced mass of the electron-proton couple  
(\ie~$m^{-1}=m_e^{-1}+m_p^{-1}$)
and $e \approx 1.6 \times 10^{-19} \, \mathrm{C}$ is the elementary charge.
The first term represents the kinetic energy of the system,
while the second term is the Coulomb electrostatic potential. 
The instability of the atom can be then mathematically understood
through the unboundedness of the total energy from below, \ie,
\begin{equation}\label{collapse}
  \inf_{(x,p) \in \Real^3\times\Real^3} H(x,p)  = - \infty 
  \,,
\end{equation}
which is exactly caused by making the distance~$|x|$ between 
the electron and the nucleus infinitesimal.

At the same time, the measured spectra of the radiation
absorbed or emitted by an atom consists of discrete frequencies.
This suggests that only a discrete set of electron orbits is allowed.
Contrary to the laws of classical physics,
according to which the energy of a planet
varies continuously with the dimension of the orbit,
which can be arbitrary.

There are other important experimental facts which
cannot be explained on the level of classical physics,
like the corpuscular behaviour of light (photoelectric effect),
the particle-wave duality of matter (Bragg's experiment),
the black-body radiation, \etc.

These strong disagreements between experimental data
and foundations of classical mechanics
lead to a crisis of physics in the beginning of the last century.
Quantum mechanics was invented on the basis of very practical
physical reasons to explain the paradoxes.

In quantum mechanics, the momentum~$p$ is represented
by the differential operator 
\begin{equation}\label{momentum}
  p := -i \hbar \nabla
  \,, \qquad
  \dom p := W^{1,2}(\Real^3;\Com^3)
  \,,
\end{equation}
in the auxiliary Hilbert space $\sii(\Real^3;\Com^3)$,
where~$\hbar \approx 10^{-34} \mathrm{J\,s}$ is the reduced Planck constant.
The position~$x$ is just an operator of multiplication.
The square $|p|^2 := p^* p = -\hbar^2\Delta$ is therefore a multiple of
the (Dirichlet) Laplacian in the scalar Hilbert space $\sii(\Real^3)$.
The hydrogen atom is consequently described by the Hamilton operator
$$
  H := -\frac{\hbar^2}{2m} \Delta - \frac{e^2}{|x|}
$$
acting in the Hilbert space $\sii(\Real^3)$.

A quantum-mechanical analogue of the lowest energy 
of the classical system~\eqref{collapse} is the variational quantity
$$
  E_1 := \inf_{\stackrel[\|\psi\|=1]{}{\psi \in \dom(H)}}(\psi,H\psi) 
  \,.	
$$
We claim that $E_1 > -\infty$, 
which implies the \textbf{stability}
of the hydrogen atom in the quantum setting.
Indeed, for every $\psi \in \dom(H)$, one has
\begin{equation}\label{stability}
\begin{aligned}
  (\psi,H\psi) 
  &= \frac{\hbar^2}{2m} \int_{\Real^3} |\nabla\psi(x)|^2 \, \der x 
  - e^2 \int_{\Real^3} \frac{|\psi(x)|^2}{|x|} \, \der x  
  \\
  &\geq \frac{\hbar^2}{8m} \int_{\Real^3} \frac{|\psi(x)|^2}{|x|^2} \, \der x 
  - e^2 \int_{\Real^3} \frac{|\psi(x)|^2}{|x|} \, \der x 
  \\
  &\geq \min_{r>0} 
  \left( \frac{\hbar^2}{8m r^2} - \frac{e^2}{r} \right)
  = -2\frac{me^4}{\hbar^2}
  \,,
\end{aligned}
\end{equation}
where the first estimate is the Hardy inequality~\eqref{Hardy1} for $d=3$,
the second inequality employs the normalisation $\|\psi\|=1$
and the last equality is elementary (see Figure~\ref{Fig.stability}).
We therefore get the bound
$$
  E_1 \geq -2\frac{me^4}{\hbar^2}
  > -\infty
  \,.
$$

It is remarkable that this estimate is not so far 
from the actual value
$$
  E_1 = -\frac{1}{2}\,\frac{me^4}{\hbar^2}
  \,,
$$
which can be obtained by solving the spectral problem 
for the hydrogen atom explicitly in terms of special functions
(see, \eg, \cite[Sec.~4.2]{Griffiths}).

\begin{figure}[h!] 
\begin{center}
\includegraphics[width=0.55\textwidth]{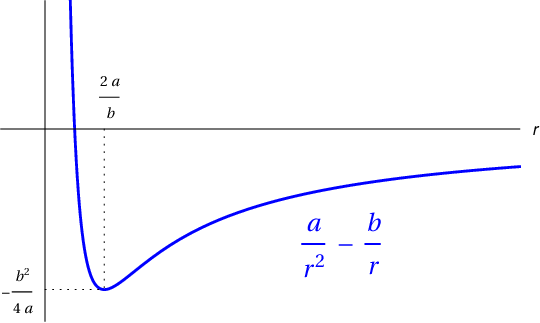} 
\caption{The function appearing in the estimate~\eqref{stability}
with positive constants $a,b$.}\label{Fig.stability}
\end{center}
\end{figure}
\begin{rema}[Uncertainty principle]
Probably the deepest reason behind the stability of atoms
in quantum mechanics is the non-commutative feature of the theory.
It is reflected in the \emph{Heisenberg uncertainty relations}
implying an inevitable limitations for the preparation of states
with sharper and sharper values of both position and momentum.
From this point of view, the Hardy inequality of Theorem~\ref{Thm.Hardy1}
can be interpreted as a sort of the \emph{uncertainty principle}.
Indeed, the boundedness from below of
the hydrogen Hamiltonian~$H$ is its consequence
and $E_1 > -\infty$ is equivalent to
$$
  \forall \psi \in W_0^{1,2}(\Real^d)
  \,, \qquad
  \frac{\hbar^2}{2m}  \int_{\Real^3} |\nabla\psi(x)|^2 \, \der x
  - e^2 \int_{\Real^3} \frac{|\psi(x)|^2}{|x|} \, \der x
  > -\infty
  \,.
$$
The classical counterpart of the energy form is unbounded from below
because of the singularity of the potential energy
at the nucleus position $x=0$.
However, a quantum electron is not allowed to reach the nucleus,
because a strict localisation close to the nucleus
would make the kinetic energy very large.
\end{rema}

\subsection{Criticality of low dimensions}
It turns out that the Dirichlet Laplacian $-\Delta_D^{\Real^d}$
is critical in dimensions $d=1,2$.
In other words, there is no Hardy inequality,
that is, no inequality of the type~\eqref{Hardy.bis}
with a positive function~$\rho$ is admissible.  
\begin{theo}\label{Thm.virtual}
Let $d = 1,2$. 
For any positive function $\rho \in L_\mathrm{loc}^1(\Real^d)$,
one has
\begin{equation}\label{virtual}
  \inf_{\stackrel[\psi\not=0]{}{\psi \in C_0^\infty(\Real^d)}}
  \left(
  \int_{\Real^d} |\nabla\psi(x)|^2 \, \der x
  - \int_{\Real^d} \rho(x) \, |\psi(x)|^2 \, \der x
  \right)
  < 0
  \,.
\end{equation}
\end{theo}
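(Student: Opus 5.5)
The strategy is to build a sequence of test functions $\psi_n \in C_0^\infty(\Real^d)$ (or Lipschitz, compactly supported functions, which suffice by density in $W^{1,2}(\Real^d)$ and mollification) that approximate the constant function $1$. I want the kinetic energy $\int|\nabla\psi_n|^2$ to tend to zero, while the potential term $\int\rho\,|\psi_n|^2$ stays bounded below by a fixed positive number. The constant function $1$ is a generalised ground state (a ``virtual bound state'' at zero energy) of $-\Delta$ when $d=1,2$, which is what makes this possible. Since $\rho>0$ and $\rho\in L^1_{\mathrm{loc}}$, we have $c:=\int_{B_1}\rho>0$. If every $\psi_n$ equals $1$ on $B_1$, then $\int\rho|\psi_n|^2\geq c$. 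Once $\|\nabla\psi_n\|^2<c$, the functional in~\eqref{virtual} is negative for that $\psi_n$, which proves the claim.

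For $d=1$ I will take the piecewise linear cut-off $\psi_n(x)=1$ for $|x|\leq 1$, $\psi_n(x)=(n-|x|)/(n-1)$ for $1\leq|x|\leq n$, and $\psi_n(x)=0$ otherwise. Then $\|\psi_n'\|^2=2/(n-1)\to0$. This is the same scaling phenomenon as in the proof of Theorem~\ref{Thm.conical}: in one dimension, stretching the support decreases the gradient energy.

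For $d=2$ the linear cut-off only gives bounded gradient energy, since plain scaling is energy-neutral in two dimensions. Here lies the main obstacle, and I resolve it with the logarithmic cut-off
$$
  \psi_n(x):=
  \begin{cases}
  1 & \mbox{if} \quad |x|\leq n\,, \\
  2-\dfrac{\log|x|}{\log n} & \mbox{if} \quad n\leq|x|\leq n^2\,, \\
  0 & \mbox{if} \quad |x|\geq n^2\,.
  \end{cases}
$$
In polar coordinates,
$$
  \|\nabla\psi_n\|^2=2\pi\int_n^{n^2}\frac{1}{r^2\log^2 n}\,r\,\der r=\frac{2\pi}{\log n}\xrightarrow[n\to\infty]{}0\,.
$$
This function equals $1$ on $B_1$ and is Lipschitz with compact support, hence belongs to $W_0^{1,2}(\Real^2)$.

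Finally, I pass to genuine $C_0^\infty$ functions. Mollifying $\psi_n$ gives functions that converge in $W^{1,2}$ and, because they are uniformly bounded with supports in a fixed compact set, also converge in the weighted $L^2(\rho)$ sense by dominated convergence, using $\rho\in L^1_{\mathrm{loc}}$. So the strict negativity survives the approximation. The only delicate point is therefore the choice of the logarithmic profile in $d=2$. This is also exactly where $d\geq3$ fails: there, cut-offs cannot have vanishing energy, consistently with Theorem~\ref{Thm.Hardy1}.
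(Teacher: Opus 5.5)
Your proof is correct and follows essentially the paper's argument: you approximate the constant function $1$ by cut-offs whose gradient energy tends to zero, using a linear stretch for $d=1$ and a logarithmic profile between radii $n$ and $n^2$ for $d=2$, which gives energy $2\pi/\log n$. The differences are cosmetic. You use Lipschitz profiles and mollify afterwards, whereas the paper builds smooth profiles with a smooth $\eta$. You also bound the potential term below by $\int_{B_1}\rho>0$ instead of passing to the limit $\int_{\Real^d}\rho$ via monotone convergence, which if anything is slightly cleaner since it needs no monotonicity of $|\psi_n|^2$ in $n$.
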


Before proving the theorem, 
let us first comment on why the result~\eqref{virtual}
contradicts the validity of the Hardy inequality~\eqref{Hardy.bis}. 
The latter precisely means that if $\psi \in W_0^{1,2}(\Real^d)$,
then $\rho^{1/2}\psi \in \sii(\Real^d)$ 
and the quadratic form 
\begin{equation}\label{formQ}
  Q[\psi] :=
  \int_{\Real^d} |\nabla\psi(x)|^2 \, \der x
  - \int_{\Real^d} \rho(x) \, |\psi(x)|^2 \, \der x
\end{equation}
is non-negative. Since $C_0^\infty(\Real^d) \subset W_0^{1,2}(\Real^d)$,
it follows that $Q[\psi] \geq 0$ for every $\psi \in C_0^\infty(\Real^d)$.
But this is an obvious contradiction with~\eqref{virtual}.
So, indeed, no Hardy inequality~\eqref{Hardy.bis} is available 
in dimensions $d=1,2$.

\begin{proof}
Clearly, to establish~\eqref{virtual},
it is enough to find a (so-called ``trial'') 
function $\psi \in C_0^\infty(\Real^d)$
such that $Q[\psi] < 0$.
Forgetting for a moment that~$1$ 
(\ie~the constant function everywhere equal to one)
is not admissible and using the pointwise identity $\nabla 1 = 0$,
we formally have 
\begin{equation}\label{test.formal}
  Q[1] = -\int_{\Real^d} \rho(x) \, \der x < 0 \,.
  \qquad \mbox{(formally!)}
\end{equation}
Hence, the idea is to use a trial function which approximates~$1$,
but it is still an admissible element of $C_0^\infty(\Real^d)$.
We thus look for a sequence
$\{\psi_n\}_{n=1}^\infty \subset C_0^\infty(\Real^d)$
such that
\begin{enumerate}
\item[(i)]
$
  \forall x \in \Real^d, \quad
  \psi_n(x) \xrightarrow[n\to\infty]{} 1
$,
\item[(ii)]
$
  \|\nabla\psi_n\| \xrightarrow[n\to\infty]{} 0
$.
\end{enumerate}
Such a sequence exists only in dimensions $d=1,2$.

\fbox{$d=1$.} \
If $d=1$,
we pick a function $\varphi \in C_0^\infty(\Real)$ such that 
$$
  0 \leq \varphi \leq 1
  \,, \qquad
  \varphi=1 \quad\mbox{on}\quad [-1,1]
  \,, \qquad
  \varphi=0 \quad\mbox{outside}\quad [-2,2]
  \,.
$$
For every $n\in\Nat^*$, we then define 
(\cf~Figure~\ref{Fig.1D})
$$
  \psi_n(x):=\varphi\left(\frac{x}{n}\right)
  \,.
$$
Notice that $\psi_n=1$ on $[-n,n]$ and $\psi_n=0$ outside $[-2n,2n]$,
so it is certainly an admissible approximation of the constant function~$1$;
in fact $\psi_n \to 1$ pointwise as $n \to \infty$.
By an obvious change of variables, we have
$$
  \int_{\Real} |\psi_n'(x)|^2 \, \der x
  = \int_{\Real} \left|\frac{1}{n}\,\varphi'\left(\frac{x}{n}\right)\right|^2 \, \der x
  = \frac{1}{n} \int_{\Real} |\varphi'(x)|^2 \, \der x
  \xrightarrow[n \to \infty]{} 0 \,,
$$
so the first term on the right-hand side of~\eqref{formQ}
vanishes as $n \to \infty$.
For the second term, we have
$$
  \int_{\Real^d} \rho(x) \, |\psi_n(x)|^2 \, \der x
  \xrightarrow[n \to \infty]{}
  \int_{\Real^d} \rho(x) \, \der x
$$
by the monotone convergence theorem (the limit can be infinite).
In summary,
$$
  Q[\psi_n]  \xrightarrow[n \to \infty]{}
  -\int_{\Real^d} \rho(x) \, \der x
  \,,
$$
so the formal result~\eqref{test.formal} is obtained in a limit sense.
Since the right hand-side is negative (possibly $-\infty$),
there obviously exists $n \in \Nat^*$ such that $Q[\psi_n] < 0$. 
This concludes the proof in the one-dimensional case.

\fbox{$d=2$.} \
If $d=2$, we have to use a more refined approximation of~$1$. 
We start by picking a function $\eta \in C^\infty([0,1])$ such that
$$
  0 \leq \eta \leq 1
  \,, \qquad
  \eta=0 \quad\mbox{on}\quad [0,\mbox{$\frac{1}{4}$}]
  \,, \qquad
  \eta=1 \quad\mbox{on}\quad [\mbox{$\frac{3}{4}$},1]
  \,.
$$
For every $n\in\Nat$ with $n \geq 2$, we then define 
(\cf~Figure~\ref{Fig.1D})
$$
  \psi_n(x) :=
  \begin{cases}
    1
    & \mbox{if} \quad |x| \leq n \,,
    \\
    \displaystyle
    \eta\left(\frac{\log n^2 - \log |x|}{\log n^2 - \log n}\right)
    & \mbox{if} \quad  n < |x| < n^2 ,
    \\
    0
    & \mbox{if} \quad |x| \geq n^2 \,.
  \end{cases}
$$
Again $\psi_n \in C_0^\infty(\Real^2)$ for every $n \geq 2$
and $\psi_n \to 1$ pointwise as $n \to \infty$.
Passing to polar coordinates
and making an obvious change of variables, 
we have
$$
\begin{aligned}
  \int_{\Real^2} |\nabla\psi_n(x)|^2 \, \der x
  &= 2\pi \int_{n}^{n^2} 
  \left| \frac{-1}{r\,(\log n^2 - \log n)}
  \ \eta'\left(
  \frac{\log n^2 -\log r}{\log n^2 - \log n}
  \right) \right|^2
  \, r \, \der r
  \\
  &= \frac{2\pi}{\log n^2 - \log n} \int_{0}^{1} 
  \left|\eta'(s) \right|^2
  \, \der s
  \xrightarrow[n \to \infty]{} 0 \,,
\end{aligned}
$$
so the first term on the right-hand side of~\eqref{formQ}
again vanishes as $n \to \infty$.
The rest of the proof is the same as in the one-dimensional case.
\end{proof}
\begin{figure}[h!]
\begin{center}
\begin{tabular}{cc}
\includegraphics[width=0.45\textwidth]{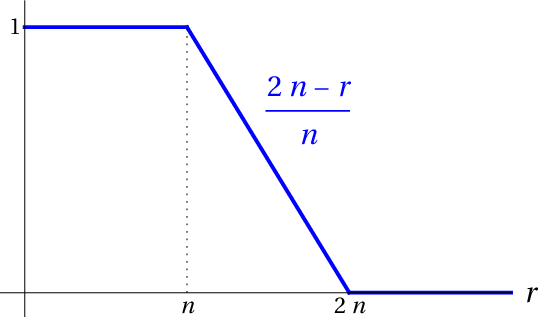}
&\includegraphics[width=0.45\textwidth]{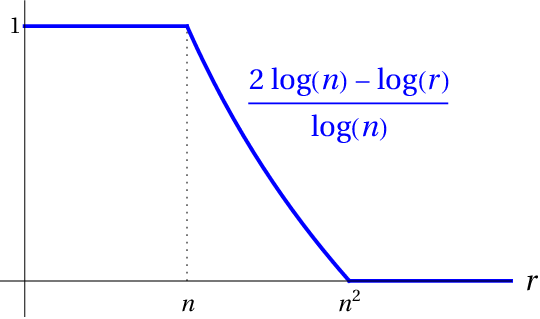}
\end{tabular}
\end{center}
\caption{The radial profile of the functions $\psi_n$ approximating~$1$ 
pointwise as $n \to \infty$ are smoothed version of 
the profiles on the left and right for $d=1$ and $d=2$, respectively.}
\label{Fig.1D}
\end{figure}

Let us summarise the dimensional features of the Euclidean space~$\Real^d$.
Due to Corollary~\ref{Corol.positivity}, 
the spectrum of the Dirichlet Laplacian in~$\Real^d$ 
is the same for all dimensions $d \geq 1$, 
namely it is equal to the interval $[0,\infty)$.
However, there is a fundamental difference at the zero energy.
There is a ``sort of repulsivity'' (respectively, ``sort of attractivity'')
at the zero energy if $d \geq 3$ (respectively, $d=1,2$).
We have quantified this by the respective existence or non-existence 
of Hardy inequalities.
More specifically, 
Theorems~\ref{Thm.Hardy1} and~\ref{Thm.virtual} can be 
summarise into the following theorem.
\begin{theo}[$-\Delta_D^{\Real^d}$\label{Thm.critical}
satisfies a Hardy inequality
if, and only if, $d \geq 3$.]\label{Thm.d-Hardy}
$$
-\Delta_D^{\Real^d}
\quad\mbox{is}\quad
\begin{cases}
  \mbox{subcritical} & \mbox{if} \quad d \geq 3 \,,
  \\
  \mbox{critical} & \mbox{if} \quad d = 1,2 \,,
\end{cases}
$$
\end{theo}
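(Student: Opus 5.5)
This theorem merely combines Theorems~\ref{Thm.Hardy1} and~\ref{Thm.virtual} with the definitions of subcriticality and criticality, so the plan is to verify the definitional requirements separately in the two dimensional regimes.

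For $d \geq 3$, I take $\rho$ as in~\eqref{rho}. It is positive and belongs to $L^1_\mathrm{loc}(\Real^d)$, because $|x|^{-2}$ is locally integrable precisely when $d \geq 3$. By Definition~\ref{Def.operator.ineq}, $-\Delta_D^{\Real^d} \geq \rho$ requires two things.
\begin{itemize}
\item[(i)] The inclusion $\dom(m_\rho) \supset W_0^{1,2}(\Real^d)$. This follows from~\eqref{Hardy1}, since the right-hand side is finite whenever the left-hand side is.
\item[(ii)] The form inequality itself, which is exactly~\eqref{Hardy1}.
\end{itemize}
Here I rely on the justification in Remark~\ref{Rem.Hardy}: the inequality is first proved on $C_0^\infty(\Real^d\setminus\{0\})$ and then extended by density and Fatou's lemma for the weighted term. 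Hence $-\Delta_D^{\Real^d}$ is subcritical.

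For $d=1,2$, I first record that the spectrum starts at zero. Indeed, $\inf\sigma(-\Delta_D^{\Real^d}) = 0$ by Corollary~\ref{Corol.positivity}, because $\Real^d$ is quasi-conical, so $-\Delta_D^{\Real^d}$ is not supercritical. Next I argue by contradiction: suppose $-\Delta_D^{\Real^d} \geq \rho$ for some positive $\rho \in L^1_\mathrm{loc}$. Then the form $Q$ of~\eqref{formQ} is non-negative on $W_0^{1,2}(\Real^d)$, hence on $C_0^\infty(\Real^d)$. This contradicts~\eqref{virtual} of Theorem~\ref{Thm.virtual}, exactly as discussed after that theorem. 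So no Hardy inequality holds, and the operator is critical.

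There is no real obstacle here. The only delicate point is the domain inclusion (i) for $d\geq3$, which rests on the density of $C_0^\infty(\Real^d\setminus\{0\})$ in $W_0^{1,2}(\Real^d)$ from Remark~\ref{Rem.Hardy}.
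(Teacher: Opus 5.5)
Your proposal is correct and follows the paper, which obtains this theorem simply by combining Theorem~\ref{Thm.Hardy1} (the weight~\eqref{rho} for $d\geq 3$) with Theorem~\ref{Thm.virtual} and the contradiction argument stated right after it (no positive weight for $d=1,2$). Your additional checks spell out requirements the paper leaves implicit: local integrability of $|x|^{-2}$ for $d\geq3$, the form-domain inclusion $W_0^{1,2}(\Real^d)\subset\dom(m_\rho)$, and $\inf\sigma(-\Delta_D^{\Real^d})=0$ via Corollary~\ref{Corol.positivity}.
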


This observation has far reaching consequences in many areas
of physics and mathematics. For instance, in stochastic analysis,
it is related to the very different behaviour of the Brownian motion 
in~$\Real^d$ depending on whether $d=1,2$ or $d \geq 3$. 
Namely, the Brownian motion is \emph{recurrent} 
on the real line and in the plane 
(meaning that the Brownian particle visits every region 
infinitely many times), 
while it is \emph{transient} in $\Real^d$ with $d \geq 3$
(meaning that it escapes from any bounded region
after some time forever).  

In this course, we have interpreted Theorem~\ref{Thm.d-Hardy}
through the stability of matter: $\Real^3$~is the lowest dimensional
Euclidean space for which the atoms and molecules 
are quantum-mechanically stable.

\subsection{The half-line and the optimality}\label{Sec.alternative.Hardy}
The Hardy inequality (Theorem~\ref{Thm.Hardy1}) is so important
that we dedicate this section to provide more insights into it.
First of all, we give an alternative proof of it
based on the following one-dimensional Hardy inequality.

\begin{theo}[One-dimensional Hardy inequality]\label{Thm.Hardy.1D}
One has
\begin{equation}\label{Hardy.1D}
  \forall \psi \in W_0^{1,2}((0,\infty))
  \,, \qquad
  \int_0^\infty |\psi'(x)|^2 \, \der x
  \geq \frac{1}{4} \int_0^\infty \frac{|\psi(x)|^2}{|x|^2} \, \der x
  \,.
\end{equation}
\end{theo}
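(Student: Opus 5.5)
We will mimic the completing-the-square argument used for Theorem~\ref{Thm.Hardy1}. By the definition of $W_0^{1,2}((0,\infty))$ as the closure of $C_0^\infty((0,\infty))$, it suffices to prove~\eqref{Hardy.1D} for $\psi \in C_0^\infty((0,\infty))$. The general case then follows by density. Take $\psi_n \to \psi$ in $W^{1,2}$ and pass to a subsequence converging almost everywhere. The left-hand sides converge, and Fatou's lemma controls the right-hand side. For smooth compactly supported~$\psi$, the support stays away from $x=0$. Hence the weight $1/x$ and all its derivatives are bounded on $\supp\psi$, and integration by parts produces no boundary terms. This is exactly the point that failed for $d=1$ in Remark~\ref{Rem.Hardy}, but here the singular point is excluded from the domain itself.

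For any $c\in\Real$ we expand
$$
  0 \leq \int_0^\infty \left|\psi'(x) - \frac{c}{x}\,\psi(x)\right|^2 \der x
  = \int_0^\infty |\psi'|^2 \, \der x + c^2 \int_0^\infty \frac{|\psi|^2}{x^2} \, \der x
  - c \int_0^\infty \frac{(|\psi|^2)'}{x} \, \der x \,.
$$
Integrating by parts in the last term gives $\int_0^\infty (|\psi|^2)'/x \, \der x = \int_0^\infty |\psi|^2/x^2 \, \der x$, because $(1/x)' = -1/x^2$. Therefore
$$
  \int_0^\infty |\psi'|^2 \, \der x \geq (c - c^2) \int_0^\infty \frac{|\psi|^2}{x^2} \, \der x
$$
for every real~$c$. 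The parabola $c-c^2$ attains its maximum $1/4$ at $c=1/2$, which yields~\eqref{Hardy.1D}.

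An equivalent route is the ground-state substitution $\psi(x) = x^{1/2}\phi(x)$, based on the virtual ground state $x^{1/2}$. One checks that $\int_0^\infty|\psi'|^2 - \frac14\int_0^\infty |\psi|^2/x^2 = \int_0^\infty x\,|\phi'|^2 \geq 0$, again after an integration by parts without boundary terms. There is no real obstacle in either argument. The only delicate step is justifying the integration by parts at the endpoint $0$ and at infinity, and restricting to $C_0^\infty((0,\infty))$ followed by the density argument handles it.
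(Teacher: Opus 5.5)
Your proof is correct and essentially matches the paper. The paper notes that the inequality follows from the completing-the-square idea of Theorem~\ref{Thm.Hardy1}, which is your main argument, and it records your ground-state substitution as the identity~\eqref{Hardy.identity}; its displayed proof instead applies the Cauchy--Schwarz inequality to the same integration-by-parts formula $\int_0^\infty x^{-2}|\psi|^2 \, \der x = \int_0^\infty 2x^{-1}\Re(\overline{\psi}\,\psi') \, \der x$ rather than optimising over~$c$, which is equivalent (and your Fatou-based density step is slightly more explicit than the paper's).
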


Obviously, the same inequality holds for 
$\psi \in W_0^{1,2}(\Real \setminus \{0\})$
with the integrals over~$\Real$.
Note carefully that, in any case,  
the function~$\psi$ is required to vanish at the origin.
Without this requirement there is no one-dimensional Hardy inequality
due to Theorem~\ref{Thm.virtual}.
In other words, while $-\Delta_D^\Real$ is critical,
both $-\Delta_D^{\Real\setminus\{0\}}$ 
and $-\Delta_D^{(0,\infty)}$ are subcritical.\
Indeed, \eqref{Hardy.1D}~is equivalent to the inequality
$$
  -\Delta_D^{(0,\infty)} \geq \rho
$$
in the sense of quadratic forms in $\sii((0,\infty))$,
where~$\rho$ is given by~\eqref{rho} with $d=1$.

\begin{proof}[Alternative proof of Theorem~\ref{Thm.Hardy1}]
By density, 
it is enough to prove the Hardy inequality~\eqref{Hardy1} 
of Theorem~\ref{Thm.Hardy1}
for $\psi \in C_0^\infty(\Real^d\setminus\{0\})$,
\cf~Remark~\ref{Rem.Hardy}.
Passing to spherical coordinates
$(r,\theta) \in (0,\infty) \times \Sphere^{d-1}$
and neglecting the angular-derivative term,
we get the bound
\begin{align*}
  \int_{\Real^d} |\nabla\psi(x)|^2 \, \der x
  &= \int_{(0,\infty) \times \Sphere^{d-1}} \left(
  |\partial_r\tilde\psi(r,\theta)|^2
  + \frac{|\nabla_{\!\theta}\tilde\psi(r,\theta)|^2}{r^2}
  \right) r^{d-1} \, \der r \, \der \theta
  \\
  &\geq \int_{(0,\infty) \times \Sphere^{d-1}}
  |\partial_r\tilde\psi(r,\theta)|^2
  \ r^{d-1} \, \der r \, \der\theta
  =: t[\tilde\psi]
  \,,
\end{align*}
where~$\tilde\psi$ is the function~$\psi$
expressed in the spherical coordinates,
$\der\theta$ is the surface element of 
the $(d-1)$-dimensional sphere $\Sphere^{d-1}$
and $\nabla_{\!\theta}$ denotes the spherical gradient.
Making the change of test function
\begin{equation}\label{change.trial}
  \phi(r,\theta) := \sqrt{r^{d-1}} \, \tilde\psi(r,\theta)
\end{equation}
and integrating by parts, we arrive at the identity
\begin{equation*}
  t[\tilde\psi]
  = \int_{(0,\infty) \times \Sphere^{d-1}} \left\{
  |\partial_r\phi(r,\theta)|^2
  + \frac{(d-1)(d-3)}{4} \,
  \frac{|\phi(r,\theta)|^2}{r^2}
  \right\}
  \der r \, \der\theta
  \,.
\end{equation*}
For every $\theta \in \Sphere^{d-1}$, the function
$r \mapsto \phi(r,\theta)$ belongs to $C_0^\infty((0,\infty))$.
Consequently, applying the one-dimensional Hardy inequality~\eqref{Hardy.1D}
of Theorem~\ref{Thm.Hardy.1D}
with help of Fubini's theorem, we finally get
$$
  t[\tilde\psi]
  \geq \left[\frac{1}{4}+
  \frac{(d-1)(d-3)}{4} 
  \right]
  \int_{(0,\infty) \times \Sphere^{d-1}}
  \frac{|\phi(r,\theta)|^2}{r^2}
  \ \der r \, \der \theta
  \,.
$$
This estimate coincides with the desired inequality~\eqref{Hardy1}
after coming back to Cartesian coordinates.

Again we see that the proof does not give
any non-trivial inequality in low dimensions $d=1,2$.
In $d=2$, our proof holds but the outcome is trivial
(the right-hand side of~\eqref{Hardy1} vanishes).
In $d=1$, the technical reason for the breakdown of the proof 
is mentioned in Remark~\ref{Rem.Hardy}:
we are not allowed to restrict 
to functions supported outside the origin.
We can still take $\psi \in C_0^\infty(\Real)$,
which is a dense subspace of $W^{1,2}(\Real)$,
but then the test function~$\phi$ would not vanish at the origin,
so Theorem~\ref{Thm.Hardy.1D} does not apply.
Indeed, in one dimension, the ``spherical'' coordinates are trivial,
there is no Jacobian, so in fact $\phi=\tilde{\psi}$.
\end{proof}

The one-dimensional Hardy inequality of Theorem~\ref{Thm.Hardy.1D}
can be established by the idea of the original proof of Theorem~\ref{Thm.Hardy1}.
An alternative proof goes as follows.
\begin{proof}[Proof of Theorem~\ref{Thm.Hardy.1D}]
For every $\psi \in C_0^\infty((0,\infty))$,
\begin{align*}
  \int_0^\infty \frac{|\psi(x)|^2}{x^2} \, \der x
  &= - \int_0^\infty \frac{\der}{\der x}\!\left(\frac{1}{x}\right) 
  |\psi(x)|^2\, \der x
  \\
  &= \int_0^\infty \frac{1}{x} \,
  2 \ \Re \!\left\{ \overline{\psi(x)} \, \psi'(x) \right\} \, \der x
  \\
  &\leq 2 \, \sqrt{\int_0^\infty \frac{|\psi(x)|^2}{x^2}\,\der x} \,
  \sqrt{\int_0^\infty |\psi'(x)|^2\,\der x}
  \,,
\end{align*}
where the second equality follows by an integration by parts
and the inequality is due to the Schwarz inequality.
The result is a square-root version of the desired inequality.
\end{proof}

Both Theorems~\ref{Thm.Hardy1} and~\ref{Thm.Hardy.1D} 
are optimal in various aspects \cite{Devyver-Fraas-Pinchover_2014}.
Here we discuss the optimality of Theorem~\ref{Thm.Hardy.1D} only.
The analogous claims for the multidimensional Theorem~\ref{Thm.Hardy1}
can be obtained by means of spherical coordinates,
using the optimising functions independent of the spherical variables.

\subsubsection*{Non-attainability}
The Hardy inequality~\eqref{Hardy.1D} 
is never achieved (by a non-trivial function),
meaning that there is no (non-zero) function $\psi \in W_0^{1,2}((0,\infty))$
for which there is equality in~\eqref{Hardy.1D}.
Indeed, for any $\psi \in C_0^\infty((0,\infty))$,
it is easy to check that
\begin{equation}\label{Hardy.identity}
  a[\psi] :=
  \int_0^\infty
  \left(
  |\psi'(x)|^2 - \frac{1}{4} \frac{|\psi(x)|^2}{x^2}
  \right)
  \der x
  = \int_0^\infty \left|
  \frac{\der}{\der x}\left(\frac{\psi(x)}{\sqrt{x}}\right)
  \right|^2 x \, \der x
  \geq 0
\end{equation}
and by density the identity extends to all $\psi \in W_0^{1,2}((0,\infty))$.
(This is yet another proof of Theorem~\ref{Thm.Hardy.1D}.)
Now, assume that there exists $\psi \in W_0^{1,2}((0,\infty))$ such that
the Hardy inequality turns into equality. Then $a[\psi] = 0$.
It follows from identity~\eqref{Hardy.identity} that
$\psi(x) = C \sqrt{x}$ for a.e.\ $x \in (0,\infty)$
with some constant $C \in \Com$.
But this is an admissible function from $W_0^{1,2}((0,\infty))$
only if $C=0$.

\subsubsection*{Asymptotic attainability}
The Hardy inequality~\eqref{Hardy.1D} is achieved asymptotically, 
meaning that 
$$
  \inf_{\stackrel[\psi\not=0]{}{\psi \in W_0^{1,2}((0,\infty))}} 
  \frac{\displaystyle a[\psi]}
  {\displaystyle \|\psi\|^2}
  = 0
  \,.
$$
Motivated by the result of the previous remark,
we construct an optimising sequence
by regularising the square-root function $x \mapsto \sqrt{x}$.

Let $\xi \in C^\infty([0,1])$ be such that $\xi=0$
in a right neighbourhood of~$0$ and $\xi=1$ 
in a left neighbourhood of~$1$.
For every $n \in \Nat$ with $n \geq 2$, 
we define (see Figure~\ref{Fig.optimal})
$$
  \xi_n(x) :=
  \begin{cases}
    0 & \mbox{if} \quad x \in [0,1/n^2) \,,
    \\
    \xi\left(\frac{\displaystyle\log(n^2 x)}{\displaystyle\log n}\right) 
    & \mbox{if} \quad x \in [1/n^2,1/n) \,,
    \\
    1 & \mbox{if} \quad x \in [1/n,n) \,,
    \\
    \xi\left(\frac{\displaystyle\log(n^2/x)}{\displaystyle\log n}\right) 
    & \mbox{if} \quad x \in [n,n^2) \,,
    \\
    0 & \mbox{if} \quad x \in [n^2,\infty) \,.
  \end{cases}
$$
Note that $\xi_n \in C_0^\infty((0,\infty))$
and $\xi_n(x) \to 1$ as $n \to \infty$ for every $x \in (0,\infty)$.
We set $\psi_n(x) := \xi_n(x) \sqrt{x}$ for every $x>0$.
Then $\|\psi_n\| \to \infty$ as $n \to \infty$, while
$$
\begin{aligned}
  a[\psi_n] 
  &= \int_0^\infty |\xi_n'(x)|^2 \, x \, \der x
  \\
  &= \frac{1}{\log^2 n} \int_{1/n^2}^{1/n} 
  \left|
  \xi'\left(\frac{\displaystyle\log(n^2 x)}{\displaystyle\log n}\right)
  \right|^2 
  \, \frac{1}{x} \, \der x
  \\
  & \qquad
  + \frac{1}{\log^2 n} \int_{n}^{n^2}
  \left|
  \xi'\left(\frac{\displaystyle\log(n^2/x)}{\displaystyle\log n}\right)
  \right|^2 
  \, \frac{1}{x} \, \der x
  \\
  &\leq  
  \frac{\|\xi'\|_\infty^2}{\log^2 n} \int_{1/n^2}^{1/n} 
  \frac{1}{x} \, \der x
  + \frac{\|\xi'\|_\infty^2}{\log^2 n} \int_{n}^{n^2}
  \frac{1}{x} \, \der x
  \\
  &=
  2 \, \frac{\|\xi'\|_\infty^2}{\log n}   
  \xrightarrow[n\to\infty]{}
  0 \,,
\end{aligned}  
$$
where we have denoted $\|\xi'\|_\infty := \max|\xi'|$.

\subsubsection*{Optimality of the constant}
The constant $\frac{1}{4}$ in~\eqref{Hardy.1D}
cannot be improved, meaning that 
$$
  \inf_{\stackrel[\psi\not=0]{}{\psi \in W_0^{1,2}((0,\infty))}} 
  \frac{\displaystyle \int_0^\infty |\psi'(x)|^2 \, \der x}
  {\displaystyle \int_0^\infty \frac{|\psi(x)|^2}{x^2} \, \der x}
  = \frac{1}{4}
  \,.
$$
Using the same sequence $\{\psi_n\}_{n=2}^\infty$ as above, 
one has 
$$
  \int_0^\infty \frac{|\psi_n(x)|^2}{x^2} \,\der x
  \geq \int_{1/n}^n \frac{1}{x} \,\der x
  = 2 \log n 
  \xrightarrow[n \to \infty]{}
  \infty 
  \,.
$$
Consequently,
$$
  \frac{\displaystyle \int_0^\infty |\psi_n'(x)|^2 \, \der x}
  {\displaystyle \int_0^\infty \frac{|\psi_n(x)|^2}{x^2} \,\der x}
  = \frac{1}{4} +
  \frac{a[\psi_n]}
  {\displaystyle \int_0^\infty \frac{|\psi_n(x)|^2}{x^2} \,\der x} 
  \xrightarrow[n \to \infty]{}
  \frac{1}{4}
  \,.
$$

\subsubsection*{Optimality of the weight}
The weight on the right-hand side of~\eqref{Hardy.1D}
cannot be improved, meaning that 
$$
  \inf_{\stackrel[\psi\not=0]{}{\psi \in W_0^{1,2}((0,\infty))}} 
  \frac{a[\psi] + v[\psi]}{\|\psi\|^2}
  < 0
  \,, \qquad \mbox{where} \qquad
  v[\psi] := \int_0^\infty V(x) \, |\psi(x)|^2 \, \der x
  \,,
$$
for any non-positive non-trivial function 
$V \in L_\mathrm{loc}^1((0,\infty))$.
In other words, 
the shifted operator $-\Delta_D^{(0,\infty)} - \rho$ is critical.
Obviously, this result is stronger than the optimality of the constant above.
(In the terminology of~\cite{Devyver-Fraas-Pinchover_2014},
combining this result with the non-attainability above,
the shifted operator $-\Delta_D^{(0,\infty)} - \rho$
is actually \emph{null-critical}.)

It is enough to show that there exists 
a trial function $\psi \in W_0^{1,2}((0,\infty))$
such that $a[\psi] + v[\psi] < 0$.
Using still the same sequence $\{\psi_n\}_{n=2}^\infty$ as above, 
one has
$$
  \lim_{n \to \infty} \left( a[\psi_n] + v[\psi_n] \right)
  = \lim_{n \to \infty} v[\psi_n]
  = \int_0^\infty V(x) \, x \, \der x
  < 0
  \,,
$$
where the last equality follows by the monotone convergence theorem
(the final integral can be $-\infty$).
Consequently, there exists $n_0 \geq 2$ such that 
$a[\psi_n] + v[\psi_n] < 0$ for all $n \geq n_0$.

\subsubsection*{Optimality of the weight at infinity}
The weight on the right-hand side of~\eqref{Hardy.1D} 
is optimal also in the sense that it has the optimal decay at infinity,
meaning that 
$$
  \inf_{\stackrel[\psi\not=0]{}{\psi \in W_0^{1,2}((0,\infty)\setminus K)}} 
  \frac{\displaystyle \int_0^\infty |\psi'(x)|^2 \, \der x}
  {\displaystyle \int_0^\infty \frac{|\psi(x)|^2}{x^2} \, \der x}
  = \frac{1}{4}
$$
for any compact set $K \in (0,\infty)$.
To prove it, we modify the optimising sequence $\{\psi_n\}_{n=2}^\infty$
from above as follows.
For every $n \in \Nat$ with $n \geq 2$, 
we now define (see Figure~\ref{Fig.optimal})
$$
  \tilde\xi_n(x) :=
  \begin{cases}
    0 & \mbox{if} \quad x \in [0,n) \,,
    \\
    \xi\left(\frac{\displaystyle\log(x/n)}{\displaystyle\log n}\right) 
    & \mbox{if} \quad x \in [n,n^2) \,,
    \\
    1
    & \mbox{if} \quad x \in [n^2,2 n^2) \,,
    \\
    \xi\left(\frac{\displaystyle\log(2n^4/x)}{\displaystyle\log n^2}\right) 
    & \mbox{if} \quad x \in [2 n^2,2n^4) \,,
    \\
    0 & \mbox{if} \quad x \in [2 n^4,\infty) \,.
  \end{cases}
$$

Note that $\tilde{\xi}_n \in C_0^\infty((0,\infty))$
and $\inf\supp \tilde{\xi}_n \geq n \to \infty$ as $n \to \infty$.
In particular, 
given any compact set $K \in (0,\infty)$,
$K \cap \supp \tilde{\xi}_n = \varnothing$ 
for all~$n$ sufficiently large.
We set $\tilde{\psi}_n(x) := \tilde{\xi}_n(x) \sqrt{x}$ for every $x>0$.
Then
$$
\begin{aligned}
  a[\tilde{\psi}_n] 
  &= \int_0^\infty |\tilde{\xi}_n'(x)|^2 \, x \, \der x
  \\
  &= \frac{1}{\log^2 n} \int_{n}^{n^2} 
  \left|
  \xi'\left(\frac{\displaystyle\log(x/n)}{\displaystyle\log n}\right)
  \right|^2 
  \, \frac{1}{x} \, \der x
  \\
  & \qquad
  + \frac{1}{\log^2 n^2} \int_{2n^2}^{2n^4}
  \left|
  \xi'\left(\frac{\displaystyle\log(2n^4/x)}{\displaystyle\log n^2}\right)
  \right|^2 
  \, \frac{1}{x} \, \der x
  \\
  &\leq  
  \frac{\|\xi'\|_\infty^2}{\log^2 n} \int_{n}^{n^2} 
  \frac{1}{x} \, \der x
  + \frac{\|\xi'\|_\infty^2}{\log^2 n^2} \int_{2n^2}^{2n^4}
  \frac{1}{x} \, \der x
  \\
  &=
  \frac{3}{2} \, \frac{\|\xi'\|_\infty^2}{\log n}   
 \,,
\end{aligned}  
$$
while
$$
  \int_0^\infty \frac{|\tilde{\psi}_n(x)|^2}{x^2} \, \der x
  = \int_n^{2 n^4} \frac{|\tilde{\xi}_n(x)|^2}{x} \, \der x
  \geq \int_{n^2}^{2 n^2} \frac{1}{x} \, \der x
  = \log 2
  \,.
$$
Consequently,
$$
  \frac{\displaystyle \int_0^\infty |\tilde\psi_n'(x)|^2 \, \der x}
  {\displaystyle \int_0^\infty \frac{|\tilde\psi_n(x)|^2}{x^2} \,\der x}
  = \frac{1}{4} +
  \frac{a[\tilde\psi_n]}
  {\displaystyle \int_0^\infty \frac{|\tilde\psi_n(x)|^2}{x^2} \,\der x} 
  \xrightarrow[n \to \infty]{}
  \frac{1}{4}
  \,.
$$

\begin{figure}[h!]
\begin{center}
\begin{tabular}{cc}
\includegraphics[width=0.45\textwidth]{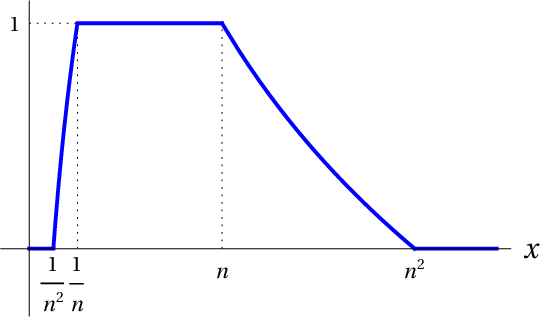}
& \includegraphics[width=0.45\textwidth]{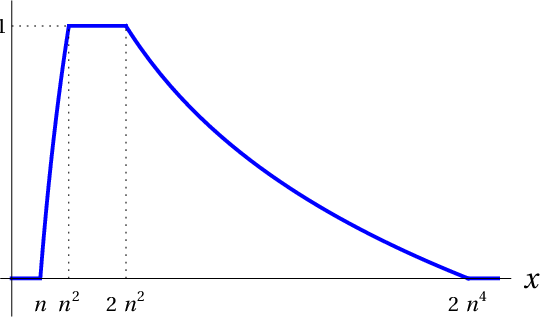}
\end{tabular}
\end{center}
\caption{The regularising functions $\xi_n$ and  $\tilde\xi_n$
are smoothed versions of the profiles 
on the left and right, respectively.}
\label{Fig.optimal}
\end{figure}

\subsection{A logarithmic Hardy inequality and a generic subcriticality}
Let us conclude this section by the following 
one-dimensional Hardy-type inequality.

\begin{lemm}\label{Lem.Hardy.2D}
For any positive number $x_0$,
\begin{equation}\label{Hardy.2D}
  \forall \psi \in W_0^{1,2}((x_0,\infty))
  \,, \qquad
  \int_{x_0}^\infty
  |\psi'(x)|^2 \, x \, \der x
   \geq \frac{1}{4}
  \int_{x_0}^\infty \frac{|\psi(x)|^2}{x^2\log^2(x/x_0)} \, x \, \der x
  \,.
\end{equation}
\end{lemm}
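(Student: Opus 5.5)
The plan is to reduce \eqref{Hardy.2D} to the one-dimensional Hardy inequality of Theorem~\ref{Thm.Hardy.1D} by a logarithmic change of variables. By density it suffices to treat $\psi \in C_0^\infty((x_0,\infty))$. I substitute $x = x_0 e^{t}$ with $t \in (0,\infty)$ and set $\phi(t) := \psi(x_0 e^t)$. Then $\phi \in C_0^\infty((0,\infty))$, because $\psi$ vanishes near $x_0$ and near infinity.

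Next I transform both sides. From $\phi'(t) = \psi'(x)\, x$ and $\der x = x\,\der t$, the left-hand side becomes
$$
  \int_{x_0}^\infty |\psi'(x)|^2 \, x \, \der x
  = \int_0^\infty |\psi'(x)|^2 \, x^2 \, \der t
  = \int_0^\infty |\phi'(t)|^2 \, \der t .
$$
On the right-hand side, $\log(x/x_0) = t$, so
$$
  \int_{x_0}^\infty \frac{|\psi(x)|^2}{x^2\log^2(x/x_0)}\, x \, \der x
  = \int_0^\infty \frac{|\phi(t)|^2}{x^2\, t^2}\, x^2 \, \der t
  = \int_0^\infty \frac{|\phi(t)|^2}{t^2} \, \der t .
$$
Inequality \eqref{Hardy.2D} is therefore exactly \eqref{Hardy.1D} applied to $\phi$, and Theorem~\ref{Thm.Hardy.1D} concludes the proof for smooth compactly supported $\psi$.

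The only point needing care is the density step, that is, extending the inequality from $C_0^\infty((x_0,\infty))$ to $W_0^{1,2}((x_0,\infty))$. For $\psi_n \to \psi$ in $W^{1,2}$, the left-hand side involves the weight $x$, which is unbounded, so a limit in the unweighted norm does not obviously control it. I would handle this as follows. If the left-hand side is infinite there is nothing to prove. Otherwise I pass to a subsequence converging almost everywhere and apply Fatou's lemma on the right-hand side. On the left, I approximate $\psi$ by truncations in the weighted $W^{1,2}$ sense: multiply by cut-offs of the $\log$-type used earlier in the paper (as in Theorem~\ref{Thm.virtual} for $d=2$), then mollify. Equivalently, I can read the statement as being for functions on which both sides make sense, obtained as closures of $C_0^\infty$.

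This extension is routine; the substance of the proof is the change of variables.
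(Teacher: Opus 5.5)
Your proof is correct, but it follows a different route from the paper. The paper stays in the variable $x$ and repeats the completing-the-square trick of its first proof of Theorem~\ref{Thm.Hardy1}. It expands $\int_{x_0}^\infty \bigl|\psi'-\frac{c}{x\log(x/x_0)}\psi\bigr|^2 x\,\der x\geq 0$, integrates the cross term by parts to obtain the coefficient $c^2-c$ in front of the weighted $L^2$ term, and chooses $c=1/2$.

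You instead substitute $x=x_0e^t$. This turns $\int|\psi'|^2x\,\der x$ into $\int|\phi'|^2\der t$ and $\frac{x\,\der x}{x^2\log^2(x/x_0)}$ into $\frac{\der t}{t^2}$, so \eqref{Hardy.2D} becomes literally \eqref{Hardy.1D}. Your computations are right, and $\phi$ is indeed compactly supported in $(0,\infty)$.

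The two arguments are the same computation in different coordinates: the paper's field $\frac{c}{x\log(x/x_0)}$ is the image of $\frac{c}{t}$. What your version adds is an explanation of where the logarithmic weight and the constant $\frac14$ come from. In the variable $t=\log(x/x_0)$, the planar radial energy is the plain one-dimensional energy on a half-line. Because the substitution is a bijection of test functions that transports both quadratic forms exactly, you also inherit the optimality statements of Section~\ref{Sec.alternative.Hardy}: sharpness of $\frac14$, non-attainability, and optimising sequences built from $\sqrt{\log(x/x_0)}$. The paper's version buys a self-contained one-line computation.

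On the density step you are more careful than the paper, which simply invokes density of $C_0^\infty((x_0,\infty))$ in $W_0^{1,2}((x_0,\infty))$ even though the weight $x$ on the left is unbounded. Your repair works, and log-type cut-offs are not needed: plain cut-offs $\chi(x/R)$ suffice, since $\int|\chi(\cdot/R)'|^2|\psi|^2x\,\der x\leq CR^{-1}\|\psi\|^2\to0$. After truncation the weight is bounded on the support, so ordinary $W^{1,2}$ approximation together with Fatou on the right-hand side finishes the argument.
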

\begin{proof}
It is enough to prove the inequality for~$\psi$ from $C_0^\infty((x_0,\infty))$,
a dense subspace of $W_0^{1,2}((x_0,\infty))$. 
For any real constant $c$, 
we employ the usual integration-by-parts trick:
\begin{align*}
\lefteqn{
  \int_{x_0}^\infty
  \left|
  \psi'(x)- \frac{c}{x \log(x/x_0)} \psi(x)
  \right|^2 x \, \der x
  }
  \\
  &= \int_{x_0}^\infty |\psi'(x)|^2 \, x \, \der x
  + c^2 \int_{r_0}^\infty \frac{|\psi(x)|^2}{x^2 \log^2(x/x_0)} \, x \, \der x
  - c \int_{x_0}^\infty \frac{(|\psi(x)|^2)'}{\log(x/x_0)} \, \der x
  \\
  &= \int_{x_0}^\infty |\psi'(x)|^2 \, x \, \der x
  + (c^2-c) \int_{x_0}^\infty 
  \frac{|\psi(x)|^2}{x^2 \log^2(x/x_0)} \, x \, \der x
  \,.
\end{align*}
Choosing $c:=1/2$, we get~\eqref{Hardy.2D}.
\end{proof}

We note that the left-hand side of~\eqref{Hardy.2D}
is the radial component of the quadratic form 
of the two-dimensional Laplacian in the exterior
of the disk $B_{x_0}(0)$.
It follows that the Dirichlet Laplacian in this exterior is subcritical,
a property which cannot be deduced from the classical 
Hardy inequalities of Theorems~\ref{Thm.Hardy1} and~\ref{Thm.Hardy.1D}. 
What is more, we have the following robust result.

\begin{theo}\label{Thm.generic}
Let $\Omega \subset \Real^d$ be an arbitrary open set
satisfying 
$$
  \Real^d \setminus \overline{\Omega}
  \not= \varnothing
  \,.
$$
Then $-\Delta_D^\Omega$ is subcritical. 
\end{theo}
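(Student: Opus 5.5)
Since $\Real^d\setminus\overline{\Omega}$ is a non-empty open set, it contains a closed ball $\overline{B_{r}(x_0)}$ with some $r>0$. Hence $\Omega \subset \Omega_0 := \Real^d \setminus \overline{B_{r}(x_0)}$, and $W_0^{1,2}(\Omega) \subset W_0^{1,2}(\Omega_0)$ (by extending functions by zero, which preserves the Dirichlet form). Consequently any Hardy inequality $-\Delta_D^{\Omega_0} \geq \rho_0$ with a positive $\rho_0$ restricts to $-\Delta_D^{\Omega} \geq \rho_0|_\Omega$ in the sense of Definition~\ref{Def.operator.ineq}, and $\rho_0|_\Omega$ is positive and locally integrable on~$\Omega$. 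It therefore suffices to prove subcriticality of the Dirichlet Laplacian in the exterior of a ball. By translation we take $x_0=0$, and by density we work with $\psi \in C_0^\infty(\Omega_0)$.

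For $d\geq 3$ I would simply invoke Theorem~\ref{Thm.Hardy1}, since $C_0^\infty(\Omega_0)\subset C_0^\infty(\Real^d)$; this gives $\rho_0(x) = \frac{(d-2)^2}{4}|x|^{-2}$. For $d=2$ I would pass to polar coordinates $(s,\theta)\in(r,\infty)\times\Sphere^1$ and drop the angular term, which gives $\int|\nabla\psi|^2 \geq \int_{\Sphere^1}\int_r^\infty |\partial_s\tilde\psi|^2\, s\,\der s\,\der\theta$. For every fixed~$\theta$ the map $s\mapsto\tilde\psi(s,\theta)$ lies in $C_0^\infty((r,\infty))$, so Lemma~\ref{Lem.Hardy.2D} with $x_0:=r$ and Fubini yield
\[
  -\Delta_D^{\Omega_0} \geq \rho_0, \qquad \rho_0(x) := \frac{1}{4\,|x|^2\log^2(|x|/r)},
\]
which is positive on~$\Omega_0$. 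Since it blows up only at $|x|=r$, outside~$\Omega_0$, it belongs to $L^1_\mathrm{loc}(\Omega_0)$.

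For $d=1$, $\Omega_0 = (-\infty,-r)\cup(r,\infty)$. On each half-line I apply Theorem~\ref{Thm.Hardy.1D} after a translation, for instance $x\mapsto x-r$ on $(r,\infty)$. This gives $\int|\psi'|^2 \geq \frac14\int |\psi|^2/(|x|-r)^2$, so $\rho_0(x)=\frac14(|x|-r)^{-2}$ works.

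The main obstacle is the planar case, where Theorem~\ref{Thm.Hardy1} degenerates and the logarithmic weight is genuinely needed. Lemma~\ref{Lem.Hardy.2D} already supplies the necessary inequality, so the only care required is checking that $\rho_0$ is positive and locally integrable on~$\Omega_0$, together with the monotonicity of $W_0^{1,2}$ under inclusion of domains.
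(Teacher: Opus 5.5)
Your proof is correct and takes essentially the same route as the paper. Both reduce to functions vanishing on a ball inside $\Real^d\setminus\overline{\Omega}$, then for $d=2$ pass to polar coordinates, drop the angular term and apply Lemma~\ref{Lem.Hardy.2D} with $x_0:=r$. The differences are cosmetic: for $d\geq 3$ you cite Theorem~\ref{Thm.Hardy1} directly instead of rerunning the spherical-coordinates argument, and for $d=1$ you use the translated half-line inequality, which gives the slightly stronger weight $\frac14(|x|-r)^{-2}$ in place of the paper's $\frac14|x|^{-2}$.
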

\begin{proof}
The proof is very similar to the alternative proof
of the classical Hardy inequality 
presented in Section~\ref{Sec.alternative.Hardy}.
Without loss of generality, we may assume that
$0 \in \Omega^\mathrm{ext} := \Real^d \setminus \overline{\Omega}$.
By hypothesis, there exists $\eps>0$ such that
the ball $B_\eps(0)$ is contained in~$\Omega^\mathrm{ext}$
(because it is a non-empty open set).
Let $\psi \in C_0^\infty(\Omega)$
and extend it by zero to the whole~$\Real^d$.

\fbox{$d \not= 2$}
We can proceed exactly as in the alternative 
proof of Theorem~\ref{Thm.Hardy1}:
passing to spherical coordinates,
neglecting the angular-derivative term
and using the one-dimensional Hardy inequality (Theorem~\ref{Thm.Hardy.1D})
after the change of trial function~\eqref{change.trial},
we arrive at the inequality ($\Real^d$ can be replaced by~$\Omega$)
\begin{equation}\label{Hardy.generic}
  \int_{\Real^d} |\nabla\psi(x)|^2 \, \der x
  \geq \frac{(d-2)^2}{4} \int_{\Real^d} \frac{|\psi(x)|^2}{|x|^2} \, \der x
  \,.
\end{equation}
It looks like the classical Hardy inequality of Theorem~\ref{Thm.Hardy1},
but the difference is that the present inequality holds
in \emph{all} dimensions (including $d=1,2$).
This is due to the fact that the function
$$
  r \mapsto \sqrt{r^{d-1}} \, \tilde\psi(r,\theta)
  \,,
$$
where~$\tilde\psi$ is the function~$\psi$
expressed in the spherical coordinates,
belongs (for every $\theta \in \Sphere^{d-1}$) to $W_0^{1,2}((0,\infty))$
even if $d=1,2$, just because it is identically zero
in a neighbourhood of $r=0$ by the hypothesis.
By density, \eqref{Hardy.generic}~extends to all $\psi \in W_0^{1,2}(\Omega)$
and we may write
$$
  -\Delta_D^\Omega \geq \frac{(d-2)^2}{4} \frac{1}{|x|^2}
  \,,
$$
in the sense of quadratic forms in~$\sii(\Omega)$.
The right-hand side is a positive function whenever $d \not= 2$.

\fbox{$d = 2$}
If $d=2$, inequality~\eqref{Hardy.generic} still holds, but it is trivial.
In the two-dimensional situation, we slightly modify the proof above.
Passing to polar coordinates
$(r,\theta) \in (0,\infty) \times \Sphere^{1}$
and neglecting the angular-derivative term as above,
but using Lemma~\ref{Lem.Hardy.2D} (instead of Theorem~\ref{Thm.Hardy1}),
we get the bound
\begin{align*}
  \int_{\Omega} |\nabla\psi(x)|^2 \, \der x
  &= \int_{(\eps,\infty) \times \Sphere^{1}} \left(
  |\partial_r\tilde\psi(r,\theta)|^2
  + \frac{|\nabla_{\!\theta}\tilde\psi(r,\theta)|^2}{r^2}
  \right) r \, \der r \, \der \theta
  \\
  &\geq \int_{(\eps,\infty) \times \Sphere^{1}}
  |\partial_r\tilde\psi(r,\theta)|^2
  \ r \, \der r \, \der\theta
  \\
  &\geq \frac{1}{4} \int_{(\eps,\infty) \times \Sphere^{1}}
  \frac{|\tilde\psi(r,\theta)|^2}{r^2 \log^2(r/\eps)}
  \ r \, \der r \, \der\theta
  \\
  &= \frac{1}{4} \int_{\Omega}
  \frac{|\psi(x)|^2}{|x|^2 \log^2(|x|/\eps)}
  \ \der x
  \,.
\end{align*}
By density, it extends to all $\psi \in W_0^{1,2}(\Omega)$
and we may write
$$
  -\Delta_D^\Omega \geq \frac{1}{4} \frac{1}{|x|^2\log^2(|x|/\eps)}
  \,,
$$
in the sense of quadratic forms in~$\sii(\Omega)$.
\end{proof}
\begin{rema}\label{Rem.generic}
Let the boundary~$\partial\Omega$ be sufficiently regular,
say continuous.
Then the boundary~$\partial\Omega$ (if non-empty) is $(d-1)$-dimensional
and the domain~$\Omega$ cannot lie on both sides of any 
part of its boundary (\cf~\cite[Sec.~3.21]{Adams2}). 
Then the exterior~$\Omega^\mathrm{ext}$
is always non-empty whenever $\Omega \not= \Real^d$.
\end{rema}
%

\section{Quasi-bounded domains or Resonators}
%
Recall that quasi-bounded sets are those  
which contains neither a sequence of balls of diverging radius
nor a sequence of balls of the same radius.
Bounded sets (\ie\ those which are contained in a ball)
are a special subclass of quasi-bounded sets,
but the latter class is much wider. 
In addition to bounded sets, 
it contains unbounded geometries which are ``narrow at infinity''
(recall Figure~\ref{Fig.Glazman}),
or more precisely: 
\begin{equation}\label{q-bounded.equivalent}
  \mbox{unbounded $\Omega$ is quasi-bounded}
  \qquad\Longleftrightarrow\qquad
  \lim_{\stackrel[x\in\Omega]{}{|x|\to\infty}}
  \dist(x,\partial\Omega) = 0
  \,.
\end{equation}
\begin{exam}[Spiny urchin]\label{Ex.urchin}
A highly irregular quasi-bounded domain 
(originally due to Clark~\cite{Clark_1967})
is obtained by removing an infinite sequence of half-lines (``spines'')
from the plane (see Figure~\ref{Fig.urchin}):
$$
  \Omega := \Real^2 \setminus \bigcup_{m=1}^\infty S_m
  \,,
$$
where 
$$
  S_m := \Big\{
  (r\cos\theta,r\sin\theta) : \
  r \geq m
  \ \land \
  \theta = n\pi/2^m
  \mbox{ for } n=1,2,\dots,2^{m+1}
  \Big\} .
$$
Note that the exterior of~$\Omega$ is empty.
\end{exam}

Recall that the spectrum of the Dirichlet Laplacian $-\Delta_D^\Omega$
is non-negative for any set $\Omega \subset \Real^d$
(\cf~Proposition~\ref{Prop.positivity}).
For quasi-conical sets, we have seen that the whole interval $[0,\infty)$
constitutes the spectrum (\cf~Corollary~\ref{Corol.positivity}).
The quasi-bounded sets~$\Omega$ are the other extreme case:
the spectrum of $-\Delta_D^\Omega$ is typically composed 
of isolated points only (at least under some weak regularity assumptions,
including the spiny urchin, see Proposition~\ref{Prop.urchin}).

\begin{figure}[h!]
\begin{center}
\begin{tikzpicture}[scale=3]
\draw (0.1,0) -- (1,0);
\draw (0,0.1) -- (0,1);
\draw (-0.1,0) -- (-1,0);
\draw (0,-0.1) -- (0,-1);
\draw (0.141421,0.141421) -- (0.707107,0.707107);
\draw (-0.141421,0.141421) -- (-0.707107,0.707107);
\draw (-0.141421,-0.141421) -- (-0.707107,-0.707107);
\draw (0.141421,-0.141421) -- (0.707107,-0.707107);
\draw (0.277164,0.114805) -- (0.92388,0.382683);
\draw (0.114805,0.277164) -- (0.382683,0.92388);
\draw (-0.114805,0.277164) -- (-0.382683,0.92388);
\draw (-0.277164,0.114805) -- (-0.92388,0.382683);
\draw (0.277164,-0.114805) -- (0.92388,-0.382683);
\draw (0.114805,-0.277164) -- (0.382683,-0.92388);
\draw (-0.114805,-0.277164) -- (-0.382683,-0.92388);
\draw (-0.277164,-0.114805) -- (-0.92388,-0.382683);
\draw (0.392314,0.0780361) -- (0.980785,0.19509);
\draw (0.332588,0.222228) -- (0.83147,0.55557);
\draw (0.0780361,0.392314) -- (0.19509,0.980785);
\draw (0.222228,0.332588) -- (0.55557,0.83147);
\draw (-0.392314,0.0780361) -- (-0.980785,0.19509);
\draw (-0.332588,0.222228) -- (-0.83147,0.55557);
\draw (-0.0780361,0.392314) -- (-0.19509,0.980785);
\draw (-0.222228,0.332588) -- (-0.55557,0.83147);
\draw (0.392314,-0.0780361) -- (0.980785,-0.19509);
\draw (0.332588,-0.222228) -- (0.83147,-0.55557);
\draw (0.0780361,-0.392314) -- (0.19509,-0.980785);
\draw (0.222228,-0.332588) -- (0.55557,-0.83147);
\draw (-0.392314,-0.0780361) -- (-0.980785,-0.19509);
\draw (-0.332588,-0.222228) -- (-0.83147,-0.55557);
\draw (-0.0780361,-0.392314) -- (-0.19509,-0.980785);
\draw (-0.222228,-0.332588) -- (-0.55557,-0.83147);
\draw (0.497592,0.0490086) -- (0.995185,0.0980171);
\draw (0.47847, 0.145142) -- (0.95694, 0.290285);
\draw (0.440961, 0.235698) -- (0.881921, 0.471397);
\draw (0.386505, 0.317197) -- (0.77301, 0.634393);
\draw (0.317197, 0.386505) -- (0.634393, 0.77301);
\draw (0.235698, 0.440961) -- (0.471397, 0.881921);
\draw (0.145142, 0.47847) -- (0.290285, 0.95694);
\draw (0.0490086, 0.497592) -- (0.0980171, 0.995185);
\draw (-0.497592,0.0490086) -- (-0.995185,0.0980171);
\draw (-0.47847, 0.145142) -- (-0.95694, 0.290285);
\draw (-0.440961, 0.235698) -- (-0.881921, 0.471397);
\draw (-0.386505, 0.317197) -- (-0.77301, 0.634393);
\draw (-0.317197, 0.386505) -- (-0.634393, 0.77301);
\draw (-0.235698, 0.440961) -- (-0.471397, 0.881921);
\draw (-0.145142, 0.47847) -- (-0.290285, 0.95694);
\draw (-0.0490086, 0.497592) -- (-0.0980171, 0.995185);
\draw (0.497592,-0.0490086) -- (0.995185,-0.0980171);
\draw (0.47847, -0.145142) -- (0.95694, -0.290285);
\draw (0.440961, -0.235698) -- (0.881921, -0.471397);
\draw (0.386505, -0.317197) -- (0.77301, -0.634393);
\draw (0.317197, -0.386505) -- (0.634393, -0.77301);
\draw (0.235698, -0.440961) -- (0.471397, -0.881921);
\draw (0.145142, -0.47847) -- (0.290285, -0.95694);
\draw (0.0490086, -0.497592) -- (0.0980171, -0.995185);
\draw (-0.497592,-0.0490086) -- (-0.995185,-0.0980171);
\draw (-0.47847, -0.145142) -- (-0.95694, -0.290285);
\draw (-0.440961, -0.235698) -- (-0.881921, -0.471397);
\draw (-0.386505, -0.317197) -- (-0.77301, -0.634393);
\draw (-0.317197, -0.386505) -- (-0.634393, -0.77301);
\draw (-0.235698, -0.440961) -- (-0.471397, -0.881921);
\draw (-0.145142, -0.47847) -- (-0.290285, -0.95694);
\draw (-0.0490086, -0.497592) -- (-0.0980171, -0.995185);
%
\draw (0.599277, 0.0294406) -- (0.998795, 0.0490677);
\draw (0.593506, 0.0880383) -- (0.989177, 0.14673);
\draw (0.582019, 0.145788) -- (0.970031, 0.24298);
\draw (0.564926, 0.202134) -- (0.941544, 0.33689);
\draw (0.542394, 0.256533) -- (0.903989, 0.427555);
\draw (0.514637, 0.308462) -- (0.857729, 0.514103);
\draw (0.481925, 0.35742) -- (0.803208, 0.595699);
\draw (0.444571, 0.402935) -- (0.740951, 0.671559);
\draw (0.402935, 0.444571) -- (0.671559, 0.740951);
\draw (0.35742, 0.481925) -- (0.595699, 0.803208);
\draw (0.308462, 0.514637) -- (0.514103, 0.857729);
\draw (0.256533, 0.542394) -- (0.427555, 0.903989);
\draw (0.202134, 0.564926) -- (0.33689, 0.941544);
\draw (0.174171, 0.574164) -- (0.290285, 0.95694);
\draw (0.145788, 0.582019) -- (0.24298, 0.970031);
\draw (0.0880383, 0.593506) -- (0.14673, 0.989177);
\draw (0.0294406, 0.599277) -- (0.0490677, 0.998795);
\draw (-0.599277, 0.0294406) -- (-0.998795, 0.0490677);
\draw (-0.593506, 0.0880383) -- (-0.989177, 0.14673);
\draw (-0.582019, 0.145788) -- (-0.970031, 0.24298);
\draw (-0.564926, 0.202134) -- (-0.941544, 0.33689);
\draw (-0.542394, 0.256533) -- (-0.903989, 0.427555);
\draw (-0.514637, 0.308462) -- (-0.857729, 0.514103);
\draw (-0.481925, 0.35742) -- (-0.803208, 0.595699);
\draw (-0.444571, 0.402935) -- (-0.740951, 0.671559);
\draw (-0.402935, 0.444571) -- (-0.671559, 0.740951);
\draw (-0.35742, 0.481925) -- (-0.595699, 0.803208);
\draw (-0.308462, 0.514637) -- (-0.514103, 0.857729);
\draw (-0.256533, 0.542394) -- (-0.427555, 0.903989);
\draw (-0.202134, 0.564926) -- (-0.33689, 0.941544);
\draw (-0.174171, 0.574164) -- (-0.290285, 0.95694);
\draw (-0.145788, 0.582019) -- (-0.24298, 0.970031);
\draw (-0.0880383, 0.593506) -- (-0.14673, 0.989177);
\draw (-0.0294406, 0.599277) -- (-0.0490677, 0.998795);
\draw (0.599277, -0.0294406) -- (0.998795, -0.0490677);
\draw (0.593506, -0.0880383) -- (0.989177, -0.14673);
\draw (0.582019, -0.145788) -- (0.970031, -0.24298);
\draw (0.564926, -0.202134) -- (0.941544, -0.33689);
\draw (0.542394, -0.256533) -- (0.903989, -0.427555);
\draw (0.514637, -0.308462) -- (0.857729, -0.514103);
\draw (0.481925, -0.35742) -- (0.803208, -0.595699);
\draw (0.444571, -0.402935) -- (0.740951, -0.671559);
\draw (0.402935, -0.444571) -- (0.671559, -0.740951);
\draw (0.35742, -0.481925) -- (0.595699, -0.803208);
\draw (0.308462, -0.514637) -- (0.514103, -0.857729);
\draw (0.256533, -0.542394) -- (0.427555, -0.903989);
\draw (0.202134, -0.564926) -- (0.33689, -0.941544);
\draw (0.174171, -0.574164) -- (0.290285, -0.95694);
\draw (0.145788, -0.582019) -- (0.24298, -0.970031);
\draw (0.0880383, -0.593506) -- (0.14673, -0.989177);
\draw (0.0294406, -0.599277) -- (0.0490677, -0.998795);
\draw (-0.599277, -0.0294406) -- (-0.998795, -0.0490677);
\draw (-0.593506, -0.0880383) -- (-0.989177, -0.14673);
\draw (-0.582019, -0.145788) -- (-0.970031, -0.24298);
\draw (-0.564926, -0.202134) -- (-0.941544, -0.33689);
\draw (-0.542394, -0.256533) -- (-0.903989, -0.427555);
\draw (-0.514637, -0.308462) -- (-0.857729, -0.514103);
\draw (-0.481925, -0.35742) -- (-0.803208, -0.595699);
\draw (-0.444571, -0.402935) -- (-0.740951, -0.671559);
\draw (-0.402935, -0.444571) -- (-0.671559, -0.740951);
\draw (-0.35742, -0.481925) -- (-0.595699, -0.803208);
\draw (-0.308462, -0.514637) -- (-0.514103, -0.857729);
\draw (-0.256533, -0.542394) -- (-0.427555, -0.903989);
\draw (-0.202134, -0.564926) -- (-0.33689, -0.941544);
\draw (-0.174171, -0.574164) -- (-0.290285, -0.95694);
\draw (-0.145788, -0.582019) -- (-0.24298, -0.970031);
\draw (-0.0880383, -0.593506) -- (-0.14673, -0.989177);
\draw (-0.0294406, -0.599277) -- (-0.0490677, -0.998795);
\end{tikzpicture}
\end{center}
\caption{\emph{Spiny urchin} as an example of 
a quasi-bounded domain.}
\label{Fig.urchin}
\end{figure}

Because our life time is finite (unfortunately for these lectures,
but fortunately for other respects of our life), 
in this section we shall almost exclusively
consider quasi-bounded sets which are \textbf{bounded}.
Then we have a classical interpretation of the spectrum
of the Dirichlet Laplacian in a bounded domain~$\Omega$:
it is composed of squares of resonant frequences of 
an elastic membrane of shape~$\Omega$ with fixed edges. 
Musically talented readers will support our expectation
that there is just a countable set of such frequences.
Let us confirm this intuition by a mathematical analysis.

\subsection{Discrete and essential spectra}
First of all, let us make precise the distinction between spectra
composed of non-degenerate intervals and isolated points.

Let~$H$ be a self-adjoint operator in a Hilbert space~$\Hilbert$.
In Section~\ref{Sec.spectrum}, we decomposed the spectrum of~$H$
into the disjoint union of the point and continuous spectra.
To better describe the situations where  
the continuous spectrum is composed of a unique point 
(\eg, a compact operator in an infinite-dimensional space)
or the point spectrum is a non-degenerate interval 
(\eg, the Dirichlet Laplacian in the disconnected version 
of Figure~\ref{Fig.embedded}, 
see the proof of Theorem~\ref{Thm.embedded}),
an alternative decomposition is 
\begin{equation}\label{spectrum.ess}
  \sigma(H) = \sigma_\mathrm{disc}(H) \ \dot\cup \ \sigma_\mathrm{ess}(H)
  \,, 
\end{equation}
where the \emph{discrete} and \emph{essential} spectra
are respectively defined by
$$
\begin{aligned}
  \sigma_\mathrm{disc}(H) &:= 
  \big\{
  \lambda \in \sigma_\mathrm{p}(H) : \  
  \lambda \mbox{ is isolated} \quad \land \quad 
  \dim\ker(H-\lambda I) < \infty  
  \big\} 
  \,,
  \\
  \sigma_\mathrm{ess}(H) &:= 
  \sigma(H) \setminus \sigma_\mathrm{disc}(H)
  \,.
\end{aligned}
$$

In other words, the discrete spectrum catches the properties
of the spectrum in finite-dimensional vector spaces.
All the ugly ``rarities'' due to the infinite dimension
are then included in the essential spectrum.
More specifically, the essential spectrum contains 
either accumulation points of~$\sigma(H)$
or isolated eigenvalues of infinite multiplicity.

If the essential (respectively, discrete) spectrum is empty,
we say that the spectrum is \emph{purely discrete} 
(respectively, \emph{purely essential}).

Due to Corollary~\ref{Corol.positivity}, 
the spectrum of the Dirichlet Laplacian
in quasi-conical domains is purely essential.
Our goal is to show that the situation in bounded domains is quite opposite,
namely the spectrum is purely discrete.

\subsection{The minimax principle}
A powerfull tool to study the discrete and essential spectra
of self-adjoint operators~$H$ is the \emph{minimax principle}.
For operators acting in finite-di\-men\-sion\-al vector spaces,
it says that the spectrum of~$H$ can fully be characterised variationally. 
\begin{theo}[Finite dimensions]\label{minimax.finite}
Let~$H$ be a self-adjoint operator in a Hilbert space~$\Hilbert$ 
with $N := \dim\Hilbert < \infty$.
Let us arrange its eigenvalues into a non-decreasing sequence
$
  \sigma(H) 
  = \{\lambda_n\}_{n=1}^N=\{\lambda_1 \leq \lambda_2 \leq \dots \leq \lambda_N\}
$,
where each eigenvalue is repeated according to its multiplicity.
Then, for every $n \in \{1,\dots,N\}$,
\begin{equation}\label{infsup}
  \lambda_n
  = \min_{\stackrel[\dim\mathscr{L}_n=n]{}{\mathscr{L}_n \subset \Hilbert}} \
  \max_{\stackrel[\psi \not= 0]{}{\psi \in \mathscr{L}_n}}
  \frac{(\psi,H\psi)}{\ \|\psi\|^2}
  \,,
\end{equation}
where~$\mathscr{L}_n$ is any $n$-dimensional subspace of~$\Hilbert$.
\end{theo}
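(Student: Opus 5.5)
The plan is to use the spectral theorem in finite dimensions. Since $H$ is self-adjoint and $\dim\Hilbert=N<\infty$, there is an orthonormal basis $\{\phi_k\}_{k=1}^N$ of~$\Hilbert$ consisting of eigenvectors, $H\phi_k=\lambda_k\phi_k$, ordered as in the statement. For any $\psi=\sum_k c_k\phi_k$ this gives the explicit formulas
$$
  (\psi,H\psi)=\sum_{k=1}^N \lambda_k |c_k|^2 \,, \qquad \|\psi\|^2=\sum_{k=1}^N |c_k|^2 \,.
$$
Thus the Rayleigh quotient is a weighted average of the eigenvalues. Both inequalities in~\eqref{infsup} will be derived from this representation.

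For the upper bound, I take the trial space $\mathscr{L}_n:=\obal\{\phi_1,\dots,\phi_n\}$. For $\psi\in\mathscr{L}_n$ only $c_1,\dots,c_n$ are non-zero, so the quotient is at most $\lambda_n$. Equality holds at $\psi=\phi_n$, so the maximum is attained and equals~$\lambda_n$. Hence the minimum over all $n$-dimensional subspaces is at most~$\lambda_n$. The max over $\mathscr{L}_n$ exists because the quotient is continuous on the compact unit sphere of $\mathscr{L}_n$.

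For the lower bound, let $\mathscr{L}_n$ be an arbitrary $n$-dimensional subspace and set $\mathscr{M}:=\obal\{\phi_n,\dots,\phi_N\}$, which has dimension $N-n+1$. The key step, and essentially the only non-routine point, is the dimension count
$$
  \dim(\mathscr{L}_n\cap\mathscr{M}) \geq n+(N-n+1)-N=1 \,.
$$
It gives a non-zero $\psi\in\mathscr{L}_n\cap\mathscr{M}$. For this $\psi$ only $c_n,\dots,c_N$ are non-zero, so the quotient is at least~$\lambda_n$. Therefore the maximum over $\mathscr{L}_n$ is at least $\lambda_n$ for every choice of~$\mathscr{L}_n$.

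Combining the two bounds shows that the infimum over subspaces is attained by the span of the first $n$ eigenvectors and equals~$\lambda_n$, which proves~\eqref{infsup}. I do not expect any real obstacle. One only needs care that eigenvalues are counted with multiplicity, which is handled automatically by working with an orthonormal eigenbasis.
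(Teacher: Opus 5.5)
Your proof is correct and follows essentially the same route as the paper: the upper bound uses the span of the first $n$ eigenvectors, and your dimension count $\dim(\mathscr{L}_n\cap\mathscr{M})\geq 1$ is the same fact the paper gets from rank--nullity, namely that the orthogonal projection onto $\obal\{\psi_1,\dots,\psi_{n-1}\}$ cannot be injective on the $n$-dimensional space $\mathscr{L}_n$. Your version has the minor advantage of handling $n=1$ uniformly instead of as a separate case.
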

\begin{proof}
Let us denote the right-hand side of~\eqref{infsup} by $\lambda_n'$.
Our aim is to show that $\lambda_n'=\lambda_n$ for every $n \in \{1,\dots,N\}$.
We follow the proof of \cite[Thm.~4.5.1]{Davies}.

\fbox{$\lambda_n \geq \lambda_n'$}
Let $\{\psi_n\}_{n=1}^N$ denote the eigenvectors of~$H$
corresponding to $\{\lambda_n\}_{n=1}^N$.
By the spectral theorem, they can be normalised in such a way 
that $\{\psi_n\}_{n=1}^N$ is an orthonormal basis of
the space~$\Hilbert$. 
For every $\psi \in \mathcal{M}_n := \obal\{\psi_1,\dots,\psi_n\}$, one has
$$
  (\psi,H\psi) 
  = \sum_{k=1}^n \lambda_k \, |(\psi_k,\psi)|^2 
  \leq \lambda_n \sum_{k=1}^n |(\psi_k,\psi)|^2 
  = \lambda_n \|\psi\|^2
  \,.
$$
Consequently, choosing 
$\mathscr{L}_n := \mathcal{M}_n$ in~\eqref{infsup},
one gets $\lambda_n' \leq \lambda_n$ for every $n \in \{1,\dots,N\}$.

\fbox{$\lambda_n \leq \lambda_n'$}
If $n=1$, the formula~\eqref{infsup} reduces to
$$
  \lambda_1' = 
  \min_{\stackrel[\psi \not= 0]{}{\psi \in \dom H }}
  \frac{(\psi,H\psi)}{\ \|\psi\|^2}
  \,.
$$
Using that $\{\psi_n\}_{n=1}^N$ is an orthonormal basis,
one has, for every $\psi \in \dom H $, 
$$
  (\psi,H\psi) = \sum_{k=1}^N \lambda_k \, |(\psi_k,\psi)|^2 
  \geq \lambda_1 \sum_{k=1}^N |(\psi_k,\psi)|^2 
  = \lambda_1 \|\psi\|^2
  \,.
$$
Consequently, $\lambda_1' \geq \lambda_1$.

If $n \in \{2,\dots,N\}$, we introduce the operator
$$
  P := \sum_{k=1}^{n-1} \psi_k (\psi_k,\cdot)
  \,, \qquad
  \dom P := \Hilbert
  \,.
$$ 
It is an orthogonal projection on~$\Hilbert$ 
(\ie, $P^2=P$ and $P^*=P$) with range $\mathcal{M}_{n-1}$.
Clearly, $\dim\ran P = n-1$. 
Let~$\mathscr{L}_n$ be any $n$-dimensional subspace of $\dom H$. 
Since $\dim\ran P|_{\mathscr{L}_n} < \dim\mathscr{L}_n$,
there must exist a non-zero vector $\phi \in \mathscr{L}_n$ such that $P\phi=0$. 
We then have $(\psi_k,\phi)=0$ for all $k \leq n-1$.
It follows that
$$
  (\phi,H\phi) = \sum_{k=n}^N \lambda_k \, |(\psi_k,\phi)|^2 
  \geq \lambda_n \sum_{k=n}^N |(\psi_k,\phi)|^2
  = \lambda_n \|\phi\|^2
  \,.
$$
We conclude that
$$
  \sup_{\stackrel[\psi \not= 0]{}{\psi \in \mathscr{L}_n}}
  \frac{(\psi,H\psi)}{\ \|\psi\|^2}
  \geq  
  \frac{(\phi,H\phi)}{\ \|\phi\|^2}
  \geq \lambda_n
  \,.
$$
Consequently, $\lambda_n' \geq \lambda_n$ for every $n \in \{1,\dots,N\}$.
\end{proof}

In infinite-dimensional vector spaces~$\Hilbert$, 
it is still possible to characterise variationally
discrete eigenvalues of~$H$ lying 
\emph{below the essential spectrum}.
It is necessary to restrict to operators~$H$ 
which are \emph{bounded from below},
\ie, there exists $c \in \Real$ 
such that $(\psi,H\psi) \geq c \|\psi\|^2$
for every $\psi \in \dom H$. 
Then there exists a form~$h$ in~$\Hilbert$ 
which is uniquely associated to~$H$ 
via the representation relationship
$h(\phi,\psi) = (\phi,H\psi)$
for every $\phi \in \dom h$ and $\psi \in \dom H$.
The following theorem is a standard spectral result, 
which is therefore admitted here
(see, \eg, \cite[Sec.~4.5]{Davies} for a proof).

\begin{theo}[Infinite dimensions]\label{minimax}
Let~$H$ be a self-adjoint operator 
in a Hilbert space~$\Hilbert$ with $\dim\Hilbert=\infty$, 
which is bounded from below.
Let $\{\lambda_n\}_{n=1}^\infty$  
be a non-decreasing sequence of numbers defined by
\begin{equation}\label{infsup.ess}
  \lambda_n
  := 
  \inf_{\stackrel[\dim\mathscr{L}_n=n]{}{\mathscr{L}_n \subset \dom H}} \
  \sup_{\stackrel[\psi \not= 0]{}{\psi \in \mathscr{L}_n}}
  \frac{(\psi,H\psi)}{\ \|\psi\|^2}
  =  
  \inf_{\stackrel[\dim\mathscr{L}_n=n]{}{\mathscr{L}_n \subset \dom h}} \
  \sup_{\stackrel[\psi \not= 0]{}{\psi \in \mathscr{L}_n}}
  \frac{h[\psi]}{\ \|\psi\|^2}
  \,,
\end{equation}
where~$\mathscr{L}_n$ is any $n$-dimensional subspace
of the corresponding domain
(the validity of the second equality in~\eqref{infsup.ess}
is part of the statement of the theorem).
Then
\begin{enumerate}
\item
$
  \displaystyle
  \lambda_\infty
  := \lim_{n\to\infty} \lambda_n
  = \inf\sigma_\mathrm{ess}(H)
  \,,
$
\smallskip \\
with the convention that $\sigma_\mathrm{ess}(H)=\varnothing$
if $\lambda_\infty = +\infty$;
\item
$
  \{\lambda_n\}_{n=1}^\infty \cap (-\infty,\lambda_\infty)
  = \sigma_\mathrm{disc}(H) \cap (-\infty,\lambda_\infty)
  \,,
$
\medskip \\
with each $\lambda_n \in (-\infty,\lambda_\infty)$ being
an eigenvalue of~$H$ repeated a number of times equal to its multiplicity.
\end{enumerate}
\end{theo}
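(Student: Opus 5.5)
The plan is to rely on the spectral theorem for the self-adjoint, lower-bounded operator~$H$, with spectral measure~$E$. The argument splits into three steps: first the equality of the two variational expressions in~\eqref{infsup.ess}, then a spectral counting characterisation of each~$\lambda_n$, and finally items (1) and (2) read off from that characterisation.

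\textbf{Step 1: the two variational expressions agree.} Since $\dom H\subset\dom h$ and $h[\psi]=(\psi,H\psi)$ on $\dom H$, the form version is at most the operator version. For the reverse inequality I use that $\dom H$ is a core for~$h$. Take an $n$-dimensional $\mathscr{L}_n\subset\dom h$ with basis $\phi_1,\dots,\phi_n$. Approximate each $\phi_j$ in the form norm by elements of $\dom H$. For close enough approximations the new vectors are still linearly independent, and the Rayleigh quotient $h[\psi]/\|\psi\|^2$ on the unit sphere of the span converges uniformly. This is the usual finite-dimensional continuity argument: the Gram matrices of $h$ and of $(\cdot,\cdot)$ converge, and the supremum of the quotient is the top generalised eigenvalue of this pair of matrices.

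\textbf{Step 2: counting characterisation.} Set $N(\mu):=\dim\ran E((-\infty,\mu))$ for $\mu\in\Real$. I claim that
$$
  \lambda_n=\inf\{\mu\in\Real:\ N(\mu)\geq n\}\,,
$$
with the convention $\inf\varnothing=+\infty$.
\begin{enumerate}
\item \emph{Upper bound.} If $N(\mu)\geq n$, pick an $n$-dimensional subspace of $\ran E((-\infty,\mu))$. It may be taken inside $\dom H$, because $\ran E([c,\mu))\subset\dom H$ when $H\geq c$. On this subspace $(\psi,H\psi)\leq\mu\|\psi\|^2$ by the functional calculus. Hence $\lambda_n\leq\mu$.
\item \emph{Lower bound.} If $N(\mu)<n$, then every $n$-dimensional $\mathscr{L}_n$ contains a nonzero~$\phi$ orthogonal to $\ran E((-\infty,\mu))$. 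This is exactly the projection argument of Theorem~\ref{minimax.finite}, with $P=E((-\infty,\mu))$. For this~$\phi$ one has $h[\phi]\geq\mu\|\phi\|^2$. Hence $\lambda_n\geq\mu$.
\end{enumerate}

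\textbf{Step 3: reading off (1) and (2).} A point $\lambda$ lies in $\sigma_\mathrm{ess}(H)$ if and only if $\dim\ran E((\lambda-\eps,\lambda+\eps))=\infty$ for every $\eps>0$. This is the standard characterisation of isolated eigenvalues of finite multiplicity via spectral projections.
\begin{enumerate}
\item Put $\mu_0:=\inf\sigma_\mathrm{ess}(H)$. For $\mu<\mu_0$, the set $\sigma(H)\cap[c,\mu]$ is compact and consists of discrete eigenvalues. It is therefore finite, so $N(\mu)<\infty$.
\item For $\mu>\mu_0$, we have $N(\mu)=\infty$.
\item Combining these with the formula of Step 2 gives $\lambda_n\to\mu_0$, which is item (1). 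If $\sigma_\mathrm{ess}(H)=\varnothing$, then $N(\mu)<\infty$ for all~$\mu$, so $\lambda_n\to\infty$.
\item Below $\mu_0$ the function $N$ is a step function. It jumps exactly at the discrete eigenvalues, and the size of each jump is the multiplicity. The formula of Step 2 then lists these eigenvalues with multiplicity, which is item (2).
\end{enumerate}

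The main obstacle is the bookkeeping in Step 3. In particular, I must check that an eigenvalue $\lambda<\mu_0$ of multiplicity~$m$ is repeated exactly $m$ times: $N$ jumps by~$m$ at~$\lambda$, and the infimum formula then returns~$\lambda$ for precisely~$m$ consecutive indices. I also need the core argument in Step 1 to be stated carefully, including the preservation of linear independence under approximation.
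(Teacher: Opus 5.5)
Your proposal is correct, and it is essentially the standard spectral-projection proof found in the reference \cite[Sec.~4.5]{Davies}. The paper admits this theorem without proof and defers to that reference, and your lower bound in Step~2 is exactly the projection argument of its finite-dimensional Theorem~\ref{minimax.finite}.

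One simplification: Step~1 is redundant. Your upper bound in Step~2 is proved for the operator-domain quantity and your lower bound for the form-domain quantity. Together with the trivial inequality between the two, this already forces both to equal $\inf\{\mu\in\Real:\ N(\mu)\geq n\}$, so no core argument is needed.
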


As a consequence of Theorem~\ref{minimax},
we see that if the essential spectrum of~$H$ is empty,
then~$H$ possesses an infinite sequence $\{\lambda_n\}_{n \in \Nat^*}$ 
of discrete eigenvalues tending to infinity. 
By the spectral theorem, the corresponding eigenvectors 
$\{\psi_n\}_{n \in \Nat^*}$
form a \emph{complete orthonormal set} in 
(or an \emph{orthonormal basis} of) $\Hilbert$.
Here the orthonormality has the same meaning as in finite-dimensional spaces,
while the completeness means that if $(\psi_n,\psi)=0$
for every $n \in \Nat^*$ with an arbitrary $\psi \in \Hilbert$, 
then necessarily $\psi=0$.
Consequently, one has the decomposition
\begin{equation}\label{D.ONB}
  \forall \psi \in \Hilbert
  \,, \qquad
  \psi = \sum_{n=1}^\infty c_n \, \psi_n
  \qquad\mbox{with}\qquad
  c_n := (\psi_n,\psi)
  \,.
\end{equation}
Vice versa, if~$H$ possesses an infinite sequence of discrete eigenvalues
tending to infinity and the corresponding eigenvectors 
form a complete orthonormal set, then 
$\sigma_\mathrm{ess}(H) = \varnothing$. 

Regardless of whether the essential spectrum of~$H$ is empty or not,
the bottom of the spectrum of~$H$ always coincides with~$\lambda_1$:
\begin{equation}\label{Rayleigh}
      \inf\sigma(H) = \lambda_1 
  = \inf_{\stackrel[\psi \not= 0]{}{\psi \in \dom H}}
  \frac{(\psi,H\psi)}{\ \|\psi\|^2}
  =  
  \inf_{\stackrel[\psi \not= 0]{}{\psi \in \dom h}}
  \frac{h[\psi]}{\ \|\psi\|^2}
  .
\end{equation}

The minimax principle can be also used to
compare spectra of different operators.
Recall Definition~\ref{Def.operator.ineq} introducing 
the order relation between (possibly unbounded) operators 
and let us write $\lambda_n(H)$ if we want to point out
the dependence of the numbers~\eqref{infsup.ess} on the operator~$H$.
\begin{coro}\label{Corol.minimax}
If $H_-$, $H_+$ are two self-adjoint operators in~$\Hilbert$
that are bounded from below. Then
$$
  H_- \leq H_+
  \qquad \Longrightarrow \qquad
  \forall n \in \Nat^* \,, \quad
  \lambda_n(H_-) \leq \lambda_n(H_+)
  \,.
$$
\end{coro}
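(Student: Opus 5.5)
The plan is to deduce the corollary directly from the variational characterisation in the second form of~\eqref{infsup.ess} of Theorem~\ref{minimax}, which is expressed through the sesquilinear forms $h_\pm$ associated with $H_\pm$. Fix $n \in \Nat^*$. By Definition~\ref{Def.operator.ineq}, the inequality $H_- \leq H_+$ gives two pieces of information: (i) $\dom h_+ \subset \dom h_-$, and (ii) $h_-[\psi] \leq h_+[\psi]$ for every $\psi \in \dom h_+$. I will use (i) to enlarge the class of competing subspaces and (ii) to compare the Rayleigh quotients on a fixed subspace.

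First, fix an arbitrary $n$-dimensional subspace $\mathscr{L}_n \subset \dom h_+$. By (i), $\mathscr{L}_n$ is also an admissible $n$-dimensional subspace of $\dom h_-$. By (ii), for every non-zero $\psi \in \mathscr{L}_n$ we have
$$
  \frac{h_-[\psi]}{\|\psi\|^2} \leq \frac{h_+[\psi]}{\|\psi\|^2}
  \,,
$$
and taking suprema over such $\psi$ yields
$$
  \lambda_n(H_-)
  \leq \sup_{\stackrel[\psi \not= 0]{}{\psi \in \mathscr{L}_n}} \frac{h_-[\psi]}{\|\psi\|^2}
  \leq \sup_{\stackrel[\psi \not= 0]{}{\psi \in \mathscr{L}_n}} \frac{h_+[\psi]}{\|\psi\|^2}
  \,,
$$
where the first inequality holds because $\lambda_n(H_-)$ is an infimum over the larger family of $n$-dimensional subspaces of $\dom h_-$, which contains $\mathscr{L}_n$. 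Finally, taking the infimum over all $n$-dimensional $\mathscr{L}_n \subset \dom h_+$ gives $\lambda_n(H_-) \leq \lambda_n(H_+)$.

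There is no real obstacle. The one point deserving care is that the comparison must go through the form version of~\eqref{infsup.ess}, not the operator version: the hypothesis relates the form domains, not $\dom H_\pm$, and the equality of the two expressions in~\eqref{infsup.ess} is provided by Theorem~\ref{minimax}. In addition, if $\dom h_+$ has dimension smaller than $n$ (for instance when $\Hilbert$ is finite-dimensional), then $\lambda_n(H_+) = +\infty$ under the convention $\inf\varnothing = +\infty$, and the inequality is trivial.
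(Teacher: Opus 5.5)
Your proof is correct and follows the paper's approach. The paper states the corollary without proof, as an immediate consequence of the form version of~\eqref{infsup.ess}. The same argument (inclusion of form domains, then comparison of Rayleigh quotients on a common subspace) is carried out explicitly in the proof of Theorem~\ref{Prop.monotonicity}.
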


Let us conclude this overview of the minimax principle
by the following useful observation,
which is rarely made explicit in the literature.
We stress that eigenvalues at the bottom
of the essential spectrum are included 
in this proposition, too.
\begin{prop}\label{Prop.achieved}
If~\eqref{infsup.ess} is achieved, 
\ie~there exists a non-trivial $\psi \in \dom h$ such that
\begin{equation}\label{achieved}
  \lambda_k = \frac{h[\psi]}{\|\psi\|^2}
\end{equation}
and~$\psi$ is orthogonal to all the eigenvectors 
corresponding to 
$\lambda_1,\dots,\lambda_{k-1} < \lambda_\infty$ 
(\ie~no orthogonality condition if $k=1$ 
or if there are no eigenvalues
below the essential spectrum,
while $\lambda_k = \lambda_\infty$ is permitted),
then~$\lambda_k$ is an eigenvalue of~$H$
and~$\psi$ is a corresponding eigenvector. 
\end{prop}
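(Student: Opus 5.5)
The plan is a first-variation argument on the Rayleigh quotient, restricted to the orthogonal complement of the lower eigenvectors. Let $\psi_1,\dots,\psi_{k-1}$ be orthonormal eigenvectors for $\lambda_1,\dots,\lambda_{k-1}<\lambda_\infty$, which exist by Theorem~\ref{minimax}. Set
$$
\mathcal{D}:=\{\phi\in\dom h:\ (\psi_j,\phi)=0,\ j=1,\dots,k-1\},
$$
so that $\psi\in\mathcal{D}$. Normalise $\|\psi\|=1$.

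\textbf{Step 1: a variational characterisation on the complement.} I first claim that
$$
h[\phi]\geq\lambda_k\|\phi\|^2 \qquad \text{for every } \phi\in\mathcal{D}.
$$
To see this, apply the spectral theorem to $H$ restricted to the reducing subspace $\{\psi_1,\dots,\psi_{k-1}\}^\perp$. The spectrum of this restriction is $\sigma(H)$ with $\lambda_1,\dots,\lambda_{k-1}$ removed, counted with multiplicity. By Theorem~\ref{minimax}, the infimum of that spectrum is $\lambda_k$: it is either the next discrete eigenvalue or, if there is none below $\lambda_\infty$, it equals $\lambda_\infty=\lambda_k$. The formula~\eqref{Rayleigh}, applied to the restricted operator, whose form is $h|_{\mathcal{D}}$, then gives the claim.

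\textbf{Step 2: the weak Euler--Lagrange equation on $\mathcal{D}$.} For $\phi\in\mathcal{D}$ and $t\in\Real$, the vector $\psi+t\phi$ lies in $\mathcal{D}$. By Step 1 the function
$$
f(t):=h[\psi+t\phi]-\lambda_k\|\psi+t\phi\|^2
$$
is non-negative. By~\eqref{achieved} it vanishes at $t=0$, and it is a real quadratic polynomial in~$t$. Its derivative at $0$ must therefore vanish, which gives $\Re\{h(\phi,\psi)-\lambda_k(\phi,\psi)\}=0$. Replacing $\phi$ by $i\phi$ kills the imaginary part as well, so
$$
h(\phi,\psi)=\lambda_k(\phi,\psi) \qquad \text{for all } \phi\in\mathcal{D}.
$$

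\textbf{Step 3: extension to all of $\dom h$.} Take an arbitrary $\phi\in\dom h$ and decompose $\phi=\phi_0+\sum_j c_j\psi_j$ with $\phi_0\in\mathcal{D}$; this is possible because each $\psi_j\in\dom H\subset\dom h$. For the remaining terms,
$$
h(\psi_j,\psi)=\overline{h(\psi,\psi_j)}=\overline{(\psi,H\psi_j)}=\lambda_j(\psi_j,\psi)=0,
$$
and also $(\psi_j,\psi)=0$ by orthogonality. Hence $h(\phi,\psi)=\lambda_k(\phi,\psi)$ for every $\phi\in\dom h$.

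\textbf{Step 4: conclusion.} By the representation theorem (the characterisation of $\dom H$ as in~\eqref{representation}), this identity means that $\psi\in\dom H$ and $H\psi=\lambda_k\psi$. So $\lambda_k$ is an eigenvalue of $H$ and $\psi$ is a corresponding eigenvector.

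The main obstacle is Step 1, in particular the case $\lambda_k=\lambda_\infty$. There one must check that the bottom of the spectrum of $H$ on the complement equals $\lambda_\infty$. This holds because all the spectrum of $H$ strictly below $\lambda_\infty$ consists of exactly the eigenvalues already removed, counted with multiplicity.
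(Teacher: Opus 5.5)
Your proof is correct and follows the same route as the paper: restrict to the orthogonal complement of the lower eigenvectors, identify $\lambda_k$ as the bottom of the spectrum there, take the first variation at the minimiser, and conclude with the representation theorem. Your version spells out several points the paper leaves implicit: Step~3, which extends $h(\phi,\psi)=\lambda_k(\phi,\psi)$ from $\mathcal{D}$ to all of $\dom h$; the substitution $\phi\mapsto i\phi$, which kills the imaginary part; and the ``counted with multiplicity'' bookkeeping in Step~1, which is more precise than the paper's set identity $\sigma(H_2)=\sigma(H)\setminus\{\lambda_1,\dots,\lambda_{k-1}\}$ when $\lambda_{k-1}=\lambda_k$.
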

\begin{proof}
In the first part of the proof, 
we are inspired by \cite[proof of Lem.~XI~1.1]{Edmunds-Evans}.
Let $\psi_1,\dots,\psi_{k-1}$ denote the orthonormal eigenvectors 
of~$H$ corresponding to $\lambda_1,\dots,\lambda_{k-1}$.
Denoting $\mathcal{G} := \obal\{\psi_1,\dots,\psi_{k-1}\}$,
we have the orthogonal sum decomposition 
$\Hilbert = \mathcal{G} \oplus \mathcal{G}^\bot$.
Let~$H_1$ and~$H_2$ be the restrictions of~$H$
to~$\mathcal{G}$ and~$\mathcal{G}^\bot$, respectively.
Clearly, $H_1$ and~$H_2$ are self-adjoint operators in 
$\mathcal{G}$ and~$\mathcal{G}^\bot$, respectively,
and $H = H_1 \oplus H_2$.
It is easy to verify that 
$
  \sigma(H_2) = \sigma(H) \setminus
  \{\lambda_1,\dots,\lambda_{k-1}\}  
$.
By~\eqref{Rayleigh}, it thus follows that 
$$
  \lambda_k = 
  \inf_{\stackrel[\psi \not= 0]{}{\psi \in \mathcal{G}^\bot \cap \dom h}}
  \frac{h[\psi]}{\|\psi\|^2}  
  \,.
$$
Now, let us assume that this infimum is achieved.
That is, there exists a non-trivial vector  
$\psi \in \mathcal{G}^\bot \cap \dom h$ 
such that~\eqref{achieved} holds. 
In particular, $\psi$~is the critical point of the functional 
$$
  J[\psi] := \frac{h[\psi]}{\|\psi\|^2}
  \,.
$$
Consequently, the first variation
$$
\begin{aligned}
  \lim_{\eps \to 0} 
  \frac{J[\psi + \eps \phi] - J[\psi]}{\eps}
  &= \lim_{\eps \to 0} \frac{1}{\eps}
  \left(
  \frac{h[\psi] + 2\eps\Re h(\phi,\psi) + \eps^2 h[\phi]}
  {\|\psi\|^2 + 2\eps\Re(\phi,\psi) + \eps^2 \|\phi\|^2}
  - \frac{h[\psi]}{\|\psi\|^2}
  \right)
  \\
  &= 2 \Re h(\phi,\psi) - 2 \lambda_k \Re(\phi,\psi)
  \\
  &= 2 \Re \left[
  h(\phi,\psi) - \lambda_k (\phi,\psi)
  \right]
\end{aligned}  
$$
must vanish, 
where $\phi \in \mathcal{G}^\bot \cap\dom h$ is arbitrary.
Using the arbitrariness of~$\phi$,
we conclude that 
$$
  \forall \phi \in \mathcal{G}^\bot \cap\dom h
  \,, \qquad
  h(\phi,\psi) = \lambda_k (\phi,\psi) 
  \,.
$$
It follows that $\psi \in \dom H$ and $H\psi = \lambda_k\psi$.
\end{proof}

\subsection{Strings}\label{Sec.strings}
By a string we mean a one-dimensional bounded interval
$I_a := (0,a)$ of length $a>0$, see Figure~\ref{Fig.string}.

\begin{figure}[h!]
\begin{center}
\includegraphics[width=0.3\textwidth]{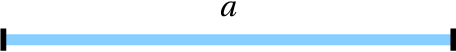}
\end{center}
\caption{Interval~$I_{a}$ of length $a$.}
\label{Fig.string}
\end{figure}

\subsubsection*{Dirichlet boundary conditions}
The point spectrum of the Dirichlet Laplacian $-\Delta_\alpha^{I_a}$
is determined by non-trivial solutions 
$\psi \in W^{2,2}(I_a)$
of the boundary-value problem
\begin{equation}\label{D.interval}
\left\{ 
\begin{aligned}
  -\psi'' &= \lambda \psi 
  && \mbox{in} \quad (0,a)
  \,,
  \\
  \psi &= 0 
  && \mbox{at} \quad 0, a
  \,.
\end{aligned}  
\right.
\end{equation}
Since $-\Delta_D^{I_a}$ is a non-negative operator
(\cf~Proposition~\ref{Prop.positivity}),
we know that $\lambda \geq 0$.  
Then the general solution of 
the differential equation of~\eqref{D.interval} reads
(the special case $\lambda=0$, when the fundamental solutions 
are given by linear functions, is covered by this formula
after taking the limit $\lambda \to 0$)
\begin{equation}\label{D.fundamental}
  \psi(x) = A \, \frac{\sin(\sqrt{\lambda}x)}{{\sqrt{\lambda}}} 
  + B \cos(\sqrt{\lambda}x)
  \,,
\end{equation}
where $A,B \in \Com$ are constants to be determined 
by the boundary conditions of~\eqref{D.interval}. 
Requiring $\psi(0)=0$ implies $B=0$.
The other boundary condition $\psi(a)=0$ yields
\begin{equation}\label{D.trans}
  \frac{\sin(\sqrt{\lambda}a)}{\sqrt{\lambda}} = 0
  \,,
\end{equation}
\ie\ $\sqrt{\lambda} a \in \pi\Nat^*$ 
(the case of $\sqrt{\lambda} = 0$ 
leads to the impossibility $a=0$,
so it must be excluded).
In summary,
\begin{equation}\label{D.spec}
  \sigma_\mathrm{p}\big(-\Delta_D^{I_a}\big)
  = \left\{ 
  \lambda_n^D(I_a) := \left(\frac{n\pi}{a}\right)^2  
  \right\}_{n = 1}^\infty
  .
\end{equation}
The normalised eigenfunctions corresponding to 
the eigenvalues $\lambda_n^D(I_a)$ are given by
\begin{equation}\label{D.efs}
  \psi_n^D(x)
  := 
  \sqrt{\frac{2}{a}} \, \sin\left(\frac{n\pi}{a} x\right)
  .  
\end{equation}

It is a standard result of Fourier analysis 
(see, \eg, \cite[Sec.~7]{Kaplan})
that $\{\psi_n^D\}_{n \in \Nat^*}$ is 
a complete orthonormal set in $\sii(I_a)$.
The minimax principle (Theorem~\ref{minimax}) implies
$\sigma_\mathrm{ess}(-\Delta_D^{I_a}) = \varnothing$.
Hence the spectrum of $-\Delta_D^{I_a}$ is purely discrete
and it is exhausted by the eigenvalues~\eqref{D.spec}.

Interpreting the eigenvalues as squares of resonant frequencies
of a vibrating string with fixed ends, we get the intuitive result
that enlarging the string leads to lower tones.
At the same time, the result tells us that 
enlarging a box to which a quantum particle is constrained
diminishes its bound-state energies.

\subsubsection*{Robin boundary conditions}
The point spectrum of the Robin Laplacian $-\Delta_\alpha^{I_a}$
with $\alpha \in \Real$
is determined by non-trivial solutions 
$\psi \in W^{2,2}(I_a)$
of the boundary-value problem
\begin{equation}\label{interval}
\left\{ 
\begin{aligned}
  -\psi'' &= \lambda \psi 
  && \mbox{in} \quad (0,a)
  \,,
  \\
  -\psi' +\alpha \psi &= 0 
  && \mbox{at} \quad 0
  \,,
  \\
  \psi' +\alpha \psi &= 0 
  && \mbox{at} \quad a
  \,.
\end{aligned}  
\right.
\end{equation}
Notice the switch of sign at the boundary conditions,
which is due to the fact that the Robin problem in~\eqref{problem}
is defined through the \emph{outward} pointing normal. 
The general solution of the differential equation of~\eqref{interval} 
again reads~\eqref{D.fundamental}
(for the case $\lambda<0$ we take $\sqrt{\lambda} := i \sqrt{|\lambda|}$,
which leads to hyperbolic functions),
where $A,B \in \Com$ are constants to be determined 
by the boundary conditions of~\eqref{interval} now. 
The latter can conveniently be written in the matrix form
$
  M_\lambda 
  \begin{psmallmatrix}
    A \\ B
  \end{psmallmatrix} 
  = 
  \begin{psmallmatrix}
    0 \\ 0
  \end{psmallmatrix} 
$
with
\begin{equation}\label{det}
  M_\lambda :=
  \begin{pmatrix}
    -1 & \alpha \\
    \cos(\sqrt{\lambda}a) 
    + \alpha \, 
    \frac{\displaystyle\sin(\sqrt{\lambda}a)}{\displaystyle\sqrt{\lambda}}
    & -\sqrt{\lambda} \sin(\sqrt{\lambda}a) 
    + \alpha \, \cos(\sqrt{\lambda}a)
  \end{pmatrix} 
  .
\end{equation}
Since we are looking for solutions with $A,B$ 
not being simultaneously equal to zero,
the eigenvalues are determined by the implicit equation
\begin{equation}\label{trans}
  0 \stackrel[]{\downarrow}{=} 
  \det(M_\lambda) 
  = - 2 \, \alpha \, \cos(\sqrt{\lambda}\,a) 
  + (\lambda-\alpha^2) \, \frac{\sin(\sqrt{\lambda}\,a)}{\sqrt{\lambda}} 
\end{equation}
(here the arrow points to the requirement,
the other equality is the computation of the determinant).
Note that~\eqref{trans} formally coincides with~\eqref{D.trans}
after dividing by~$\alpha^2$ and sending~$\alpha$ to infinity.  
Let us denote the solutions of~\eqref{trans} by $\lambda_n^\alpha(I_a)$,
where $\{\lambda_n^\alpha(I_a)\}_{n=1}^\infty$ is an increasing sequence. 
The corresponding eigenfunctions will be denoted by~$\psi_n^\alpha$.
As usual, we normalise them in such a way that 
$\psi_n^\alpha$ has norm~$1$ in $\sii(I_a)$ for all $n \in \Nat^*$,
moreover we choose~$\psi_1^\alpha$ positive.

If $\alpha=0$, we \emph{a priori} know that $\lambda \geq 0$
due to Proposition~\ref{Prop.positivity}.
The implicit equation~\eqref{trans} reduces to 
$\sqrt{\lambda} \sin(\sqrt{\lambda}\,a) = 0$, 
\ie\ $\sqrt{\lambda}\,a \in \pi\Nat$ (now the case $\sqrt{\lambda} = 0$
is admissible, leading to a constant eigenfunction). 
Consequently,
\begin{equation}\label{N.spec.point} 
  \sigma_\mathrm{p}\big(-\Delta_N^{I_a}\big)
  = \left\{ 
  \lambda_n^N(I_a) = \left(\frac{(n-1)\pi}{a}\right)^2  
  \right\}_{n = 1}^\infty
  . 
\end{equation}
The normalised eigenfunctions corresponding to 
the eigenvalues $\lambda_n^N(I_a)$ are given by
\begin{equation}\label{N.efs}
  \psi_n^N(x)
  = 
  \begin{cases}
  \displaystyle
  \sqrt{\frac{1}{a}}
  & \mbox{if} \quad n = 1 \,,
  \medskip \\
  \displaystyle
  \sqrt{\frac{2}{a}} \, \cos\left(\frac{(n-1)\pi}{a} x\right)
  & \mbox{if} \quad n \geq 2 \,. 
  \end{cases} 
\end{equation}
Again, $\{\psi_n^N\}_{n=1}^\infty$ is 
a complete orthonormal set in~$\sii(I_a)$,
so the spectrum of $-\Delta_N^{I_a}$ is purely discrete
and it is exhausted by the eigenvalues~\eqref{N.spec.point}.

The Neumann spectrum~\eqref{N.spec.point} 
looks like the Dirichlet spectrum~\eqref{N.spec.point},
but the former additionally contains the zero more.
As in the Dirichlet case, 
the result~\eqref{N.spec.point} confirms the intuition that
enlarging the length of a vibrating string with free ends
leads to lower tones.
It also explains why the \emph{piccolo} produces higher tones 
than the \emph{flute}: both can be modelled by a tube with open ends
but the piccolo is half of the length of the flute's. 

If $\alpha > 0$, we again have $\lambda \geq 0$
due to Proposition~\ref{Prop.positivity}.
In this case, however, $\lambda = 0$ is not an admissible 
solution of~\eqref{trans}, so the spectrum of $-\Delta_\alpha^{I_a}$
is positive.

If $\alpha < 0$, the situation changes dramatically. 
First of all, there is always a negative eigenvalue. 
If $\alpha > -2/a$, there is just one negative eigenvalue
and the remaining eigenvalues are positive.
If $\alpha = -2/a$, there is one negative eigenvalue,
one zero eigenvalue and the remaining eigenvalues are positive.
If $\alpha < -2/a$, there are two negative eigenvalues
and the remaining eigenvalues are positive.
See Figure~\ref{Fig.Robin} for a visualisation.

For every $\alpha \in \Real$. the spectrum of $-\Delta_\alpha^{I_a}$
is purely discrete. We have already seen it for the Neumann case $\alpha=0$.
It follows that $\dom(\delta_N^{I_a}) = W^{1,2}(I_a)$ 
is compactly embedded in $\sii(I_a)$.
The other values of~$\alpha$ can be covered 
by noticing that $\dom(\delta_\alpha^{I_a}) = \dom(\delta_0^{I_a})$ 
by definition of the Robin Laplacian. 

\subsubsection*{Combined boundary conditions}
It is also possible to consider 
the one-dimensional operator $-\Delta_{DN}^{I_a}$
that acts as the Laplacian in the interval~$I_a$,
subject to a Dirichlet (respectively, Neumann) boundary condition
at~$0$ (respectively, $a$). 
Proceeding as above, we obtain that the spectrum is purely discrete
and equal to the set
\begin{equation}\label{DN.spec.bis}
  \sigma_\mathrm{p}\big(-\Delta_{DN}^{I_a}\big)
  = \left\{ 
  \left(\frac{(2n-1)\pi}{2a}\right)^2  
  \right\}_{n = 1}^\infty
  \,.
\end{equation}
The corresponding eigenfunctions are given by
\begin{equation}\label{DN.efs}
  \psi_n^{DN}(x) := 
  \sqrt{\frac{2}{a}} \, \sin\left(\frac{(2n-1)\pi}{2a} \, x\right)
\end{equation}
and they form a complete orthonormal set in $\sii(I_a)$.

The operator $-\Delta_{DN}^{I_a}$ is a classical model for
resonant vibrations of a string with one end fixed and the other free.
It also models standing waves in a \emph{clarinet},
\ie\ a tube with one open end and one closed end (at the reed).
On the other hand, $-\Delta_{N}^{I_a}$ models the situation of a \emph{flute},
\ie\ a tube with both ends open.
Considering the hypothetical situation of a clarinet and a flute
of the same length, 
we see by comparing~\eqref{DN.spec.bis} with~\eqref{N.spec.point} 
that the clarinet tones are lower than the tones of the flute 
(the zero mode does not count).
In a musical language, the clarinet is in B-flat major 
whereas the flute is in C major.

\begin{figure}[h!]
\begin{center}
\includegraphics[width=0.7\textwidth]{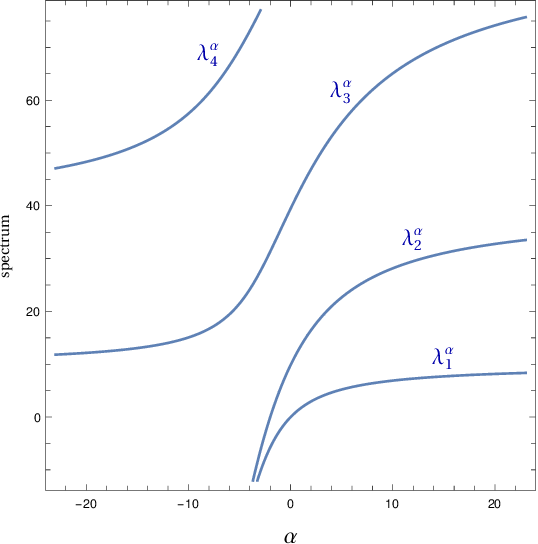}
\end{center}
\caption{Eigenvalues of $-\Delta_\alpha^{I_a}$ as functions of~$\alpha$
for $a:=1$.}
\label{Fig.Robin}
\end{figure}

\subsection{Rectangular boxes}\label{Sec.pipeds}
Given positive numbers $a_1, \dots ,a_d$, 
a \emph{rectangular box} of edges having the lengths $a_1, \dots, a_d$
is the Cartesian product
\begin{equation}\label{parallelepiped}
  I_{a_1,\dots,a_d} 
  := I_{a_1} \times \dots \times I_{a_d}
  \,,
\end{equation}
see Figure~\ref{Fig.rectangle}.
If all the edges have the same length~$a$,
we denote by $Q_a := I_{a,\dots,a}$ 
a \emph{cube} of edges having the length~$a$.

\begin{figure}[h!]
\begin{center}
\begin{tabular}{cc}
\includegraphics[width=0.4\textwidth]{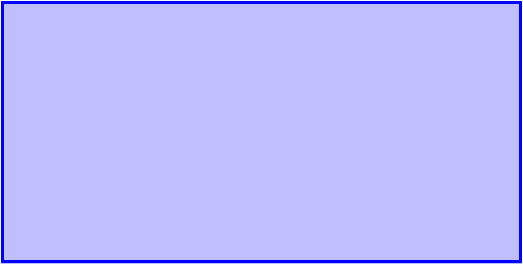}
& \includegraphics[width=0.4\textwidth]{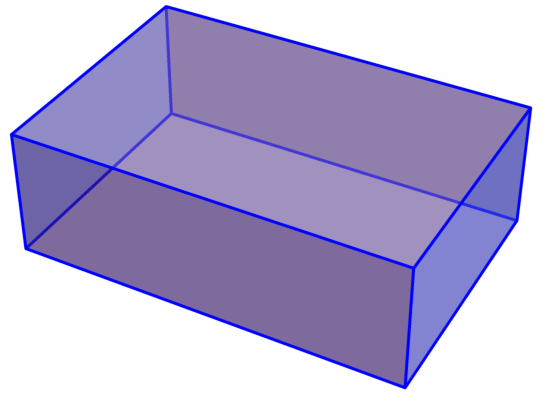}
\end{tabular}
\end{center}
\caption{Rectangular boxes in two and three dimensions.}
\label{Fig.rectangle}
\end{figure}

The spectral problem for the Robin Laplacian in $I_{a_1,\dots,a_d}$
can be solved by a separation of variables.
Consequently,
\begin{equation}\label{evs.piped}
  \sigma_\mathrm{p}\big(-\Delta_\alpha^{I_{a_1,\dots,a_d}}\big)
  = \left\{
   \lambda_{n_1}^\alpha(I_{a_1}) + \dots + \lambda_{n_d}^\alpha(I_{a_d}) 
  \right\}_{n_1,\dots,n_d = 1}^\infty
  \,.
\end{equation}
The corresponding normalised eigenfunctions are given by
$$
  \psi_{n_1,\dots,n_d}^\alpha(x) 
  := \psi_{n_1}^\alpha(x_1) \dots \psi_{n_d}^\alpha(x_d)
  \,.
$$
Since they form a complete orthonormal set in $\sii(I_{a_1,\dots,a_d})$,
the spectrum of $-\Delta_\alpha^{I_{a_1,\dots,a_d}}$ is purely discrete
and it is exhausted by the eigenvalues~\eqref{evs.piped}.

Note that the lowest eigenvalue is simple 
and the corresponding eigenfunction is nowhere zero 
(in fact, it is positive for our normalisation).
As usual in spectral theory, we arrange the eigenvalues
into a non-decreasing sequence 
$$
\begin{aligned}
  \sigma_\mathrm{p}\big(-\Delta_\alpha^{I_{a_1,\dots,a_d}}\big)
  &= \{\lambda_n^\alpha(I_{a_1,\dots,a_d})\}_{n=1}^\infty 
  \\
  &= \{\lambda_1^\alpha(I_{a_1,\dots,a_d}) 
  < \lambda_2^\alpha(I_{a_1,\dots,a_d}) 
  \leq \lambda_3^\alpha(I_{a_1,\dots,a_d}) \leq \dots\}
  \,,
\end{aligned}  
$$
where each eigenvalue is repeated according to its multiplicity
(so the sequence is \emph{not strictly} increasing 
if there are degeneracies).
The corresponding set of eigenfunctions will be denoted by
$\{\psi_n^\alpha\}_{n\in\Nat^*}$.
It is not completely trivial to obtain this non-decreasing sequence 
of eigenvalues for higher-dimensional rectangular boxes
and analyse the degeneracies 
(try for a square and a three-dimensional cube).

\subsection{Monotonicity of Dirichlet eigenvalues}
Given any open set $\Omega \subset \Real^d$, 
we denote by $\lambda_n^\alpha(\Omega)$ the numbers 
as defined by the minimax principle~\eqref{infsup.ess} 
with $H := -\Delta_\alpha^\Omega$.
They represent either discrete eigenvalues
(repeated according to multiplicities)
or the lowest point of the essential spectrum of $-\Delta_\alpha^\Omega$.
That is why we use the quotation marks in the following theorem.

\begin{theo}[Monotonicity of Dirichlet ``eigenvalues''] 
\label{Prop.monotonicity}
Given arbitrary open sets $\Omega_1,\Omega_2$, 
one has
$$
  \Omega_1 \subset \Omega_2
  \qquad \Longrightarrow \qquad
  \forall n \in \Nat^* \,, \quad
  \lambda_n^D(\Omega_1) \geq \lambda_n^D(\Omega_2)
  \,.
$$
\end{theo}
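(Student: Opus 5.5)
The plan is to deduce the statement directly from the minimax principle in its form-domain version, that is, the second expression in~\eqref{infsup.ess} of Theorem~\ref{minimax}, by embedding the form domain of $-\Delta_D^{\Omega_1}$ into that of $-\Delta_D^{\Omega_2}$. The natural map is extension by zero. Let $E : \sii(\Omega_1) \to \sii(\Omega_2)$ send $\psi$ to $\tilde\psi$, equal to $\psi$ on~$\Omega_1$ and to $0$ on $\Omega_2\setminus\Omega_1$. This map is linear and isometric, so it is injective and preserves dimensions of subspaces.

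The key step is to show that $E$ maps $W_0^{1,2}(\Omega_1)$ into $W_0^{1,2}(\Omega_2)$, with $\nabla\tilde\psi$ equal to the zero-extension of $\nabla\psi$. For $\psi \in C_0^\infty(\Omega_1)$ this is obvious: $\tilde\psi \in C_0^\infty(\Omega_2)$ because $\supp\psi$ is a compact subset of $\Omega_1 \subset \Omega_2$. For general $\psi \in W_0^{1,2}(\Omega_1)$, I take a sequence $\psi_k \in C_0^\infty(\Omega_1)$ converging to~$\psi$ in the norm $(\|\cdot\|^2+\|\nabla\cdot\|^2)^{1/2}$. Since $E$ is isometric both for functions and for gradients on smooth compactly supported functions, the sequence $\tilde\psi_k$ is Cauchy in $W^{1,2}(\Omega_2)$. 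It therefore converges to an element of $W_0^{1,2}(\Omega_2)$, which must coincide with $\tilde\psi$ because $\tilde\psi_k\to\tilde\psi$ in $\sii(\Omega_2)$. Consequently
\begin{equation*}
  \delta_D^{\Omega_2}[\tilde\psi] = \delta_D^{\Omega_1}[\psi]
  \qquad \mbox{and} \qquad
  \|\tilde\psi\|_{\sii(\Omega_2)} = \|\psi\|_{\sii(\Omega_1)}
  \,.
\end{equation*}
This is the only step that uses anything specific to the Dirichlet condition. It would fail for Neumann, since zero extension of a general $W^{1,2}$ function creates jumps. It is also the place where I would be careful, although the density argument above seems sufficient and requires no regularity of the open sets.

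With this in hand, the comparison is immediate. For any $n$-dimensional subspace $\mathscr{L}_n \subset W_0^{1,2}(\Omega_1)$, the image $E\mathscr{L}_n$ is an $n$-dimensional subspace of $W_0^{1,2}(\Omega_2)$, and the Rayleigh quotients agree pointwise under~$E$. Hence
\begin{equation*}
  \sup_{\psi\in \mathscr{L}_n\setminus\{0\}} \frac{\delta_D^{\Omega_1}[\psi]}{\|\psi\|^2}
  = \sup_{\phi\in E\mathscr{L}_n\setminus\{0\}} \frac{\delta_D^{\Omega_2}[\phi]}{\|\phi\|^2}
  \geq \lambda_n^D(\Omega_2)
  \,,
\end{equation*}
and taking the infimum over $\mathscr{L}_n$ gives $\lambda_n^D(\Omega_1)\geq\lambda_n^D(\Omega_2)$.

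One minor point concerns the case where $\sii(\Omega_1)$ is the zero space or finite dimensional. This cannot happen for a nonempty open set, so Theorem~\ref{minimax} applies. The numbers involved may also coincide with the bottom of the essential spectrum; this causes no difficulty, because the argument compares only the variational quantities~\eqref{infsup.ess}, which is exactly the quantity the theorem's quotation marks refer to.
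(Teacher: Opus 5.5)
Your proposal is correct and follows the paper's proof: extend $W_0^{1,2}(\Omega_1)$ functions by zero into $W_0^{1,2}(\Omega_2)$, note that the Rayleigh quotients are unchanged, and observe that the form-domain infimum in~\eqref{infsup.ess} over the smaller family of subspaces can only be larger. The one difference is that you justify the zero-extension step by a density argument, which the paper simply asserts; your argument is valid for arbitrary open sets.
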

\begin{proof}
If $\psi \in W_0^{1,2}(\Omega_1)$, then the \emph{trivial extension}
\begin{equation}\label{extension}
  \tilde{\psi}(x) := 
  \begin{cases}
    \psi(x) & \mbox{if} \quad x \in \Omega_1 \,,
    \\
    0 & \mbox{if} \quad x \not\in \Omega_1 \,,
  \end{cases}
\end{equation}
belongs to $W_0^{1,2}(\Omega_2)$.
Consequently, 
$W_0^{1,2}(\Omega_1) \subset W_0^{1,2}(\Omega_2)$.
Using~\eqref{infsup.ess}, 
we therefore get 
$$
\begin{aligned}
  \lambda_n^D(\Omega_2)
  &= \inf_{\stackrel[\dim\mathscr{L}_n=n]{}{\mathscr{L}_n \subset W_0^{1,2}(\Omega_2)}} \
  \sup_{\stackrel[\psi \not= 0]{}{\psi \in \mathscr{L}_n}}
  \frac{\displaystyle\int_{\Omega_2} |\nabla\psi|^2}
  {\displaystyle\int_{\Omega_2} |\psi|^2}
  \\
  &\leq
  \inf_{\stackrel[\dim\mathscr{L}_n=n]{}{\mathscr{L}_n \subset W_0^{1,2}(\Omega_1)}} \
  \sup_{\stackrel[\psi \not= 0]{}{\psi \in \mathscr{L}_n}}
  \frac{\displaystyle\int_{\Omega_2} |\nabla\psi|^2}
  {\displaystyle\int_{\Omega_2} |\psi|^2} 
  \\
  &=
  \inf_{\stackrel[\dim\mathscr{L}_n=n]{}{\mathscr{L}_n \subset W_0^{1,2}(\Omega_1)}} \
  \sup_{\stackrel[\psi \not= 0]{}{\psi \in \mathscr{L}_n}}
  \frac{\displaystyle\int_{\Omega_1} |\nabla\psi|^2}
  {\displaystyle\int_{\Omega_1} |\psi|^2} 
  = \lambda_n^D(\Omega_1)
\end{aligned}  
$$  
for every $n \in \Nat^*$.
\end{proof}

It follows from Theorem~\ref{Prop.monotonicity}
that the larger membrane produces a lower fundamental tone
(or a quantum particle in a larger cavity has a lower ground-state energy),
which is in agreement with a physical intuition.

\begin{center}
\fbox{The monotonicity does not hold for Neumann eigenvalues !}
\end{center}
\begin{exam}[Non-monotonicity of Neumann eigenvalues]
A classical counterexample is given in 
the left realisation in Figure~\ref{Fig.counter.Neumann}:
an inscribed thin rectangle~$\Omega_1$ 
along the diagonal of a circumscribed rectangle~$\Omega_2$.
Of course, there is no contradiction for the lowest eigenvalue,
because $\lambda_1^N(\Omega_1) = 0 = \lambda_1^N(\Omega_2)$
due to the availability of the constant eigenfunction. 
However, we get a contradiction already for the second eigenvalue.
Let the lengths of the sides of the circumscribed (respectively inscribed)
rectangle be $a_2 \geq b_2$ (respectively $a_1 \geq b_1$).
Then
$$
\begin{aligned}
  \lambda_2^N(\Omega_1)
  &= \min\Big\{
  \mbox{$\big(\frac{\pi}{a_1}\big)^2$}+0,
  0+ \mbox{$\big(\frac{\pi}{b_1}\big)^2$}\Big\}
  =  \mbox{$\big(\frac{\pi}{a_1}\big)^2$} \,,
  \\
  \lambda_2^N(\Omega_2)
  &= \min\Big\{
 \mbox{$\big(\frac{\pi}{a_2}\big)^2$}+0,
  0+\mbox{$\big(\frac{\pi}{b_2}\big)^2$}
  \Big\}
  =  \mbox{$\big(\frac{\pi}{a_2}\big)^2$} \,.
\end{aligned}  
$$
The geometry dictates
$$
  a_1=\sqrt{a_2^2+b_2^2}- a_2 \, \frac{b_1}{b_2}
  \qquad \mbox{with} \qquad 
  b_1 < \frac{b_2}{a_2} \sqrt{a_2^2+b_2^2}
  \,.
$$ 
Consequently, $a_1 > a_2$ for all sufficiently small~$b_1$
(the result is intuitively clear from the picture), namely for
$$
  b_1 < \frac{b_2}{a_2} \sqrt{a_2^2+b_2^2} - b_2 
  \,.
$$
Under this condition, we get
$$
  \lambda_2^N(\Omega_2) > \lambda_2^N(\Omega_1)
  \,,
$$
which is a reversed inequality with respect to that of 
Theorem~\ref{Prop.monotonicity}.
\end{exam}
\begin{figure}[h!]
\begin{center}
\setlength{\unitlength}{2.8cm}
\begin{picture}(2,1.2)(0,-0.2) 
\thicklines
\put(0,0){\line(1,0){2}}
\put(0,1){\line(1,0){2}}
\put(0,0){\line(0,1){1}}
\put(2,0){\line(0,1){1}}
\put(0.2,0){\line(-1,2){0.08}}
\put(0.2,0){\line(2,1){1.68}}
\put(0.12,0.16){\line(2,1){1.68}}
\put(1.88,0.84){\line(-1,2){0.08}}
\thinlines
\put(1.8,0.1){\small $\Omega_2$}
\put(1.7,0.85){\small $\Omega_1$}
\end{picture}
\quad
\psset{unit=1cm}
\begin{pspicture}(2,2.2)(-2,-2.2)
\psset{linewidth=0.8pt}
\pscircle(0,0){2}
\thicklines
\put(1.9,0){\line(0,1){0.55}}
\put(1.9,0){\line(0,-1){0.55}}
\put(-1.9,0){\line(0,1){0.55}}
\put(-1.9,0){\line(0,-1){0.55}}
\put(-1.9,-0.55){\line(1,0){3.8}}
\put(-1.9,0.55){\line(1,0){3.8}}
\put(0,-1.7){$\Omega_2$}
\put(0,-0.1){$\Omega_1$}
\end{pspicture}
\caption{Counterexamples to the monotonicity of Neumann eigenvalues.}%
\label{Fig.counter.Neumann}
\end{center}
\end{figure}
\begin{exam}[Non-monotonicity of Neumann eigenvalues]
Yet another counterexample to the monotonicity of Neumann 
eigenvalues is presented in 
right part of Figure~\ref{Fig.counter.Neumann}.
If the radius of the circumscribed disk~$\Omega_2$ is~$R$
and the lengths of the sides of the inscribed rectangle are $a \geq b$,
then $R^2=(a/2)^2+(b/2)^2$, 
so that the second Neumann eigenvalues satisfy
$$
  \lambda_2^N(\Omega_2)=(j_{1,1}'/R)^2 > (\pi/a)^2 = \lambda_2^N(\Omega_1)
$$
for all sufficiently small~$b$,
where~$j_{1,1}'$ denotes the first root of~$J_1'(x)=0$.
Indeed, $j_{1,1}' \approx 1.84$, while $\pi/2 \approx 1.57$.
(On the other hand, 
if the disk~$\Omega_2$ is inscribed in the rectangle~$\Omega_1$,
the inequality $\lambda_2^N(\Omega_2) < \lambda_2^N(\Omega_1)$ 
remains valid for all $a,b \geq 2R$.)
\end{exam}

As a replacement of domain monotonicity for the Neumann eigenvalues,
in 2023 Funano~\cite{Funano_2023} proved that 
that there exists a universal constant that controls by how much 
the quotient between two eigenvalues of 
convex domains~$\Omega_1$ and~$\Omega_2$ can vary, 
when~$\Omega_1$ is contained in~$\Omega_2$.
More recently, Freitas and Kennedy~\cite{Freitas-Kennedy}
found the optimal constant in the case of 
the lowest non-trivial eigenvalue.

\subsection{Absence of the essential spectrum}
Now we are in a position to establish 
the desired result that the spectrum of the Dirichlet Laplacian
in bounded sets is purely discrete.

\begin{theo}\label{Thm.bounded}
Let $\Omega$ be any bounded open set. 
Then
$$
  \sigma_\mathrm{ess}(-\Delta_D^{\Omega})  
  = \varnothing \,.
$$
\end{theo}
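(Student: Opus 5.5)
The natural route combines domain monotonicity (Theorem~\ref{Prop.monotonicity}) with the explicit spectrum of a cube, and then uses the minimax principle (Theorem~\ref{minimax}). Since $\Omega$ is bounded, it is contained in some cube: after a translation (which does not affect the spectrum), $\Omega \subset Q_a$ for some $a>0$. Theorem~\ref{Prop.monotonicity} then gives
$$
  \lambda_n^D(\Omega) \geq \lambda_n^D(Q_a)
  \qquad \mbox{for all} \quad n \in \Nat^* \,.
$$

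The spectrum of the cube is explicit. By Section~\ref{Sec.pipeds}, the Dirichlet spectrum of $Q_a$ is purely discrete and consists of the numbers $\sum_{j=1}^d (n_j\pi/a)^2$ with $n_1,\dots,n_d \in \Nat^*$. This set has no finite accumulation point, and each value occurs only finitely many times. Hence the non-decreasing rearrangement satisfies $\lambda_n^D(Q_a) \to \infty$. Alternatively, by part~(1) of Theorem~\ref{minimax}, the emptiness of the essential spectrum of $-\Delta_D^{Q_a}$ already forces $\lim_{n\to\infty}\lambda_n^D(Q_a)=+\infty$.

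It follows that $\lambda_n^D(\Omega) \to \infty$ as $n\to\infty$. Part~(1) of Theorem~\ref{minimax} applied to $H = -\Delta_D^\Omega$ gives $\inf\sigma_\mathrm{ess}(-\Delta_D^\Omega) = \lim_n \lambda_n^D(\Omega) = +\infty$. By the convention in that theorem, this means $\sigma_\mathrm{ess}(-\Delta_D^\Omega)=\varnothing$. The operator is bounded from below by Proposition~\ref{Prop.positivity}, so Theorem~\ref{minimax} applies. We also need $\dim \sii(\Omega)=\infty$, which holds for any non-empty open set.

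The only step needing some care is the claim $\lambda_n^D(Q_a)\to\infty$. One could worry that the numbers $\lambda_n^D(Q_a)$ of~\eqref{infsup.ess} might not coincide with the ordered eigenvalue list from separation of variables. This is settled by the completeness of the product eigenfunctions: the essential spectrum of the cube is empty, so Theorem~\ref{minimax}(1) yields the divergence directly. Everything else is a direct combination of results already stated.
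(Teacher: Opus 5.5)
Your proposal is correct and follows essentially the same route as the paper's proof: enclose $\Omega$ in a cube, apply the Dirichlet monotonicity of Theorem~\ref{Prop.monotonicity}, and use that the cube's spectrum is purely discrete, so that $\lim_{n\to\infty}\lambda_n^D(\Omega) = +\infty$ by Theorem~\ref{minimax}. The extra checks you add are harmless refinements of the paper's three-line argument: translation invariance, boundedness from below, $\dim\sii(\Omega)=\infty$, and the identification of the minimax numbers with the ordered cube eigenvalues.
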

\begin{proof}
Since~$\Omega$ is bounded, there exists a cube~$Q$ 
such that $\Omega \subset Q$. 
By Theorem~\ref{Prop.monotonicity},
one has 
$
  \lambda_n^D(\Omega) \geq \lambda_n^D(Q)
$
for every $n \in \Nat^*$.
Taking the limit $n\to\infty$, 
we get 
$$
  \lambda_\infty^D(\Omega) \geq \lambda_\infty^D(Q) = \infty
  \,.
$$
Here the equality holds because we know that 
the spectrum of $-\Delta_D^{Q}$ is purely discrete.
\end{proof}
\begin{center}
\fbox{The absence does not hold for the Neumann Laplacian !}
\end{center}
\begin{exam}[Rooms and passages]\label{Ex.rooms}
A classical counterexample for the Neumann Laplacian consists 
of a planar domain~$\Omega$ depicted in Figure~\ref{Fig.rooms},
which is originally due to Fraenkel in 1979~\cite{Fraenkel_1979}
(see also \cite[Sec.~V.4.9]{Edmunds-Evans}).  
It is made up of an infinite sequence
of squares~$\mathcal{R}_j$ (``rooms'') 
of decreasing sizes $h_j$  
joined together by thin pipes~$\mathcal{P}_j$ (``passages'')
of diminishing cross-sections~$\delta_j$. 

More specifically, defining $h_0 := 0$, $h_j := j^{-3/2}$
and $\delta_j := j^{-6}$ for every $j \in \Nat^*$,
we introduce an $n^\mathrm{th}$ room
$$
  \mathcal{R}_n := \big(h_0+h_1+\dots+h_{n-1},h_0+h_1+\dots+h_{n-1}+h_n\big) 
  \times \left(-\frac{h_n}{2},\frac{h_n}{2}\right)
$$
and an $n^\mathrm{th}$ passage
$$
  \mathcal{P}_n := \left[h_0+h_1+\dots+h_{n-1},h_0+h_1+\dots+h_{n-1}+h_n\right]
  \times \left(-\frac{\delta_n}{2},\frac{\delta_n}{2}\right)
$$
and join together the odd rooms with even passages by setting 
$$
  \Omega := 
  \bigcup_{\stackrel[n \ \textrm{odd}]{}{n \in \Nat^*}}
  \mathcal{R}_n
  \cup
  \bigcup_{\stackrel[n \ \textrm{even}]{}{n \in \Nat^*}}
  \mathcal{P}_n
  \,.
$$

The domain~$\Omega$ is bounded because 
$$
  \displaystyle \sum_{j=1}^\infty h_j = \zeta(3/2) =: l < \infty
  \,,
$$ 
where~$\zeta$ denotes the Riemann zeta function;
in fact, $\Omega \subset [0,l] \times [-1/2,1/2]$.
However, the boundary~$\partial\Omega$ is not of class~$C^0$
because of the troublesome point $(l,0) \in \partial\Omega$.
\end{exam}
\begin{figure}[h!]
\begin{center}
\resizebox{1.75\textwidth}{!}{\begin{pspicture}(-1,-5)(27,5)
\psset{unit=0.5cm,origin={0,0}}
\thicklines
\psline[linestyle=dotted](-1,0)(2.8,0)
\psline[linestyle=dotted](4.2,0)(9,0)
\psline[linestyle=dotted](10.1,0)(12.1,0)
\psline[linestyle=dotted](13.3,0)(15.2,0)
\psline[arrows=->,linestyle=dotted](16.1,0)(26,0)
\rput(26,-0.5){$x_1$}
\psline[arrows=->,linestyle=dotted](0,-10)(0,10)
\rput(0.7,10){$x_2$}
\psframe[linestyle=dotted](-0.1,-3.6)(24,3.6)
\rput(24.3,-0.5){$l$}
\psline[linewidth=.1cm]{C-C}(0,-3.5)(0,3.5)
\psline[linewidth=.1cm]{C-C}(0,3.5)(7,3.5)
\psline[linewidth=.1cm]{C-C}(0,-3.5)(7,-3.5)
\psline[linewidth=.1cm]{C-C}(7,3.5)(7,1)
\psline[linewidth=.1cm]{C-C}(7,-3.5)(7,-1)
\psline[arrows=<->](0,-7)(7,-7)
\rput(3.5,-7.6){$h_1$}
\psline[arrows=->](3.5,-5)(3.5,-3.6)
\psline[arrows=->](3.5,5)(3.5,3.6)
\rput(3.5,5.7){$h_1$}
\rput(3.5,0){$\mathcal{R}_1$}
\psline[linewidth=.1cm]{C-C}(7,1)(11,1)
\psline[linewidth=.1cm]{C-C}(7,-1)(11,-1)
\psline[arrows=<->](7,-7)(11,-7)
\rput(9,-7.6){$h_2$}
\psline[arrows=->](9,-5)(9,-1.1)
\psline[arrows=->](9,5)(9,1.1)
\rput(9,5.7){$\delta_2$}
\rput(9.5,0){$\mathcal{P}_2$}
\psline[linewidth=.1cm]{C-C}(11,1)(11,1.75)
\psline[linewidth=.1cm]{C-C}(11,-1)(11,-1.75)
\psline[linewidth=.1cm]{C-C}(11,1.75)(14.5,1.75)
\psline[linewidth=.1cm]{C-C}(11,-1.75)(14.5,-1.75)
\psline[linewidth=.1cm]{C-C}(14.5,1.75)(14.5,0.5)
\psline[linewidth=.1cm]{C-C}(14.5,-1.75)(14.5,-0.5)
\psline[arrows=<->](11,-7)(14.5,-7)
\rput(12.75,-7.6){$h_3$}
\psline[arrows=->](12.75,-5)(12.75,-1.85)
\psline[arrows=->](12.75,5)(12.75,1.85)
\rput(12.75,5.7){$h_3$}
\rput(12.75,0){$\mathcal{R}_3$}
\psline[linewidth=.1cm]{C-C}(14.5,0.5)(17,0.5)
\psline[linewidth=.1cm]{C-C}(14.5,-0.5)(17,-0.5)
\psline[arrows=<->](14.5,-7)(17,-7)
\rput(15.75,-7.6){$h_4$}
\psline[arrows=->](15.75,-5)(15.75,-0.6)
\psline[arrows=->](15.75,5)(15.75,0.6)
\rput(15.75,5.7){$\delta_4$}
\rput(15.75,0){$\mathcal{P}_4$}
\psline[linewidth=.1cm]{C-C}(17,0.5)(17,1)
\psline[linewidth=.1cm]{C-C}(17,-0.5)(17,-1)
\psline[linewidth=.1cm]{C-C}(17,1)(19,1)
\psline[linewidth=.1cm]{C-C}(17,-1)(19,-1)
\psline[linewidth=.1cm]{C-C}(19,1)(19,0.3)
\psline[linewidth=.1cm]{C-C}(19,-1)(19,-0.3)
\psline[linewidth=.1cm]{C-C}(19,0.3)(20,0.3)
\psline[linewidth=.1cm]{C-C}(19,-0.3)(20,-0.3)
\psline[linewidth=.1cm]{C-C}(20,0.3)(20,0.5)
\psline[linewidth=.1cm]{C-C}(20,-0.3)(20,-0.5)
\psline[linewidth=.1cm]{C-C}(20,0.5)(21,0.5)
\psline[linewidth=.1cm]{C-C}(20,-0.5)(21,-0.5)
\psline[linewidth=.1cm]{C-C}(21,0.5)(21,0.2)
\psline[linewidth=.1cm]{C-C}(21,-0.5)(21,-0.2)
\psline[linewidth=.1cm]{C-C}(21,0.2)(21.5,0.2)
\psline[linewidth=.1cm]{C-C}(21,-0.2)(21.5,-0.2)

\end{pspicture}}
\end{center}
\caption{\emph{Rooms and passages} as an example of 
a bounded domain for which the Neumann Laplacian has an essential spectrum.}
\label{Fig.rooms}
\end{figure}
\begin{prop}[Rooms and passages]
Let~$\Omega$ be the (bounded) domain of Example~\ref{Ex.rooms}.
Then 
\begin{equation}\label{spec.rooms}
  0 \in \sigma_\mathrm{ess}(-\Delta_N^\Omega)
  \,.
\end{equation}
\end{prop}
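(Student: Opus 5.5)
The plan is to use the minimax principle (Theorem~\ref{minimax}) and show that $\lambda_n^N(\Omega) \to 0$ as $n\to\infty$. Since $-\Delta_N^\Omega \geq 0$ by Proposition~\ref{Prop.positivity}, all the numbers $\lambda_n^N(\Omega)$ are non-negative. Once we know $\lambda_\infty = \lim_{n\to\infty}\lambda_n^N(\Omega) = 0$, item~(1) of Theorem~\ref{minimax} gives $\inf\sigma_\mathrm{ess}(-\Delta_N^\Omega) = 0$. The essential spectrum is closed, so this yields~\eqref{spec.rooms}.

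To get $\lambda_n \to 0$, I will build, for each $N$, an $N$-dimensional subspace of $W^{1,2}(\Omega)$ on which the Rayleigh quotient is small. The natural trial functions are attached to the rooms. For each odd $j$, let $\varphi_j$ be the function equal to $1$ on the room $\mathcal{R}_j$, zero on all other rooms, and varying linearly along $x_1$ inside the two adjacent passages $\mathcal{P}_{j-1}$ and $\mathcal{P}_{j+1}$. In the passage $\mathcal{P}_{j+1}$, which has length $h_{j+1}$ and width $\delta_{j+1}$, it goes from $1$ to $0$; in $\mathcal{P}_{j-1}$ it goes from $0$ to $1$, and for $j=1$ no left passage is needed. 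These functions are Lipschitz, hence in $W^{1,2}(\Omega)$. Their gradients are supported in the passages, where $|\nabla\varphi_j| = 1/h$, so
$$
  \|\nabla\varphi_j\|^2 \lesssim \frac{\delta_{j+1}}{h_{j+1}} + \frac{\delta_{j-1}}{h_{j-1}} \approx j^{-6+3/2} = j^{-9/2},
  \qquad
  \|\varphi_j\|^2 \geq |\mathcal{R}_j| = h_j^2 = j^{-3}.
$$
Hence the Rayleigh quotient of $\varphi_j$ is $O(j^{-3/2}) \to 0$.

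To pass to an $N$-dimensional subspace, take $\mathscr{L}_N := \obal\{\varphi_j : j \text{ odd},\ j_0 \leq j < j_0 + 2N\}$ with $j_0$ large. Functions $\varphi_j$ and $\varphi_k$ with $|j-k| \geq 4$ have disjoint supports. Neighbouring odd indices ($|j-k|=2$) overlap only on a single passage, whose area $h\delta$ is negligible compared with the room areas. So I will either use only every other odd index, giving exactly disjoint supports, or estimate the off-diagonal terms directly. I prefer disjoint supports, which make the Gram matrices for both $h[\cdot]$ and $\|\cdot\|^2$ diagonal. The sup of the Rayleigh quotient over $\mathscr{L}_N$ is then the maximum of the individual quotients, which is $O(j_0^{-3/2})$. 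This gives $\lambda_N^N(\Omega) \leq C j_0^{-3/2}$ for every $j_0$, hence $\lambda_N^N(\Omega) = 0$ for every $N$. Therefore $\lambda_\infty = 0$.

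The main obstacle is bookkeeping rather than depth. I need to check that the piecewise-linear functions really lie in $W^{1,2}(\Omega)$ across the room–passage junctions; continuity and piecewise smoothness suffice here. I also need to track the indices so that the passages adjacent to odd rooms are exactly the even-indexed $\mathcal{P}$'s, and confirm the exponent comparison $\delta_j/h_j \ll h_j^2$. I also note that Theorem~\ref{minimax} applies to $-\Delta_N^\Omega$, which is self-adjoint and bounded from below, even though $\Omega$ has the irregular boundary point $(l,0)$: the Neumann form domain $W^{1,2}(\Omega)$ requires no regularity.
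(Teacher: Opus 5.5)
Your proposal is correct and follows essentially the paper's proof. The paper uses the same piecewise-linear trial functions (normalised to $h_n^{-1}$ on $\mathcal{R}_n$ rather than $1$), the same estimate $\|\nabla u_n\|^2/\|u_n\|^2 = O(n^{-3/2})$, and the same disjoint-support subsequence $\{u_{4n+1}\}$ to invoke the minimax principle. Your explicit deduction that $\lambda_N^N(\Omega)=0$ for every $N$, hence $\inf\sigma_\mathrm{ess}=0$ and so $0\in\sigma_\mathrm{ess}$ by closedness, spells out the step the paper leaves implicit.
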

\begin{proof}
For each odd $n \in \Nat^*$, 
define a function $u_n  \in W^{1,2}(\Omega)$
by requiring
$$
\begin{aligned}
  u_n(x) &:=
  \begin{cases}
    h_n^{-1} & \mbox{if} \quad x \in \mathcal{R}_n \,, 
    \\
    0 & \mbox{if} \quad x \in \Omega \setminus 
    (\mathcal{P}_{n-1}\cup\mathcal{R}_n\cup\mathcal{P}_{n+1})
    \,,
  \end{cases}
  \\
  \nabla u_n(x) &:= \pm \big((h_{n} h_{n \mp 1})^{-1},0\big) 
  \quad \mbox{if} \quad x \in \mathcal{P}_{n \mp 1}
  \,.
\end{aligned}  
$$
We have
$$
\begin{aligned}
  \|u_n\|^2 &= 1 + 
  \mbox{$\frac{1}{3}$} h_n^{-2} (h_{n-1}\delta_{n-1}+h_{n+1}\delta_{n+1})
  \geq 1 
  \,,
  \\
  \|\nabla u_n\|^2 &= (h_n h_{n-1})^{-2} h_{n-1} \delta_{n-1}
  + (h_n h_{n+1})^{-2} h_{n+1} \delta_{n+1}
  \xrightarrow[n\to\infty]{} 0
  \,.
\end{aligned}
$$
Consequently,
$$
  \frac{\|\nabla u_n\|^2}{\|u_n\|^2} \xrightarrow[n\to\infty]{} 0
  \,,
$$
which implies that $0 \in \sigma_\mathrm{ess}(-\Delta_N^\Omega)$
by the minimax principle,
because the functions~$u_n$ span an infinite-dimensional
subspace of $W^{1,2}(\Omega)$
(as the elements of the subsequence $\{u_{4n+1}\}_{n \in \Nat}$
have mutually disjoint supports).
\end{proof}

The feature of Example~\ref{Ex.rooms}
is the irregular point $(l,0) \in \partial\Omega$.
In general, a certain regularity of the boundary~$\partial\Omega$ 
is needed in order to ensure that the Neumann Laplacian
has a purely discrete spectrum in bounded sets.
The required regularity can be characterised in terms of 
the following extension property.

\begin{defi}\label{Def.extension}
\emph{An open set $\Omega \subset \Real^d$ 
is said to satisfy the \emph{extension property}
if there exists a bounded (linear) operator 
$E:W^{1,2}(\Omega) \to W^{1,2}(\Real^d)$ satisfying
$
  (E\psi)(x) = \psi(x)
$
for all $\psi \in W^{1,2}(\Omega)$ and all $x \in \Omega$.}
\end{defi}

Note that an analogous definition for Dirichlet boundary conditions is trivial, 
just because of the availability of
the trivial extension~\eqref{extension} for arbitrary sets.
This is the main reason behind the robust result 
of Theorem~\ref{Thm.bounded} about the emptiness
of the essential spectrum of the Dirichlet Laplacian
for \emph{any} bounded set.
Now we are in a position to establish the same result
under the extra hypothesis of Definition~\ref{Def.extension}.

\begin{theo}\label{Thm.bounded.Neumann}
Let $\Omega$ be any bounded open set
with the extension property. 
Then
$$
  \sigma_\mathrm{ess}(-\Delta_N^{\Omega})  
  = \varnothing \,.
$$
\end{theo}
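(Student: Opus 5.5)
The plan is to show that the form domain $W^{1,2}(\Omega)$ embeds compactly into $\sii(\Omega)$. From this, $\lambda_n^N(\Omega)\to\infty$ follows, so $\sigma_\mathrm{ess}(-\Delta_N^\Omega)=\varnothing$ by Theorem~\ref{minimax}. The extension operator $E$ lets us transfer the question to a cube, where everything is explicit.

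Since $\Omega$ is bounded, first fix an open cube $Q$ with $\overline{\Omega}\subset Q$ and a cut-off $\chi\in C_0^\infty(Q)$ with $\chi=1$ on $\Omega$. The map $\tilde E\psi:=\chi\,E\psi$ is bounded from $W^{1,2}(\Omega)$ into $W_0^{1,2}(Q)$, with $\|\tilde E\psi\|_{W^{1,2}(Q)}\le C\|\psi\|_{W^{1,2}(\Omega)}$, and it restricts to $\psi$ on $\Omega$. The constant depends on $\|E\|$ and $\|\nabla\chi\|_\infty$.

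Next I would argue by contradiction. Suppose $\lambda_\infty:=\lim_n\lambda_n^N(\Omega)<\infty$. Then for every $n$ there is an $n$-dimensional subspace $\mathscr{L}_n\subset W^{1,2}(\Omega)$ on which $\|\nabla\psi\|^2\le(\lambda_\infty+1)\|\psi\|^2$. Hence $\|\psi\|_{W^{1,2}(\Omega)}^2\le(\lambda_\infty+2)\|\psi\|^2_{\sii(\Omega)}$ on $\mathscr{L}_n$. Put $\mathscr{M}_n:=\tilde E\mathscr{L}_n\subset W_0^{1,2}(Q)$. This space is still $n$-dimensional, because $\tilde E\psi|_\Omega=\psi$. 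For $\phi=\tilde E\psi$ in it, $\int_Q|\nabla\phi|^2\le C^2(\lambda_\infty+2)\|\psi\|^2_{\sii(\Omega)}\le C'\int_Q|\phi|^2$. The minimax principle \eqref{infsup.ess} for $-\Delta_D^Q$ then gives $\lambda_n^D(Q)\le C'$ for all $n$. This contradicts the explicit spectrum of the cube from Section~\ref{Sec.pipeds}, which is purely discrete and tends to infinity. Therefore $\lambda_\infty=\infty$, and Theorem~\ref{minimax} yields the claim.

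The main point needing care is the dimension bookkeeping and the norm comparison in this transfer. The key observation is that restriction to $\Omega$ is a left inverse of $\tilde E$, so $\tilde E$ is injective and $\|\phi\|_{\sii(Q)}\ge\|\psi\|_{\sii(\Omega)}$. That inequality is exactly what is needed to bound the Rayleigh quotient on $Q$ from above. One could instead phrase the argument through the compact embedding $W_0^{1,2}(Q)\hookrightarrow\sii(Q)$ and Rellich's theorem, but the minimax comparison above uses only results already stated in the paper.
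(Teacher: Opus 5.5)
Your proof is correct, but it takes a different route from the paper.

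The paper proves a structural fact: the embedding $W^{1,2}(\Omega)\hookrightarrow\sii(\Omega)$ is compact. It factors this embedding as $R_2\circ\iota\circ R_1\circ E$, where $\iota:W^{1,2}(Q)\hookrightarrow\sii(Q)$ is compact. Compactness of $\iota$ comes from the compact resolvent of the Neumann Laplacian on the cube, via $\iota=(-\Delta_N^Q+I)^{-1/2}(-\Delta_N^Q+I)^{1/2}$. The paper then invokes the abstract link between a compactly embedded form domain, a compact resolvent and an empty essential spectrum.

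You never establish compactness at all, so your opening sentence misdescribes what you actually do. Instead you push test subspaces through $\tilde E$ and compare Rayleigh quotients directly in the minimax principle. The contradiction is also unnecessary. Your estimate gives, for every $n$ and with $C$ the norm of $\tilde E$,
\[
\lambda_n^D(Q)\le C^2\bigl(1+\lambda_n^N(\Omega)\bigr),
\]
which is an explicit lower bound $\lambda_n^N(\Omega)\ge C^{-2}\lambda_n^D(Q)-1$. Combined with the cube asymptotics, it even yields growth of order $n^{2/d}$.

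What each route buys:
\begin{itemize}
\item Your route uses only the minimax principle and the explicit cube spectrum, and avoids the operator-theoretic facts the paper leaves implicit.
\item The paper's route produces the compact embedding itself, which it reuses elsewhere. For example, it gives compact resolvents for the Robin Laplacians, whose form domain is again $W^{1,2}(\Omega)$.
\end{itemize}

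Finally, the cut-off $\chi$ and the Dirichlet cube are not needed. Restricting $E\psi$ to $Q$ and comparing with $\lambda_n^N(Q)\to\infty$ works equally well and matches the paper's chain of maps more closely.
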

\begin{proof} 
Since~$\Omega$ is bounded, there exists a cube~$Q$ 
such that $\Omega \subset Q$. 
We have the following bounded maps:
$$
  W^{1,2}(\Omega)
  \xrightarrow[]{\ E \ }
  W^{1,2}(\Real^d)
  \xrightarrow[]{\ R_1 \ }
  W^{1,2}(Q)
  \xrightarrow[]{\ \iota \ }
  \sii(Q)
  \xrightarrow[]{\ R_2 \ }
  \sii(\Omega)
  \,,
$$
where~$E$ is the extension of Definition~\ref{Def.extension},
$\iota$ is the embedding 
and $R_1,R_2$ are elementary restrictions.   
By the absence of the essential spectrum 
in Neumann rectangular boxes (established in Section~\ref{Sec.pipeds}), 
the embedding~$\iota$ is compact. 
More specifically, 
by writing 
$
  \iota = (-\Delta_N^Q+I)^{-1/2}(-\Delta_N^Q+I)^{1/2}
$, 
the compactness of~$\iota$ 
follows from the fact that 
the absence of the essential spectrum of $-\Delta_N^Q$
is equivalent (see \cite[Corol.~4,2,3]{Davies})
to the compactness of the resolvent $(-\Delta_N^Q+I)^{-1}$,
and therefore to the compactness of $(-\Delta_N^Q+I)^{-1/2}$.
In summary, the composed map 
$
  R_2 \circ \iota \circ R_1 \circ E 
$ 
is compact too. 
It follows that the embedding 
$W^{1,2}(\Omega) \hookrightarrow \sii(\Omega)$ is compact,
which means that the form domain of the Neumann Laplacian
is compactly embedded in the Hilbert space.
\end{proof}

The family of all open sets with the extension property is a wide one
(see \cite[Sec.~V.4.4]{Edmunds-Evans}).
It includes the so-called open sets with \emph{minimally smooth boundary},
which cover all bounded open sets with boundary of class~$C^{0,1}$
(\ie~Lipschitz regularity).
However, to have the conclusion of Theorem~\ref{Thm.bounded.Neumann},
it is enough to assume that the boundary of~$\Omega$
is merely continuous (see \cite[Sec.~V.4.17]{Edmunds-Evans}).

\subsection{Unbounded domains}\label{Sec.unbounded}
The situation becomes more difficult, even for the Dirichlet Laplacian,
if~$\Omega$ is an \emph{unbounded} quasi-bounded set.
It is still true that the spectrum of the Dirichlet Laplacian
is purely discrete if~$\Omega$ has finite volume.
Another example of sufficient condition is the following
result due to Berger and Schechter \cite{Berger-Schechter_1972}
(see~\cite[Thm.~V.5.17]{Edmunds-Evans} for a more general statement):
\begin{theo}[Berger--Schechter's criterion]
\label{Thm.Berger}
Let~$\Omega$ be an arbitrary open set. 
One has
$$
  \limsup_{\stackrel[x\in\Omega]{}{|x|\to\infty}}
  \big|\Omega \cap B_1(x)\big| = 0
  \qquad\Longrightarrow\qquad
  \sigma_\mathrm{ess}(-\Delta_D^\Omega) = \varnothing \,.
$$
\end{theo}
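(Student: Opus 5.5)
The plan is to show that for every $\Lambda>0$ the essential spectrum of $-\Delta_D^\Omega$ lies above $\Lambda$. Equivalently, by the minimax principle (Theorem~\ref{minimax}), $\lambda_\infty^D(\Omega)\ge\Lambda$. The idea is a Persson-type decomposition: outside a large ball $B_R$ the form $\int|\nabla\psi|^2$ should dominate $\Lambda\int|\psi|^2$, and inside the ball the Dirichlet problem is compact by Theorem~\ref{Thm.bounded}.

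\textbf{Step 1: local Poincaré-type inequality for thin sets.} Fix $\Lambda>0$. We aim to show there is $R$ such that
$$
  \int_{\Omega\setminus B_R} |\psi|^2 \le \frac{1}{\Lambda}\int_{\Omega} |\nabla\psi|^2
  \qquad \mbox{for all } \psi\in C_0^\infty(\Omega)
  \quad \mbox{supported away from } B_{R-1} .
$$
To prove it, cover $\Real^d$ by unit cubes $Q_j$, each contained in a ball $B_1(x)$ with $x\in\Omega$, or not meeting $\Omega$ at all. Extend $\psi$ by zero, so that $\psi\in W^{1,2}(Q_j)$ vanishes on $Q_j\setminus\Omega$. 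For cubes far out, the hypothesis gives $|Q_j\cap\Omega|\le\eps$, hence the zero set of $\psi$ in $Q_j$ has measure at least $1-\eps$. A standard Poincaré inequality for functions vanishing on a set of large measure then gives $\int_{Q_j}|\psi|^2\le C(\eps)\int_{Q_j}|\nabla\psi|^2$ with $C(\eps)\to0$ as $\eps\to0$. The cleanest route to this uses Sobolev/Hölder estimates. One writes $\|\psi\|_{L^2(Q_j)}=\|\psi\|_{L^2(Q_j\cap\Omega)}\le|Q_j\cap\Omega|^{1/d}\|\psi\|_{L^{2^*}}$, or uses the embedding into $L^p$ with $p>2$ when $d\le2$, and then controls $\|\psi\|_{L^p(Q_j)}$ by $\|\psi\|_{W^{1,2}(Q_j)}$. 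Absorbing the resulting $\|\psi\|_{L^2}$ term for small $\eps$ yields $\int_{Q_j}|\psi|^2\le C\eps^{\gamma}\int_{Q_j}|\nabla\psi|^2$. Summing over $j$ gives the claim. I expect this step to be the main obstacle: making the constant uniform over cubes (translation invariance handles it) and treating $d=1,2$ uniformly.

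\textbf{Step 2: IMS localization.} Take $\chi_1,\chi_2$ smooth with $\chi_1^2+\chi_2^2=1$, $\chi_1$ supported in $B_R$ and $\chi_2$ supported outside $B_{R-1}$, both with bounded gradients. The IMS formula gives
$$
  \int|\nabla\psi|^2=\sum_i\int|\nabla(\chi_i\psi)|^2-\int\big(|\nabla\chi_1|^2+|\nabla\chi_2|^2\big)|\psi|^2 .
$$
Step 1 applies to the $\chi_2$ term and gives $\int|\nabla(\chi_2\psi)|^2\ge\Lambda\|\chi_2\psi\|^2$. The gradient error is bounded by a constant $c$ independent of $R$. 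Hence, in the sense of forms,
$$
  -\Delta_D^\Omega \ge \big(-\Delta_D^{\Omega\cap B_R}\oplus\Lambda\big)-c ,
$$
where the right-hand side is realised through the isometry $\psi\mapsto(\chi_1\psi,\chi_2\psi)$.

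\textbf{Step 3: conclusion.} By Theorem~\ref{Thm.bounded}, $-\Delta_D^{\Omega\cap B_R}$ has purely discrete spectrum, so the comparison operator has only finitely many eigenvalues below $\Lambda-c$. A minimax argument (Corollary~\ref{Corol.minimax}-style, using the isometric embedding) then gives $\lambda_\infty^D(\Omega)\ge\Lambda-c$. Since $\Lambda$ is arbitrary and $c$ is fixed, $\lambda_\infty^D(\Omega)=\infty$, that is, $\sigma_\mathrm{ess}(-\Delta_D^\Omega)=\varnothing$.
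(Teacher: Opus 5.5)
The paper does not prove Theorem~\ref{Thm.Berger}; it only quotes it from Berger--Schechter and Edmunds--Evans, so there is no argument of the paper to match. Judged on its own, your argument is correct.

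\textbf{Comparison with the paper.} Its architecture matches the paper's proof of Proposition~\ref{Prop.urchin}. In both, a bounded piece has discrete Dirichlet spectrum by Theorem~\ref{Thm.bounded}, the exterior part is bounded below by a constant that can be made arbitrarily large, and the minimax principle concludes. Both ingredients differ, however.
\begin{itemize}
\item You decouple interior and exterior by an IMS partition of unity, not by an extra Neumann condition.
\item Your exterior bound comes from a Poincar\'e inequality for functions vanishing outside a small-measure subset of a cube (H\"older plus the Sobolev embedding, giving a constant $C\eps^{\gamma}$). The paper instead uses the one-dimensional angular Dirichlet bound. Yours is the right substitute, since the hypothesis only provides measure information and the boundary may be very irregular.
\end{itemize}
IMS lets you cut along a sphere. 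The price is an error term $c$ that depends only on the fixed width of the transition annulus, hence not on $R$ or $\Lambda$. The paper's Neumann bracketing would also work, with no error term at all, if you cut along the boundary of a large lattice-aligned cube: then the exterior cells are whole cubes. Equivalently, Step~1 applied to $\chi_2\psi$ gives a uniform $\sii$-tail bound on bounded subsets of $W_0^{1,2}(\Omega)$. Together with Theorem~\ref{Thm.bounded}, this gives compactness of $W_0^{1,2}(\Omega)\hookrightarrow\sii(\Omega)$ directly.

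\textbf{A detail to fix in Step~1.} For $d\geq 2$, a cube of side $1$ has diameter $\sqrt d>1$. A unit cube meeting $\Omega$ therefore need not lie in any $B_1(x)$ with $x\in\Omega$: the centre of the cube need not belong to $\Omega$, and $\Omega$ may meet the cube only near a corner. The fix is to use a lattice of cubes of side $1/\sqrt d$ instead.
\begin{itemize}
\item If such a cube meets $\supp\psi$, it contains a point $y\in\Omega$ with $|y|\geq R-1$.
\item It then lies in $\overline{B_1(y)}$, so $|Q_j\cap\Omega|\leq|\Omega\cap B_1(y)|\leq\eps$ once $R$ is large.
\end{itemize}
Alternatively, covering by balls of radius $1/2$ shows that the hypothesis at radius $1$ implies the same at any fixed radius.

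Everything else goes through as you describe. This includes the case $d=1,2$ via $W^{1,2}(Q)\hookrightarrow L^p(Q)$ for some $p>2$. It also includes the isometric-embedding version of Corollary~\ref{Corol.minimax} in Step~3, and the fact that $c$ is independent of $\Lambda$.
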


It is interesting to compare the sufficient condition
of Theorem~\ref{Thm.Berger} with
the characterisation~\eqref{q-bounded.equivalent}:
While quasi-bounded domains are just ``narrow at infinity'',
the sufficient condition of Theorem~\ref{Thm.Berger} requires
that the narrowness must be ``inessential in an integral sense''
to have a purely discrete spectrum.

Example~\ref{Ex.urchin} (spiny urchin) 
shows that Theorem~\ref{Thm.Berger} 
represents just a sufficient condition.
Indeed, since~$\Omega$ is built by removing from~$\Real^2$
just sets of measure zero (semi-infinite lines),
it follows that $|\Omega \cap B_1(x)| = |B_1(x)|$ for every $x\in\Omega$.
On the other hand, the following proposition shows
that the spectrum is still purely discrete.

\begin{prop}[Spiny urchin]\label{Prop.urchin}
Let $\Omega$ be the (unbounded) domain of Example~\ref{Ex.urchin}.
Then 
$$
  \sigma_\mathrm{ess}(-\Delta_D^\Omega) = \varnothing
  \,.
$$
\end{prop}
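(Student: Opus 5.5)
The plan is to show that the bottom of the spectrum of the Dirichlet Laplacian restricted to the far region tends to infinity, via a Persson-type/IMS localisation argument. By the minimax principle (Theorem~\ref{minimax}), it suffices to show that for every $M>0$ there is a finite-dimensional subspace $\mathcal{F}\subset\sii(\Omega)$ (or, equivalently, a compact perturbation) such that $\int_\Omega|\nabla\psi|^2 \geq M\|\psi\|^2$ for all $\psi \in W_0^{1,2}(\Omega)$ orthogonal to $\mathcal{F}$. Concretely, I would aim for the following claim: for every $m$, every $\psi\in C_0^\infty(\Omega)$ supported in $\{|x|>m+1\}$ satisfies
$$
  \int_\Omega |\nabla\psi|^2 \geq c\, 4^{m}\int_\Omega|\psi|^2
$$
with a universal $c>0$. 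Combined with a partition of unity $\chi_1^2+\chi_2^2=1$, where $\chi_1$ is supported in the ball $B_{m+2}$ and $\chi_2$ in $\{|x|>m+1\}$, the IMS formula gives $-\Delta_D^\Omega \geq \chi_1(-\Delta_D^{\Omega\cap B_{m+2}})\chi_1 + \chi_2(c4^m)\chi_2 - C$, with $C$ depending only on $\|\nabla\chi_j\|_\infty$ (bounded uniformly in $m$ if the transition layer has unit width). The first piece is an operator on a bounded set, whose spectrum is purely discrete by Theorem~\ref{Thm.bounded}. Hence $\inf\sigma_\mathrm{ess}(-\Delta_D^\Omega)\geq c4^m - C$ for every $m$, which gives the statement.

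For the key claim I would use polar coordinates. For $r\geq m$, the set $S_m$ (hence $\partial\Omega$) contains the rays at angles $n\pi/2^m$, so at radius $r$ the circle is cut into arcs of angular length at most $\pi/2^m$, i.e.\ of arc length at most $\pi r/2^m$. For fixed $r$, the function $\theta\mapsto\tilde\psi(r,\theta)$ vanishes at the ends of each such arc. The one-dimensional Dirichlet bound of Section~\ref{Sec.strings}, namely $\lambda_1^D(I_a)=(\pi/a)^2$ applied with $a=\pi/2^m$, then yields
$$
  \int_0^{2\pi}|\partial_\theta\tilde\psi|^2\,\der\theta \geq 4^m\int_0^{2\pi}|\tilde\psi|^2\,\der\theta .
$$
Dropping the radial derivative gives $\int|\nabla\psi|^2 \geq \int r^{-2}|\partial_\theta\tilde\psi|^2\, r\,\der r\,\der\theta \geq 4^m\int r^{-2}|\tilde\psi|^2\, r\,\der r\,\der\theta$.

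The obstacle appears here: the factor $r^{-2}$ makes this bound decay with $r$, because the spines at level $m$ alone only give arcs of length $\sim r2^{-m}$, which grow with $r$. The remedy is to use all $S_k$ with $k\leq r$: at radius $r$ the rays of $S_{\lfloor r\rfloor}$ are present, so the angular gaps are at most $\pi 2^{-\lfloor r\rfloor}$. This gives the pointwise-in-$r$ bound $r^{-2}4^{\lfloor r\rfloor}$, which tends to infinity as $r\to\infty$. So the claim should be restated with the weight $V(r):=r^{-2}4^{\lfloor r\rfloor}$, giving $-\Delta_D^\Omega\geq V(|x|)$ in the form sense, with $V\to\infty$. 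The IMS step then yields the bound $\inf_{|x|>m+1}V - C\to\infty$ for the exterior piece. Besides this, I would only need to check that $C_0^\infty(\Omega)$ is dense in the form domain (true by definition of $W_0^{1,2}$) and that the angular argument applies for a.e.\ $r$ to smooth compactly supported functions, which is routine.
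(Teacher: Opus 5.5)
Your proof is correct, and its core estimate is the same as the paper's. The paper also drops the radial derivative and works annulus by annulus: on $(m+j,m+j+1)\times\Sphere^1$ it uses the rays of $S_{m+j}$ (angular gaps $\pi/2^{m+j}$) to get the weight $4^{m+j}/(m+j+1)^2$. That is exactly your remedy $V(r)=4^{\lfloor r\rfloor}/r^2$ for the $r^{-2}$ loss, with $V:=0$ for $r<1$.

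The genuine difference is the localisation.

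\emph{The paper's route.} It imposes an extra Neumann condition on $\partial B_m$, which bounds $-\Delta_D^\Omega$ from below by a direct sum of an interior and an exterior operator with no error term. It then needs the interior Neumann-type piece to have compact resolvent, which it gets from $W^{1,2}(B_m)\hookrightarrow\sii(B_m)$. After that, the exterior bound $(2^m/(m+1))^2$ on $\inf\sigma_\mathrm{ess}(-\Delta_D^\Omega)$ is immediate.

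\emph{Your route.} The IMS partition keeps Dirichlet conditions throughout, so the interior piece is the Dirichlet Laplacian on the bounded open set $\Omega\cap B_{m+2}$. Theorem~\ref{Thm.bounded} then applies with no regularity discussion. The price is the error $C$, which is uniform in $m$ thanks to the unit-width transition layer.

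There is also one step you should state explicitly. Saying the first piece has purely discrete spectrum is not quite enough, because $\chi_1(-\Delta_D^{\Omega\cap B_{m+2}})\chi_1$ has an infinite-dimensional kernel in $\sii(\Omega)$. The fix is short. Set $M:=\inf_{r>m+1}V$ and let $P$ be the finite-rank spectral projection of $-\Delta_D^{\Omega\cap B_{m+2}}$ onto $[0,M)$. Then every $\psi\in W_0^{1,2}(\Omega)$ orthogonal to $\chi_1\ran P$ satisfies $P(\chi_1\psi)=0$, hence $\int_\Omega|\nabla\psi|^2\geq(M-C)\|\psi\|^2$. So $\chi_1\ran P$ is precisely the finite-dimensional subspace $\mathcal{F}$ of your first paragraph, and minimax gives $\inf\sigma_\mathrm{ess}\geq M-C\to\infty$.

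As a by-product, your route records the global inequality $-\Delta_D^\Omega\geq V(|x|)$ with $V\to\infty$, which is slightly more than the paper states.
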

\begin{proof}
To prove that the spectrum of the Dirichlet Laplacian in~$\Omega$
is purely discrete, 
let us impose an extra Neumann condition on 
the circle $\Sigma_m := \partial B_m$.
More specifically, 
for every $m \in \Nat^*$,
we employ the decomposition
$$
  \Omega = \underbrace{(\Omega \cap B_m)}_{\Omega_m^\mathrm{int}} 
  \cup \underbrace{(\Omega \cap \partial B_m)}_{\Sigma_m}
  \cup \underbrace{(\Omega \setminus \overline{B}_m)}_{\Omega_m^\mathrm{ext}} 
  \,,
$$
which leads to the direct-sum decomposition of the Hilbert space
\begin{equation}\label{orthogonal}
  \sii(\Omega) 
  = \sii(\Omega_m^\mathrm{int}) \oplus \sii(\Omega_m^\mathrm{ext})
  \,.
\end{equation}
Recall that the Dirichlet Laplacian $H := -\Delta_D^\Omega$
is the operator in $\sii(\Omega)$
associated with the form 
$$
  h[\psi] := \int_{\Omega} |\nabla\psi|^2
  \,, \qquad
  \dom h := W_0^{1,2}(\Omega)
  \,. 
$$
The same operator with the extra Neumann condition on~$\Sigma_m$
is introduced as the operator~$H_m^N$ in $\sii(\Omega)$
associated with the form 
$$
  h_m^N[\psi] := \int_{\Omega} |\nabla\psi|^2
  \,, \qquad
  \dom h_m^N := 
  \big[ W_0^{1,2}(\Omega) \upharpoonright \Omega_m^\mathrm{int} \big]
  \oplus
  \big[ W_0^{1,2}(\Omega) \upharpoonright \Omega_m^\mathrm{ext} \big]
  \,,  
$$
where $W_0^{1,2}(\Omega) \upharpoonright \Omega_m^\mathrm{int}$
means the set of restrictions $\psi\upharpoonright\Omega_m^\mathrm{int}$
with $\psi \in W_0^{1,2}(\Omega)$,
and similarly for $W_0^{1,2}(\Omega) \upharpoonright \Omega_m^\mathrm{ext}$.
Note that $h_m^N$ acts in the same way as~$h$,
while the form domain of the former is larger
(\cf~Figure~\ref{Fig.bracketing}). 
Consequently,
\begin{equation}\label{urchin.bracketing}
  H \geq H_m^N
  = H_m^{\mathrm{int},N} \oplus H_m^{\mathrm{ext},N}
  \,,
\end{equation}
where $H_m^{\mathrm{int},N}$ is the operator 
in $\sii(\Omega_m^\mathrm{int})$ associated with the form 
$$
  h_m^{\mathrm{int},N}[\psi] 
  := \int_{\Omega_m^\mathrm{int}} |\nabla\psi|^2
  \,, \qquad
  \dom h_m^{\mathrm{int},N} := 
  W_0^{1,2}(\Omega) \upharpoonright \Omega_m^\mathrm{int}
  \,,  
$$
and $H_m^{\mathrm{ext},N}$ in $\sii(\Omega_m^\mathrm{ext})$
is defined analogously.

As a consequence of~\eqref{urchin.bracketing}, 
we get (\cf~Corollary~\ref{Corol.minimax})
$$
  \forall k \in \Nat^* \,, \qquad
  \lambda_k(H) \geq \lambda_k(H_m^N) 
  \,,
$$
and therefore (taking the limit $k \to \infty$)
\begin{equation}\label{urchin.lower}
\begin{aligned}
  \inf\sigma_\mathrm{ess}(H) 
  &\geq \inf\sigma_\mathrm{ess}(H_m^N) 
  \\
  &= \min\left\{ 
  \inf\sigma_\mathrm{ess}(H_m^{\mathrm{int},N}),
  \inf\sigma_\mathrm{ess}(H_m^{\mathrm{ext},N})
  \right\}
  \\
  &= \inf\sigma_\mathrm{ess}(H_m^{\mathrm{ext},N})
  \\
  &\geq \inf\sigma(H_m^{\mathrm{ext},N})
  \\
  &=
  \inf_{\stackrel[\psi \not= 0]{}{\psi \in \dom h_m^{\mathrm{ext},N}}}
  \frac{h_m^{\mathrm{ext},N}[\psi]}{\ \|\psi\|_{\sii(\Omega_m^\mathrm{ext})}^2}
  \,.
\end{aligned}  
\end{equation}
Here the second equality follows from the fact
that the spectrum of $H_m^{\mathrm{int},N}$ is purely discrete
(the extension of $\dom h_m^{\mathrm{int},N}$ 
to $W^{1,2}(B_m)$ is trivial
and the spectrum of the Neumann Laplacian in any disk is purely discrete,
see Theorem~\ref{Thm.bounded.Neumann}).
In summary, we have obtained a lower estimate to
the threshold of the essential spectrum of~$H$ 
through the spectrum of the ``exterior'' operator $H_m^{\mathrm{ext},N}$,
for any~$m$. 
It remains to analyse the spectral threshold of~$H_m^{\mathrm{ext},N}$.

\begin{figure}[h!] 
\begin{center}
\includegraphics[width=0.7\textwidth]{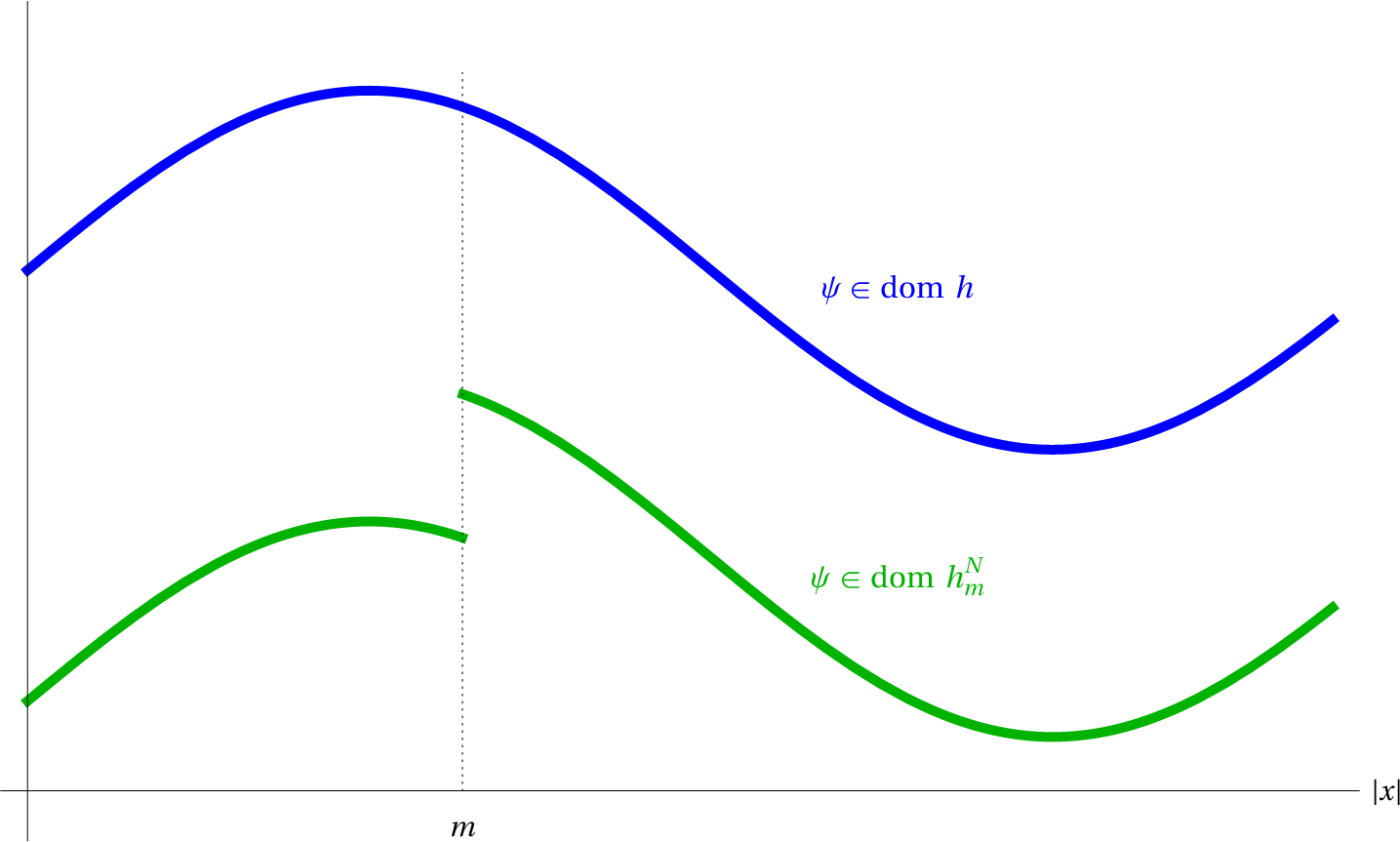}
\caption{Schematical visualisation of the effect
of introducing the Neumann boundary condition.}\label{Fig.bracketing}
\end{center}
\end{figure}

Let 
$
  \psi \in 
  C_0^\infty(\Omega) \upharpoonright \Omega_m^\mathrm{ext}
$,
a core of $h_m^{\mathrm{ext},N}$.
Passing to polar coordinates and neglecting the radial component, 
we have
\begin{equation}\label{urchin.lower2}
\begin{aligned}
  h_m^{\mathrm{ext},N}[\psi] 
  &= \int_{(m,\infty)\times S^1} 
  \left[
  \left|\partial_r\psi\right|^2
  + \frac{|\partial_\theta\psi|^2}{r^2}
  \right] \, r \, \der r \, \der\theta
  \\
  &\geq
  \sum_{j=0}^\infty 
  \int_{(m+j,m+j+1)\times S^1} 
  \frac{|\partial_\theta\psi|^2}{r^2}
  \ r \, \der r \, \der\theta
  \\
  &\geq 
  \sum_{j=0}^\infty 
  \int_{(m+j,m+j+1)\times S^1} 
  \left(\frac{\pi}{\pi/2^{m+j}}\right)^2 \frac{|\psi|^2}{r^2} 
  \ r \, \der r \, \der\theta
  \\
  &\geq 
  \sum_{j=0}^\infty 
  \int_{(m+j,m+j+1)\times S^1} 
  \left(\frac{2^{m+j}}{m+j+1}\right)^2 |\psi|^2
  \ r \, \der r \, \der\theta
  \\ 
  &\geq \left(\frac{2^m}{m+1}\right)^2 
  \int_{(m,\infty)\times S^1} 
  |\psi|^2 
  \ r \, \der r \, \der\theta
  \\
  &= \left(\frac{2^m}{m+1}\right)^2 
  \|\psi\|_{\sii(\Omega_m^\mathrm{ext})}^2 
  \,.
\end{aligned}
\end{equation}
Here the second inequality follows from 
the one-dimensional spectral bound
$$
  -\Delta_D^{(a,b)} \geq \left(\frac{\pi}{b-a}\right)^2
$$
for any real numbers $a<b$
(\cf~\eqref{D.spec} and~\eqref{Rayleigh}),
with help of Fubini's theorem,
by noticing that the angular distance on the circle~$\Sigma_m$ 
between two closest rays of the urchin is $\pi/2^m$
(\cf~Figure~\ref{Fig.urchin1}). 

\begin{figure}[h!] 
\begin{center}
\includegraphics[width=0.45\textwidth]{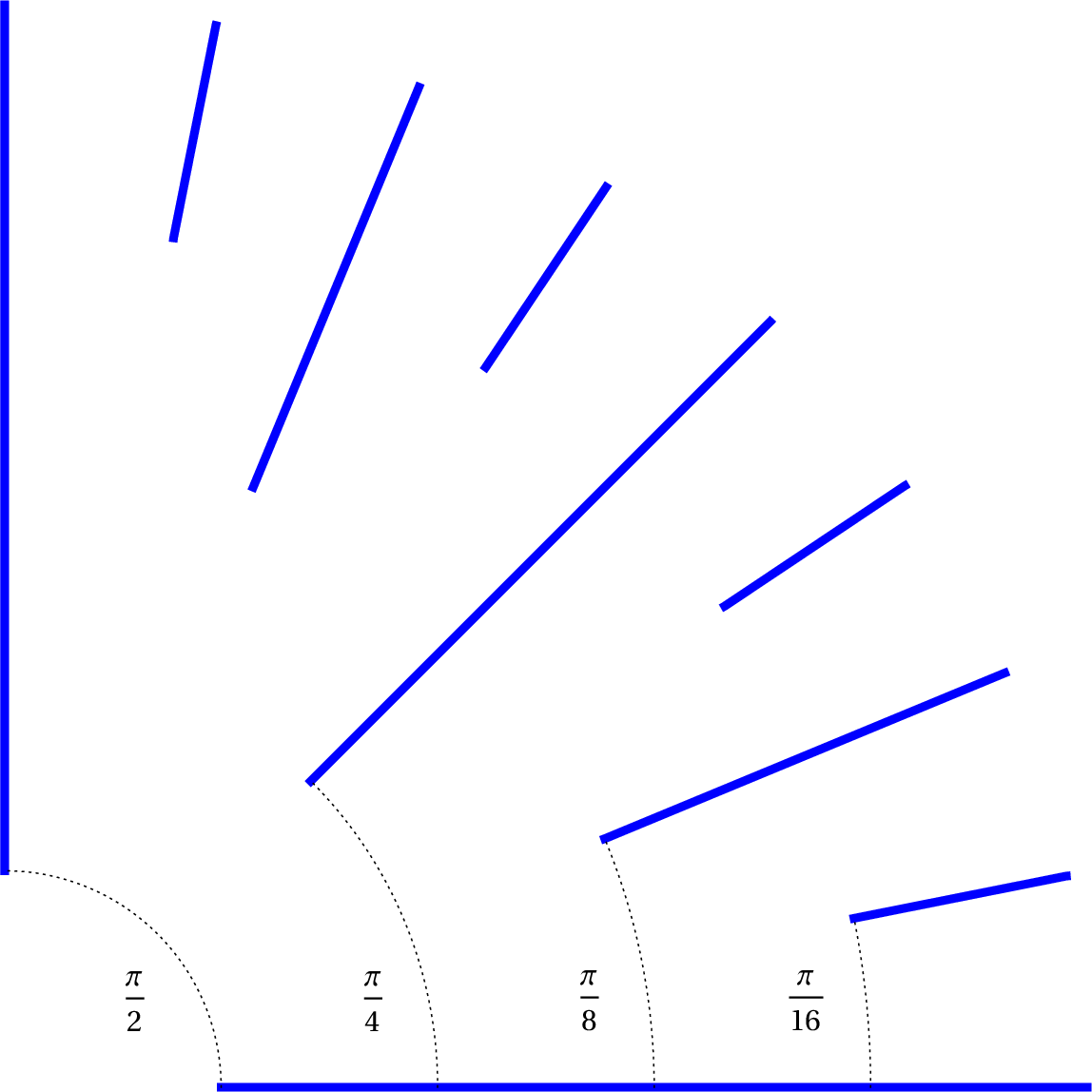}
\caption{The angular distance between two closest
rays of the spiny urchin.}\label{Fig.urchin1}
\end{center}
\end{figure}

In summary, from~\eqref{urchin.lower} and~\eqref{urchin.lower2}
we deduce 
$$
  \inf\sigma_\mathrm{ess}(H) 
  \geq \left(\frac{2^m}{m+1}\right)^2
  \,.
$$
Since the right-hand side tends to $+ \infty$ as $m \to \infty$,
while the left-hand side is independent of~$m$,
we obtain $\inf\sigma_\mathrm{ess}(H) = +\infty$.
That is, $\sigma_\mathrm{ess}(H) = \varnothing$ by
the minimax principle (Theorem~\ref{minimax}).
\end{proof}

In fact, it turns out that the property that the spectrum
of the Dirichlet Laplacian in~$\Omega$ is purely discrete
depends in an essential way on the dimension of~$\partial\Omega$.
Any quasi-bounded domain whose boundary consists
of reasonably regular $(d-1)$-dimensional hypersurfaces
has no essential spectrum.
For irregular boundaries, 
the reality can be drastically different,
as the following example demonstrates.

\begin{exam}[Spiny urchin with sparse spines]
If we replace the ``solid spines'' 
by ``dots accumulating at infinity'',
\ie, we define~$\dot\Omega$ as the domain in~$\Real^2$
obtained by deleting from the plane the union of the sets
\begin{multline*}
  \dot{S}_m :=
  \big\{
  (r\cos\theta,r\sin\theta) \, : \quad
  r = m + \sqrt{j}
  \quad\mbox{for}\quad j\in\Nat
  \\
  \quad \land \quad
  \theta = n\pi/2^m
  \quad\mbox{for}\quad n=1,2,\dots,2^{m+1}
  \big\}
  \,,
\end{multline*}
then exactly the same proof as that of Theorem~\ref{Thm.conical}
for quasi-conical domains implies 
$$
  \sigma(-\Delta_D^{\dot\Omega})
  = \sigma_\mathrm{ess}(-\Delta_D^{\dot\Omega})
  =[0,\infty)
  \,.
$$
This is obvious since a finite number of points
in an open planar set (\eg, an arbitrarily large disk)
form a set of capacity zero 
(\cf~Remark~\ref{Rem.Hardy}), 
so that  
$
  W_0^{1,2}( \dot{\Omega} \cap B_R) 
  = W_0^{1,2}(B_R)
$
for any $R>0$. 
\end{exam}

More generally, one has the following result.
\begin{theo}[{\cite[Thm.~1]{Adams_1970}}]
Let $d \geq 2$. If~$\partial\Omega$ consists only of isolated
points with no finite accumulation point,
then 
$$
  \sigma_\mathrm{ess}(-\Delta_D^\Omega) \not= \varnothing
  \,.
$$
\end{theo}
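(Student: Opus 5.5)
The idea is to show that $0\in\sigma_\mathrm{ess}(-\Delta_D^\Omega)$ (in fact that $[0,\infty)\subset\sigma_\mathrm{ess}$), by the same Weyl-sequence strategy as in Theorem~\ref{Thm.conical}. The key point, already used in the sparse-urchin example and in Remark~\ref{Rem.Hardy}, is that isolated points have capacity zero when $d\geq 2$. Hence $\Omega$ behaves locally like $\Real^d$ as far as $W_0^{1,2}$ is concerned. Since $\partial\Omega$ has no finite accumulation point, every ball $B_R$ meets $\partial\Omega$ in finitely many points only. In particular $\Omega=\Real^d\setminus\partial\Omega$, because $\Omega$ is open and its complement consists of isolated points. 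By the capacity statement of Remark~\ref{Rem.Hardy}, applied to a finite set instead of a single point, we get $W_0^{1,2}(\Omega\cap B_R)=W_0^{1,2}(B_R)$ for every $R>0$.

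The construction goes as follows. Fix $k\in\Real^d$ and take $\psi_n:=\varphi_n e^{ik\cdot x}$ exactly as in \eqref{singular}, with $\varphi_n$ supported in $a_n+n\supp\varphi$. I choose the translations $a_n$ so that the supports are mutually disjoint, e.g.\ $|a_n|\to\infty$ fast enough. These functions lie in $W_0^{1,2}(\Real^d)$ but not in $C_0^\infty(\Omega)$, since their supports may contain boundary points. This is the main obstacle, and I handle it at the level of the quadratic form: there I have $\psi_n\in W_0^{1,2}(B_{R_n})=W_0^{1,2}(\Omega\cap B_{R_n})\subset W_0^{1,2}(\Omega)=\dom\delta_D^\Omega$.

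To avoid discussing the operator domain, I use the minimax principle (Theorem~\ref{minimax}) for the bottom of the essential spectrum. The disjointness of the supports makes the $\psi_n$ orthonormal, and also orthogonal in the form sense. Moreover $\delta_D^\Omega[\psi_n]=\|\nabla\psi_n\|^2\to|k|^2$; with $k=0$ this tends to $0$ by the scaling computation in the proof of Theorem~\ref{Thm.conical}. Fix $\varepsilon>0$. For $N$ large, the span of $\psi_N,\dots,\psi_{N+m-1}$ is an $m$-dimensional subspace of $\dom\delta_D^\Omega$ on which the Rayleigh quotient is at most $\varepsilon$, because the form is diagonal in this basis. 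Hence $\lambda_m\le\varepsilon$ for every $m$, so $\lambda_\infty\le 0$. Together with Proposition~\ref{Prop.positivity} this gives $\inf\sigma_\mathrm{ess}(-\Delta_D^\Omega)=0$, and in particular the essential spectrum is non-empty.

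If I want the full $[0,\infty)$, I replace the form argument by a genuine Weyl sequence. This requires showing $\psi_n\in\dom(-\Delta_D^\Omega)$ with $-\Delta\psi_n$ computed as in Theorem~\ref{Thm.conical}. That follows because $\Delta\psi_n\in\sii$ and the identity $\delta_D^\Omega(\phi,\psi_n)=(\phi,-\Delta\psi_n)$ holds for all $\phi\in W_0^{1,2}(\Omega)=W_0^{1,2}(\Real^d)$ locally, by integration by parts in $\Real^d$.

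The delicate step is the capacity-zero identification $W_0^{1,2}(\Omega\cap B_R)=W_0^{1,2}(B_R)$ for finitely many removed points in $d\ge2$. For $d\ge3$ I prove it by cutting off with $\chi(|x-p|/\eps)$, whose gradient has $\sii$ norm $O(\eps^{(d-2)/2})$. For $d=2$ I use the logarithmic cutoff from the proof of Theorem~\ref{Thm.virtual}, rescaled to small radii, whose gradient norm is $O(1/\log(1/\eps))$.
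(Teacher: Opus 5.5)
Your argument is correct and follows the route the paper indicates in the sparse-urchin example just before the statement; the theorem itself is only cited from Adams. That route is: points have capacity zero for $d\ge2$, so $W_0^{1,2}(\Omega\cap B_R)=W_0^{1,2}(B_R)$, and then the approximate eigenfunctions of Theorem~\ref{Thm.conical} are admissible.

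One step needs repair. The identity $\Omega=\Real^d\setminus\partial\Omega$ does not follow from ``its complement consists of isolated points'', because that is exactly what has to be shown. The correct reason is this: $\Real^d\setminus\partial\Omega$ is connected when $d\ge2$, and it is the disjoint union of the open sets $\Omega$ and $\Real^d\setminus\overline{\Omega}$. Hence the exterior is empty once $\Omega\neq\varnothing$. This step genuinely uses $d\ge 2$, as $\Omega=(0,1)\subset\Real$ shows.

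With that fixed, your cutoff argument gives more than you use. The support of any $u\in C_0^\infty(\Real^d)$ contains only finitely many boundary points, so $W_0^{1,2}(\Omega)=W^{1,2}(\Real^d)$ globally. Thus $-\Delta_D^\Omega$ is just the free Laplacian on $\sii(\Real^d)$, and $\sigma=\sigma_\mathrm{ess}=[0,\infty)$ follows at once, without a separate minimax or Weyl-sequence step.

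A minor slip: for $d=2$, it is the squared gradient norm of the logarithmic cutoff that is $O(1/\log(1/\eps))$. The norm itself is $O((\log(1/\eps))^{-1/2})$, which still tends to zero, so the argument is unaffected.
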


Finally, let us remark that in $d=1$
one knows that quasi-boundedness is necessary and sufficient
for an arbitrary (not necessary connected)
open subset $\Omega\subset\Real$ to have a purely discrete spectrum.
In higher dimensions, the necessary and sufficient conditions
can be obtained in terms of capacity \cite[Thm.~VIII.3.1]{Edmunds-Evans}. 

\subsection{Zero is always in the spectrum
of the Neumann Laplacian}
Finally, let us mention another peculiarity of Neumann boundary conditions.
If~$\Omega$ is bounded, then~$0$ is never in the spectrum 
of the Dirichlet Laplacian, for the spectrum is purely discrete
and the only solution of   
$
  0 = \|\nabla\psi\|^2
$
is $\psi=0$ (because~$\psi$ must be a constant function
and the Dirichlet boundary conditions force the constant to be zero).
On the other hand, non-zero constant functions are admissible
eigenfunctions of the Neumann Laplacian in bounded domains
(more generally in domains of finite volume),
so~$0$ is always in the spectrum of the Neumann Laplacian in such domains.
What is more, the property that~$0$ is in the spectrum 
of the Neumann Laplacian actually holds in the full generality
of arbitrary domains.
(Since the Neumann Laplacian is non-negative,
the result says that the bottom of the spectrum starts by zero.)

\begin{theo}\label{Thm.N.0} 
Let $\Omega$ be an arbitrary open set.
Then
$$
  0 \in \sigma(-\Delta_N^\Omega) 
  \,.
$$
\end{theo}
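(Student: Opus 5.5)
The plan is to show that the bottom of the spectrum is zero via the variational characterisation~\eqref{Rayleigh}. Since $-\Delta_N^\Omega \geq 0$ by Proposition~\ref{Prop.positivity}, it is enough to find non-zero trial functions $\psi \in W^{1,2}(\Omega)$ whose Rayleigh quotient $\|\nabla\psi\|^2/\|\psi\|^2$ is arbitrarily small. Then $\inf\sigma(-\Delta_N^\Omega)=0$, and since the spectrum is closed, $0\in\sigma(-\Delta_N^\Omega)$. No regularity of $\Omega$ is needed, because for Neumann conditions the form domain is all of $W^{1,2}(\Omega)$. In particular, the restriction to~$\Omega$ of any function from $C_0^\infty(\Real^d)$ is admissible.

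\textbf{Trial functions.} Following the idea of Theorem~\ref{Thm.virtual} (approximating the formal eigenfunction~$1$), I will fix a cut-off $\varphi\in C_0^\infty(\Real^d)$ with $0\le\varphi\le1$, $\varphi=1$ on $B_1$ and $\varphi=0$ outside $B_2$. After a translation, assume $0\in\Omega$. For $r>0$ set $\psi_r := \varphi(\cdot/r)\upharpoonright\Omega \in W^{1,2}(\Omega)$ and $V(r):=|\Omega\cap B_r|$. Then $V(r)>0$ for every $r>0$, because $\Omega$ is open and contains~$0$. The elementary estimates are
$$
  \|\psi_r\|^2 \geq V(r)
  \,, \qquad
  \|\nabla\psi_r\|^2 \leq \frac{\|\nabla\varphi\|_\infty^2}{r^2}\, V(2r)
  \,,
$$
so the Rayleigh quotient of $\psi_r$ is at most $\|\nabla\varphi\|_\infty^2\, r^{-2}\, V(2r)/V(r)$.

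\textbf{Main obstacle.} The difficulty is that $\Omega$ is arbitrary, so the volume ratio $V(2r)/V(r)$ need not be bounded for all~$r$; for instance, $\Omega$ may be very sparse near the origin and fat further out. I will handle this with a doubling/pigeonhole argument along a dyadic sequence. Fix $R>0$ and set $r_k:=2^kR$. Suppose that $V(2r_k) > 2^{d+1}V(r_k)$ for all $k\ge k_0$. Iterating gives $V(r_k)\ge 2^{(k-k_0)(d+1)}V(r_{k_0})$, which grows like $r_k^{d+1}$. This contradicts $V(r_k)\le|B_{r_k}|=c_d\,r_k^d$. Hence there are infinitely many $k$ with $V(2r_k)\le 2^{d+1}V(r_k)$. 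Along these~$k$ the Rayleigh quotient is bounded by $2^{d+1}\|\nabla\varphi\|_\infty^2\,r_k^{-2}\to0$.

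If $|\Omega|<\infty$, this step is even simpler: the constant function~$1$ is already an eigenfunction with eigenvalue~$0$. The general argument above nevertheless covers all cases uniformly, and it concludes the proof via~\eqref{Rayleigh} and Proposition~\ref{Prop.positivity}.
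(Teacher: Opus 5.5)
Your proof is correct and follows essentially the same route as the paper: cut-offs approximating the constant function, plus a contradiction argument showing that $|\Omega\cap B_r|$ cannot keep growing geometrically because it is bounded by $|B_1|\,r^d$. The only difference is that you use dyadic dilations, so the smallness of the Rayleigh quotient comes from the factor $r^{-2}$ and you only need a bounded doubling ratio; the paper instead uses unit-width shells with uniformly bounded gradient, and so it needs the relative shell volume $(\omega_{n+1}-\omega_n)/\omega_n$ to tend to zero along a subsequence.
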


\begin{proof}
The statement of Theorem~\ref{Thm.N.0} in a greater 
generality of Riemannian manifolds can be found in \cite[Thm.~5.2.10]{Davies_1989}.
Our proof is partially inspired 
by the proof of \cite[Thm.~2.12]{CFKS}.
We are grateful to Markus Holzmann for letting us know about the idea.

The bound $\inf\sigma(-\Delta_N^\Omega) \geq 0$
follows by the non-negativity of the Neumann Laplacian.
To prove the opposite inequality, we recall
the minimax principle (\cf~\eqref{Rayleigh}),
\begin{equation}\label{variational}
  \inf\sigma(-\Delta_N^\Omega) =
  \inf_{\stackrel[\phi\not=0]{}{\phi \in W^{1,2}(\Omega)}}
  \frac{\|\nabla\phi\|^2}{\|\phi\|^2}
  \leq \frac{\|\nabla\psi\|^2}{\|\psi\|^2}
  \,,
\end{equation}
where the inequality holds for any non-zero $\psi \in W^{1,2}(\Omega)$.
For every $n \in \Nat^*$, we define $\psi_n(x) := \varphi_n(|x|)$,
where $\varphi_n \in C_0^\infty([0,\infty))$ is such that 
$0 \leq \varphi_n \leq 1$ and $|\varphi_n'| \leq C$ 
for all $n \in \Nat^*$ with some constant~$C$ independent of~$n$
and
$$
  \varphi_n(r) := 
  \begin{cases}
    1 & \mbox{if} \quad r < n \,,
    \\
    0 & \mbox{if} \quad r > n+1 \,.
  \end{cases}
$$
Clearly, for every $n \in \Nat^*$,
the restriction of~$\psi_n$ to~$\Omega$ 
(that we again denote by~$\psi_n$) 
belongs to $W^{1,2}(\Omega)$.
Let us take~$n$ sufficiently large so that $\Omega \cap B_n \not= \varnothing$. 
Since 
$
  |\nabla\psi_n(x)| = |\varphi_n'(|x|)| 
  = |\varphi_n'(|x|)| \chi_{B_{n+1} \setminus B_n}(x)
$ 
and $\psi_n(x) \geq \chi_{B_n}(x)$ for all $x \in \Real^d$, 
we have
$$
  \|\nabla\psi_n\|^2 
  \leq C^2 \, |\Omega \cap (B_{n+1}\setminus B_{n})|
  \,, \qquad
  \|\psi_n\|^2 \geq |\Omega \cap B_{n}| 
  \,.
$$
Replacing~$\psi$ by~$\psi_n$ in~\eqref{variational},
we therefore get the bound
\begin{equation}\label{Marcus}
  \inf\sigma(-\Delta_N^\Omega)
  \leq C^2 \,
  \frac{|\Omega \cap (B_{n+1}\setminus B_{n})|}{|\Omega \cap B_{n}|}
  =  C^2 \,
  \frac{|\Omega \cap B_{n+1}| - |\Omega \cap B_{n}|}{|\Omega \cap B_{n}|}
  \,.
\end{equation}
If the right-hand side equals to zero for some~$n$
(which is the case of bounded domains)
or in the limit as $n \to \infty$
(which is the case of the whole space~$\Real^d$),
we deduce $\inf\sigma(-\Delta_N^\Omega) \leq 0$.
In fact, it is enough to assume that one achieves 
the zero limit for a subsequence.
By contradiction, let us assume
$$
  \exists c>0, \ n_0>0, \quad \forall n \geq n_0, \qquad
  \frac{\omega_{n+1}-\omega_n}{\omega_n} \geq c
  \,,
$$ 
where $\omega_n := |\Omega \cap B_{n}|$.
That is, $\omega_{n+1} \geq (1+c) \, \omega_n$.
By recurrence, 
$$
  \forall k \in \Nat
  \,, \qquad
  \omega_{n+k} \geq (1+c)^k \, \omega_n
  \,.
$$
Estimating the left-hand side by the volume of the whole ball~$B_{n+k}$,
we obtain
$$
  \forall k \in \Nat
  \,, \qquad
  (n+k)^d \, |B_1|= |B_{n+k}| \geq (1+c)^k \, \omega_n
  \,.
$$
Since the right hand-side is exponentially growing with~$k$,
while the left-hand has a polynomial growth as a function of~$k$,
we arrive at a contradiction for all sufficiently large~$k$.
That is, the right-hand side of~\eqref{Marcus} 
tends to zero as $n \to \infty$ and the theorem is proved.
\end{proof}

\section{Spectral isoperimetric inequalities}
In this section, we look at extremal properties 
of the lowest eigenvalue $\lambda_1^\alpha(\Omega)$ 
as regards the shape of the underlying domain~$\Omega$.

\begin{center}
\fbox{Is the optimal geometry a ball ?}
\end{center}
\begin{figure}[h!t  ]
\begin{center}
\includegraphics[width=0.35\textwidth]{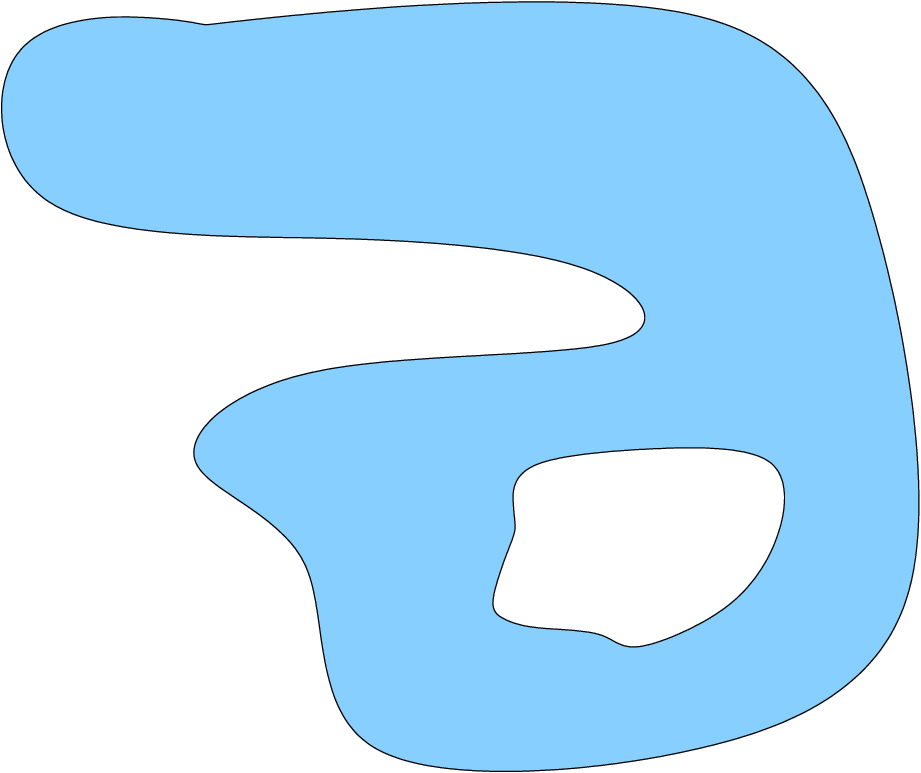}
\qquad\qquad
\includegraphics[width=0.3\textwidth]{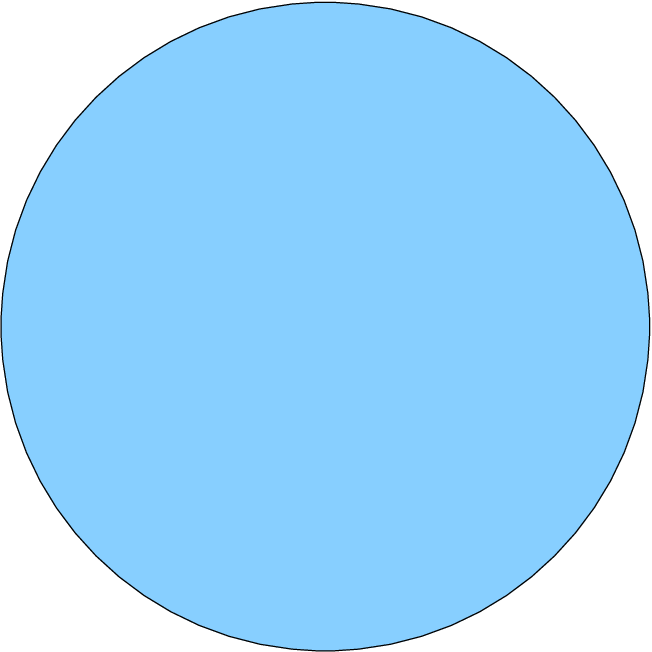}
\end{center}
\caption{An arbitrary bounded domain~$\Omega$ (left)
to be compared with the ball~$B$ (right).}
\label{Fig.domain}
\end{figure}

Let us begin by recalling some classical geometric facts.
For simplicity, 
you can assume that~$\Omega$ is a \emph{smooth} bounded domain,
in order to have classical definitions of its volume and boundary area,
but the domain can be multiply connected, see Figure~\ref{Fig.domain}.

\subsection{Geometric isoperimetric inequalities}
The (geometric) \emph{isoperimetric inequality}
states that among all sets of a given perimeter,
the ball has the largest volume.
That is, 
\begin{equation}\label{isoperimetric}
  \max_{|\partial\Omega|=\const} |\Omega| = |B|
  \,,
\end{equation}
where the maximum is taken over all bounded sets $\Omega \subset \Real^d$
of the fixed perimeter $|\partial\Omega|=\const$,
$B$~denotes the ball of the same perimeter as~$\Omega$
(\ie\ $|\partial B|=|\partial\Omega| = \const$)
and~$|\Omega|$ denotes the volume of~$\Omega$.
It is indeed an inequality because \eqref{isoperimetric}~is equivalent 
to the statement
\begin{equation}\label{isoperimetric.bis}
  \forall \Omega, \ |\partial\Omega| = \const, \qquad
  |\Omega| \leq |B|
  \,.
  \qquad\qquad
  (|\partial B| = |\partial\Omega| = \const)
\end{equation}
Moreover, the inequality becomes equality if, and only if, $\Omega=B$.

By scaling, \eqref{isoperimetric}~is equivalent 
to the  \emph{isochoric inequality}
stating that among all sets of a given volume,
the ball has the smallest perimeter.
That is, 
\begin{equation}\label{isochoric}
  \min_{|\Omega|=\const} |\partial\Omega| = |\partial B|
  \,,
\end{equation}
where the minimum is taken over all bounded sets $\Omega \subset \Real^d$
of a fixed volume $|\Omega|=\const$
and now~$B$ denotes the ball of the same volume as~$\Omega$ 
(\ie\ $|B|=|\Omega| = \const$).
Again, one is concerned with an inequality because~\eqref{isochoric}
is equivalent to the statement
\begin{equation}\label{isochoric.bis}
  \forall \Omega, \ |\Omega| = \const, \qquad
  |\partial\Omega| \geq |\partial B|
  \,.
  \qquad\qquad
  (|B| = |\Omega| = \const)
\end{equation}
Moreover, the inequality becomes equality if, and only if, $\Omega=B$.

The two inequalities~\eqref{isoperimetric.bis} and~\eqref{isochoric.bis}
can be stated as a unique inequality
(without any further constraints on the set~$\Omega$)
\begin{equation}\label{isochoric.both}
  \forall \Omega, \qquad
  |\partial\Omega|^d - d^d \, |B_1| \, |\Omega|^{d-1} \geq 0 
  ,
\end{equation}
and the inequality becomes equality if, and only if, $\Omega=B$.
Indeed, if~$R$ denotes the radius of~$B$, 
then the isoperimetric constraint requires 
$|\partial\Omega|=|\partial B_R| = R^{d-1} |\partial B_1|$,
while~\eqref{isoperimetric.bis} states that 
$|\Omega| \leq |B_R| = R^d |B_1|$;
eliminating~$R$ and using that $|\partial B_1| = d |B_1|$, 
we arrive at~\eqref{isochoric.both}.

The history of the geometric optimisation 
problems~\eqref{isoperimetric} and~\eqref{isochoric}
is briefly as follows
(we refer to~\cite{BuZa,Blasjo_2005} 
for a more detailed overview and references).
The optimality of ball was certainly known to ancient Greeks
(the solution is traditionally attributed 
to the legendary queen of Carthage Dido in about 900 BC),
but a rigorous proof was beyond the reach of their mathematical apparatus.
The first rigorous proof in two dimensions
was given by Steiner in 1838 AD, after three millennia.
In fact, he gave five nice proofs, but all of them were incomplete,
because he \emph{a priori}
assumed that the maximisation/minimisation problem
actually admits a solution.
This was settled by Weierstrass in 1879.
In 1884, Schwartz established the first proof in three dimensions.
A proof for higher dimensions was given by Hurwitz in 1901.
Skipping many subsequent achievements,
let us only mention a quantitative version of the isoperimetric inequality 
established by
Fusco, Maggi and Pratelli in 2008 \cite{Fusco-Maggi-Pratelli_2008}.

\subsection{The Faber--Krahn inequality}
Going from geometric to spectral quantities,
one may ask the question whether the ball is the extremal set 
also when optimising eigenvalues instead of the geometric data. 
The most celebrated result is certainly the \emph{Faber--Krahn inequality}
stating that it is indeed the case 
for the lowest Dirichlet eigenvalue under the isochoric constraint.  

\begin{theo}[Spectral isochoric inequality, Dirichlet case]\label{Thm.FK}
One has
\begin{equation}\label{FK}
  \min_{|\Omega|=\const} \lambda_1^D(\Omega) = \lambda_1^D(B)
  \,,
\end{equation}
where the minimum is taken over all bounded domains 
$\Omega \subset \Real^d$
of a fixed volume $|\Omega|=\const$
and~$B$ denotes the ball of the same volume as~$\Omega$ 
(\ie\ $|B|=|\Omega| = \const$).
\end{theo}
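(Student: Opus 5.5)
The plan is to use \emph{Schwarz symmetrisation} (symmetric decreasing rearrangement) together with the variational characterisation~\eqref{Rayleigh} of $\lambda_1^D$. Let $\Omega$ be a bounded domain and let $B$ be the centred ball with $|B|=|\Omega|$. By Theorem~\ref{Thm.bounded} and Theorem~\ref{minimax}, $\lambda_1^D(\Omega)$ is a discrete eigenvalue given by the infimum of the Rayleigh quotient $\|\nabla\psi\|^2/\|\psi\|^2$ over $W_0^{1,2}(\Omega)$. Replacing $\psi$ by $|\psi|$ does not change the quotient (since $|\nabla|\psi||\le|\nabla\psi|$ a.e.), so I restrict to non-negative $\psi\in C_0^\infty(\Omega)$ and conclude by density. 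For such $\psi$, I define the rearrangement $\psi^*:B\to[0,\infty)$ as the unique radially symmetric, non-increasing function whose level sets $\{\psi^*>t\}$ are centred balls with $|\{\psi^*>t\}|=|\{\psi>t\}|$ for every $t>0$. Since $\{\psi>0\}\subset\Omega$, the support of $\psi^*$ lies in $\overline{B}$.

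The first step is equimeasurability. By the layer-cake formula $\int|\psi|^2=\int_0^\infty|\{\psi^2>s\}|\,\der s$, one gets $\|\psi^*\|_{\sii(B)}=\|\psi\|_{\sii(\Omega)}$. The second step, which is the core, is the \emph{P\'olya--Szeg\H{o} inequality} $\|\nabla\psi^*\|\le\|\nabla\psi\|$, together with $\psi^*\in W_0^{1,2}(B)$. I would prove it via the coarea formula. Set $\mu(t):=|\{\psi>t\}|$. Then
$$
\int|\nabla\psi|^2=\int_0^\infty\!\!\int_{\{\psi=t\}}|\nabla\psi|\,\der\sigma\,\der t,
\qquad
-\mu'(t)\ge\int_{\{\psi=t\}}|\nabla\psi|^{-1}\der\sigma
$$
for a.e.\ $t$, where Sard's theorem handles critical values. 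The Cauchy--Schwarz inequality gives
$$
|\{\psi=t\}|^2\le\int_{\{\psi=t\}}|\nabla\psi|\,\der\sigma\int_{\{\psi=t\}}|\nabla\psi|^{-1}\der\sigma .
$$
Combining these, $\int_{\{\psi=t\}}|\nabla\psi|\ge|\{\psi=t\}|^2/(-\mu'(t))$. The geometric isochoric inequality~\eqref{isochoric.bis} applied to the superlevel set $\{\psi>t\}$ yields $|\{\psi=t\}|\ge|\{\psi^*=t\}|$. For the radial function $\psi^*$, all the inequalities above are equalities, because $|\nabla\psi^*|$ is constant on spheres and $\psi^*$ has the same distribution function $\mu$. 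Integrating in $t$ therefore gives $\|\nabla\psi\|^2\ge\|\nabla\psi^*\|^2$.

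The third step is the conclusion. Since $\psi^*\in W_0^{1,2}(B)$ is admissible in~\eqref{Rayleigh} for $B$,
$$
\frac{\|\nabla\psi\|^2}{\|\psi\|^2}\ge\frac{\|\nabla\psi^*\|^2}{\|\psi^*\|^2}\ge\lambda_1^D(B).
$$
Taking the infimum over $\psi$ gives $\lambda_1^D(\Omega)\ge\lambda_1^D(B)$, and the minimum in~\eqref{FK} is attained at $\Omega=B$ itself.

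The main obstacle will be the rigorous justification of the P\'olya--Szeg\H{o} step. This involves regularity of $\psi^*$: it is only Lipschitz, and I would need to check $\psi^*\in W_0^{1,2}(B)$. It also requires the coarea manipulation on the critical set $\{\nabla\psi=0\}$, and the isoperimetric inequality for merely smooth-almost-everywhere level sets, for which one should use perimeter in the De Giorgi sense. Here I would work with smooth $\psi$, use Sard to get smooth level sets for a.e.\ $t$, and treat the critical set separately, noting that it contributes nothing to $\|\nabla\psi\|^2$ and only decreases $\mu$ further. Only the inequality in~\eqref{FK} is required here; the rigidity statement (equality only for balls) is not needed and I would not attempt it.
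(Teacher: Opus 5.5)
Your proposal is correct and follows essentially the same route as the paper: Schwarz symmetrisation, equimeasurability for the $\sii$-norm, the P\'olya--Szeg\H{o} inequality $\|\nabla\psi^*\|\le\|\nabla\psi\|$, and then $\psi^*$ as a trial function in the Rayleigh quotient for the ball. The differences are minor: the paper symmetrises the non-negative ground state $|\psi_1|$ (after noting via Proposition~\ref{Prop.achieved} that it is again a minimiser) and simply cites \cite[Lem.~7.17]{LL} for the gradient inequality, whereas you symmetrise arbitrary non-negative test functions and pass to the infimum, and you sketch the P\'olya--Szeg\H{o} step through the coarea formula and the geometric isochoric inequality~\eqref{isochoric.bis}.
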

\begin{proof}
Let $\psi_1 := \psi_1^D$ denote a real-valued eigenfunction
of~$-\Delta_D^\Omega$ corresponding to $\lambda_1^D(\Omega)$. 
By the minimax principle (Theorem~\ref{minimax}),
one has
\begin{equation}\label{Rayleigh.ground}
  \lambda_1^D(\Omega) 
  = \inf_{\stackrel[\psi\not=0]{}{\psi \in W_0^{1,2}(\Omega)}} 
  \frac{\displaystyle \int_\Omega |\nabla\psi|^2}
  {\displaystyle \int_\Omega |\psi|^2}
  = \frac{\displaystyle \int_\Omega |\nabla\psi_1|^2}
  {\displaystyle \int_\Omega |\psi_1|^2}
  = \frac{\displaystyle \int_\Omega 
  \big|\nabla|\psi_1|\big|^2}
  {\displaystyle \int_\Omega \big||\psi_1|\big|^2}
  \,,
\end{equation}
where the second equality holds because $\lambda_1^D(\Omega)$
is an eigenvalue, so the infimum is actually achieved. 
It follows (\cf~Proposition~\ref{Prop.achieved})
that the absolute value~$|\psi_1|$ is also
a minimiser of~\eqref{Rayleigh.ground} and thus eigenfunction 
of~$-\Delta_D^\Omega$ corresponding to $\lambda_1^D(\Omega)$. 
Hence, $\lambda_1^D(\Omega)$ admits a non-negative eigenfunction.
 
Define the \emph{symmetric-decreasing rearrangement} 
(or the \emph{Schwarz symmetrisation},
see Figure~\ref{Fig.Schwarz})
$$
  \psi_1^*(x) := \int_0^\infty \chi_{\{|\psi_1|>t\}^*}(x) \, \der t
  \,,
$$
where~$S^*$ denotes the \emph{symmetric rearrangement} 
of any bounded measurable set $S \subset \Real^d$,
\ie\ $S^* := B_r(0)$ with $|B_r(0)| = |S|$.
Then $\psi_1^* \in W_0^{1,2}(\Omega^*)$, 
where $\Omega^*:=B_R(0)$ is a ball such that $|\Omega^*| = |\Omega|$.
Notice that~$\psi_1^*$ is constructed from~$\psi_1$ 
by rearranging the level sets of~$\psi_1$ in balls of the same volume.
Clearly, $\psi_1^*$~is non-negative, 
radially symmetric (\ie, $\psi_1^*(x)=\psi_1^*(y)$ if $|x|=|y|$)
and non-increasing as a function of the distance from the origin
(\ie, $\psi_1^*(x) \geq \psi_1^*(y)$ if $|x| \leq |y|$). 
Since the functions~$\psi_1$ and~$\psi_1^*$ are obviously \emph{equimeasurable}
(\ie~their level sets have the same measure),
we immediately get  
$$
  \|\psi_1^*\|_{\sii(\Omega^*)} = \|\psi_1\|_{\sii(\Omega)}
  \,.
$$
It is more difficult to prove that the derivative diminishes 
after the symmetric rearrangement 
(see, \eg, \cite[Lem.~7.17]{LL} for a proof)
$$
  \|\nabla \psi_1^*\|_{\sii(\Omega^*)} 
  \leq \|\nabla \psi_1\|_{\sii(\Omega)}
  \,.
$$

\begin{figure}[h!]
\begin{center}
\includegraphics[width=0.95\textwidth]{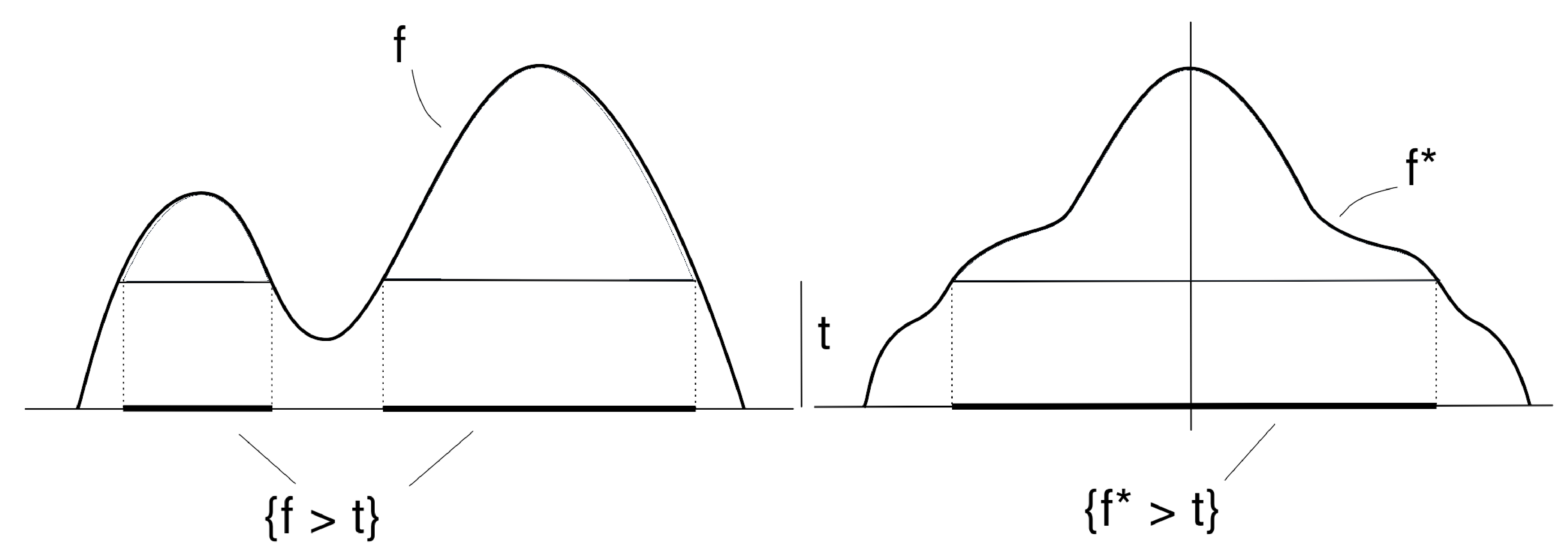}
\end{center}
\caption{The symmetric-decreasing rearrangement 
of a one-di\-men\-sio\-nal function~$f$.
(Source: \emph{Wikipedia} \cite{Wiki-Schwarz}.)}
\label{Fig.Schwarz}
\end{figure}

After these preliminaries,
we are now in a position to establish~\eqref{FK}. 
Using~$\psi_1^*$ as a trial function 
in the variational characterisation
of $\lambda_1^D(B)$ with $B:=\Omega^*$,
we get
$$
  \lambda_1^D(B) 
  = \inf_{\stackrel[\psi \not= 0]{}{\psi \in W_0^{1,2}(\Omega^*) }}
  \frac{\|\nabla\psi\|_{\sii(\Omega^*)}^2}{\ \|\psi\|_{\sii(\Omega^*)}^2}
  \leq \frac{\|\nabla\psi_1^*\|_{\sii(\Omega^*)}^2}{\ \|\psi_1^*\|_{\sii(\Omega^*)}^2}
  \leq \frac{\|\nabla\psi_1\|_{\sii(\Omega)}^2}{\ \|\psi_1\|_{\sii(\Omega)}^2}
  = \lambda_1^D(\Omega)
  \,. 
$$
This is equivalent to~\eqref{FK}. 
\end{proof}

Theorem~\ref{Thm.FK} implies the spectral isoperimetric 
inequality as a corollary.

\begin{coro}[Spectral isoperimetric inequality, Dirichlet case]
\label{Corol.FK}
One has
\begin{equation}\label{FK.perim}
  \min_{|\partial\Omega|=\const} \lambda_1^D(\Omega) = \lambda_1^D(B)
  \,,
\end{equation}
where the minimum is taken over all bounded domains 
$\Omega \subset \Real^d$
of a fixed perimeter $|\partial\Omega|=\const$
and~$B$ denotes the ball of the same perimeter as~$\Omega$ 
(\ie\ $|\partial B|=|\partial\Omega| = \const$).
\end{coro}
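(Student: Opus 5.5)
The corollary will follow from the Faber--Krahn inequality (Theorem~\ref{Thm.FK}), the geometric isoperimetric inequality~\eqref{isoperimetric.bis}, and the scaling behaviour of Dirichlet eigenvalues together with their monotonicity in the radius of a ball. Throughout, fix a bounded domain~$\Omega$ with perimeter $|\partial\Omega| = \const$, let $B$ be the ball with $|\partial B| = |\partial\Omega|$, and let $B'$ be the ball with $|B'| = |\Omega|$.

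First I would compare~$\Omega$ with~$B'$. Theorem~\ref{Thm.FK} gives directly
$$
  \lambda_1^D(\Omega) \geq \lambda_1^D(B') \,.
$$

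Second I would compare $B'$ with $B$. By~\eqref{isoperimetric.bis}, $|\Omega| \leq |B|$, hence $|B'| \leq |B|$. After a translation we may take $B' \subset B$, since both are balls and the radius of~$B'$ is at most that of~$B$. Theorem~\ref{Prop.monotonicity} then yields $\lambda_1^D(B') \geq \lambda_1^D(B)$. Alternatively, one can use the scaling identity $\lambda_1^D(B_R) = R^{-2}\lambda_1^D(B_1)$, obtained from the change of variables $x \mapsto Rx$ in the Rayleigh quotient~\eqref{Rayleigh.ground}.

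Combining the two steps gives $\lambda_1^D(\Omega) \geq \lambda_1^D(B)$. Since~$B$ is itself an admissible domain with the prescribed perimeter, the infimum is attained at~$B$, which is~\eqref{FK.perim}.

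The only delicate point is the geometric input. One must ensure that~\eqref{isoperimetric.bis} applies to the class of bounded domains considered, where the perimeter should be understood classically for smooth domains as in the preceding subsection. Note that the direction of the inequality in~\eqref{isoperimetric.bis} is precisely the one needed here: a smaller volume gives a smaller equivalent ball and therefore a larger eigenvalue. I do not expect any further obstacle; the argument is a two-line chaining of earlier results.
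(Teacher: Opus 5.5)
Your proposal is correct and follows the paper's argument. You apply Faber--Krahn to the equal-volume ball, use the geometric isoperimetric inequality to nest that ball inside the equal-perimeter ball, and conclude with Dirichlet domain monotonicity; the only cosmetic difference is that you invoke the isoperimetric form~\eqref{isoperimetric.bis}, whereas the paper uses the equivalent isochoric form~\eqref{isochoric.bis} to enlarge the equal-volume ball.
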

\begin{proof}
The spectral isochoric inequality~\eqref{FK} means 
$$
 \forall \Omega, \ |\Omega| = \const, \qquad
  \lambda_1^D(\Omega) \geq \lambda_1^D(B)
  \,.
  \qquad\qquad
  (|B| = |\Omega| = \const)
  \,.
$$
The (geometric) isochoric inequality~\eqref{isochoric.bis} implies 
$|\partial\Omega| \geq |\partial B|$.
It follows that there exists a ball $\tilde{B} \supset B$ 
such that $|\partial\tilde{B}| = |\partial \Omega|$. 
By the monotonicity of Dirichlet eigenvalues 
(Theorem~\ref{Prop.monotonicity}), 
$\lambda_1^D(B) \geq \lambda_1^D(\tilde{B})$.
\end{proof}

The brief history of~\eqref{FK} and~\eqref{FK.perim} goes as follows.
In 1877, 
based on explicitly solvable models,
Lord Rayleigh~\cite{Rayleigh_1877} conjectured
the validity of the spectral isochoric inequality.
In 1918, Courant~\cite{Courant_1918} 
established the spectral isoperimetric inequality.
In 1923--4,
Faber~\cite{Faber_1923} and Krahn~\cite{Krahn_1924} 
independently proved the spectral isochoric inequality.
A quantitative version of the Faber--Krahn inequality was established
by Fusco, Maggi and Pratelli in 2009~\cite{Fusco-Maggi-Pratelli_2009}.

\subsection{The Bossel--Daners inequality}
Next, let us investigate the spectral isochoric and isoperimetric 
inequalities for the Robin Laplacian $-\Delta_\alpha^\Omega$. 
We assume that~$\Omega$ is a bounded Lipschitz domain,
in order to guarantee the validity of~\eqref{Ass.Robin}
as well as that the spectrum is purely discrete
(recall Theorem~\ref{Thm.bounded.Neumann}). 

The Neumann case $\alpha=0$ is trivial, 
because $\lambda_1^N(\Omega)=0$
for any bounded domain~$\Omega$ 
(the corresponding eigenfunction is any non-zero constant). 
The problem is interesting for the first non-trivial eigenvalue 
$\lambda_2^N(\Omega)$ (so as it is for higher Dirichlet eigenvalues),
but we shall not consider these optimisation problems here.
Let us start with the Robin Laplacian $-\Delta_\alpha^\Omega$ 
with a \emph{positive} boundary parameter~$\alpha$

\begin{theo}[Spectral isochoric inequality, positive Robin case]\label{Thm.Bossel}
For every $\alpha > 0$,
one has
\begin{equation}\label{Bossel}
  \min_{|\Omega|=\const} \lambda_1^\alpha(\Omega) = \lambda_1^\alpha(B)
  \,,
\end{equation}
where the minimum is taken over all bounded Lipschitz domains 
$\Omega \subset \Real^d$
of a fixed volume $|\Omega|=\const$
and~$B$ denotes the ball of the same volume as~$\Omega$ 
(\ie\ $|B|=|\Omega| = \const$).
\end{theo}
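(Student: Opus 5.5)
The plan is to follow the approach of Bossel (in two dimensions) and Daners (in all dimensions). Schwarz symmetrisation as in the proof of Theorem~\ref{Thm.FK} will not work here. The boundary term $\alpha\int_{\partial\Omega}|\psi|^2$ is not controlled by rearrangement: the eigenfunction does not vanish on $\partial\Omega$, and its level sets touch the boundary in an uncontrolled way.

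\emph{Step 1: a positive eigenfunction.} Exactly as in~\eqref{Rayleigh.ground}, $|\psi_1|$ is again a minimiser of the Rayleigh quotient of $\delta_\alpha^\Omega$, so Proposition~\ref{Prop.achieved} gives a non-negative eigenfunction~$\psi$. The Harnack inequality then gives $\psi>0$ in~$\Omega$. Using the Robin condition with $\alpha>0$ (Hopf lemma), we also expect $\psi>0$ up to $\partial\Omega$, at least after approximating by smooth domains.

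\emph{Step 2: Bossel's level-set functional.} For a vector field $\phi\in L^\infty(\Omega;\Real^d)$ and a subset $U\subset\Omega$, set
$$
  H_\Omega(U,\phi) := \frac{1}{|U|}\left(\int_{\partial U\cap\Omega}\phi\cdot n + \alpha\,|\partial U\cap\partial\Omega| + \int_U \big(\divergence\phi - |\phi|^2\big)\right).
$$
For $\phi=\nabla\psi/\psi$ one has $\divergence\phi-|\phi|^2=-\lambda_1^\alpha(\Omega)$ in~$\Omega$ and $\phi\cdot n=-\alpha$ on $\partial\Omega$. The divergence theorem then gives $H_\Omega(U,\nabla\psi/\psi)=\lambda_1^\alpha(\Omega)$ for every admissible~$U$, and in particular for all level sets $U_t:=\{\psi>t\}$. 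Conversely, I would show a variational characterisation in the spirit of~\eqref{Rayleigh}:
$$
  \lambda_1^\alpha(\Omega)=\sup_\phi\inf_{t}H_\Omega(U_t,\phi),
$$
where the supremum is attained at $\nabla\psi/\psi$.

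\emph{Step 3: comparison with the ball.} Let $\psi_B$ be the radial, radially decreasing positive eigenfunction on~$B$. Transplant the radial field $\nabla\psi_B/\psi_B$ to~$\Omega$ along level sets of equal volume: on $\{\psi=t\}$, assign the value that the ball field takes on the sphere enclosing the volume $|U_t|$. The geometric isochoric inequality~\eqref{isochoric.bis}, applied to the level sets $U_t$, handles the perimeter term $|\partial U_t|$; here $\alpha>0$ is used to bound the boundary contribution $\alpha|\partial U_t\cap\partial\Omega|$ from below together with the interior one. Combining this with the co-area formula should give, for the transplanted field, $H_\Omega(U_t,\cdot)\ge\lambda_1^\alpha(B)$ for some~$t$. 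By Step~2 this yields $\lambda_1^\alpha(\Omega)\ge\lambda_1^\alpha(B)$. Equality at $\Omega=B$ is trivial, which gives the minimum in~\eqref{Bossel}.

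\emph{Main obstacle.} I expect Step~3 to be the crux: producing a single level~$t$ where the comparison holds. Following Daners, I would argue by contradiction. If $H_\Omega(U_t,\cdot)<\lambda_1^\alpha(B)$ for all~$t$, this should translate, via the co-area formula, into a differential inequality for $t\mapsto\int_{U_t}\psi^{-2}$ or a similar quantity. Integrating it over $t\in(0,\sup\psi)$ should contradict the finiteness of $\|\psi\|^2$. Regularity issues (a Lipschitz boundary, and the level sets of~$\psi$ meeting~$\partial\Omega$) are secondary and would be handled by approximation.
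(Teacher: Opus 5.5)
Your outline names the method the notes point to: they give no proof of this theorem, only the references to Bossel and Daners. As you have assembled it, however, the argument does not close, and there are genuine gaps.

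\textbf{The functional in Step~2.} The functional is wrong, and so is the identity you claim for it. For $\phi=\nabla\psi/\psi$ one has $\divergence\phi=-\lambda_1^\alpha(\Omega)-|\phi|^2$, not $\divergence\phi-|\phi|^2=-\lambda_1^\alpha(\Omega)$. Even with the sign repaired ($\phi=-\nabla\psi/\psi$, so that $\divergence\phi-|\phi|^2=\lambda_1^\alpha(\Omega)$ and $\phi\cdot n=\alpha$ on $\partial\Omega$), keeping both the interior flux and $\int_U\divergence\phi$ counts the flux twice. It gives $H_\Omega(U,\phi)=2\lambda_1^\alpha(\Omega)+|U|^{-1}\int_U|\phi|^2$. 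Bossel's functional contains no derivative of $\phi$. For scalar $\phi\geq 0$ it reads
\[
  H_\Omega(U,\phi):=\frac{1}{|U|}\Bigl(\int_{\partial U\cap\Omega}\phi\,\der\sigma+\alpha\,|\partial U\cap\partial\Omega|-\int_U\phi^2\Bigr),
\]
and the divergence theorem applied to $-\nabla\psi/\psi$ on $U_t$ gives $H_\Omega(U_t,\phi_0)=\lambda_1^\alpha(\Omega)$ for $\phi_0:=|\nabla\psi|/\psi$. This is not cosmetic. A field transplanted from the ball along level sets has an uncontrolled divergence. Step~3 can only work because $H_\Omega$ sees nothing but the value of $\phi$ on $\{\psi=t\}$ and $\int_{U_t}\phi^2$.

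\textbf{Step~3.} It has the wrong quantifier, it lacks a key ingredient, and the contradiction argument sits in the wrong step.

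Step~2 only gives $\lambda_1^\alpha(\Omega)\geq\inf_t H_\Omega(U_t,\phi)$. So you need $H_\Omega(U_t,\tilde\phi)\geq\lambda_1^\alpha(B)$ for \emph{every}~$t$; a bound at one level yields nothing. That bound is a direct estimate, not a contradiction argument. Set $\phi_B:=|\nabla\psi_B|/\psi_B$ (a function of $r$), choose $r_t$ with $|B_{r_t}|=|U_t|$, and put $\tilde\phi:=\phi_B(r_t)$ on $\{\psi=t\}$. Equimeasurability gives $\int_{U_t}\tilde\phi^2=\int_{B_{r_t}}\phi_B^2$, whence
\[
  |U_t|\,H_\Omega(U_t,\tilde\phi)\geq\phi_B(r_t)\,|\partial U_t|-\int_{B_{r_t}}\phi_B^2\geq\phi_B(r_t)\,|\partial B_{r_t}|-\int_{B_{r_t}}\phi_B^2=\lambda_1^\alpha(B)\,|U_t| ,
\]
by~\eqref{isochoric.bis} and the same divergence identity on $B_{r_t}$.

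The first inequality requires $\alpha\geq\phi_B(r_t)$. Positivity of $\alpha$ alone does not let you absorb $\alpha|\partial U_t\cap\partial\Omega|$ into the interior term. You need $0\leq\phi_B\leq\alpha$ in $B$. This follows from the Riccati equation $\phi_B'=\phi_B^2-\frac{d-1}{r}\phi_B+\lambda_1^\alpha(B)$ with $\phi_B(0)=0$ and $\phi_B(R)=\alpha$: interior critical points are strict minima, so $\phi_B$ increases.

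\textbf{Where the co-area argument belongs.} The co-area/ODE contradiction is really the proof of the ``conversely'' in Step~2. That step is the real crux, and it is not a Rayleigh-quotient statement. Write $\phi=\phi_0+w$ and $F(t):=\int_{U_t}\phi_0w$. The co-area formula gives $\int_{\{\psi=t\}}w\,\der\sigma=-tF'(t)$ and
\[
  |U_t|\bigl(H_\Omega(U_t,\phi)-\lambda_1^\alpha(\Omega)\bigr)=-\frac{1}{t}\bigl(t^2F(t)\bigr)'-\int_{U_t}w^2 .
\]
If the left-hand side were positive for a.e.\ $t$, then $t^2F$ would be strictly decreasing on $(0,\max\psi)$ while tending to $0$ at both ends, which is impossible. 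Those limits need $\phi_0\in\sii(\Omega)$. This is where positivity of $\psi$ up to the boundary ($\inf_\Omega\psi>0$) is used, not the finiteness of $\|\psi\|$.
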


Again, Theorem~\ref{Thm.Bossel} implies the corresponding 
spectral isoperimetric inequality.

\begin{coro}[Spectral isoperimetric inequality, positive Robin case]
For every $\alpha > 0$,
one has
\begin{equation*}
  \min_{|\partial\Omega|=\const} \lambda_1^\alpha(\Omega) = \lambda_1^\alpha(B)
  \,,
\end{equation*}
where the minimum is taken over all bounded Lipschitz domains 
$\Omega \subset \Real^d$
of a fixed perimeter $|\partial\Omega|=\const$
and~$B$ denotes the ball of the same perimeter as~$\Omega$ 
(\ie\ $|\partial B|=|\partial\Omega| = \const$).
\end{coro}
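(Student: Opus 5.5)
The plan is to mimic the proof of Corollary~\ref{Corol.FK}: deduce the perimeter-constrained statement from the volume-constrained Theorem~\ref{Thm.Bossel}, combined with the geometric isochoric inequality~\eqref{isochoric.bis}. The Dirichlet domain monotonicity used there is not available for Robin conditions. It will be replaced by the monotonicity of $\lambda_1^\alpha$ of balls with respect to the radius.

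Fix a bounded Lipschitz domain~$\Omega$. Let $B$ be the ball with $|B|=|\Omega|$, and let $\tilde B$ be the ball with $|\partial\tilde B|=|\partial\Omega|$. By Theorem~\ref{Thm.Bossel}, $\lambda_1^\alpha(\Omega)\geq\lambda_1^\alpha(B)$. By~\eqref{isochoric.bis}, $|\partial\Omega|\geq|\partial B|$, so the radius $\tilde R$ of~$\tilde B$ is at least the radius $R$ of~$B$; concentrically, $\tilde B\supset B$. It then remains to show that $r\mapsto\lambda_1^\alpha(B_r)$ is non-increasing for fixed $\alpha>0$. Once this is done, $\lambda_1^\alpha(\Omega)\geq\lambda_1^\alpha(B)\geq\lambda_1^\alpha(\tilde B)$, and equality is attained by $\Omega=\tilde B$.

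For the radial monotonicity, which is the only genuinely new step, I use scaling. For $\psi\in W^{1,2}(B_r)$ and $t>1$, set $\psi_t(x):=\psi(x/t)$ on $B_{tr}$. A change of variables gives $\|\nabla\psi_t\|^2=t^{d-2}\|\nabla\psi\|^2$, $\|\psi_t\|^2=t^d\|\psi\|^2$ and $\int_{\partial B_{tr}}|\psi_t|^2=t^{d-1}\int_{\partial B_r}|\psi|^2$. Hence the Rayleigh quotient satisfies
$$
  \frac{\delta_\alpha^{B_{tr}}[\psi_t]}{\|\psi_t\|^2}
  = \frac{t^{-2}\|\nabla\psi\|^2+t^{-1}\alpha\int_{\partial B_r}|\psi|^2}{\|\psi\|^2}
  \leq \frac{\delta_\alpha^{B_r}[\psi]}{\|\psi\|^2}\,,
$$
where the inequality uses $\alpha>0$ and $t\geq1$. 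Taking $\psi$ as the first eigenfunction on $B_r$ and applying~\eqref{Rayleigh} on $B_{tr}$ gives $\lambda_1^\alpha(B_{tr})\leq\lambda_1^\alpha(B_r)$. The same scaling argument in fact works for any domain, not only balls.

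The main obstacle is simply noticing that domain monotonicity fails in the Robin case but is replaced by dilation monotonicity, which holds for $\alpha>0$ and fails for $\alpha<0$. The remaining steps are routine. Uniqueness of the minimiser, if desired, would follow from the equality case of Theorem~\ref{Thm.Bossel} together with the strict inequality $|\partial\Omega|>|\partial B|$ for non-balls, since the displayed inequality is strict when $t>1$ and $\psi\neq0$.
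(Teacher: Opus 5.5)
Your proposal is correct and follows the paper's proof: it combines Theorem~\ref{Thm.Bossel} with the isochoric inequality~\eqref{isochoric.bis}, and replaces Dirichlet domain monotonicity by a scaling argument giving $\lambda_1^\alpha(\tilde B)\leq\lambda_1^\alpha(B)$ for $\tilde R\geq R$ and $\alpha>0$. The paper states this last step through the identity $\lambda_1^\alpha(B)=(\tilde R/R)^2\lambda_1^{\tilde\alpha}(\tilde B)$ with $\tilde\alpha=(R/\tilde R)\alpha$ and concludes tersely; your direct Rayleigh-quotient comparison is exactly the computation that turns that identity into the required inequality.
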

\begin{proof}
The spectral isochoric inequality~\eqref{Bossel} means 
$$
 \forall \Omega, \ |\Omega| = \const, \qquad
  \lambda_1^\alpha(\Omega) \geq \lambda_1^\alpha(B)
  \,.
  \qquad\qquad
  (|B| = |\Omega| = \const)
  \,.
$$
The (geometric) isochoric inequality~\eqref{isochoric.bis} implies 
$|\partial\Omega| \geq |\partial B|$.
It follows that there exists another ball $\tilde{B} \supset B$ 
with the same centre as~$B$
and such that $|\partial\tilde{B}| = |\partial \Omega|$.
Contrary to the Dirichlet case, however,
now we cannot use the monotonicity of eigenvalues to conclude that 
$\lambda_1^\alpha(B) \geq \lambda_1^\alpha(\tilde{B})$.
Anyway, the same inequality holds by a simple scaling.
Indeed, if~$R$ and~$\tilde{R}$ denote the radii of~$B$ and~$\tilde{B}$,
respectively, then
$$
  \lambda_1^\alpha(B) =  \left(\frac{\tilde{R}}{R}\right)^2 \, \lambda_1^{\tilde\alpha}(\tilde{B}) 
  \qquad \mbox{with} \qquad
  \tilde\alpha := \frac{R}{\tilde{R}} \, \alpha
  \,,
$$
which can easily be deduced by using the variational characterisation
of $\lambda_1^\alpha(B)$ 
and performing a scaling change variables in the integrals.
Since $\tilde{R} \geq R$ and~$\alpha$ is non-negative,
we get $\lambda_1^\alpha(B) \geq \lambda_1^\alpha(\tilde{B})$.   
\end{proof}

The brief history of~\eqref{Bossel} is as follows.
In 1986, 
Bossel~\cite{Bossel_1986} established the spectral isochoric inequality 
in dimension two.
The extension to higher dimensions was achieved 
by Daners in 2006~\cite{Daners_2006},
following the original idea of Bossel's. 
In 2010,
Bucur and Giacomini~\cite{Bucur-Giacomini_2010}
proposed an alternative proof based on the theory of special functions 
of bounded variation. 
In 2023,
Alvino, Nitsch, Trombetti \cite{Alvino-Nitsch-Trombetti}
developed an alternative proof in two dimensions 
based on a Talenti-type comparison principle.
 
\subsection{Bareket's conjecture}
Let us now look at the Robin Laplacian $-\Delta_\alpha^\Omega$
with a negative boundary parameter~$\alpha$.
It seems to be natural to expect that the ball is again 
the optimal set for the lowest eigenvalue.
However, since $\lambda_1^\alpha(\Omega)$ is negative whenever $\alpha<0$
(indeed, choose a constant trial function 
in the variational characterisation of $\lambda_1^\alpha(\Omega)$),
now it makes sense to maximise it.	 

\begin{conj}[Spectral isochoric inequality, negative Robin case]\label{Bareket}
For every $\alpha < 0$, one has
\begin{equation*}
  \max_{|\Omega|=\const} \lambda_1^\alpha(\Omega) = \lambda_1^\alpha(B)
  \,,
\end{equation*}
where the maximum is taken over all bounded Lipschitz domains 
$\Omega \subset \Real^d$
of a fixed volume $|\Omega|=\const$
and~$B$ denotes the ball of the same volume as~$\Omega$ 
(\ie\ $|B|=|\Omega| = \const$).
\end{conj}

This conjecture was raised by Bareket in 1977.
Moreover, she proved it for two-dimensional domains close to the ball
provided that~$|\alpha|$ is small. 
In 2007, 
Brock and Daners \cite{Brock-Daners_2007} rejuvenated the conjecture
by re-stating it as an open problem during an Oberwolfach conference.
In 2013,
by computing the derivative of $\lambda_1^\alpha(\Omega)$ 
with respect to $\alpha=0$, Daners~\cite{Daners_2013} gave another
support for the validity of the conjecture. 
In the same year, 
during a conference in Erlangen \cite{Erlangen},
Freitas formulated the conjecture as problem P1 
from ``Top~5 Open Problems'' in spectral geometry.
In 2015,
Ferone, Nitsch and Trombetti~\cite{Ferone-Nitsch-Trombetti_2015}
proved the conjecture for arbitrary domains closed to the ball,
with the closedness depending on~$\alpha$. 
 
Since we state this spectral isochoric inequality as a conjecture,
it might be expected that something goes wrong.
Indeed, in collaboration with Freitas~\cite{FK7},
we disproved the conjecture by showing that there exists another domain
which is better than the ball, at least if~$|\alpha|$ is large,
see Figure~\ref{Fig.annulus}.
\begin{theo}[Freitas \& Krej\-\v{c}i\v{r}\'ik \cite{FK7},
disproval of Conjecture~\ref{Bareket}]\label{Thm.counter.Bareket}
For every positive numbers $R_1 < R_2$, 
there exists $\alpha_0=\alpha_0(R_1,R_2) < 0$ such that, 
for all $\alpha \leq \alpha_0$, 
\begin{equation}\label{counter}
  \lambda_1^\alpha(B_R) < \lambda_1^\alpha(A_{R_1,R_2})
  \,,
\end{equation}
where $A_{R_1,R_2} := B_{R_2} \setminus \overline{B_{R_2}}$
is a \emph{spherical shell} 
and~$B_R$ with radius $R:= (R_2^d - R_1^d)^{1/d} < R_2$
is a ball of the same volume as $A_{R_1,R_2}$
(\ie\ $|B_R|=|A_{R_1,R_2}|$).
\end{theo}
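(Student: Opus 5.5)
The plan is a strong-coupling asymptotic comparison. For $\alpha\to-\infty$ the first Robin eigenvalue of a smooth bounded domain is governed by the boundary curvature:
\begin{equation*}
  \lambda_1^\alpha(\Omega) = -\alpha^2 - (d-1)\,\max_{\partial\Omega} H\,|\alpha| + o(|\alpha|)
  \,, \qquad \alpha \to -\infty \,,
\end{equation*}
where $H$ is the mean curvature, taken positive for convex boundary parts. The leading term $-\alpha^2$ comes from the half-line model of Example~\ref{Ex.half}. For the ball $B_R$ the maximal mean curvature is $1/R$. For the shell $A_{R_1,R_2}$ it is $1/R_2$: the outer sphere has curvature $1/R_2$ and the inner sphere is concave, with curvature $-1/R_1$. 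Since $R<R_2$, the shell's correction term $-(d-1)|\alpha|/R_2$ is less negative than the ball's $-(d-1)|\alpha|/R$. This gives \eqref{counter} for all sufficiently negative $\alpha$.

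Because both domains are radial, I would not invoke the general asymptotics; the two estimates can be obtained directly, one in each direction.

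\emph{Ball (upper bound).} I would use a radial trial function $\psi(x)=e^{\alpha(R-|x|)}$ cut off away from the origin, in the variational characterisation \eqref{Rayleigh}. A one-dimensional Laplace-type computation of the Rayleigh quotient in the weighted space $r^{d-1}\,\der r$ should give
\begin{equation*}
  \lambda_1^\alpha(B_R) \le -\alpha^2 - \frac{d-1}{R}\,|\alpha| + O(1)
  \,.
\end{equation*}
Alternatively, $\lambda_1^\alpha(B_R)$ solves an explicit Bessel transcendental equation whose large-$|\alpha|$ expansion could be read off.

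\emph{Shell (lower bound).} I would first note that the ground state is radial, being simple and positive and the problem being rotationally invariant. So it suffices to bound from below the one-dimensional form
\begin{equation*}
  \int_{R_1}^{R_2} |f'|^2\, r^{d-1}\,\der r + \alpha\big(R_2^{d-1}|f(R_2)|^2 + R_1^{d-1}|f(R_1)|^2\big)
  \,.
\end{equation*}
I would split $(R_1,R_2)$ at the midpoint $R_m$ and impose an extra Neumann condition there. As in the bracketing argument of Proposition~\ref{Prop.urchin}, this lowers the eigenvalue, so a lower bound for the decoupled problem is a lower bound for the shell.

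On the outer piece $(R_m,R_2)$ I would use the substitution $g=r^{(d-1)/2}f$, as in \eqref{change.trial}. The resulting potential term $\frac{(d-1)(d-3)}{4r^2}$ is bounded, and there is a boundary term $-\frac{d-1}{2R_2}|g(R_2)|^2$ from integrating by parts. The effective Robin parameter at $R_2$ thus becomes $\alpha+\frac{d-1}{2R_2}$, with a Neumann condition at $R_m$. The interval Robin spectrum from Section~\ref{Sec.strings} then gives a bottom of
\begin{equation*}
  -\Big(\alpha+\frac{d-1}{2R_2}\Big)^2 + O(1) = -\alpha^2 - \frac{d-1}{R_2}\,|\alpha| + O(1)
  \,,
\end{equation*}
with an exponentially small error. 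On the inner piece the same computation yields $-\alpha^2 + \frac{d-1}{R_1}|\alpha| + O(1)$, because the normal there points inward. This is larger, so it does not matter.

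The main obstacle is keeping the $O(1)$ remainders uniform in this lower bound for the shell. Specifically, one has to check that the interval with one Robin end and one Neumann end has bottom $-\beta^2(1+O(e^{-c|\beta|}))$, which follows from the transcendental equation analogous to \eqref{trans}. Combining the two bounds, the difference of the bounds equals $(d-1)|\alpha|\big(\frac1R-\frac1{R_2}\big) + O(1)$, which is positive for $\alpha\le\alpha_0(R_1,R_2)$. This proves \eqref{counter}.
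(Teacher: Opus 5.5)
Your proposal is correct in substance, but it proves the inequality by a different route from the paper. The paper separates variables, writes both ground states in terms of Bessel functions, and extracts genuine two-sided asymptotics $\lambda_1^\alpha(B_R) = -\alpha^2 + \frac{d-1}{R}\alpha + o(\alpha)$ and $\lambda_1^\alpha(A_{R_1,R_2}) = -\alpha^2 + \frac{d-1}{R_2}\alpha + o(\alpha)$, then compares the second-order terms. You need only one-sided estimates, in opposite directions. For the ball you use an upper bound from the trial function $e^{\alpha(R-|x|)}$. Your Laplace-type computation does give $-\alpha^2 - \frac{d-1}{R}|\alpha| + O(1)$, and no cut-off at the origin is needed because the function is Lipschitz. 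For the shell you use a lower bound obtained from radial reduction, Neumann bracketing at $R_m$, the substitution \eqref{change.trial} and the explicit Robin--Neumann interval problem. Your route is elementary and self-contained, using only minimax, bracketing and one-dimensional exponentials. It gives $O(1)$ rather than $o(\alpha)$ remainders, and hence in principle a computable $\alpha_0(R_1,R_2)$. The paper's route gives more information, namely the actual asymptotics of both eigenvalues, at the cost of the ``tedious'' Bessel analysis.

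Two points in your shell estimate need fixing.

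First, there is a sign error. After $g = r^{(d-1)/2} f$ on $(R_m,R_2)$, the integration by parts gives $-\frac{d-1}{2R_2}|g(R_2)|^2$, as you say, plus the term $+\frac{d-1}{2R_m}|g(R_m)|^2 \geq 0$, which may be dropped. So the effective Robin parameter at $R_2$ is $\alpha - \frac{d-1}{2R_2}$, not $\alpha + \frac{d-1}{2R_2}$. For $\alpha<0$ one has $-(\alpha+\frac{d-1}{2R_2})^2 = -\alpha^2 + \frac{d-1}{R_2}|\alpha| + O(1)$, so your displayed identity is false as written. The correct expression $-(\alpha-\frac{d-1}{2R_2})^2 = -\alpha^2 - \frac{d-1}{R_2}|\alpha| + O(1)$ is exactly the bound you go on to use, so the two slips cancel.

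Second, on the inner piece the same substitution turns the Neumann condition at $R_m$ into a negative Robin term $-\frac{d-1}{2R_m}|g(R_m)|^2$. So it is not literally ``the same computation''. This is harmless, because a crude bound suffices there. Use $r^{d-1} \geq R_1^{d-1}$ on $(R_1,R_m)$, and let $\mu<0$ be the bottom of the interval problem on $(R_1,R_m)$ with Robin parameter $\alpha$ at $R_1$ and Neumann at $R_m$. Then the inner form is bounded below by $\mu \int_{R_1}^{R_m} |f|^2 r^{d-1}\,\der r$. Since $\mu = -\alpha^2\big(1+O(e^{-c|\alpha|})\big)$, this lies well above $-\alpha^2 - \frac{d-1}{R_2}|\alpha|$ for large $|\alpha|$.
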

\begin{proof}
Because of the rotational symmetry, the spectral problem
for the balls and spherical shells can be solved by separation of variables
in terms of special (namely, Bessel) functions.
Employing the known asymptotics of the Bessel functions,
it is tedious but straightforward to establish the following asymptotics:
$$
\begin{aligned}
  \lambda_1^\alpha(B_R) 
  &= -\alpha^2 + \frac{d-1}{R} \, \alpha + o(\alpha) \,,
  \\
  \lambda_1^\alpha(A_{R_1,R_2}) 
  &= -\alpha^2 + \frac{d-1}{R_2} \, \alpha + o(\alpha) \,,
\end{aligned}
\qquad \mbox{as} \qquad
\alpha \to -\infty \,.
$$
(For such type of asymptotics in general smooth domains,    
see \cite{Pankrashkin_2013,Exner-Minakov-Parnovski_2014,
Pankrashkin-Popoff_2015a,Pankrashkin-Popoff_2016}.)
Since the condition $|B_R|=|A_{R_1,R_2}|$ implies $R < R_2$
and~$\alpha$ is negative, we get the desired inequality~\eqref{counter}
for all sufficiently large~$|\alpha|$. 
\end{proof}
\begin{figure}[h!]
\begin{center}
\includegraphics[width=0.5\textwidth]{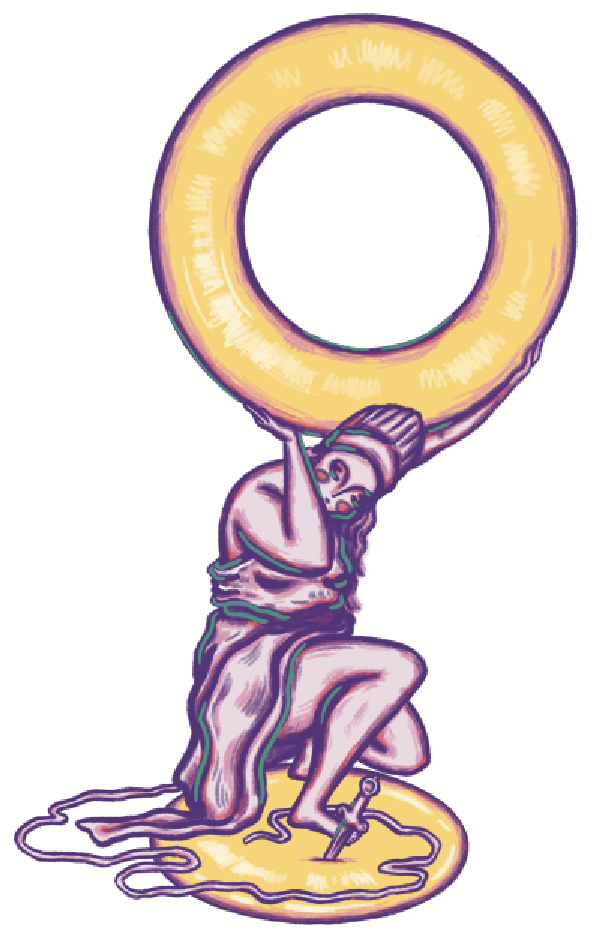}
\end{center}
\caption{As Dido must admit, 
spherical shells are more optimal than the ball in the spectral
isochoric maximisation problem of the negative Robin problem.
(Courtesy of Ane\v{z}ka K\r{u}lov\'a.)}
\label{Fig.annulus}
\end{figure}

Theorem~\ref{Thm.counter.Bareket} 
is remarkable for it provides the first known example 
where the extremal domain for the lowest eigenvalue of 
the Robin Laplacian is not a ball.
It remains open to show that spherical shells are the maximisers.
\begin{OProblem}
For every $\alpha < 0$, there exist positive numbers $R_1<R_2$
such that 
\begin{equation*}
  \max_{|\Omega|=\const} \lambda_1^\alpha(\Omega) 
  = \lambda_1^\alpha(A_{R_1,R_2})
  \,,
\end{equation*}
where the maximum is taken over all bounded Lipschitz domains 
$\Omega \subset \Real^d$
of a fixed volume $|\Omega|=\const$
and~$A_{R_1,R_2}$ denotes a spherical shell of the same volume as~$\Omega$ 
(\ie\ $|A_{R_1,R_2}|=|\Omega| = \const$).
\end{OProblem}

At the same time, it is still believed 
(and supported by numerical experiments~\cite{AFK})
that the ball is the maximiser within a more restricted class of domains
(in dimensions $d\geq 3$, it is not enough to assume that
the domain is simply connected~\cite{Ferone-Nitsch-Trombetti_2016}).
\begin{OProblem}
Does Conjecture~\ref{Bareket} hold within the class of 
\emph{simply connected} bounded Lipschitz domains if $d=2$
and \emph{convex} bounded domains if $d \geq 2$ ?
\end{OProblem}

In the same paper~\cite{FK7}, 
we also get a positive result about Conjecture~\ref{Bareket}
for small values of~$|\alpha|$ and planar domains.
Here it is important that the smallness depends solely
on the fixed area.  

\begin{theo}[Freitas \& Krej\v{c}i\v{r}\'ik \cite{FK7},
proof of Conjecture~\ref{Bareket}, 
$d=2$ and small $|\alpha|$]\label{Thm.Bareket}
There exists a negative number $\alpha_0=\alpha_0(|\Omega|)$ 
such that, for all $\alpha \in [\alpha_0,0]$
and any planar bounded domain~$\Omega$ of class~$C^2$ of fixed area, 
one has 
\begin{equation}\label{small}
  \lambda_1^\alpha(\Omega) \leq \lambda_1^\alpha(B)
  \,,
\end{equation}
where~$B$ is a ball of the same area as $\Omega$
(\ie\ $|B|=|\Omega|$).
\end{theo}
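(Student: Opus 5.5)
The plan is to use the \emph{method of parallel coordinates}: build a trial function on~$\Omega$ from the radial ground state of the ball, composed with the distance to~$\partial\Omega$. Let $R$ be the radius of~$B$, so $|B|=\pi R^2=|\Omega|$. Let $\varphi>0$ be the radial eigenfunction of $-\Delta_\alpha^{B}$ for $\lambda:=\lambda_1^\alpha(B)<0$. For $\alpha<0$ it is increasing in~$r$, since $\varphi''+r^{-1}\varphi'=-\lambda\varphi>0$ and $\varphi'(0)=0$. Set $g(t):=\varphi(R-t)$ for $t\in[0,R]$, which is decreasing. Let $\rho(x):=\dist(x,\partial\Omega)$, let $h$ be the inradius of~$\Omega$, and let $L(t)$ be the length of $\{\rho=t\}$. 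Take the trial function $\psi:=g\circ\rho$, with~$g$ frozen at $g(h)$ if $h>R$ (or extended by the ODE).

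Since $|\nabla\rho|=1$ a.e., the coarea formula turns the Rayleigh quotient of Theorem~\ref{minimax} (cf.~\eqref{Rayleigh}) into a one-dimensional expression. The claim \eqref{small} reduces to
\[
  \int_0^{h} \big(g'(t)^2-\lambda g(t)^2\big) L(t)\,\der t + \alpha\, g(0)^2 L(0) \leq 0 .
\]
For the ball the same expression, with $L_B(t)=2\pi(R-t)$ on $[0,R]$, vanishes identically. This is because~$\varphi$ is the eigenfunction, and the left-hand side is exactly its quadratic form after subtracting $\lambda\|\varphi\|^2$.

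Subtracting the two expressions, it suffices to prove
\[
  \int_0^\infty F(t)\,\big(L(t)-L_B(t)\big)\,\der t + \alpha\, g(0)^2\,\big(L(0)-L_B(0)\big)\leq 0,
  \qquad F:=g'^2-\lambda g^2>0 ,
\]
where~$L$ and~$L_B$ are extended by zero beyond~$h$ and~$R$.

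Three geometric inputs are used:
\begin{itemize}
\item the isoperimetric inequality~\eqref{isochoric.bis}, which gives $\Delta:=L(0)-L_B(0)\geq0$;
\item the planar Gauss--Bonnet bound for $C^2$ domains (Hartman, Savo), $L(t)\leq L(0)-2\pi t$, valid also for multiply connected~$\Omega$, so that $L-L_B\leq \Delta$ on $[0,R]$;
\item the area identity $\int_0^\infty (L-L_B)\,\der t = |\Omega|-|B|=0$.
\end{itemize}
By the area identity we may replace~$F$ by $F-F(0)$ in the first integral. The boundary term is $\alpha g(0)^2\Delta\leq0$, of size $|\alpha|\,\Delta$.

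The main obstacle is showing that the remaining integral $\int_0^\infty (F-F(0))(L-L_B)\,\der t$ is bounded by $o(|\alpha|)\,\Delta$, uniformly over all $C^2$ domains of the given area. Small-$|\alpha|$ asymptotics of~$\varphi$ give $\lambda = 2\alpha/R+O(\alpha^2)$ and $g'=O(|\alpha|)$, so $F-F(0)=O(\alpha^2)$ uniformly on $[0,R]$; the constants depend only on~$R$, i.e.\ on~$|\Omega|$. The difficulty is controlling $\int|L-L_B|$, and in particular the part $t>R$ where $L>L_B=0$, by a multiple of~$\Delta$. I would first try this: $F$ is nonincreasing for small~$|\alpha|$ (to be checked from the ODE), so $F-F(0)\leq0$, and only the region where $L<L_B$ contributes positively. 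That region's mass equals, by the area identity, the mass of $\{L>L_B\}$. The latter lies under $L_B+\Delta$ on $[0,R]$ and, beyond~$R$, under the line $L(0)-2\pi t$, which vanishes at $t=L(0)/(2\pi)=R+\Delta/(2\pi)$. Hence it is $O(\Delta)$, with a constant depending on~$R$. Choosing $|\alpha_0|$ small in terms of~$R$ then makes the $O(\alpha^2)\Delta$ term dominated by $|\alpha|\,g(0)^2\Delta$, which proves~\eqref{small}.
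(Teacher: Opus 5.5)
\textbf{Gap: multiply connected domains.} The statement covers every planar bounded $C^2$ domain, but the bound $L(t)\le L(0)-2\pi t$ is false unless $\Omega$ is simply connected. For $\{R_1<|x|<R_2\}$ the set $\{\rho=t\}$ consists of the circles of radii $R_1+t$ and $R_2-t$, so $L(t)=L(0)$ for all $t<(R_2-R_1)/2$.

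This breaks your trial function itself, not just an estimate. Take such annuli of area $\pi R^2$ and let $R_1\to0$. Then $\Delta\to0$, while $L-L_B\to2\pi t$ on $(0,R/2)$ and $L-L_B\to-2\pi(R-t)$ on $(R/2,R)$. Hence the quantity $\int_0^\infty F\,(L-L_B)\,\der t+\alpha g(0)^2\Delta$, which you need to be $\le0$, tends to $\int_0^{R/2}\big(F(t)-F(R-t)\big)\,2\pi t\,\der t$. This is strictly positive because $F$ is strictly decreasing. So for annuli with small holes, $g\circ\dist(\cdot,\partial\Omega)$ does not beat the disk, for any $\alpha<0$.

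\textbf{Repair.} Measure the distance to the outer boundary component $\Gamma_0$ only. Let $\hat\Omega$ be the simply connected $C^2$ domain bounded by $\Gamma_0$, and take $\psi:=g\circ\dist(\cdot,\Gamma_0)$. Drop the Robin terms on the inner components; they are $\le0$ since $\alpha<0$, so discarding them only raises the quotient you bound. Then:
\begin{itemize}
\item $L_\Omega(t)\le L_{\hat\Omega}(t)\le|\Gamma_0|-2\pi t$;
\item the area identity $\int_0^\infty L_\Omega\,\der t=|\Omega|$ still holds;
\item $\Delta:=|\Gamma_0|-2\pi R\ge0$ by the isoperimetric inequality for $\hat\Omega$, since $|\hat\Omega|\ge|\Omega|$.
\end{itemize}
Now $\dist(\cdot,\Gamma_0)$ may exceed $R$. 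Freeze $g$ at $g(R)=\varphi(0)$; your ``$g(h)$'' is undefined, and this choice keeps $g\in C^1$ and $F$ nonincreasing.

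Your mass bound also needs fixing. The triangle estimate $R\Delta+\Delta^2/(4\pi)$ is not $O(\Delta)$ uniformly in the perimeter, which is exactly the uniformity the theorem requires. Add $\int(L_\Omega-L_B)_+=\int(L_\Omega-L_B)_-\le\int_0^R L_B=\pi R^2$. This gives $\int(L_\Omega-L_B)_+\le\min\{R\Delta+\Delta^2/(4\pi),\pi R^2\}\le\frac32R\Delta$ for all $\Delta\ge0$. For simply connected $\Omega$ the issue does not arise: the inradius satisfies $\pi h^2\le|\Omega|$, so $h\le R$ and $L=0$ beyond $R$.

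The monotonicity you left to check holds for every $\alpha<0$. In the radial variable, $\frac{\der}{\der r}(\varphi'^2-\lambda\varphi^2)=2\varphi'(-\varphi'/r-2\lambda\varphi)>0$, because $\varphi=I_0(kr)$, $\lambda=-k^2$ and $I_1(x)/x\le I_0(x)/2$.

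With these changes your argument is complete, with $\alpha_0$ depending only on $R$. It differs from the paper's route. There, parallel coordinates only compare $\Omega$ with a Robin--Neumann annulus, so $\alpha_0$ also depends on $|\partial\Omega|$. The perimeter dependence is then removed by an explicit Bessel-function comparison of such annuli with disks. Your balancing of $F-F(0)$ against the area identity gets the uniformity in the perimeter directly and avoids that comparison.
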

\begin{proof}
By the analytic perturbation theory
and the (geometric) iso\-perimetric inequality, 
it is easy to see that
$$
  \left. 
  \frac{\der \lambda_1^\alpha(\Omega)}{\der\alpha} 
  \right|_{\alpha=0}
  = \alpha \, \frac{|\partial\Omega|}{|\Omega|}
  \geq \alpha \, \frac{|\partial B|}{|B|}
  = \left. 
  \frac{\der \lambda_1^\alpha(B)}{\der\alpha} 
  \right|_{\alpha=0}
  \,.
$$
This argument (overtaken from~\cite{Daners_2013})
gives~\eqref{small}, but with  $\alpha_0=\alpha_0(\Omega)$. 
In~\cite{FK7}, we instead develop the method of parallel coordinates 
due to~\cite{Payne-Weinberger_1961}, which is based on using  
trial functions whose level lines are parallel to~$\partial\Omega$. 
In this way, we get~\eqref{small} 
with  $\alpha_0=\alpha_0(|\Omega|,|\partial\Omega|)$. 
To get rid of the dependence on the perimeter,
we make a careful comparison of Robin-Neumann annuli 
with Robin disks using explicit solutions
obtained in terms of Bessel functions. 
\end{proof}

An extension of this uniform result to higher dimensions 
has recently been achieved for convex domains in~\cite{GKP2}.

The isoperimetric constraint seems to be simpler.

\begin{theo}[Spectral isoperimetric inequality, negative Robin case]
\label{Thm.perimeter}
For every $\alpha < 0$,
one has
\begin{equation*}
  \max_{|\partial\Omega|=\const} \lambda_1^\alpha(\Omega) = \lambda_1^\alpha(B)
  \,,
\end{equation*}
where the maximum is taken over all bounded domains 
$\Omega \subset \Real^d$
of a fixed perimeter $|\partial\Omega|=\const$
which are either of class $C^2$ if $d=2$ or convex if $d \geq 2$
and~$B$ denotes the ball of the same perimeter as~$\Omega$ 
(\ie\ $|\partial B|=|\partial\Omega| = \const$).
\end{theo}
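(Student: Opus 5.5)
We follow the method of parallel coordinates of Payne and Weinberger already mentioned in the proof of Theorem~\ref{Thm.Bareket}. The plan is to take the radial ground state $\varphi$ of the ball $B=B_R$ with $|\partial B|=|\partial\Omega|$, written as $\psi_B(x)=\varphi(R-|x|)$, and transplant it to $\Omega$ as the trial function $u(x):=\varphi(\rho(x))$, where $\rho(x):=\dist(x,\partial\Omega)$. Since $u$ is Lipschitz and constant on the parallel sets of $\partial\Omega$, we have $|\nabla u|=|\varphi'(\rho)|$ a.e. and $u=\varphi(0)$ on $\partial\Omega$. Writing $A(t):=|\partial\Omega_t|$ for the perimeter of the inner parallel set $\Omega_t:=\{x\in\Omega:\rho(x)>t\}$, the coarea formula gives
\[
  \lambda_1^\alpha(\Omega)\leq
  \frac{\int_0^{r_\Omega}|\varphi'(t)|^2A(t)\,\der t+\alpha\,\varphi(0)^2A(0)}
  {\int_0^{r_\Omega}\varphi(t)^2A(t)\,\der t}
  \,,
\]
where $r_\Omega$ is the inradius. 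For the ball, the same quotient with $A_B(t)=|\partial B_1|(R-t)^{d-1}$ and $r_B=R$ equals $\lambda_1^\alpha(B)$, and $A(0)=A_B(0)$ by the constraint.

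The key geometric input is the comparison $A(t)\leq A_B(t)$ for $t\in[0,\min\{r_\Omega,R\}]$, together with $r_\Omega\leq R$. In the convex case this follows from the Steiner formula and the Aleksandrov--Fenchel inequalities for quermassintegrals of inner parallel bodies: $A(t)^{1/(d-1)}$ decreases at least as fast as for the ball, since $-A'(t)$ equals $(d-1)$ times the integrated mean curvature, which is bounded below by the isoperimetric-type inequality $\int_{\partial\Omega_t}H\geq |\partial B_1|^{1/(d-1)}A(t)^{(d-2)/(d-1)}$. In the planar $C^2$ case we would instead use the Gauss--Bonnet argument $-A'(t)\geq 2\pi$ (valid for multiply connected $\Omega$ as well, since the total curvature of the boundary of each component is at least $2\pi$, which only helps), so that $A(t)\leq A(0)-2\pi t=A_B(t)$.

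Next we would exploit the properties of $\varphi$. For $\alpha<0$, the radial eigenfunction $\psi_B$ is positive and increasing in $|x|$, so $\varphi(t)$ is decreasing in $t$ and $\varphi'\leq 0$. The remaining and main obstacle is to show that replacing $A_B$ by the smaller weight $A$ does not increase the quotient. Both numerator pieces and the denominator decrease, so monotonicity is not automatic. To handle this, we would rewrite the numerator minus $\lambda_1^\alpha(B)$ times the denominator as $\int (|\varphi'|^2-\lambda_1^\alpha(B)\varphi^2)A\,\der t+\alpha\varphi(0)^2A(0)$. Since $\lambda_1^\alpha(B)<0$, the integrand weight $|\varphi'|^2-\lambda\varphi^2$ is nonnegative. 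Hence the integral with $A$ is at most the integral with $A_B$, and the boundary term is unchanged. The expression with $A_B$ vanishes, so the expression with $A$ is nonpositive, which yields $\lambda_1^\alpha(\Omega)\leq\lambda_1^\alpha(B)$. We will need to verify that the interval truncation at $r_\Omega\leq R$ only drops nonnegative contributions, and that the trial function is admissible in $W^{1,2}(\Omega)$ (distance functions are Lipschitz). Finally, equality for the ball is trivial, and the maximum is attained at $B$.
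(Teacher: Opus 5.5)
\textbf{Overall.} The paper does not prove this theorem itself; it only cites \cite{AFK}, \cite{Bucur-Ferone-Nitsch-Trombetti_2019} and \cite{Vikulova_2022}. Your route is in the spirit of the parallel-coordinates proofs in those works: you transplant the ball eigenfunction along $\dist(\cdot,\partial\Omega)$ and use $\lambda_1^\alpha(B)<0$ to make the weight $|\varphi'|^2-\lambda_1^\alpha(B)\varphi^2$ non-negative. Your sign argument is correct. The convex case is also fine. Strictly, one has $-A'(t)\geq(d-1)\int_{\partial\Omega_t}H$ rather than equality (a square already shows this), but only this direction is needed. It follows from $\Omega_{t+s}+\overline{B_s(0)}\subset\Omega_t$, Steiner's formula and monotonicity of perimeter for convex sets.

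\textbf{The gap: planar $C^2$ domains with holes.} Your claim that $-A'(t)\geq 2\pi$ persists for multiply connected $\Omega$ is false. The boundary curve of a hole has total curvature $-2\pi$ with respect to the normal pointing into $\Omega_t$. So for small $t$, Gauss--Bonnet gives $-A'(t)=2\pi\chi(\Omega_t)\leq 0$. For the annulus $\Omega=B_{R_2}\setminus\overline{B_{R_1}}$ one has $A(t)=2\pi(R_1+R_2)=A(0)>A_B(t)$ for $0<t<r_\Omega=(R_2-R_1)/2$.

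This is not only a defect of the justification; the trial function itself fails. With $A$ constant, your upper bound reduces to $\big(\int_0^{r_\Omega}|\varphi'|^2\,\der t+\alpha\varphi(0)^2\big)/\int_0^{r_\Omega}\varphi^2\,\der t$. This is at least the ground-state energy $-k^2$, with $k\tanh(kr_\Omega)=|\alpha|$, of the one-dimensional Robin--Neumann problem on $(0,r_\Omega)$. That energy equals $-\alpha^2+O(\alpha^2e^{-|\alpha|(R_2-R_1)})$. By contrast, $\lambda_1^\alpha(B)=-\alpha^2+\alpha/(R_1+R_2)+o(\alpha)$ as $\alpha\to-\infty$. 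So for large $|\alpha|$, the trial function $\varphi(\dist(\cdot,\partial\Omega))$ cannot give the inequality for annuli.

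\textbf{A possible repair.} Let $\Gamma_0$ be the outer boundary curve of $\Omega$ and $\hat\Omega\supset\Omega$ the simply connected $C^2$ domain it encloses. Take $u=\varphi(\dist(\cdot,\Gamma_0))$, now with $\varphi$ the profile of the disk $B'$ with $|\partial B'|=|\Gamma_0|$.
\begin{itemize}
\item The integrals over the inner boundary curves are non-positive because $\alpha<0$, so they can be dropped.
\item Since $|\nabla u|^2-\lambda_1^\alpha(B')u^2\geq 0$, the quantity $\int_\Omega(|\nabla u|^2-\lambda_1^\alpha(B')u^2)+\alpha\int_{\partial\Omega}u^2$ is bounded by the corresponding quantity on $\hat\Omega$.
\item Your argument shows the latter is $\leq 0$, hence $\lambda_1^\alpha(\Omega)\leq\lambda_1^\alpha(B')$.
\end{itemize}
As $|\Gamma_0|<|\partial\Omega|$, you still need one ingredient absent from your proposal: $R\mapsto\lambda_1^\alpha(B_R)$ is increasing for $\alpha<0$.

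\textbf{Monotonicity in the radius.} Write $\psi=\psi_{B_R}$ (radial), $\lambda=\lambda_1^\alpha(B_R)$ and $v:=\psi'/\psi$.
\begin{itemize}
\item In $d=2$, $v$ solves $v'=-\lambda-v/r-v^2$ with $v(0)=0$ and $v'(0^+)=-\lambda/2>0$.
\item At a first zero of $v'$ one would have $v''=v/r^2>0$, which is impossible. Hence $v$ is increasing.
\item The Robin condition gives $v(R)=-\alpha$.
\end{itemize}
Now test $\lambda_1^\alpha(B_{R'})$, $R'<R$, with the restriction of $\psi$ and integrate by parts:
$\int_{B_{R'}}(|\nabla\psi|^2-\lambda\psi^2)+\alpha\int_{\partial B_{R'}}\psi^2=|\partial B_{R'}|\,\psi(R')^2\,(v(R')+\alpha)<0$.
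This gives $\lambda_1^\alpha(B_{R'})<\lambda_1^\alpha(B_R)$ and closes the argument.
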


In 2017, 
Antunes, Freitas and  Krej\v{c}i\v{r}\'ik~\cite{AFK}
gave a first proof of Theorem~\ref{Thm.perimeter}
for planar domains of class $C^2$.
In 2019,
Bucur, Ferone, Nitsch and Trombetti
\cite{Bucur-Ferone-Nitsch-Trombetti_2019}
established the theorem for convex domains 
in arbitrary dimensions.
Independently,
Vikulova \cite{Vikulova_2022} extended the proof of~\cite{AFK} 
to convex domains in~$\Real^3$.

It remains open to show that the extra geometric hypotheses 
are superfluous.
\begin{OProblem}
Extend Theorem~\ref{Thm.perimeter} to all 
bounded Lipschitz
domains $\Omega \subset \Real^3$ 
without assuming the convexity.
\end{OProblem}
%
 
\section{Properties of eigenfunctions}
%
Up to now, we were exclusively interested in qualitative
and quantitative properties of the spectrum. 
Let us now look at properties of the eigenfunctions.
 
\subsection{Regularity}
By~\eqref{Laplace}, $\psi \in \dom(-\Delta_\alpha^\Omega)$
particularly satisfies $\psi \in W^{1,2}(\Omega)$ 
and $\Delta\psi \in \sii(\Omega)$. 
\emph{Elliptic regularity theory} ensures that 
\begin{equation}\label{elliptic}
  \psi \in W^{1,2}(\Omega) \quad \& \quad
  \Delta\psi \in \sii(\Omega)
  \qquad \Longrightarrow \qquad
  \nabla^2\psi \in \sii_\mathrm{loc}(\Omega)
  \,,
\end{equation}
where~$\nabla^2$ denotes the Hessian.
What is more, if~$\Omega$ is ``nice''
(\eg, bounded and $C^2$-smooth),
the hypotheses together with the boundary condition~\eqref{bc}
imply the global extra regularity
$\nabla^2\psi \in \sii(\Omega)$.
That is why, one has the characterisation~\eqref{nice}
of the operator domain. 

The gain of regularity~\eqref{elliptic}
is indeed remarkable:
Not only that the sum of second derivatives 
is square integrable, but each mixed second derivatives separately,
at least locally.
Heuristically, \eqref{elliptic}~can be understood 
by a formal(!) integration by parts:
$$
\begin{aligned}
  \|\Delta\psi\|^2
  = \int_{\Omega} |\Delta\psi|^2
  &= \sum_{j,k=1}^d \int_{\Omega}
  \partial_j\partial_j\bar{\psi}
  \ \partial_k\partial_k\psi
  \\
  &= -\sum_{j,k=1}^d \int_{\Omega}
  \partial_j\bar{\psi}
  \ \partial_j\partial_k\partial_k\psi
  \\
  &= -\sum_{j,k=1}^d \int_{\Omega}
  \partial_k\partial_j\bar{\psi}
  \ \partial_j\partial_k\psi
  = \int_{\Omega} |\nabla^2\psi|^2
  = \|\nabla^2\psi\|^2
  \,.
\end{aligned}  
$$
Of course, this formal computation suffers 
from at least the following defects:
\begin{enumerate}
\item 
the boundary terms are disregarded,
\item
the third derivatives $\partial_j\partial_k\partial_k\psi$
are not defined,
\item
the mixed derivatives $\partial_j\partial_k\psi$ 
for $j \not= k$ are not defined.
\end{enumerate}
The tricks of elliptic regularity theory are precisely
about overcoming these problems.
First, the contact with the boundary~$\partial\Omega$ 
is solved by multiplying~$\psi$ by a cut-off function;
that is why only the local regularity is obtained 
in the full generality of arbitrary domains~$\Omega$. 
The other problems are solved by replacing
the customary partial derivative $\partial_k\psi$
by the \emph{difference quotient}
$$
  \partial_k^\delta\psi(x) 
  := \frac{\psi(x_1,\dots,x_k+\delta,\dots,x_d) - \psi(x)}{\delta}
$$
and taking the limit $\delta \to 0$ only after 
the manipulations in the spirit of the integration by parts above.  
We refer to \cite[Sec.~6.3]{Evans} for more details. 
 
It follows from~\eqref{elliptic} that any function~$\psi$
from $\dom(-\Delta_\alpha^\Omega)$ 
belongs to $W_\mathrm{loc}^{2,2}(\Omega)$.
If $d=1,2$, the \emph{Sobolev embedding theorem}
\cite[Thm.~4.12]{Adams2} then guarantees that~$\psi$ is continuous.
This is true for 
$
  \psi \in W_\mathrm{loc}^{1,2}(\Omega)
  \subset \dom(\delta_\alpha^\Omega)
$ if $d=1$,
but not for $d=2$.
The former can be seen by writing,
for all $x,y$ in any connected component of $\Omega \subset \Real$,
$$
  |\psi(x) - \psi(y)|
  = \left| \int_y^x \psi' \right|
  \leq \|\psi'\| \, |x-y|^{1/2} 
  \,,
$$
so~$\psi$ is actually H\"older continuous with exponent~$1/2$.
By developing this idea 
(why integrability of derivatives ensure smoothness),
one has the multidimensional Sobolev embedding
\begin{equation}\label{Sobolev}
  k > d/2
  \qquad \Longrightarrow \qquad
  W_\mathrm{loc}^{k,2}(\Omega) 
  \subset 
  C^{k-[\frac{d}{2}]-1}(\Omega)
  \,.
\end{equation}

Now, let~$\psi$ be an eigenfunction of $-\Delta_\alpha^\Omega$
corresponding to an eigenvalue~$\lambda$.
We know that 
$\psi \in W^{1,2}(\Omega) \cap W^{2,2}_\mathrm{loc}(\Omega)$.
The validity of the equation (in a weak sense, as usual)
\begin{equation}\label{ev.eq}
   -\Delta\psi = \lambda \psi
   \qquad \mbox{in} \qquad \Omega
\end{equation}
imply that $\Delta\psi \in W^{2,2}_\mathrm{loc}(\Omega)$.
We can thus differentiate~\eqref{ev.eq},
apply~\eqref{elliptic} and conclude with 
$\psi \in W^{3,2}_\mathrm{loc}(\Omega)$. 
Repeating this boot-strap procedure, we get that 
$\psi \in W^{k,2}_\mathrm{loc}(\Omega)$
for every $k \in \Nat$.
By applying~\eqref{Sobolev}, we therefore conclude 
with the following regularity result.

\begin{theo}\label{Thm.regularity}
Let $\psi$ be any eigenfunction of $-\Delta_\alpha^\Omega$,
where $\Omega \subset \Real^d$ is an arbitrary open set.
Then
$$
  \psi \in C^\infty(\Omega)
  \,.
$$
\end{theo}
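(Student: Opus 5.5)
The plan is the bootstrap argument sketched just before the statement, made precise. Since the claim is local (membership in $C^\infty(\Omega)$ only concerns interior smoothness), I fix an arbitrary open set $U \Subset \Omega$ and show $\psi \in C^\infty(U)$. The boundary condition plays no role here. The only information used is that $\psi \in \dom(-\Delta_\alpha^\Omega)$, so $\psi \in W^{1,2}(\Omega)$ (or $W_0^{1,2}(\Omega)$ in the Dirichlet case) with $\Delta\psi \in \sii(\Omega)$. In addition, $-\Delta\psi = \lambda\psi$ holds in the distributional sense in~$\Omega$, which follows from~\eqref{bc} by taking test functions $\phi \in C_0^\infty(\Omega) \subset \dom(\delta_\alpha^\Omega)$.

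First step: apply the interior regularity~\eqref{elliptic} to get $\psi \in W^{2,2}_\mathrm{loc}(\Omega)$. Inductive step: suppose $\psi \in W^{k,2}_\mathrm{loc}(\Omega)$ for some $k \geq 2$. For a multi-index $\beta$ with $|\beta| = k-1$, set $u := \partial^\beta \psi$. Then $u \in W^{1,2}_\mathrm{loc}(\Omega)$, and since distributional derivatives commute with $\Delta$,
\[
  \Delta u = \partial^\beta \Delta\psi = -\lambda\, \partial^\beta\psi \in \sii_\mathrm{loc}(\Omega).
\]
To apply~\eqref{elliptic}, which is stated for global $W^{1,2}$ and $\sii$ membership, I localise. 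Take nested open sets $U \Subset V \Subset \Omega$ with $V$, say, a ball or a smooth bounded set, and run the local form of~\eqref{elliptic} on~$V$, where $u \in W^{1,2}(V)$ and $\Delta u \in \sii(V)$. This gives $\nabla^2 u \in \sii_\mathrm{loc}(V)$, hence $\psi \in W^{k+1,2}$ on a neighbourhood of every point. Since the point was arbitrary, $\psi \in W^{k+1,2}_\mathrm{loc}(\Omega)$, and by induction $\psi \in W^{k,2}_\mathrm{loc}(\Omega)$ for every $k \in \Nat$.

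Finally, apply the Sobolev embedding~\eqref{Sobolev}. For each $m \in \Nat$, choosing $k := m + [d/2] + 1 > d/2$ gives $\psi \in C^{m}(\Omega)$. Intersecting over~$m$ yields $\psi \in C^\infty(\Omega)$.

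The main obstacle is the bookkeeping in the inductive step: making sure~\eqref{elliptic} is legitimately applicable to derivatives $\partial^\beta\psi$, which are only locally in $W^{1,2}$. I would handle this by the restriction to nested subdomains $V$, using that~\eqref{elliptic} holds for any open set, so in particular for~$V$. Alternatively, one can multiply by a cut-off $\chi \in C_0^\infty(\Omega)$ and note that
\[
  \Delta(\chi u) = \chi\Delta u + 2\nabla\chi\cdot\nabla u + u\Delta\chi \in \sii(\Real^d).
\]
All other steps are routine.
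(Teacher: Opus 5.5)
Your proposal is correct and follows the paper's own route. The paper likewise applies the interior regularity~\eqref{elliptic} repeatedly to derivatives of the eigenvalue equation~\eqref{ev.eq}, obtains $\psi \in W^{k,2}_\mathrm{loc}(\Omega)$ for every $k$, and concludes with the Sobolev embedding~\eqref{Sobolev}. Your localisation to nested subdomains compactly contained in $\Omega$ (or via a cut-off) just writes out explicitly the step the paper leaves implicit.
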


Note carefully that we are not stating any result about
the boundary regularity of the eigenfunctions.
However, it is true that $\psi \in C^\infty(\overline{\Omega})$
provided that~$\Omega$ is sufficiently smooth
(\eg, $\Omega$~bounded with~$\partial\Omega$ of class~$C^\infty$). 
 
\subsection{Positivity of the ground state}
The Robin Laplacian $-\Delta_\alpha^\Omega$ 
is a \emph{real} operator in the sense that 
$
  \psi \in \dom(-\Delta_\alpha^\Omega)
$
implies  
$
  \bar\psi \in \dom(-\Delta_\alpha^\Omega)
$
and 
$
  \overline{-\Delta_\alpha^\Omega\psi} 
  = -\Delta_\alpha^\Omega\bar{\psi}
$
for every $\psi \in \dom(-\Delta_\alpha^\Omega)$. 
Moreover, $-\Delta_\alpha^\Omega$ is self-adjoint,
so its spectrum is real. 
It follows that any eigenfunction of $-\Delta_\alpha^\Omega$ 
can be chosen to be real-valued
(we have already used it in the proof of Theorem~\ref{Thm.FK}).

It is a highly non-trivial fact that if the lowest point
in the spectrum of $-\Delta_\alpha^\Omega$ is a discrete eigenvalue
(so-called \emph{ground-state eigenvalue}),
then it possesses an eigenfunction 
(so-called \emph{ground state})
which does not change sign.
Moreover, the eigenspace is one-dimensional. 

\begin{theo}[Positivity of the ground state]\label{Thm.positive}
Let $\Omega \subset \Real^d$ be an arbitrary domain. 
Let 
$$
  \lambda_1 :=
  \inf\sigma(-\Delta_\alpha^\Omega) 
  \in \sigma_\mathrm{disc}(-\Delta_\alpha^\Omega)
  \,.
$$
Then the eigenvalue~$\lambda_1$ 
is simple and the corresponding eigenfunction
can be chosen to be positive.
\end{theo}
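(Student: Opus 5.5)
The plan follows the variational argument already used in the proof of Theorem~\ref{Thm.FK}, combined with Proposition~\ref{Prop.achieved}, Theorem~\ref{Thm.regularity} and a Harnack-type (strong maximum principle) argument for non-negative solutions.

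\emph{Step 1: a non-negative ground state exists.} Since $\lambda_1$ is a discrete eigenvalue at the bottom of the spectrum, \eqref{Rayleigh} shows that the infimum of $\delta_\alpha^\Omega[\psi]/\|\psi\|^2$ over $\dom(\delta_\alpha^\Omega)$ is attained by any eigenfunction $\psi$. Because $-\Delta_\alpha^\Omega$ is a real operator, we may take $\psi$ real-valued. For real $\psi \in W^{1,2}(\Omega)$ we have $|\psi| \in W^{1,2}(\Omega)$ with $|\nabla|\psi|| = |\nabla\psi|$ almost everywhere. The boundary trace of $|\psi|$ is the absolute value of the trace of $\psi$, so the Robin boundary term is unchanged. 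In the Dirichlet case, $|\psi| \in W_0^{1,2}(\Omega)$. Hence $\delta_\alpha^\Omega[|\psi|] = \delta_\alpha^\Omega[\psi]$ and $\||\psi|\| = \|\psi\|$, so $|\psi|$ is also a minimiser. By Proposition~\ref{Prop.achieved} with $k=1$, where no orthogonality is needed, $|\psi|$ is an eigenfunction for $\lambda_1$.

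\emph{Step 2: strict positivity.} This is the main obstacle. Set $u := |\psi| \geq 0$, $u \not\equiv 0$. By Theorem~\ref{Thm.regularity}, $u \in C^\infty(\Omega)$ and $-\Delta u = \lambda_1 u$ in $\Omega$. Choose $c \geq 0$ with $c > -\lambda_1$ if $\lambda_1 < 0$. Then $-\Delta u + c u = (\lambda_1 + c)u \geq 0$, so $u$ is a non-negative supersolution of $-\Delta + c$ with $c \geq 0$. The strong maximum principle, or equivalently the weak Harnack inequality \cite[Thm.~8.18]{Gilbarg-Trudinger} applied on balls compactly contained in $\Omega$, shows that the zero set of $u$ is open. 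The zero set is also closed in $\Omega$ by continuity. Since $\Omega$ is connected and $u \not\equiv 0$, we get $u > 0$ in $\Omega$. The same argument works for $\lambda_1 \geq 0$ with $c = 0$, where $u$ is superharmonic. The care needed here is only in invoking the Harnack or maximum principle correctly for a sign-indefinite zeroth-order term; shifting by $c$ removes that issue.

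\emph{Step 3: simplicity.} Let $\psi$ be any real eigenfunction for $\lambda_1$. By Steps~1--2, $|\psi|$ is a positive eigenfunction, so $\psi$ never vanishes in $\Omega$. Since $\psi$ is continuous and $\Omega$ is connected, $\psi$ has constant sign. Now suppose the eigenspace has dimension at least two. Its real part then contains two linearly independent real eigenfunctions $\psi_1, \psi_2$. Fix $x_0 \in \Omega$ and form $\phi := \psi_2(x_0)\psi_1 - \psi_1(x_0)\psi_2$. This is a real eigenfunction, or identically zero, and it vanishes at $x_0$. By the constant-sign property, $\phi \equiv 0$, which contradicts linear independence because $\psi_1(x_0) \neq 0$. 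The complex eigenspace is spanned by real eigenfunctions, since real and imaginary parts of an eigenfunction are eigenfunctions, so $\lambda_1$ is simple with a positive eigenfunction.
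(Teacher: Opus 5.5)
Your proposal is correct and follows essentially the paper's proof: you replace a real eigenfunction by its absolute value, apply Proposition~\ref{Prop.achieved}, obtain strict positivity from regularity plus a Harnack-type argument, and deduce simplicity from the fact that every real eigenfunction has constant sign. The differences are minor. Your explicit open--closed connectedness argument with the shift by $c$ is harmless, though unnecessary for the Harnack inequality of \cite[Thm.~8.20]{Gilbarg-Trudinger}, which allows a zeroth-order term of either sign. Your point-evaluation argument for simplicity is a variant of the paper's remark that two eigenfunctions of fixed sign cannot be orthogonal.
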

\begin{proof}
Let~$\psi_1$ be a real eigenfunction
of $-\Delta_\alpha^\Omega$ corresponding to~$\lambda_1$.
By the minimax principle (Theorem~\ref{minimax}),
one has
\begin{equation}\label{Rayleigh.ground.bis}
  \lambda_1 = \inf_{\stackrel[\psi\not=0]{}{\psi \in W^{1,2}(\Omega)}} 
  \frac{\displaystyle \int_\Omega |\nabla\psi|^2
  + \alpha \int_{\partial\Omega} |\psi|^2}
  {\displaystyle \int_\Omega |\psi|^2}
  \,.
\end{equation}
(If $\alpha = \infty$, the infimum should be taken 
over $W_0^{1,2}(\Omega)$
and the boundary term should be disregarded.)
Since~$\lambda_1$ is an eigenvalue, 
the infimum is achieved and one also has
\begin{equation}\label{ground.achieved}
  \lambda_1 =  
  \frac{\displaystyle \int_\Omega |\nabla\psi_1|^2
  + \alpha \int_{\partial\Omega} |\psi|^2}
  {\displaystyle \int_\Omega |\psi_1|^2}
  \,.
\end{equation}
If a real-valued function~$\psi$ belongs to $W^{1,2}(\Omega)$, 
then also $|\psi| \in W^{1,2}(\Omega)$
and $|\nabla|\psi|| = |\nabla\psi|$
(the analogous claim holds for $W_0^{1,2}(\Omega)$).
Consequently, the absolute value~$|\psi_1|$
is an admissible trial function in~\eqref{Rayleigh.ground.bis}
and one has
$$
  \lambda_1 \leq   
  \frac{\displaystyle \int_\Omega \big|\nabla|\psi_1|\big|^2
  \alpha \int_{\partial\Omega} \big||\psi|\big|^2}
  {\displaystyle \int_\Omega \big||\psi_1|\big|^2}
  = \frac{\displaystyle \int_\Omega |\nabla\psi_1|^2
  \alpha \int_{\partial\Omega} |\psi|^2}
  {\displaystyle \int_\Omega |\psi_1|^2}
  = \lambda_1
  \,.
$$
It follows (\cf~Proposition~\ref{Prop.achieved})
that~$|\psi_1|$ is also a minimiser of~\eqref{Rayleigh.ground}.
Hence $|\psi_1|$ is a non-negative eigenfunction 
of $-\Delta_\alpha^\Omega$ corresponding to~$\lambda_1$. 

We claim that $|\psi_1|$ is positive in~$\Omega$.
By contradiction, let us assume that there exists 
a point $x_0 \in \Omega$ such that $|\psi_1|(x_0)=0$.
By Theorem~\ref{Thm.regularity},
we know that~$|\psi_1|$ 
is necessarily infinitely smooth inside~$\Omega$.
Since~$|\psi_1|$ solves the differential equation
(which can be considered in the classical sense)
$
  -\Delta |\psi_1| = \lambda_1 |\psi_1| 
  ,
$
we know that it satisfies the Harnack inequality 
(see, \eg, \cite[Thm.~8.20]{Gilbarg-Trudinger})
$$
  \sup_{U}|\psi_1| \leq C \, \inf_{U}|\psi_1|
  \,,
$$
where~$U$ is any bounded domain containing~$x_0$
and~$C$ is a positive constant depending on~$U$.
Consequently, $|\psi_1|=0$ identically in~$U$. 
Using the arbitrariness of~$U$,
it follows that $\psi_1=0$ identically in~$\Omega$,
a contradiction.
So~$\lambda_1$ admits the positive eigenfunction~$|\psi_1|$. 

By the argument above, we have actually shown more, 
namely that~$\psi_1$ is either positive or negative 
(because $|\psi_1| > 0$ in~$\Omega$). 
So it is impossible that there is another eigenfunction
from the same eigenspace which would be orthogonal 
to~$\psi_1$ in $\sii(\Omega)$.
Hence, the eigenvalue~$\lambda_1$ is simple. 
\end{proof}

\subsection{The nodal-line conjecture}
In this section, let us restrict to Dirichlet boundary conditions.
Furthermore, to ensure that the spectrum of $-\Delta_D^\Omega$
is purely discrete, let us assume that~$\Omega$ is a bounded domain.

Let us arrange the eigenvalues of $-\Delta_D^\Omega$
in a non-decreasing sequence 
$$
  \sigma(-\Delta_D^\Omega)
  = \sigma_\mathrm{disc}(-\Delta_D^\Omega)
  = \{\lambda_1^D(\Omega) < \lambda_2^D(\Omega) 
  \leq \lambda_3^D(\Omega) \leq \dots \to +\infty \}
  \,,
$$
where each eigenvalue is repeated according to its multiplicity.
Notice that $\lambda_1^D(\Omega)$ is simple  
due to Theorem~\ref{Thm.positive},
so the inequality between~$\lambda_1^D(\Omega)$ 
and~$\lambda_2^D(\Omega)$ is strict.
Let $\{\psi_n^D\}_{n=1}^\infty$ denote a corresponding set
of real eigenfunctions, normalised to~$1$ in $\sii(\Omega)$.
Since the eigenfunctions are mutually orthogonal
and~$\psi_1^D$ can be chosen to be positive 
due to Theorem~\ref{Thm.positive},
the other eigenfunctions are forced to change sign.
For $n \geq 2$,
it makes thus sense to introduce the \emph{nodal set} of~$\psi_n^D$
by setting
$$
  \mathcal{N}(\psi_n^D) := (\psi_n^D)^{-1}(0)
  = \{x \in \Omega: \ \psi_n^D(x) = 0 \}
  \,.
$$
The connected components of $\Omega \setminus \mathcal{N}(\psi_n^D)$
are called \emph{nodal domains} of~$\psi_n^D$. 
 
In dimension $d=1$, the situation is particularly simple.
Without loss of generality, we can consider $I_a := (0,a)$ with $a>0$.  
Recalling Section~\ref{Sec.strings}, 
the eigenfunctions are given 
by~\eqref{D.spec} and~\eqref{D.efs}, respectively.
Consequently,
$$
  \mathcal{N}(\psi_n^D) 
  = \left\{a \frac{k}{n} \right\}_{k=1}^{n-1}
$$
and each~$\psi_n^D$ has exactly~$n$ nodal domains,
see Figure~\ref{Fig.nodal.segment}.

\begin{figure}[h!]
\begin{center}
\begin{tabular}{cccc}
{\small $\lambda_1^D$} 
& {\small $\lambda_2^D$}   
& {\small $\lambda_3^D$}  
& {\small $\lambda_4^D$}  
\\
\includegraphics[width=0.22\textwidth]{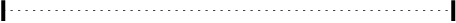}
& \includegraphics[width=0.22\textwidth]{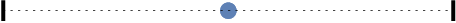}
& \includegraphics[width=0.22\textwidth]{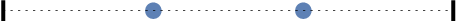}
& \includegraphics[width=0.22\textwidth]{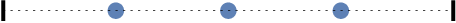}
\end{tabular}
\end{center}
\caption{Nodal sets (points) of the eigenfunctions in the segment
corresponding to the lowest four eigenvalues.
The $n^\text{th}$ eigenfunction has exactly $n$ nodal domains.}
\label{Fig.nodal.segment}
\end{figure}

In dimension $d=2$, the sets
$\mathcal{N}(\psi_n^D)$ are called \emph{nodal lines} 
and they form spectacular crossing curves or closed loops
(\emph{Chladni's patterns}),
see Figures~\ref{Fig.nodal.square} and~\ref{Fig.nodal.disk}
for the cases of squares 
(studied in Section~\ref{Sec.pipeds}) 
and disks, respectively.

\begin{figure}[h!t]
\begin{center}
\begin{tabular}{cccccc}
{\small $\lambda_1^D$} 
& {\small $\lambda_2^D$ ($= \lambda_3^D$)}  
& {\small $\lambda_3^D$ ($= \lambda_2^D$)}
& {\small $\lambda_4^D$}  
& {\small $\lambda_5^D$ ($= \lambda_6^D$)}
& {\small $\lambda_6^D$ ($= \lambda_5^D$)} 
\\
\includegraphics[width=0.135\textwidth]{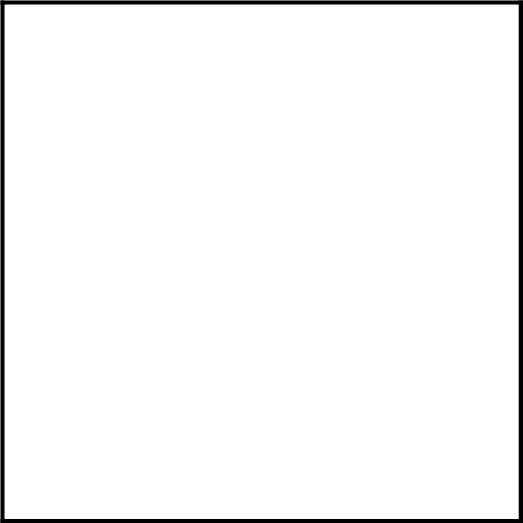}
&\includegraphics[width=0.135\textwidth]{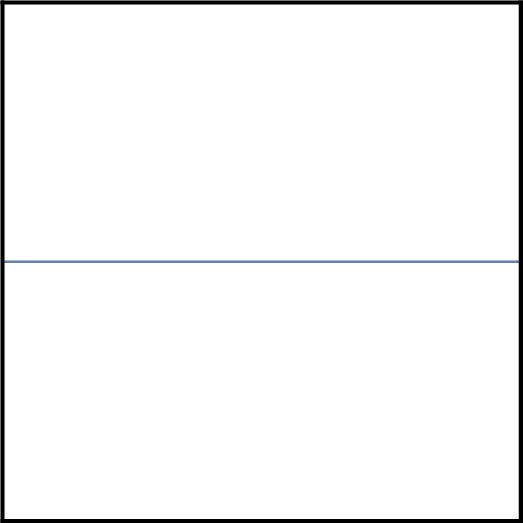}
&\includegraphics[width=0.135\textwidth]{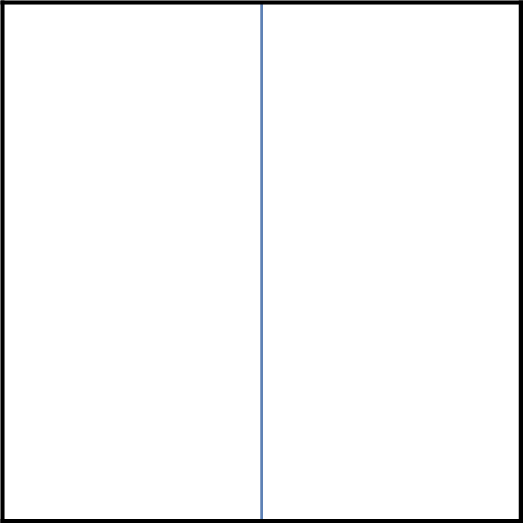}
&\includegraphics[width=0.135\textwidth]{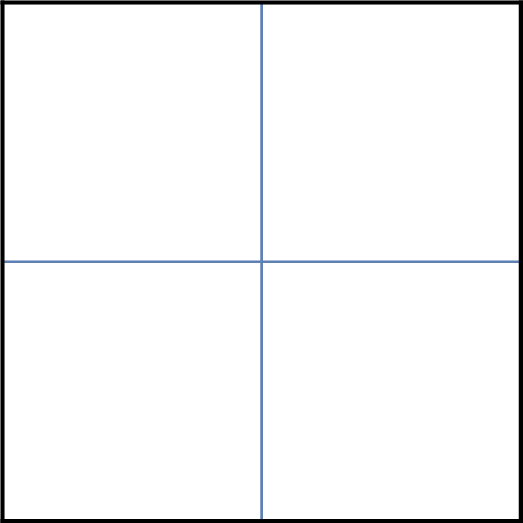}
&\includegraphics[width=0.135\textwidth]{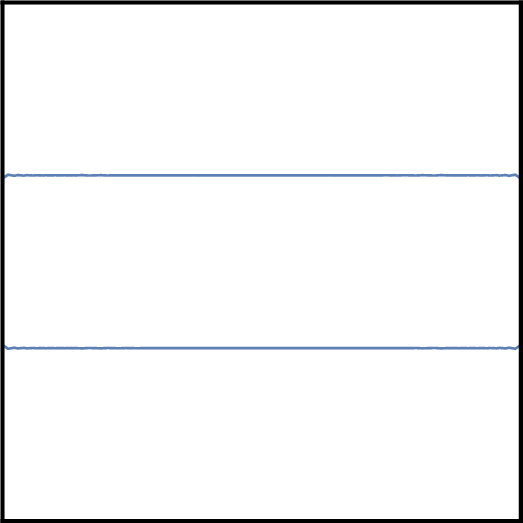}
&\includegraphics[width=0.135\textwidth]{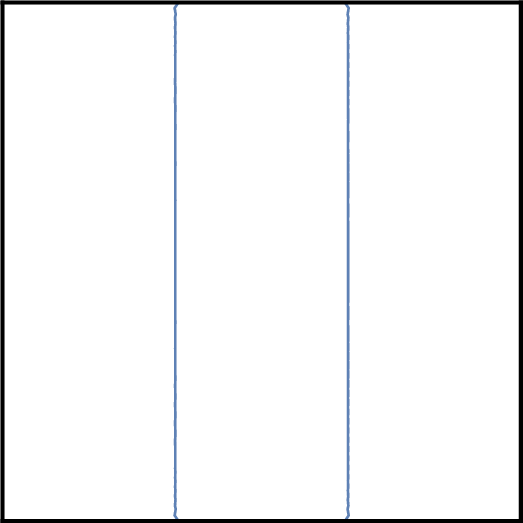}
\end{tabular}
\medskip \\
\begin{tabular}{ccc}
{\small $\lambda_2^D$ ($= \lambda_3^D$)}  
& {\small $\lambda_5^D$ ($= \lambda_6^D$)}
& {\small $\lambda_5^D$ ($= \lambda_6^D$)}
\\
\includegraphics[width=0.135\textwidth]{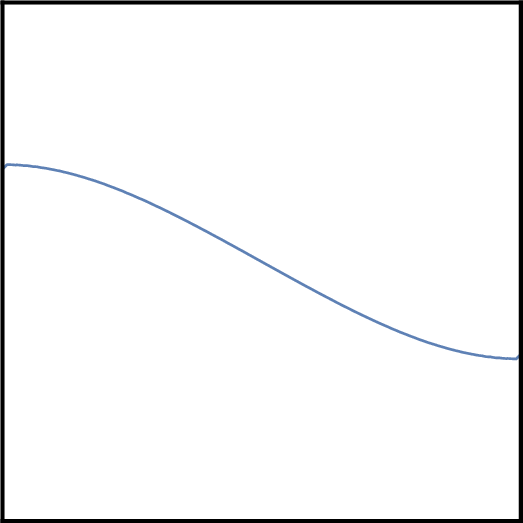}
&\includegraphics[width=0.135\textwidth]{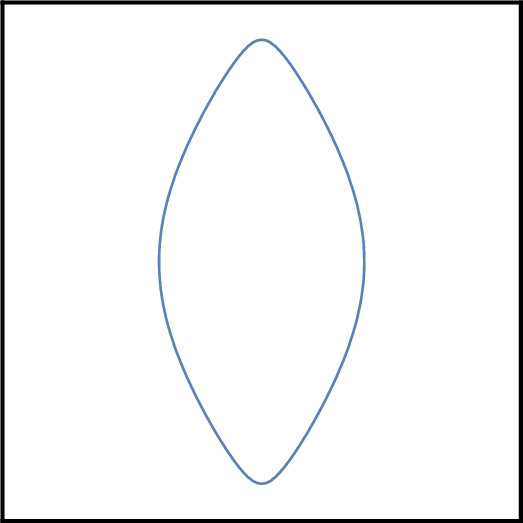}
&\includegraphics[width=0.135\textwidth]{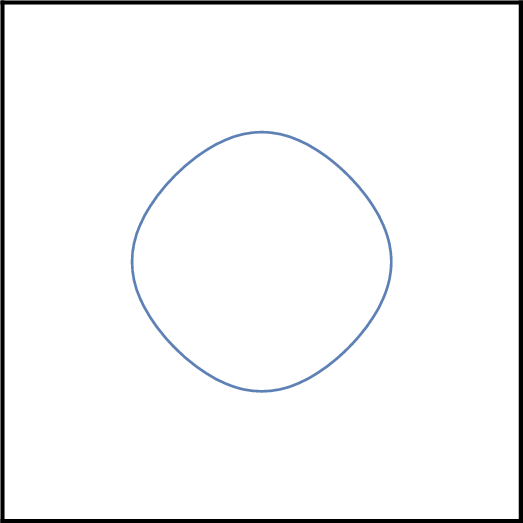}
\end{tabular}
\end{center}
\caption{Nodal lines of the eigenfunctions in the square
corresponding to the lowest six eigenvalues.
The first row depicts the nodal lines for the eigenfunctions
as obtained by a separation of variables.
The second row combines the eigenfunctions 
corresponding to multiple eigenvalues in a non-trivial way.
The fifth eigenfunction is the lowest eigenfunction
of the square which admits a closed nodal line.}
\label{Fig.nodal.square}
\end{figure}
\begin{figure}[h!t]
\begin{center}
\begin{tabular}{cccccc}
{\small $\lambda_1^D$} 
& {\small $\lambda_2^D$ ($= \lambda_3^D$)}  
& {\small $\lambda_3^D$ ($= \lambda_2^D$)}
& {\small $\lambda_4^D$ ($= \lambda_5^D$)}
& {\small $\lambda_5^D$ ($= \lambda_4^D$)}
& {\small $\lambda_6^D$}  
\\
\includegraphics[width=0.135\textwidth]{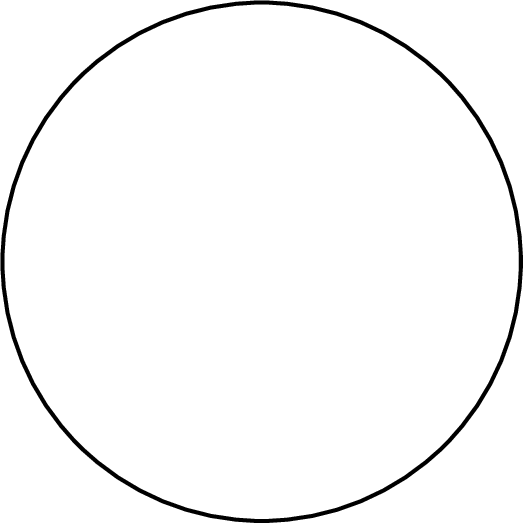}
&\includegraphics[width=0.135\textwidth]{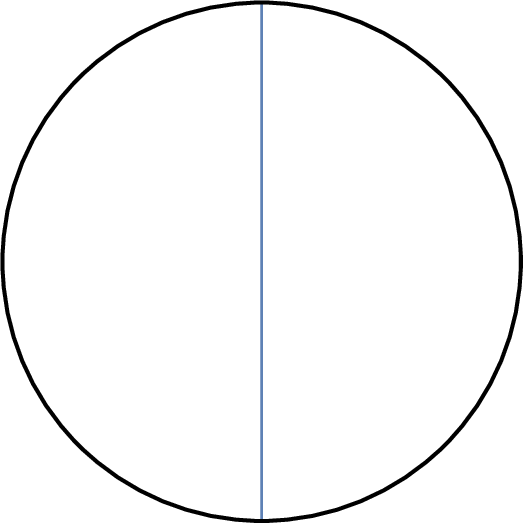}
&\includegraphics[width=0.135\textwidth]{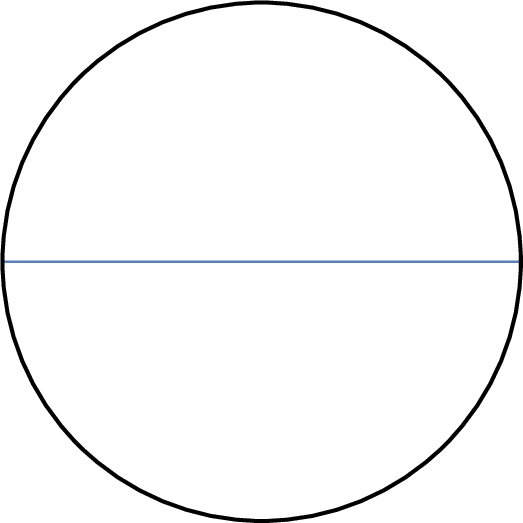}
&\includegraphics[width=0.135\textwidth]{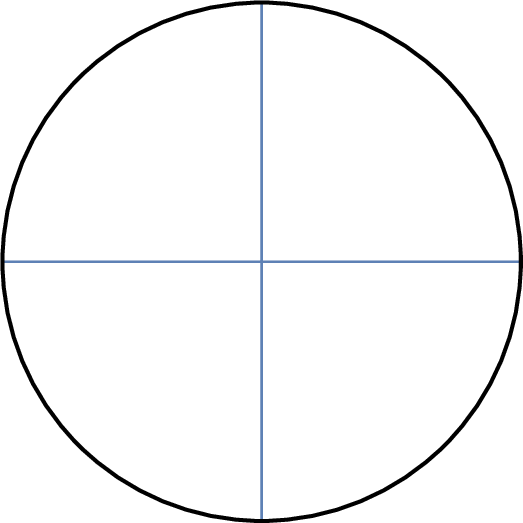}
&\includegraphics[width=0.135\textwidth]{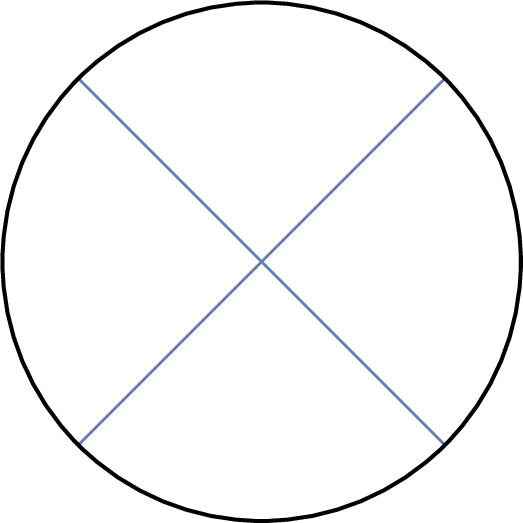}
&\includegraphics[width=0.135\textwidth]{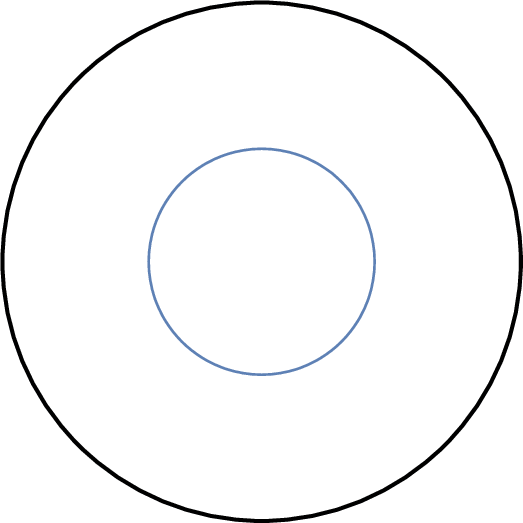}
\end{tabular}
\end{center}
\caption{Nodal lines of the eigenfunctions in the disk
corresponding to the lowest six eigenvalues.
The sixth eigenfunction is the lowest eigenfunction
of the disk which admits a closed nodal line.}
\label{Fig.nodal.disk}
\end{figure}

In particular, and this fact extends to dimensions $d \geq 2$,
it is no longer true that~$\psi_n^D$ has \emph{exactly}~$n$ nodal domains. 
Anyway, the examples suggest that~$\psi_n^D$ has \emph{at most}~$n$
nodal domains. 
This observation holds in the full generality.

\begin{theo}[Nodal domain theorem, 
Courant 1923 \cite{Courant_1923}]\label{Thm.Courant} 
Let $\Omega \subset \Real^d$ be an arbitrary bounded domain.
For every $n \geq 1$, 
$$
\mbox{
$\psi_n^D$ has at most~$n$ nodal domains. 
}
$$
\end{theo}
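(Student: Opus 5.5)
We argue by contradiction, combining the minimax principle (Theorem~\ref{minimax}) with the characterisation of minimisers in Proposition~\ref{Prop.achieved} and the Harnack-type argument from the proof of Theorem~\ref{Thm.positive}. Suppose $\psi_n := \psi_n^D$ has at least $n+1$ nodal domains $G_1,\dots,G_{n+1}$. Without loss of generality we may assume that $\lambda_n^D(\Omega) < \lambda_{n+1}^D(\Omega)$ is false or true; it is cleaner to let $n$ be the smallest index with $\lambda_n = \lambda_n^D(\Omega)$, since all eigenfunctions of that eigenvalue share the same $\lambda$. For $j=1,\dots,n$, define $\phi_j := \psi_n \chi_{G_j}$, i.e.\ the restriction of $\psi_n$ to $G_j$ extended by zero.

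\textbf{Step 1 (localised functions are admissible).} We need $\phi_j \in W_0^{1,2}(\Omega)$ with
\[
  \int_\Omega |\nabla\phi_j|^2 = \lambda_n \int_\Omega |\phi_j|^2 .
\]
Formally this follows by multiplying $-\Delta\psi_n = \lambda_n\psi_n$ by $\phi_j$ and integrating over $G_j$, where $\psi_n$ vanishes on $\partial G_j\cap\Omega$ and, in the $W_0^{1,2}$ sense, on $\partial\Omega$. Rigorously, I would use that $\psi_n^\pm := \max\{\pm\psi_n,0\}$ belong to $W_0^{1,2}(\Omega)$ (the lattice property used in the proof of Theorem~\ref{Thm.positive}). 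I would then realise $\phi_j$ as a limit of $(\psi_n^\pm-\eps)^+$ restricted to $G_j$. These truncations are Lipschitz functions of $\psi_n$, they vanish near $\partial G_j\cap\Omega$, and they are supported in $G_j$. Testing the weak eigenvalue equation against them and letting $\eps\to 0$ gives the identity. \emph{I expect this to be the main obstacle}, since $\Omega$ is arbitrary and $\partial G_j$ may be wild; the truncation trick is what avoids any regularity of nodal sets.

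\textbf{Step 2 (a trial subspace).} The $\phi_j$ have disjoint supports, so they are orthogonal both in $\sii(\Omega)$ and in the Dirichlet form, and they span an $n$-dimensional subspace $\mathscr{L}_n$. On it the Rayleigh quotient is identically $\lambda_n$. Choose a nonzero $\phi = \sum_{j=1}^n c_j\phi_j \in \mathscr{L}_n$ orthogonal to $\psi_1,\dots,\psi_{n-1}$; this is $n-1$ linear conditions on $n$ coefficients. Since $\phi\perp\psi_1,\dots,\psi_{n-1}$, the same argument as in the proof of Theorem~\ref{minimax.finite} (expanding in the eigenbasis) gives $\lambda_n \le h[\phi]/\|\phi\|^2 = \lambda_n$. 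Hence the infimum over $\{\psi_1,\dots,\psi_{n-1}\}^\perp$ is attained at $\phi$, and Proposition~\ref{Prop.achieved} shows that $\phi$ is an eigenfunction with eigenvalue $\lambda_n$.

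\textbf{Step 3 (unique continuation contradiction).} By construction $\phi$ vanishes identically on the nonempty open set $G_{n+1}$. Since $\phi$ solves $-\Delta\phi=\lambda_n\phi$, it is real-analytic, or one may invoke unique continuation; either way it vanishes on the connected set $\Omega$, contradicting $\phi\neq 0$. The Harnack argument of Theorem~\ref{Thm.positive} alone does not suffice here because $\phi$ changes sign, so I would cite real-analyticity of solutions of the Helmholtz equation, which is strictly stronger than the $C^\infty$ regularity of Theorem~\ref{Thm.regularity}.
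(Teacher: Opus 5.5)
Your proof is correct and is essentially the paper's argument: restrict $\psi_n^D$ to nodal domains, form a nontrivial combination orthogonal to $\psi_1^D,\dots,\psi_{n-1}^D$ whose Rayleigh quotient equals $\lambda_n^D(\Omega)$, conclude via Proposition~\ref{Prop.achieved} that it is an eigenfunction, and reach a contradiction from its vanishing on a leftover nodal domain. Your opening remark about passing to the smallest index is unnecessary and can be dropped.

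The differences from the paper are only technical, and they favour your version:
\begin{itemize}
\item You obtain $\phi_j\in W_0^{1,2}(\Omega)$ and the identity $\int_\Omega|\nabla\phi_j|^2=\lambda_n^D(\Omega)\int_\Omega|\phi_j|^2$ from $\eps$-truncations. The paper instead uses Sard's theorem and smooth superlevel sets, and it uses the identity without separate justification.
\item You close with unique continuation (real-analyticity) rather than the strong maximum principle or Harnack inequality. That is the tool actually needed here, because the combination changes sign.
\end{itemize}
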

\begin{proof}
This theorem is originally announced 
and proved for $d=2$ in~\cite{Courant_1923}
(see also \cite[Sec.~VI.6]{CH1}).
In higher dimensions, not all the existing proofs 
meet the necessary mathematical rigour
(\cf~\cite[Rem.~6 in App.~D]{Berard-Meyer_1982}).
On the other hand, the general rigorous approach of
\cite{Alessandrini_1998} requires apparently 
unnecessary hypotheses about the regularity of~$\Omega$.
We rely on some arguments of 
\cite[App.~D]{Berard-Meyer_1982}
used to prove the theorem on manifolds.
 
The case $n=1$ is trivial due to the positivity of~$\psi_1^D$.
Henceforth we therefore assume $n \geq 2$.
Let $\Omega_1,\dots,\Omega_k$ be the nodal domains of~$\psi_n^D$.
Suppose, by contradiction, that $k > n$. 
We abbreviate $u_j := \psi_n^D \chi_{\Omega_j}$,
the restriction of~$\psi_n^D$ to its $j^\text{th}$ nodal domain.
Let us consider the function
$$
  \psi := \sum_{j=1}^{k-1} 
  c_j \, u_j 
  \,,
$$
where $c_1,\dots,c_{k-1} \in \Real$
are constants to be chosen later.

First of all, we claim that $\psi \in W_0^{1,2}(\Omega)$,
so it is an admissible test function in the variational 
characterisation of the eigenvalues of $-\Delta_D^\Omega$ 
given by Theorem~\ref{minimax}.
To verify it, we have to be careful 
when the boundary~$\partial\Omega$ is not sufficiently regular. 
Clearly, it is enough to check
that each $u_j \in W_0^{1,2}(\Omega)$.
Fix $j \in \{1,\dots,k\}$ 
and let us assume (without loss of generality)
that $u_j > 0$ in~$\Omega_j$.
For every positive~$\eps$, 
let us introduce the superlevel set 
$
  \Omega_j^\eps := \{x \in \Omega_j : u_j > \eps \}
$.
We set $u^\eps := u-\eps$ in~$\Omega_j^\eps$
and extend it by zero elsewhere.
Let $\{\eps_i\}_{i=1}^\infty$ be the set 
of regular values of~$u$ 
(\ie, $\forall x \in \Omega$,
$u(x)=\eps_i \Rightarrow \nabla u(x) \not= 0$)
which tend to zero as $i \to \infty$
(by Sard's theorem, critical values form a set of measure zero). 
Then $\Omega_j^{\eps_i}$ is smooth
and since~$u$ is smooth and 
$u^{\eps_i}=0$ on $\partial\Omega_j^{\eps_i}$,
it follows that
$
  u^{\eps_i} \in W_0^{1,2}(\Omega_j^{\eps_i})
  \subset W_0^{1,2}(\Omega_j)
$.
Since the volume $|\Omega_j \setminus \Omega_j^{\eps_i}|$
vanishes as $i \to \infty$,
it is straightforward to check that 
$u^{\eps_i} \to u_j$ in $W^{1,2}(\Omega_j)$ as $i \to \infty$.
Since $W_0^{1,2}(\Omega_j)$ is a closed subspace of $W^{1,2}(\Omega_j)$, 
we have just established that 
$u_j \in W_0^{1,2}(\Omega_j) \subset W_0^{1,2}(\Omega)$.

Second, we claim that it is possible to choose 
the constants $c_1,\dots,c_{k-1}$ in such a way that 
$c_1^2 + \dots + c_{k-1}^2 \not= 0$ 
(so that~$\psi$ is non-trivial)
and
$$
  \forall i = 1, \dots, n-1 
  \,, \qquad
  0 = (\psi_i^D,\psi) 
  = \sum_{j=1}^{k-1} c_j \int_{\Omega_j} \psi_i^D \psi_n^D
  \,.
$$
Indeed, it is enough to notice that we deal with 
a homogeneous system of linear equations,
where the number of unknowns is larger than 
the number of equations 
(because $k-1 > n-1$).

Since~$\psi$ is orthogonal to the first~$n-1$ eigenfunctions,
it follows that the linear span
$
  \mathcal{M}_{n}
  := \obal\{\psi_1^D,\dots,\psi_{n-1}^D,\psi\}
$ 
is an $n$-dimensional subspace of $W_0^{1,2}(\Omega)$.
Choosing 
$\mathscr{L}_{n} := \mathcal{M}_{n}$ in~\eqref{infsup},
it follows that 
$$
  \lambda_{n}^D(\Omega) 
  \leq \frac{\displaystyle \int_\Omega |\nabla\psi|^2}
  {\displaystyle \int_\Omega |\psi|^2}
  = \frac{\displaystyle \sum_{j=1}^{k-1} c_j 
  \int_{\Omega_j} |\nabla u_j|^2}
  {\displaystyle \sum_{j=1}^{k-1} c_j \int_{\Omega_j} |u_j|^2}
  = \frac{\displaystyle \sum_{j=1}^{k-1} c_j \,
  \lambda_n^D(\Omega) \int_{\Omega_j} |u_j|^2}
  {\displaystyle \sum_{j=1}^{k-1} c_j \int_{\Omega_j} |u_j|^2}
  = \lambda_n^D(\Omega) 
  .
$$  
Consequently (\cf~Proposition~\ref{Prop.achieved}), 
$\psi$~is an admissible eigenfunction
of $-\Delta_D^\Omega$ corresponding to the eigenvalue $\lambda_n^D(\Omega) $.
However, $\psi = 0$ on the non-empty open set~$\Omega_k$,
which implies that actually $\psi = 0$ on~$\Omega$
due to the strong maximum principle (or the Harnack inequality).
This is a contradiction with the fact that~$\psi$
has been constructed as a non-trivial function.  
\end{proof} 

We continue to assume that~$\Omega$ is a bounded domain,
but the hypotheses that there are two eigenvalues
below the essential spectrum would be enough 
for the intriguing conjecture below.
Since the ground state~$\psi_1^D$ is positive, 
the second eigenfunction~$\psi_2^D$ 
has at least two nodal domains.
It follows by Theorem~\ref{Thm.Courant}
that~$\psi_2^D$ has \emph{exactly two} nodal domains.

In 1967 Payne~\cite{Payne1} 
conjectured that the nodal line $\mathcal{N}(\psi_2^D)$
in dimension $d=2$ cannot be a closed curve
(in fact, he writes: \emph{This almost certainly must be the case\dots}).
There are some musical arguments to support this expectation.
A reformulation suitable also for higher dimensions
is that the nodal set touches the boundary.
\begin{conj}[Nodal-line conjecture]\label{Conj.nodal}
For any domain~$\Omega$, \
$$  
  \overline{\mathcal{N}(\psi_2^D)} \cap \partial\Omega
  \not= \varnothing
  \,.
$$
\end{conj}

As already mentioned, 
a variant of the conjecture was raised by Payne \cite{Payne1} 
for planar domains in 1967.
In 1973,
Payne~\cite{Payne2} proved it under some symmetry assumptions
about~$\Omega$.
(For other results under symmetry, 
see \cite{Lin_1987,Putter,Damascelli}.)
In 1991,
Jerison~\cite{Jerison0} announced a proof of the conjecture
for long thin convex planar domains 
(see also \cite{Jerison1,GJerison,
Grieser-Jerison_1998,Grieser-Jerison_2009,
Beck_2018,Beck-Canzani-Marzuola_2021}).
The method extends to higher dimensions~\cite{Jerison}.
In 1992,
Melas~\cite{Melas} established the conjecture for all convex planar domains
(see also \cite{Alessandrini_1994}).
In 1993,
Yau~\cite{Yau_1993} extended the conjecture to higher dimensions
and manifolds.
In 2008,
Freitas and Krej\v{c}i\v{r}\'ik \cite{FK4} proved the conjecture
for thin (possibly non-convex)
tubular neighbourhoods of curves in arbitrary dimensions
and thin strips on surfaces. 
Moreover, an estimate on the location of the nodal set was given
(in fact, for all eigenfunctions), see Figure~\ref{Fig.nodal}.
See also~\cite{KT2} for an extension to tubular neighbourhoods
of hypersurfaces. 

\begin{figure}[h!]
\begin{center}
\includegraphics[width=0.7\textwidth]{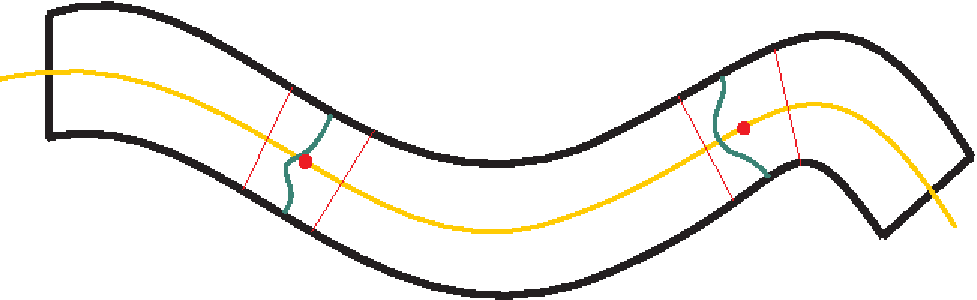}
\end{center}
\caption{The geometric setting of~\cite{FK4} for $d=2$.
The nodal lines (green) are located close to the nodal points (red)
of a one-dimensional Schr\"odinger operator on the base curve (yellow)
with a geometric potential.}
\label{Fig.nodal}
\end{figure}

On the negative side, in 1997,
Hoffmann-Ostenhofs and Nadirashvili~\cite{H2ON}
gave a counterexample for multiply connected planar domains,
see Figure~\ref{Fig.nodal.H2ON}. 
(See~\cite{Fournais} for an extension to higher dimensions.)
In 2002,
Freitas~\cite{Freitas_2002} showed that the conjecture 
does not necessarily hold for domains on surfaces.
In 2007,
Freitas and Krej\v{c}i\v{r}\'ik \cite{FK2} 
gave a counterexample for unbounded domains,
see Figure~\ref{Fig.nodal.unbounded}.
In 2013,
Kennedy~\cite{Kennedy_2013} gave a simply connected counterexample 
in dimensions $d \geq 3$.
As the last development, in 2025, Freitas and Leylekian~\cite{Freitas-Leylekian} 
constructed a planar merely doubly connected example with a closed nodal line,
see Figure~\ref{Fig.FL}.

\begin{figure}[h!]
\begin{center}
\includegraphics[width=0.25\textwidth]{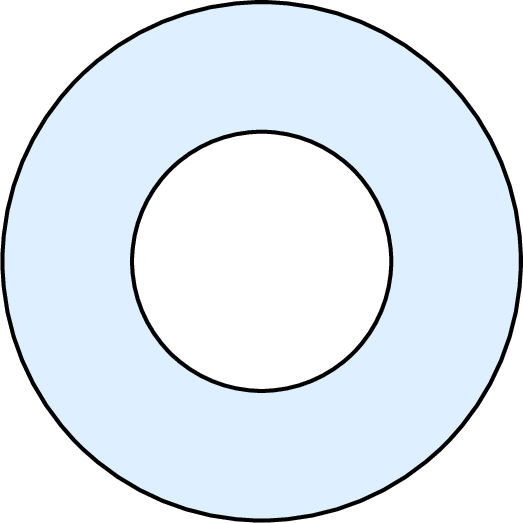}
\quad
\includegraphics[width=0.25\textwidth]{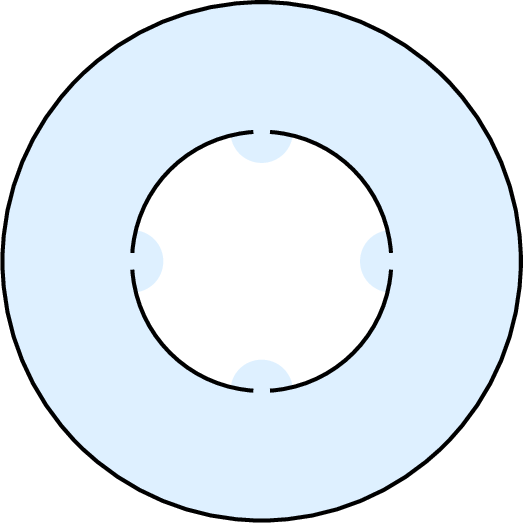}
\quad
\includegraphics[width=0.25\textwidth]{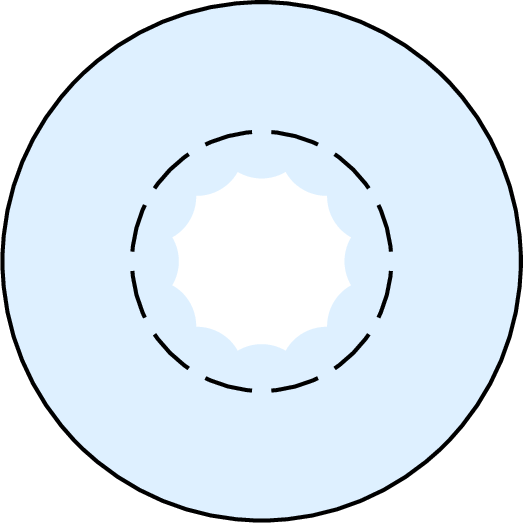}
\end{center}
\caption{The multiply connected domain of \cite{H2ON} 
for which the nodal line of the second eigenfunction is a closed curve.
The idea is to start with two concentric circles;
the union of the interior of the inner circle (disk)
and the exterior of the inner circle lying inside
the outer circle (annulus) forms a disconnected open set.
Assuming that the annulus is such that its 
first eigenvalue is greater than the first eigenvalue
of the inner disk and simultaneously smaller than
the second eigenvalue of the inner disk,
it follows that the second eigenfunction of
the disconnected domain is the first eigenfunction
in the annulus extended by zero to the inner disk.
Digging a sufficient number of small holes in the inner circle
will make the set connected and the eigenfunction
of the annulus will penetrate in the interior of the inner circle 
in such a way that it produces a closed nodal line eventually.
(Numerically, 6 holes are sufficient
in a similar geometric setting~\cite{Dahne-Gomez-Serrano-Hou_2021}.)}
\label{Fig.nodal.H2ON}
\end{figure}
\begin{OProblem}
Prove Conjecture~\ref{Conj.nodal} for all bounded simply connected 
domains in~$\Real^2$ 
and all bounded convex domains in~$\Real^d$ with $d \geq 3$. 
\end{OProblem}
\begin{figure}[h!t]
\begin{center}
\includegraphics[width=0.45\textwidth]{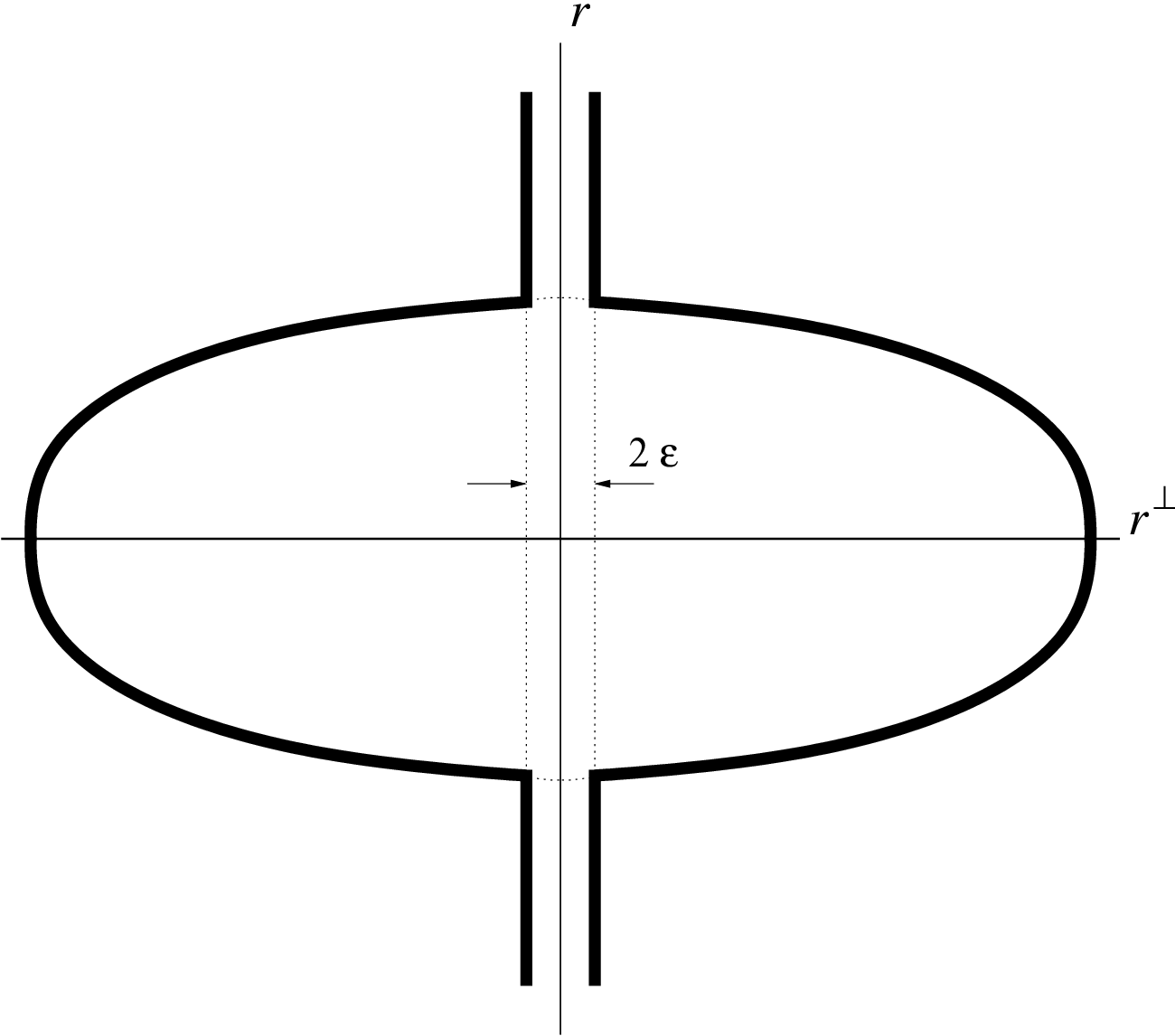}
\qquad
\includegraphics[width=0.45\textwidth]{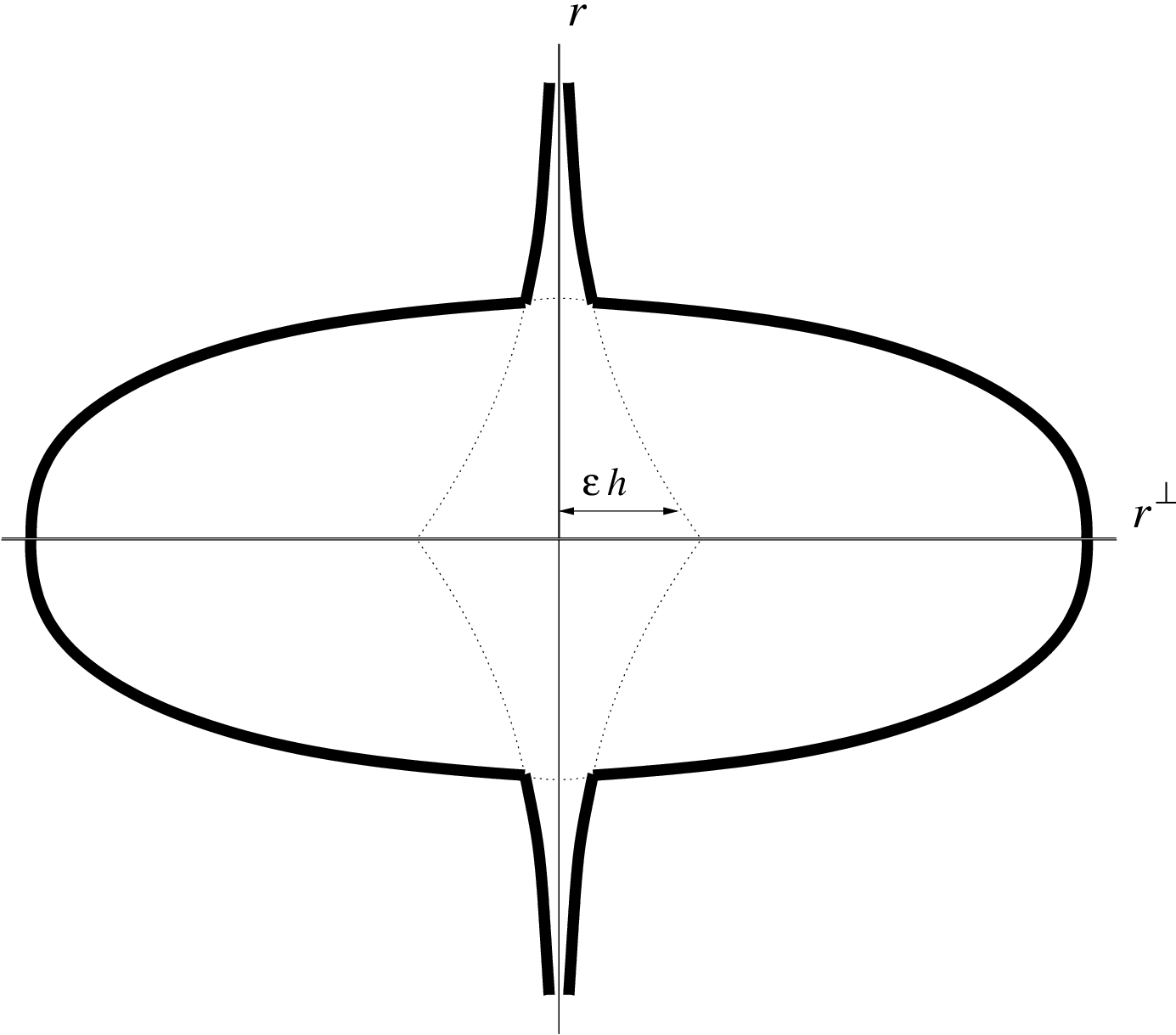}
\end{center}
\caption{Unbounded domains of~\cite{FK2}
for which the nodal line 
of the second eigenfunction does not touch the boundary.
The construction starts with a bounded convex domain 
which is invariant under reflections 
through two orthogonal lines~$r$ and~$r^\bot$.
Assuming that the bounded domain is sufficiently long
in the direction~$r^\bot$, its second eigenvalue is simple 
and has a nodal line coinciding with the axis~$r$
inside the domain.
Appending two sufficiently thin semi-infinite strips 
invariant under a reflection through~$r$
will make the whole vertical line~$r$ to be the nodal line
of the obtained unbounded domain.
The left (respectively, right) figure represents
a quasi-cylindrical (respectively, quasi-bounded) realisation.}
\label{Fig.nodal.unbounded}
\end{figure}
\begin{figure}[h!t]
\begin{center}
\includegraphics[height=2.3cm]{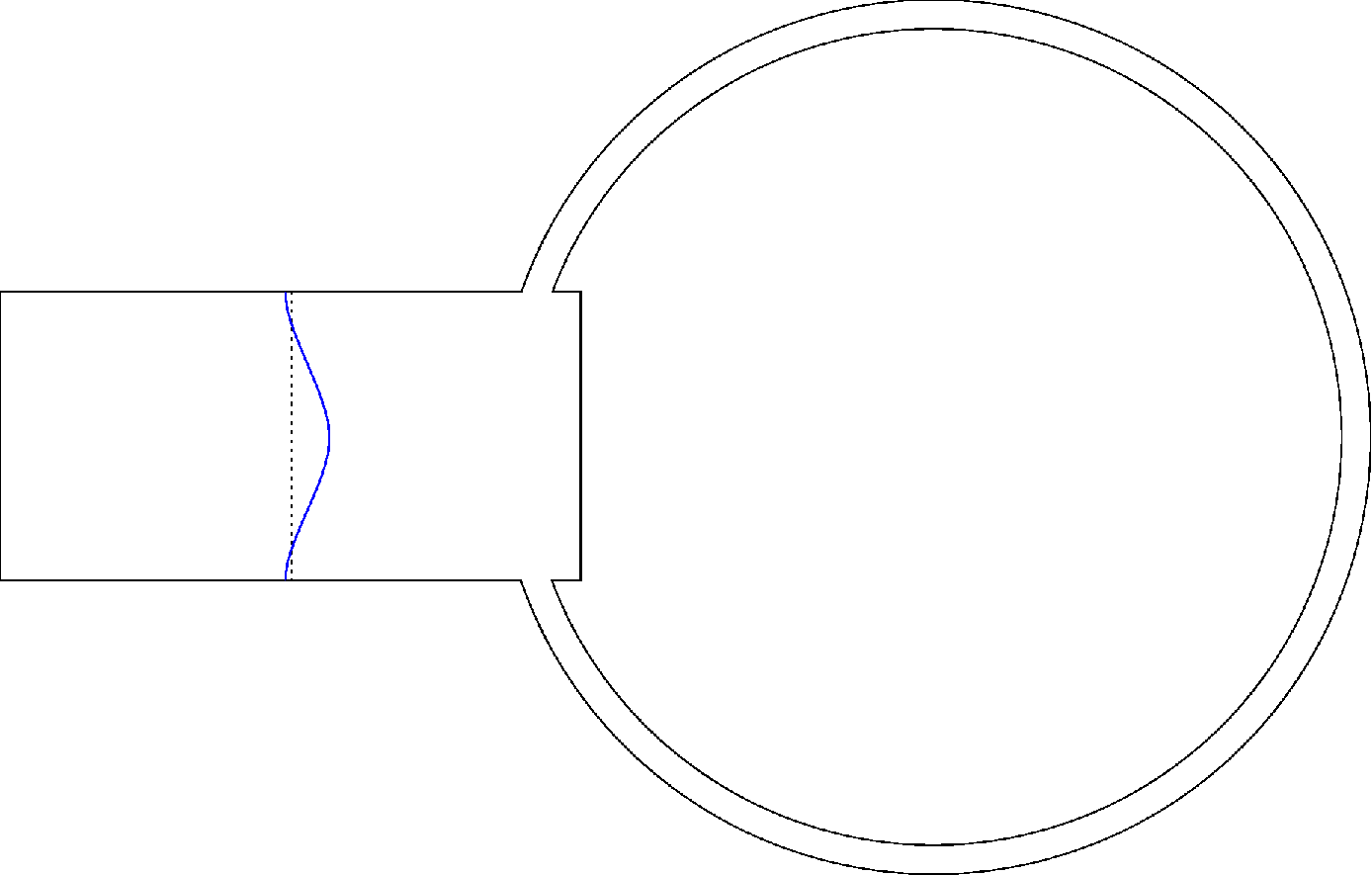} \quad
\includegraphics[height=2.3cm]{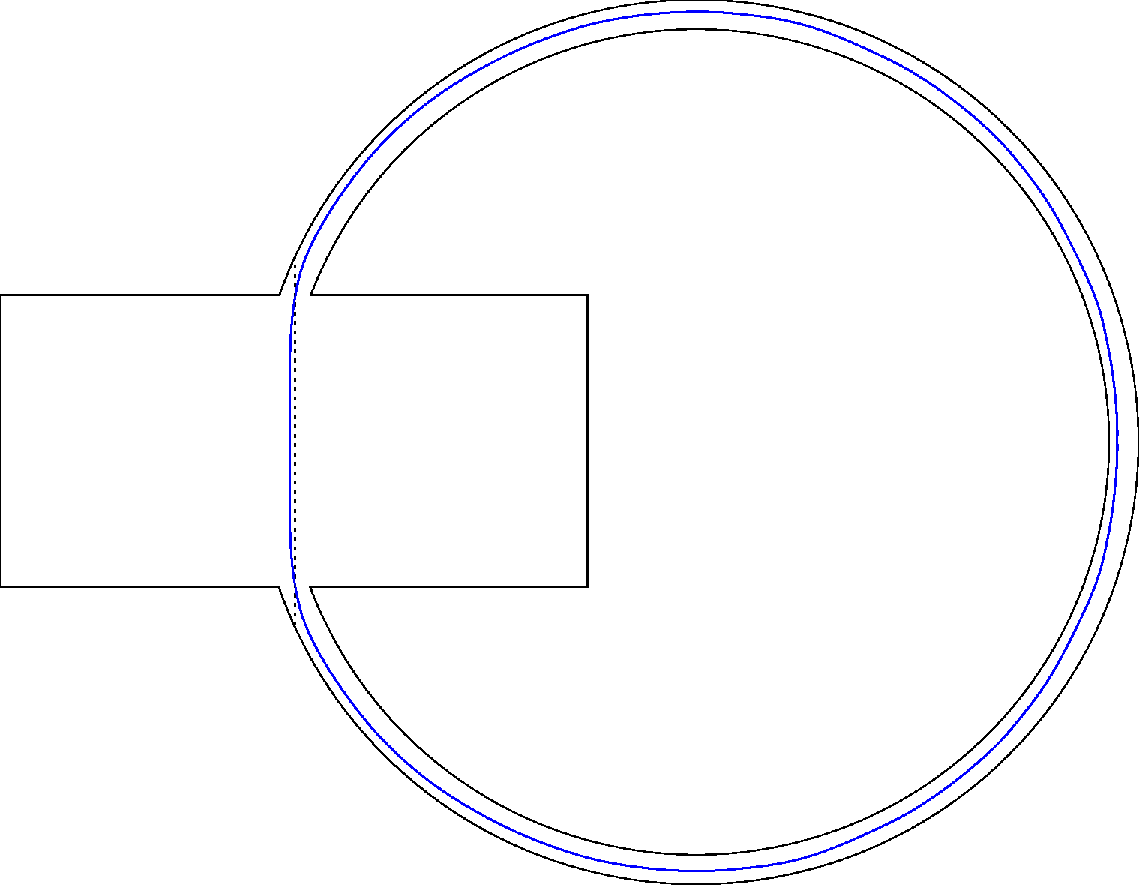} \quad
\includegraphics[height=2.3cm]{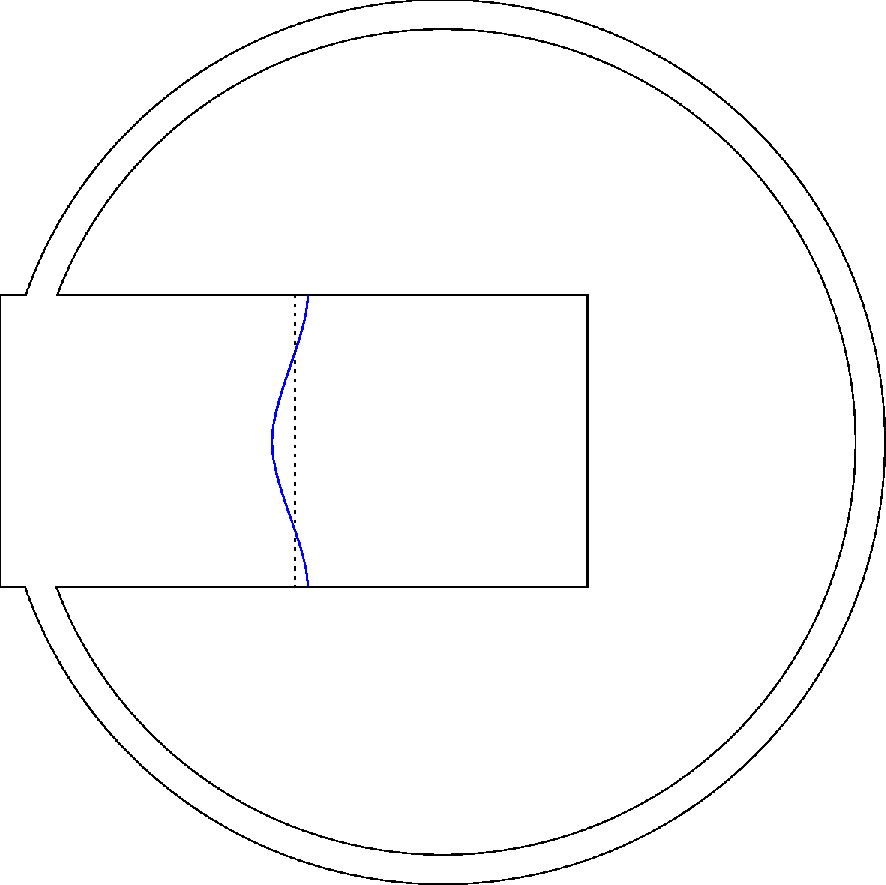}
\end{center}
\caption{The doubly connected domain of~\cite{Freitas-Leylekian}
admitting a closed nodal line. 
The idea is reminiscent of~\cite{FK2}, 
but now a thin annulus-like appendix is used 
instead of semi-infinite strips 
as an attachment to the symmetric bounded convex domain.
A continuity argument ensures that there is a location 
of the bounded domain for which the nodal line remains trapped 
inside the constructed domain (the middle realisation in the figure).}
\label{Fig.FL}
\end{figure}

\newpage
 
\subsection{The hot-spots conjecture}
In this section, let us restrict to Neumann boundary conditions.
Furthermore, to ensure that the spectrum of $-\Delta_N^\Omega$
is purely discrete, let us assume that~$\Omega$ is 
a sufficiently regular (\eg, Lipschitz) bounded domain. 
 
Let us arrange the eigenvalues of $-\Delta_N^\Omega$
in a non-decreasing sequence 
$$
  \sigma(-\Delta_N^\Omega)
  = \sigma_\mathrm{disc}(-\Delta_N^\Omega)
  = \{ 0 = \lambda_1^N(\Omega) < \lambda_2^N(\Omega) 
  \leq \lambda_3^N(\Omega) \leq \dots \to +\infty \}
  \,,
$$
where each eigenvalue is repeated according to its multiplicity.
Notice that $\lambda_1^N(\Omega)$ is simple due to Theorem~\ref{Thm.positive},
so the inequality between~$\lambda_1^N(\Omega)$ 
and~$\lambda_2^N(\Omega)$ is strict again.
Let $\{\psi_n^N\}_{n=1}^\infty$ denote a corresponding set
of real eigenfunctions, normalised to one in $\sii(\Omega)$.
Since the eigenfunctions are mutually orthogonal
and~$\psi_1^N$ can be chosen to be positive 
due to Theorem~\ref{Thm.positive} (in fact, it is a constant),
the other eigenfunctions are forced to change sign. 

In 1974 Rauch conjectured
that any eigenfunction corresponding to
the second eigenvalue~$\lambda_2^N(\Omega)$  
attains its maximum and minimum at boundary points only. 

\begin{conj}[Hot-spots conjecture]
\label{Conj.Rauch}
For any eigenfunction~$\psi_2^N$ 
corresponding to the second eigenvalue of $-\Delta_N^\Omega$, one has
$$
  \forall x \in \Omega, \quad
  \min_{\partial\Omega} \psi_2^N 
  < \psi_2^N(x) < 
  \max_{\partial\Omega} \psi_2^N
  \,.
$$
\end{conj}

According to Ba\~nuelos and Burdzy~\cite{Banuelos-Burdzy_1999}, 
Conjecture~\ref{Conj.Rauch} was indeed raised by Rauch 
during a conference in 1974 but 
``it has never appeared in print under his name''. 
It is certainly true that Conjecture~\ref{Conj.Rauch}
does not appear in Rauch's conference report~\cite{Rauch_1975},
where however a strong heuristic support 
for the validity of it can be deduced from: 
\begin{quote}
\emph{Since the first eigenvalue is zero, the large-time behaviour
of the heat semigroup generated by the Neumann Laplacian
is determined by the second eigenfunction 
and Neumann boundary conditions model an insulating interface,
it is expected that ``hot spots'' and ``cold spots'' 
of a medium living inside~$\Omega$ will move towards 
the boundary~$\partial\Omega$ for large times.}
\end{quote}

In dimension $d=1$,  
the situation is particularly simple.
Without loss of generality, we can consider $I_a := (0,a)$ with $a>0$.  
Recalling Section~\ref{Sec.strings}, 
the eigenfunctions are given 
by~\eqref{N.spec.point} and~\eqref{N.efs}, respectively.
In particular, the (unique) eigenfunction~$\psi_2^N$
corresponding to $\lambda_2^N((0,a))$ indeed 
attains its maximum and minimum at the boundary,
see Figure~\ref{Fig.hot}. 

\begin{figure}[h!t]
\begin{center}%
\begin{tabular}{ccc}%
{\small $\psi_2^N$} & {\small $\psi_3^N$} & {\small $\psi_4^N$}%
\\
\includegraphics[width=0.3\textwidth]{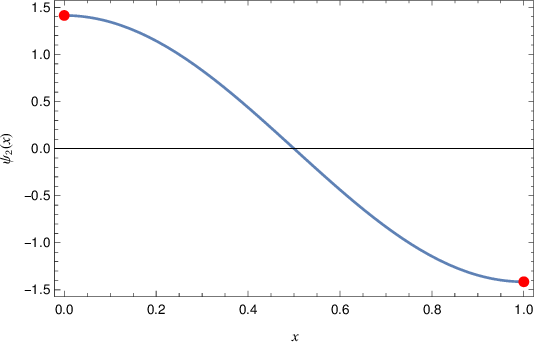}
&\includegraphics[width=0.3\textwidth]{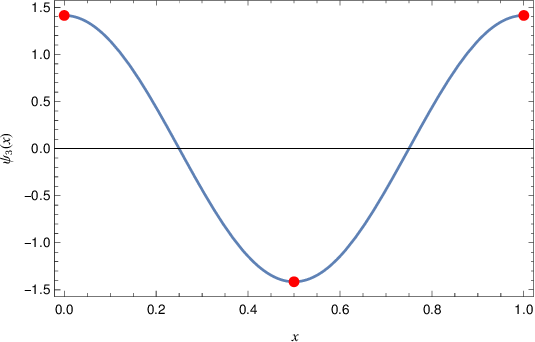}
&\includegraphics[width=0.3\textwidth]{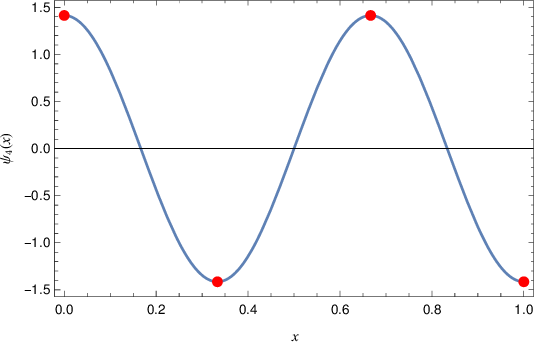}%
\end{tabular}%
\end{center}%
\caption{Eigenfunctions of $-\Delta_N^{(0,1)}$.
The hot and cold spots (red dots)
of~$\psi_2^N$ are clearly located 
on the boundary of the string $(0,1)$.}\label{Fig.hot}
\end{figure}

A pre-millennium history of the conjecture is as follows. 
As already mentioned, the conjecture is attributed 
to Rauch and the year 1974. 
In 1985, 
Kawohl \cite[Sec.~II.5]{Kawohl_1985} established the conjecture
for Cartesian products $(0,a) \times \omega$,
where $\omega \subset \Real^{d-1}$ is another bounded Lipschitz domain.
In 1999,
Ba\~nuelos and Burdzy~\cite{Banuelos-Burdzy_1999}
proved the conjecture for various special domains,
including obtuse triangles and long symmetric convex domains. 
(For other results under symmetry, 
see \cite{Jerison-Nadirashvili_2000,Pascu_2002}.) 

On the negative side, in 1999,
Burdzy and Werner \cite{Burdzy-Werner_1999}
gave a counterexample for multiply connected domains 
(see also \cite{Bass-Burdzy_2000,Burdzy_2005}).
In 2002,
Freitas~\cite{Freitas_2002} showed that the conjecture 
does not necessarily hold for domains on surfaces.
In 2024,
De Dios Pont \cite{deDios}
constructed counterexamples of convex domains
in all Euclidean spaces of sufficiently large dimensions.

New positive developments have been achieved after the turn of the millennium.   
In 2004,
Atar and Burdzy \cite{Atar-Burdzy_2004}
showed that the conjecture holds for \emph{lip domains},
see Figure~\ref{Fig.lips}. 
(For other results under small parameter, 
see \cite{Miyamoto_2009,Miyamoto_2013,Siudeja_2015}.) 
In 2019,
Krej\v{c}i\v{r}\'ik and Tu\v{s}ek \cite{KT3} proved the conjecture
for thin strips on surfaces. 
Moreover, we located the maxima and minima of 
all the eigenfunctions in the thin strips. 
In 2020,
Judge and Mondal \cite{Judge-Mondal_2020,Judge-Mondal_2022} 
established the conjecture for general triangles.
In 2023,
Rohleder~\cite{Rohleder1} (see also~\cite{Rohleder2})
gave an analytic proof 
of the probabilistic result~\cite{Atar-Burdzy_2004}.
In 2024,
Kennedy and Rohleder~\cite{Kennedy-Rohleder_2024}
established the conjecture for higher-dimensional 
analogues of the planar setting considered in~\cite{Atar-Burdzy_2004}.
In 2025, Hatcher~\cite{Hatcher_2025} 
proved the conjecture for some non-convex polygons, 
including L- and Swiss-cross-shaped domains 
(see also \cite{Hatcher_2025a,Hatcher_2025b}
for his recent works on manifolds).

\begin{figure}[h!]
\begin{center}
\includegraphics[width=0.97\textwidth]{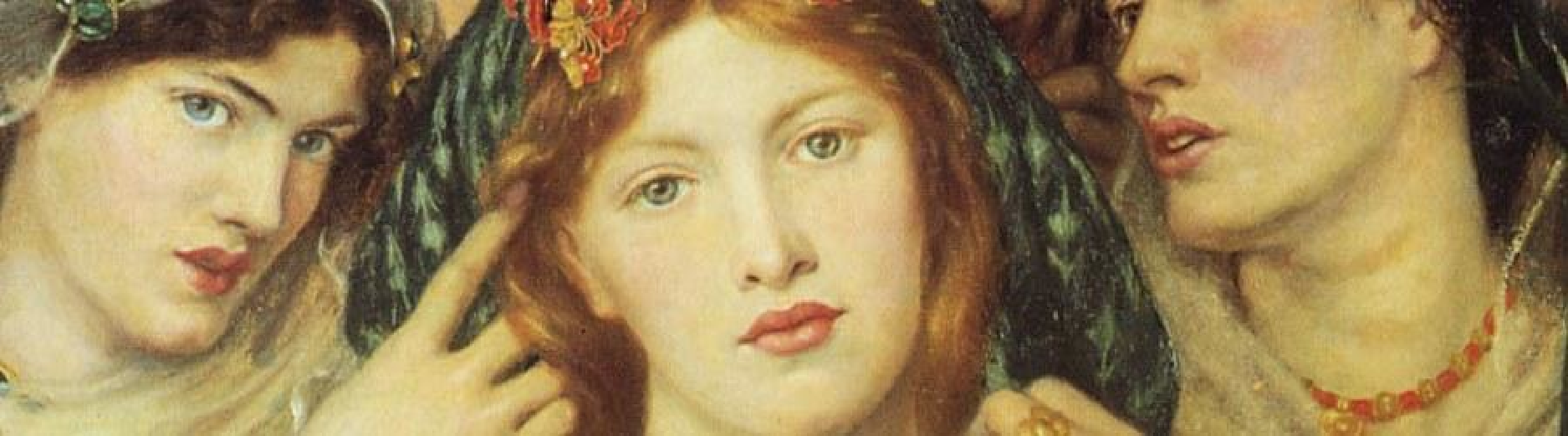}
\end{center}
\caption{In~\cite{Atar-Burdzy_2004}, Conjecture~\ref{Conj.Rauch}
is proved for \emph{lip domains}, 
\ie, planar domains  delimited by graphs of two Lipschitz functions 
with Lipschitz constant equal to one.
(Dante Gabriel Rossetti: \emph{The Beloved})}
\label{Fig.lips}
\end{figure}

\section{Inverse spectral geometry}
To conclude the discussion of bounded domains,
let us look at an inverse problem 
stated by this famous question 
of Kac's from 1966 \cite{Kac_1966}:
\begin{center}
\fbox{Can one hear the shape of a drum ?}
\end{center}
More specifically, 
given a sequence of Dirichlet eigenvalues 
$\{\lambda_n^D(\Omega)\}_{n=1}^\infty$,
can one reconstruct the geometry of the domain~$\Omega$
on which the Dirichlet Laplacian $-\Delta_D^\Omega$ acts?
In particular, do there exist isospectral 
(having the same spectrum)
incongruent (having different shapes) domains?  
 
Let us investigate this problem inductively
by starting with specific geometries.

\subsection{Can one hear the shape of a string?}
The simplest non-trivial class of domains is given by 
one-dimensional intervals $I_a := (0,a)$ with $a>0$
of Section~\ref{Sec.strings}. 
Here the geometry of~$I_a$ is determined by the length~$a$ of the string.
Since the spectrum of $-\Delta_D^{I_a}$ 
is given by~\eqref{D.spec}, 
the knowledge of the lowest eigenvalue 
$\lambda_1^D(I_a) = \left(\frac{\pi}{a}\right)^2$ is sufficient 
to determine the geometry of~$I_a$.
In summary, the answer is YES. 
 
\subsection{Can one hear the shape of a rectangular drum?}
Now, let us consider the Cartesian product of two strings,
\ie, a rectangle $I_{a,b} := (0,a) \times (0,b)$
with positive~$a$ and~$b$ of Section~\ref{Sec.pipeds}.
Without loss of generality, let us assume $a \leq b$.
Here the geometry is determined by the side lengths~$a$ and~$b$. 
From the analysis in Section~\ref{Sec.pipeds}, we know that 
$$
  \sigma(-\Delta_D^{I_{a,b}}) = \left\{
  \left(\frac{n\pi}{a}\right)^2 + \left(\frac{m\pi}{b}\right)^2
  \right\}_{n,m=1}^\infty
  \,.
$$
Consequently, the knowledge of the two lowest eigenvalues 
$
  \lambda_1^D(I_{a,b}) := 
  \left(\frac{\pi}{a}\right)^2 + \left(\frac{\pi}{b}\right)^2
$ 
and 
$
  \lambda_2^D(I_{a,b}) := 
  \left(\frac{\pi}{a}\right)^2 + \left(\frac{2\pi}{b}\right)^2
$ 
is sufficient to determine the geometry of~$I_{a,b}$
(for squares, the knowledge of the lowest eigenvalue is sufficient).
In summary, the answer is YES.

In general, we see that the knowledge of the lowest eigenvalue
is not sufficient to determine the geometry.
In fact, it is not sufficient to determine 
even the dimension of the domain
(there exist $a,b,c > 0$ such that 
$\lambda_1^D(I_{a,b}) = \lambda_1^D(I_{c})$). 
However, in the case of rectangles, 
the lowest eigenvalue is sufficient to determine 
the geometry of~$I_{a,b}$ 
if we restrict to membranes of fixed area.

In higher dimensions $d \geq 3$, 
it is clear that the first~$d$ eigenvalues 
are sufficient to determine the geometry of the rectangular box
$I_{a_1,\dots,a_d}$. 
 
\subsection{Can one hear the area of a drum?}
Before answering the general question,
let us consider the more modest problem 
whether one can hear the area of a drum. 
Here the answer is YES
because of the following robust result.
Indeed, the leading-order term in the asymptotics 
of the eigenvalues at infinity contains the area~$|\Omega|$. 

\begin{theo}[Weyl's law, 1911 \cite{Weyl_1911}]\label{Thm.Weyl.law}
Let $\Omega \subset \Real^d$ be an arbitrary bounded open set. 
Then
\begin{equation}\label{Weyl.law}
  \lambda_n^D(\Omega)^{d/2} 
  = \frac{(2\pi)^d}{|B_1|} \frac{n}{|\Omega|}
  + o(n)
\end{equation}
as $n \to \infty$.
\end{theo}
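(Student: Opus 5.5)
The plan is to prove the equivalent statement for the counting function $N(\lambda) := \#\{n : \lambda_n^D(\Omega) \leq \lambda\}$, namely
$$
  N(\lambda) = \frac{|B_1|\,|\Omega|}{(2\pi)^d}\,\lambda^{d/2} + o(\lambda^{d/2})
  \qquad\mbox{as}\quad \lambda\to\infty\,.
$$
Inverting this asymptotics gives~\eqref{Weyl.law}. Throughout I will use Dirichlet--Neumann bracketing together with the explicit spectra of cubes from Section~\ref{Sec.pipeds}, in the spirit of Courant's original argument.

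\emph{Step 1: cubes.} For a cube $Q_a$, the Dirichlet eigenvalues are $\pi^2 a^{-2}|n|^2$ with $n\in(\Nat^*)^d$, and the Neumann eigenvalues are the same expression with $n\in\Nat^d$ (by \eqref{D.spec}, \eqref{N.spec.point} and \eqref{evs.piped}). Counting lattice points in the ball of radius $a\sqrt\lambda/\pi$ restricted to the positive orthant gives
$$
  N_{D/N}^{Q_a}(\lambda) = 2^{-d}|B_1|(a\sqrt{\lambda}/\pi)^d + O(\lambda^{(d-1)/2})
  = \frac{|B_1|\,a^d}{(2\pi)^d}\lambda^{d/2} + O(\lambda^{(d-1)/2}).
$$
The Neumann count differs from the Dirichlet count only by the lattice points on the coordinate hyperplanes, which is again $O(\lambda^{(d-1)/2})$.

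\emph{Step 2: bracketing.} Fix a mesh size $a>0$ and cover $\Real^d$ by the grid of cubes of side $a$. Let $\Omega_a^-$ be the union of the open grid cubes contained in $\Omega$, and $\Omega_a^+$ the interior of the closure of the union of the grid cubes meeting $\Omega$. For the lower bound, monotonicity (Theorem~\ref{Prop.monotonicity}) gives $\lambda_n^D(\Omega)\le\lambda_n^D(\Omega_a^-)$. Deleting the interior faces shrinks the form domain, so the Dirichlet Laplacian on $\Omega_a^-$ is dominated by the direct sum of Dirichlet Laplacians on the individual cubes. Hence $N^\Omega\ge\sum N_D^{Q}$ by Corollary~\ref{Corol.minimax}. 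For the upper bound, I would use $\lambda_n^D(\Omega)\ge\lambda_n^D(\Omega_a^+)\ge\lambda_n^N(\Omega_a^+)$. Then I would impose additional Neumann conditions on the interior faces, which enlarges the form domain to $\bigoplus W^{1,2}(Q)$ exactly as in the proof of Proposition~\ref{Prop.urchin}. This yields $N^\Omega\le\sum N_N^Q$. Combining with Step~1,
$$
  \frac{|B_1|\,|\Omega_a^-|}{(2\pi)^d}\le\liminf\frac{N(\lambda)}{\lambda^{d/2}}
  \le\limsup\frac{N(\lambda)}{\lambda^{d/2}}\le\frac{|B_1|\,|\Omega_a^+|}{(2\pi)^d}.
$$

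\emph{Step 3: limit $a\to0$ (the main obstacle).} The difficulty is that for an arbitrary bounded open set, $|\Omega_a^+|\to|\overline\Omega|$, which may exceed $|\Omega|$ when the boundary has positive measure. On the inner side, $|\Omega_a^-|\to|\Omega|$ always holds, since $\Omega$ is a countable union of dyadic cubes; so the lower bound is fine. For the upper bound I would avoid outer approximations altogether. Given $\eps>0$, choose finitely many grid cubes forming $K\subset\Omega$ with $|\Omega\setminus K|<\eps$. Then split $\Omega$ into the interiors of these cubes and the remainder $\Omega' := \Omega\setminus K$, and apply Dirichlet bracketing in reverse: the Dirichlet form on $\Omega$ dominates the direct sum of the Dirichlet-on-$\Omega$ forms restricted to each piece, with Neumann conditions on the cut faces. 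Each cube piece contributes at most $N_N^Q$. For the remainder, I need a crude bound $N^{\Omega'}(\lambda)\le C|\Omega'|\lambda^{d/2}$ with $C$ independent of the shape of $\Omega'$. I would try to obtain it from a Cwikel--Lieb--Rozenblum/Li--Yau-type bound using the heat trace $\sum e^{-t\lambda_n}\le(4\pi t)^{-d/2}|\Omega'|$ (comparison with the heat kernel of $\Real^d$), together with $N(\lambda)\le e\sum e^{-\lambda_n/\lambda}$. Since this heat-kernel comparison is not established earlier in the paper, it is the step I expect to need the most care. Letting $\eps\to0$ then closes the upper bound, and inverting the counting asymptotics yields~\eqref{Weyl.law}.
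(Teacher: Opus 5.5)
Your Steps 1 and 2 are exactly the paper's argument: lattice-point counting in cubes, then Dirichlet--Neumann bracketing between inner and outer unions of grid cubes. As you correctly note, this closes only when $|\partial\Omega|=0$; the paper itself stops at contented sets and refers to Birman--Solomyak and Rozenblum for the rest.

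Your Step~3 has a genuine gap. Cutting $\Omega$ sharply along $\partial K$ and enlarging the form domain puts \emph{Neumann} conditions on $\partial K$ for the remainder piece as well. The operator $H'$ you must control on $\Omega'=\Omega\setminus K$ is generated by $\int_{\Omega'}|\nabla v|^2$ on restrictions of $W_0^{1,2}(\Omega)$-functions, so it is Dirichlet only on $\partial\Omega$.

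The bound $\sum_n e^{-t\lambda_n}\le(4\pi t)^{-d/2}|\Omega'|$ comes either from domain monotonicity of the heat kernel or from the Berezin--Li--Yau argument with $\sum_n|\hat\psi_n(\xi)|^2\le|\Omega'|$. Both need Dirichlet conditions on the \emph{whole} boundary, i.e.\ eigenfunctions that extend by zero to $W^{1,2}(\Real^d)$. For $H'$ the comparison with the free kernel is false. Near a flat Neumann face, reflection makes the heat kernel close to twice the free one on the diagonal. Moreover, a component of $\Omega\setminus K$ whose boundary lies in $\partial K$ carries pure Neumann conditions, so $0$ is an eigenvalue of $H'$ and $N^{H'}(\lambda)\le C|\Omega'|\lambda^{d/2}$ fails outright.

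Nor can you impose Dirichlet conditions on the $\Omega'$ side of $\partial K$ instead: that shrinks the form domain and reverses the inequality. Bounding the mixed problem on the wild set $\Omega'$ is essentially the original difficulty again.

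The missing idea is to localise smoothly rather than sharply, so that the small piece keeps Dirichlet conditions.
Choose finite unions of closed grid cubes $K_1\subset\mathrm{int}\,K_2$ with $K_2\subset\Omega$ and $|\Omega\setminus K_1|<\eps$. Take smooth real $\chi_1,\chi_2$ with $\chi_1^2+\chi_2^2=1$, $\supp\chi_1$ a compact subset of $\mathrm{int}\,K_2$, and $\chi_2=0$ near $K_1$.
Use the pointwise IMS identity $|\nabla u|^2=\sum_i|\nabla(\chi_iu)|^2-W|u|^2$ with $W:=|\nabla\chi_1|^2+|\nabla\chi_2|^2$. The map $u\mapsto(\chi_1u,\chi_2u)$ sends $W_0^{1,2}(\Omega)$ $\sii$-isometrically into $W_0^{1,2}(\mathrm{int}\,K_2)\oplus W_0^{1,2}(\Omega\setminus K_1)$. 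The minimax principle then gives
$N_D^\Omega(\lambda)\le N_D^{\mathrm{int}\,K_2}(\lambda+\|W\|_\infty)+N_D^{\Omega\setminus K_1}(\lambda+\|W\|_\infty)$.
Step~2 handles the first term, with leading coefficient $|K_2|\le|\Omega|$. Your heat-trace bound now legitimately gives at most $e\,(4\pi)^{-d/2}\,\eps\,(\lambda+\|W\|_\infty)^{d/2}$ for the second. The $\eps$-dependent shift $\|W\|_\infty$ is invisible at order $\lambda^{d/2}$, and letting $\eps\to0$ closes the upper bound.

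Alternatively, follow Kac's route. The bound $\mathrm{Tr}\,e^{t\Delta_D^\Omega}\le(4\pi t)^{-d/2}|\Omega|$ holds for every bounded open set. Your inner cube approximation gives the matching lower bound as $t\to0^+$. Karamata's Tauberian theorem then converts this heat-trace asymptotics into Weyl's law.
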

\begin{proof} 
We are inspired by \cite[Sec.~XIII.15]{RS4}.

\fbox{Heuristic proof}
First of all, 
let us give a ``physical proof''
based on a quantum-mechanical intuition. 
Let us consider the operator
$$
  H := -\frac{\hbar^2}{2m} \Delta_D^\Omega
$$
representing the quantum Hamiltonian of 
a free particle of mass~$m$
constrained to the open set $\Omega \subset \Real^d$. 
Here $\hbar := h / (2\pi)$ denotes the reduced Planck constant,
where~$h$ is the Planck constant.
The eigenvalues~$\lambda$ of~$H$ represent energies of the particle.  
From the eigenvalue equation $H\psi = \lambda\psi$,
the following relationship is obvious:
$$
  \mbox{large-energy limit $\lambda \to \infty$}
  \qquad \Longleftrightarrow \qquad
  \mbox{semiclassical limit $h \to 0$}.
$$
In this regime, the Bohr--Sommerfeld quantisation condition states
that each quantum bound state is associated with a volume $h^d$ 
in the phase space:
$$
  \mbox{1 bound state}
  \quad \sim \quad
  \mbox{volume $h^d$ in the phase space}.
$$
Consequently,
$$
\begin{aligned}
  N_D^\Omega(\lambda) 
  :=&\  \# \{\mbox{eigenvalues of }- \Delta_D^\Omega \leq \lambda\} 
  \\
  \sim&\ \{(x,p) \in \Omega\times\Real^d : \ |p|^2 \leq \lambda \} / h^d
  \qquad \mbox{as} \qquad \lambda \to \infty
  \\
  = &\ |\Omega| \, |B_{\lambda^{1/2}}| / (2\pi)^d
  = |\Omega| \, |B_1| \, \lambda^{d/2} / (2\pi)^d
  \,,
\end{aligned}  
$$
because in our units, $m = \frac{1}{2}$ and $\hbar=1$ 
(then $H = |p|^2$, where $p := -i\hbar\nabla$ is the quantum momentum). 
The obtained asymptotic formula for~$N_D^\Omega(\lambda)$ is just 
an equivalent way of writing~\eqref{Weyl.law}.   

\fbox{Cubes}
From now on, let us continue with mathematically rigorous arguments.
First, let us argue that the result~\eqref{Weyl.law} holds 
for cubes $Q_a := (0,a)^d$ with $a > 0$.
In fact, the asymptotics~\eqref{Weyl.law} becomes exact 
for the one-dimensional cube (or string) $I_a = (0,a)$. 
In higher dimensions,
we know that any eigenvalue $\Lambda \in \sigma(-\Delta_D^{Q_a})$
satisfies 
$$
\begin{aligned}
  \Lambda
  &=
  \left(\frac{k_1\pi}{a}\right)^2 
  + \dots + \left(\frac{k_d\pi}{a}\right)^2
  \qquad \mbox{with some} \qquad
  (k_1,\dots,k_d) \in {\Nat^*}^d
  \\
  &= \frac{\pi^2}{|Q_a|^{2/d}} \, (k_1^2 + \dots + k_d^2)
  \,.
\end{aligned}
$$
To determine $N_D^{Q_a}(\lambda)$, we have to count all these eigenvalues
which are less than or equal to~$\lambda$.
In other words, we have to count all the points of ${\Nat^*}^d$ 
which are contained in the ball of radius 
$$
  \kappa := \sqrt{\frac{|Q_a|^{2/d} \, \lambda}{\pi^2}}
  = \frac{|Q_a|^{1/d} \, \lambda^{1/2}}{\pi}
  \,,
$$
see Figure~\ref{Fig.Weyl} for $d=2$.
This is a purely number-theoretic problem. 
The intuition that the number should be asymptotically as $\lambda \to \infty$ 
equal to the volume of a hyperoctant of this ball is correct, \ie,
\begin{equation}\label{Weyl.Dirichlet}
  N_D^{Q_a}(\lambda) \sim \frac{|B_\kappa|}{2^d}
  = \frac{|B_1| \, \kappa^d }{2^d}
  = \frac{|B_1| \, |Q_a| \, \lambda^{d/2} }{(2\pi)^d}
  \qquad \mbox{as} \qquad \lambda \to \infty
  \,.
\end{equation} 
This is just  an equivalent way of writing~\eqref{Weyl.law} for cubes.

Defining $N_N^{Q_a}$ as the counting function
of the Neumann eigenvalues in the cube~$Q_a$,
we note that that the procedure above leads 
to the same asymptotics
\begin{equation}\label{Weyl.Neumann}
  N_N^{Q_a}(\lambda) \sim N_D^{Q_a}(\lambda) 
  \qquad \mbox{as} \qquad \lambda \to \infty
  \,.
\end{equation} 
Indeed, now we have to count all the points of ${\Nat}^d$ 
which are contained in the ball of the same radius~$\kappa$ 

\begin{figure}[h!t]
\begin{center}
\includegraphics[width=0.5\textwidth]{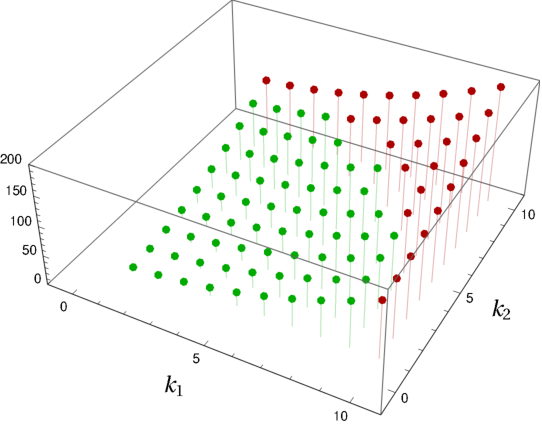}
\includegraphics[width=0.45\textwidth]{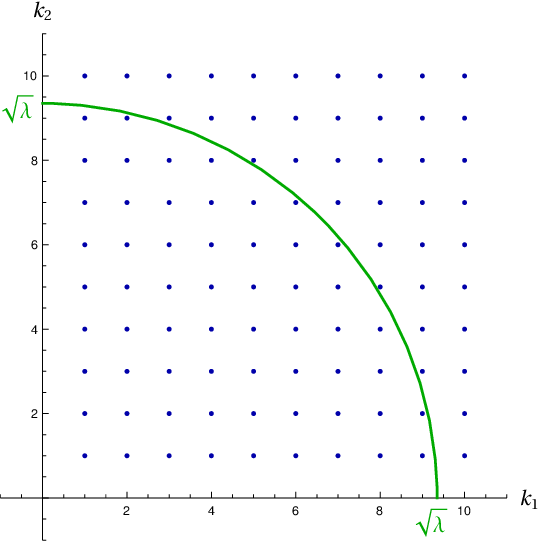}
\end{center}
\caption{Counting the number of Dirichlet eigenvalues 
$k_1^2+k_2^2$ (green and red dots)
which are less than or equal to~$\lambda$ (green dots) 
is equivalent to counting the points $(k_1,k_2)$ (blue dots)
inside the circle of radius~$\sqrt{\lambda}$ (green curve).}
\label{Fig.Weyl}
\end{figure}

\fbox{General contented sets}
Let $\Omega \subset \Real^d$ be a general bounded open set.
Given any $j \in \Nat$, 
we consider the subsets and supersets
$$
  \interior\left( \bigcup_{a \in A_j^-} \overline{Q_j(a)} \right)
  =: \Omega_j^- \subset \Omega \subset \Omega_j^+ :=
  \interior\left( \bigcup_{a \in A_j^+} \overline{Q_j(a)} \right)
$$ 
where
$$
  Q_j(a) := \left(\frac{a_1}{2^j},\frac{a_1+1}{2^j}\right)
  \times \dots \times \left(\frac{a_d}{2^j},\frac{a_d+1}{2^j}\right)
  \,, \qquad
  a := (a_1,\dots,a_d) \in \Int^d 
  \,,
$$
and the unions are taken over all 
$a \in A_j^-$ (respectively, $a \in A_j^+$)
such that $Q_j(a) \subset \Omega$
(respectively, $Q_j(a) \cap \Omega \not= \varnothing$).
That is, $\Omega_j^-$ (respectively, $\Omega_j^+$)
is formed by all the shrinking cubes $Q_j(a)$ contained in~$\Omega$  
(respectively, that intersect~$\Omega$),
see Figure~\ref{Fig.Jordan}.

\begin{figure}[h!]
\begin{center}
\includegraphics[width=0.4\textwidth]{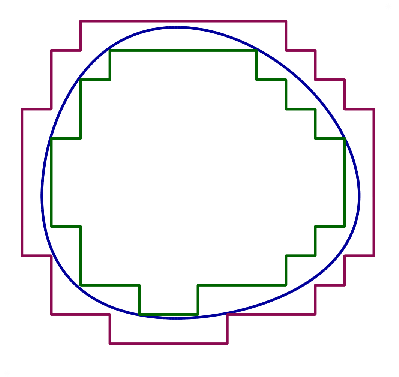}
\end{center}
\caption{Approximating~$\Omega$ (blue) by sets formed by cubes, 
from inside by~$\Omega_j^-$ (green) 
and from outside by~$\Omega_j^+$ (red).}
\label{Fig.Jordan}
\end{figure}

By the minimax principle (Dirichlet--Neumann bracketing), 
we have 
\begin{multline*}
  \lefteqn{\bigoplus_{a \in A_j^+} -\Delta_N^{Q_j(a)} =
  -\Delta_N^{\bigcup_{a \in A_j^+} Q_j(a)} \leq 
  -\Delta_N^{\Omega_j^+} \leq 
  -\Delta_D^{\Omega_j^+} \leq}
  \\
  -\Delta_D^\Omega
  \\
  \leq -\Delta_D^{\Omega_j^-}
  \leq  -\Delta_D^{\bigcup_{a \in A_j^-} Q_j(a)}
  = \bigoplus_{a \in A_j^-} -\Delta_D^{Q_j(a)}
  \,.
\end{multline*}  
Consequently, 
\begin{multline*}
  (\# A_j^+) \, N_N^{Q_j(a)}(\lambda) \geq
  \sum_{a \in A_j^+} N_N^{Q_j(a)}(\lambda) \geq
  \\
  N_D^\Omega(\lambda)
  \\
  \geq \sum_{a \in A_j^-} N_D^{Q_j(a)}(\lambda)
  = (\# A_j^-) \, N_D^{Q_j(a)}(\lambda)
  \,.
\end{multline*}  
Using the asymptotics~\eqref{Weyl.Dirichlet} and~\eqref{Weyl.Neumann} for cubes, 
it follows that 
$$
  (\# A_j^\pm) \, N_\iota^{Q_j(a)}(\lambda)
  \sim
  \frac{|B_1| \,  (\# A_j^\pm) \, |Q_j(a)| \, \lambda^{d/2} }{(2\pi)^d}
  = \frac{|B_1| \, |\Omega_j^\pm| \, \lambda^{d/2} }{(2\pi)^d}
$$
as $\lambda \to \infty$ for both $\iota \in \{D,N\}$.
Consequently,
$$
  \frac{|B_1| \, |\Omega_j^+| }{(2\pi)^d} \geq 
  \limsup_{\lambda\to\infty} \frac{N_D^\Omega(\lambda)}{\lambda^{d/2}} 
  \geq
  \liminf_{\lambda\to\infty} \frac{N_D^\Omega(\lambda)}{\lambda^{d/2}} 
  \geq \frac{|B_1| \, |\Omega_j^-| }{(2\pi)^d}
  \,.
$$
By definition, $\Omega$~is \emph{contented} (or \emph{Jordan-measurable}) 
if the boundary~$\partial\Omega$ has measure zero
(see~\cite[Sec.~3]{Spivak-calculus}). 
Then 
$$
  \lim_{j\to\infty} |\Omega_j^-| 
  = |\Omega|
  = \lim_{j\to\infty} |\Omega_j^+|
  \,,
$$ 
which concludes the proof of~\eqref{Weyl.law} for contented sets. 

\fbox{General sets}
For general sets, more refined methods are used
\cite{Birman-Solomyak_1970,Rozenblum_1972}.
\end{proof}

It is interesting to note that Weyl's law (Theorem~\ref{Thm.Weyl.law}) 
together with the Faber--Krahn inequality 
(Theorem~\ref{Thm.FK}, where it is additionally known that
the minimum is achieved only for the ball)
enables one to hear whether the drum is circular. 
 
\subsection{Can one hear the perimeter of a drum?}
Motivated by Theorem~\ref{Thm.Weyl.law}, 
maybe it is a good idea to look at the next terms 
in the asymptotics of the eigenvalues 
to reconstruct the geometry of the underlying domain?
Maybe the next term contains information about 
the perimeter of the domain?
This was exactly what Weyl thought in 1913~\cite{Weyl_1913}. 

\begin{conj}[Two-term Weyl's law, 1913~\cite{Weyl_1913}]\label{Conj.Weyl} 
Let $\Omega \subset \Real^d$ be a smooth bounded open set. Then
\begin{equation}\label{Weyl.conj}
  N_D^\Omega(\lambda)
  = \frac{|B_1|}{2\pi} \, |\Omega| \, \lambda^{d/2} 
  - \frac{1}{2\,(d-1)!\,|\Sphere^{d-1}|} 
  \, |\partial\Omega| \, \lambda^{(d-1)/2} 
  + o(\lambda^{(d-1)/2} )
\end{equation}
as $\lambda \to \infty$.
\end{conj}

The validity of this two-term Weyl's law remains a mystery. 
As already mentioned, this conjecture 
was raised by Weyl in 1913~\cite{Weyl_1913}.
In 1980, 
Ivrii~\cite{Ivrii_1980} and Melrose~\cite{Melrose_1980}
established the conjecture
under the additional assumption that 
the set of periodic billiard trajectories in~$\Omega$ has measure zero.
While this extra condition is conjectured to be satisfied for 
all Euclidean domains, it has been verified only for a few classes only,
namely convex analytic domains and polygons.
We refer to the 1997 book by Safarov and Vassiliev~\cite{Safarov-Vassiliev}
for a self-contained general approach to the problem
and further references. 
 
In summary, the answer to the question of the title 
of this subsection is NOT KNOWN.

\begin{OProblem}
Prove Conjecture~\ref{Conj.Weyl}
for all bounded smooth domains.
\end{OProblem}

The feature of Conjecture~\ref{Conj.Weyl} is to consider 
the asymptotics for individual eigenvalues. 
By ``averaging with respect to spectral parameter'',
Frank and Larson~\cite{Frank-Larson_2020,Frank-Larson_2025} 
have recently managed to 
establish an analogue of the two-term asymptotics
for Riesz means of the eigenvalues. 
 
\subsection{P\'olya's conjecture}
Assuming the validity of Conjecture~\ref{Conj.Weyl},
it immediately follows that  
$
  N_D^\Omega(\lambda)
  \leq \frac{|B_1|}{2\pi} \, |\Omega| \, \lambda^{d/2} 
$
for all sufficiently large~$\lambda$.
In 1954, P\'olya \cite{Polya_1954} conjectured that this inequality 
actually holds uniformly.   

\begin{conj}[P\'olya 1954 \cite{Polya_1954}]\label{Conj.Polya} 
Let $\Omega \subset \Real^d$ be a bounded open set. 
Then, for all $\lambda \geq 0$,
\begin{equation} 
  N_D^\Omega(\lambda)
  \leq \frac{|B_1|}{2\pi} \, |\Omega| \, \lambda^{d/2} 
  \,.
\end{equation}
\end{conj}

In 1961,  
P\'olya \cite{Polya_1961} resolved his own conjecture
under the additional assumption that~$\Omega$ is a tiling domain 
(\ie~a domain $\Omega \subset \Real^d$ such that~$\Real^d$ 
can be covered, up to a set of measure zero, 
by a disjoint union of copies of~$\Omega$, 
see Figure~\ref{Fig.honey}).
In 1966,
under the P\'olya's tiling-domain assumption,
Kellner~\cite{Kellner_1966} proved
an analogue of the conjecture for Neumann boundary conditions
(now a reverse inequality).
In 1997,
Laptev \cite{Laptev_1997} established the conjecture
for Cartesian products of domains
if it holds for one of the domains.

\begin{figure}[h!]
\begin{center}
\includegraphics[width=0.55\textwidth]{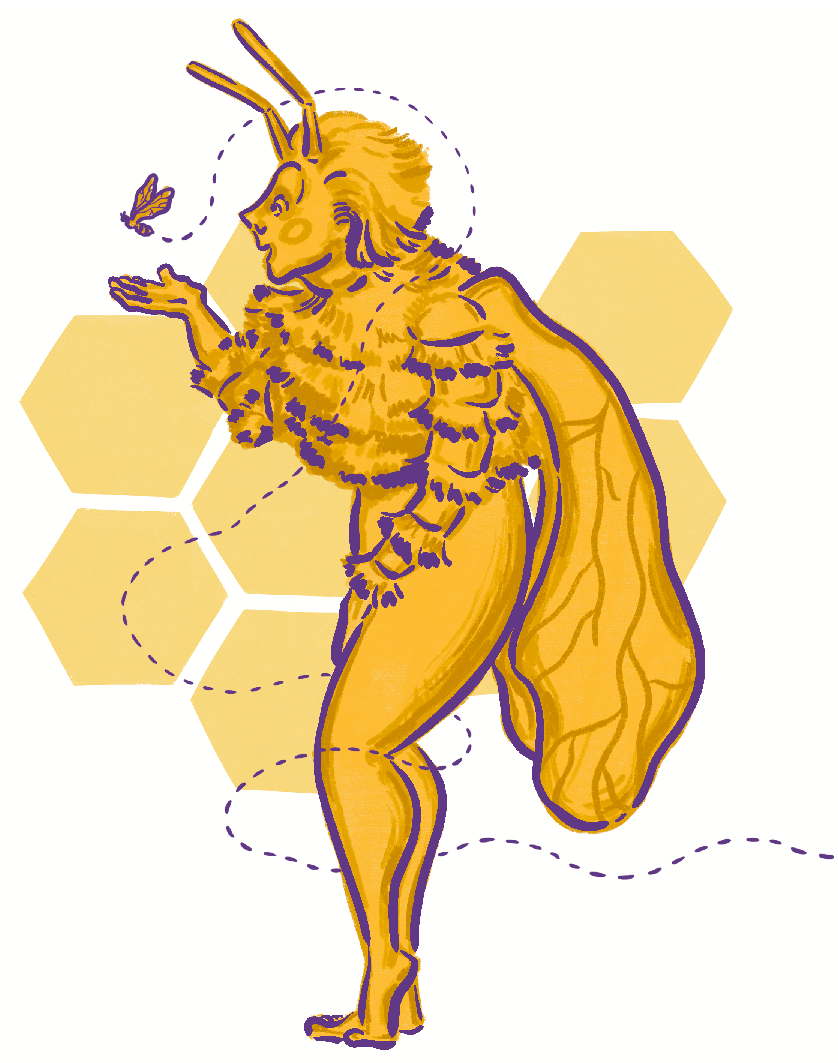}
\end{center}
\caption{As honeybees know very well, an example of a tiling domain
(for which P\'olya's result~\cite{Polya_1961} applies)
is a regular hexagon. 
(Courtesy of Ane\v{z}ka K\r{u}lov\'a.)}
\label{Fig.honey}
\end{figure}

As the most recent development,
in 2023,
Filonov, Levitin, Polterovich 
and Sher~\cite{Filonov-Levitin-Polterovich-Sher_2023}
proved the conjecture for balls
(including the Neumann case, which is computer-assisted).
The computer-assisted proof is also used for 
their more recent proof for Dirichlet (two-dimensional) 
annuli~\cite{Filonov-Levitin-Polterovich-Sher_2025}.
 
The inclusion of balls and annuli is interesting 
because they represent non-tiling domains.
In parallel, there are proofs for thin domains by P.~Freitas and I.~Salavessa
\cite{Freitas-Salavessa_2023} (see also~\cite{Freitas-Mao-Salavessa}),
which includes results for thin domains also valid on manifolds.
See also~\cite{He-Wang} for other thin domains.

\begin{OProblem}
Prove Conjecture~\ref{Conj.Polya} for all bounded  domains.
\end{OProblem}

\subsection{Can one hear the shape of a drum?}
Now let us come back to the initial question of this section
whether the geometry of the domain can be reconstructed   
from the Dirichlet eigenvalues. 
It turns out that the answer is NO

\begin{figure}[h!]
\begin{center}
\includegraphics[width=0.7\textwidth]{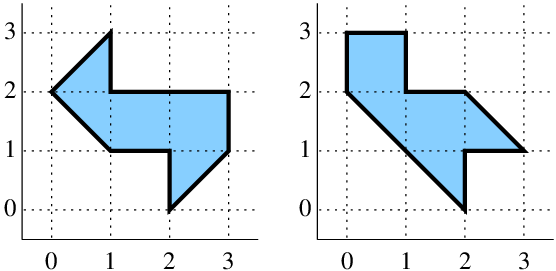}
\end{center}
\caption{Isospectral incongruent domains 
due to~\cite{Gordon-Webb-Wolpert_1992}. 
Note that the domains have the same area and perimeter,
so they are indistinguishable on the level of 
the two-term Weyl's law~\eqref{Weyl.conj}.}
\label{Fig.iso}
\end{figure}

A brief history of this discovery is as follows. 
In 1985,
Sunada~\cite{Sunada_1985} introduced a fundamental 
method for constructing isospectral Riemannian manifolds.
In 1992,
Gordon Webb and Wolpert~\cite{Gordon-Webb-Wolpert_1992}
constructed isospectral incongruent Euclidean domains, 
see Figure~\ref{Fig.iso}.
To do so they employed a non-trivial application of Sunada's method:
Their drums consist of seven congruent triangles. The symmetry behind the
construction method dictates how the triangles ought to be glued together to
provide the isospectral drums. It also yields a map (alias transplantation)
between eigenfunctions of one drum to eigenfunctions of the other sharing
the same eigenvalue.
In 2009,
Band, Parzanchevski and Ben-Shach~\cite{Band-Parzanchevski-Ben-Shach_2009}
presented new isospectral examples
(including graphs, see also~\cite{Band-Berkolaiko-Joyner-Liu}).

\section{Quasi-cylindrical domains or Waveguides}
%
Finally, let us consider quasi-cylindrical domains.
Spectral analysis of this type of domains is typically the most complicated.
The only general result is that there is always 
some essential spectrum
(see Theorem~\ref{Thm.cylindrical} below),
but there might also be some discrete eigenvalues. 

Because of the geometric complexity of quasi-cylindrical domains,
we restrict ourselves to 
Dirichlet boundary conditions
and to a special geometric subclass: 
\textbf{tubes}, 
see Figure~\ref{f-tube}.
Our motivation is twofold.
First, the tubular geometry is rich enough to demonstrate
the complexity of quasi-cylindrical domains.
Second, the Dirichlet Laplacian in tubes is
a reasonable model for the Hamiltonian
in quantum-waveguide nanostructures.

\begin{figure}[h]
\begin{center}
\includegraphics[width=0.9\textwidth]{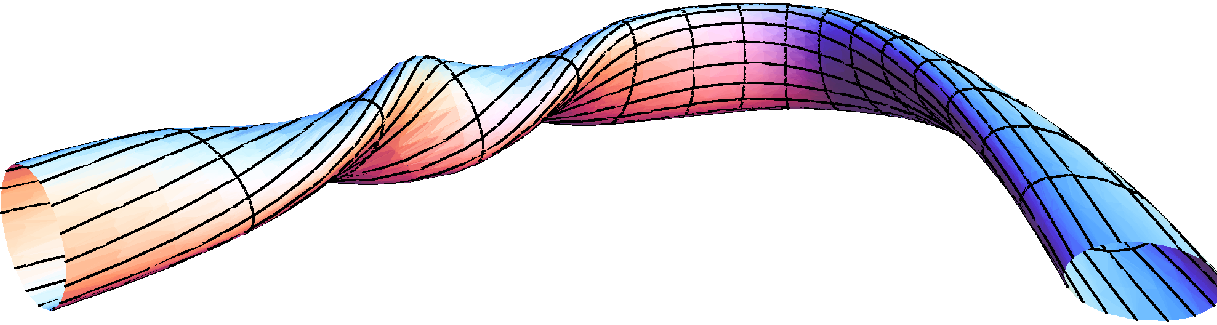}
\caption{An example of a tube of elliptical cross-section.
The geometric deformations of twisting and bending 
are demonstrated on the left and right
part of the picture, respectively.}\label{f-tube}
\end{center}
\end{figure}

\subsection{There is always some essential spectrum}
Before considering the special geometric setting of tubes,
let us establish a very general result, 
which is not even restricted to quasi-cylindrical domains.

\begin{theo}[General location of the essential spectrum]
\label{Thm.cylindrical}
Let $\Omega \subset \Real^d$ be an arbitrary non-empty open set.
Set
\begin{equation*}
  R_\mathrm{max} :=
  \sup\big\{
  R \, : \ \mbox{$\Omega \supset$ infinite sequence 
  of disjoint balls of radius~$R$}
  \big\}
\end{equation*}
(by convention, we set $R_\mathrm{max} := 0$ if there is no such a sequence.)
There exists a dimensional constant~$c_d$ such that
\begin{equation}\label{ess.location}
  \inf\sigma_\mathrm{ess}(-\Delta_D^\Omega)
  \leq \frac{c_d}{R_\mathrm{max}^2}
\end{equation}
(by convention, we interpret the right-hand side as~$+\infty$ or~$0$
if $R_\mathrm{max} := 0$ or $R_\mathrm{max} := +\infty$, respectively).
\end{theo}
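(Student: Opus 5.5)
The plan is to use the minimax principle (Theorem~\ref{minimax}) with an infinite-dimensional family of trial functions whose supports are pairwise disjoint. The trivial cases come first. If $R_\mathrm{max}=0$ there is nothing to prove, since the right-hand side of~\eqref{ess.location} is $+\infty$. If $R_\mathrm{max}=+\infty$, then for every $R>0$ the set~$\Omega$ contains infinitely many disjoint balls of radius~$R$, and the argument below gives $\inf\sigma_\mathrm{ess}(-\Delta_D^\Omega)\leq c_d/R^2$ for every~$R$, hence the bound~$0$. So it suffices to prove: if $\Omega$ contains an infinite sequence of pairwise disjoint balls $B_R(x_j)$, $j\in\Nat^*$, then $\inf\sigma_\mathrm{ess}(-\Delta_D^\Omega)\leq c_d/R^2$. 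Afterwards we take the supremum over admissible~$R<R_\mathrm{max}$ (or $R$ arbitrarily close to $R_\mathrm{max}$, since the supremum need not be attained).

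For such a sequence, let $\phi$ be the positive normalised ground state of $-\Delta_D^{B_1(0)}$, so that $\int|\nabla\phi|^2=\lambda_1^D(B_1)$. I set $c_d:=\lambda_1^D(B_1)$, the first Dirichlet eigenvalue of the unit ball in~$\Real^d$. Define $\phi_j(x):=R^{-d/2}\phi((x-x_j)/R)$, extended by zero; this lies in $W_0^{1,2}(B_R(x_j))\subset W_0^{1,2}(\Omega)$ by the trivial extension~\eqref{extension}. By the scaling computation used in the proof of Theorem~\ref{Thm.conical}, $\|\phi_j\|=1$ and $\|\nabla\phi_j\|^2=c_d/R^2$. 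The supports are disjoint, so the $\phi_j$ are mutually orthogonal both in $\sii(\Omega)$ and in the form sense. Hence, for any $n$ and $\mathscr{L}_n:=\obal\{\phi_1,\dots,\phi_n\}$, every $\psi=\sum c_j\phi_j\in\mathscr{L}_n$ satisfies $\|\nabla\psi\|^2=\sum|c_j|^2c_d/R^2=(c_d/R^2)\|\psi\|^2$. By~\eqref{infsup.ess} this gives $\lambda_n^D(\Omega)\leq c_d/R^2$ for all~$n$, and letting $n\to\infty$, item~(1) of Theorem~\ref{minimax} yields $\inf\sigma_\mathrm{ess}(-\Delta_D^\Omega)=\lambda_\infty\leq c_d/R^2$.

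One technical point needs care. If $\lambda_\infty<+\infty$, Theorem~\ref{minimax} asserts that $\lambda_\infty$ is the bottom of the essential spectrum, so no separate Weyl-sequence argument is needed; the infinite-dimensional subspace of trial functions does all the work. Beyond that, the only mild obstacle is this limiting argument in $R$. Choose $R_k\uparrow R_\mathrm{max}$ (or $R_k\to\infty$) with infinitely many disjoint balls of radius $R_k$ in~$\Omega$. Then $\inf\sigma_\mathrm{ess}\leq c_d/R_k^2$ for every $k$, and passing to the limit gives~\eqref{ess.location}. I expect no further difficulty, since the whole proof reduces to the monotonicity and scaling of Dirichlet eigenvalues already established.
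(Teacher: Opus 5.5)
Your proposal is correct and follows essentially the same route as the paper: you place disjointly supported, rescaled Dirichlet ground states into arbitrarily high-dimensional subspaces in the minimax principle, which gives $\lambda_n^D(\Omega)\leq c_d/R^2$ for all $n$ and hence the bound on $\lambda_\infty=\inf\sigma_\mathrm{ess}$. The only difference is that the paper uses ground states of cubes inscribed in the balls, which are explicit by separation of variables and give $c_d=(\pi d/2)^2$, whereas you use the ball's own ground state, which gives the sharper but only implicitly known constant $c_d=\lambda_1^D(B_1)$.
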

\begin{proof}
If $R_\mathrm{max} = 0$, the right-hand side of~\eqref{ess.location}
can be interpreted as~$+\infty$ and there is nothing to be proved.
Let us therefore assume $R_\mathrm{max} > 0$.
Let $\{x_n\}_{n\in\Nat^*} \subset \Omega$
be a set of points such that
$\{B_R(x_n)\}_{n\in\Nat^*} \subset \Omega$
is the set of mutually disjoint balls
for all $R \in (0,R_\mathrm{max})$.
Then there also exists a sequence of cubes $\{Q_a(x_n)\}_{n\in\Nat^*}$
such that $Q_a(x_n) \subset B_R(x_n)$;
in fact, choosing the inscribed cubes, we have the relation $R^2 = d a^2$.
The idea is to construct a non-compact sequence
supported on the disjoint cubes.
Let~$\psi$ be the first eigenfunction of $-\Delta_D^{Q_a(0)}$,
normalised to~$1$ in $\sii(Q_a(0))$,
and recall (\cf~\eqref{evs.piped})
that the corresponding eigenvalue is given by
$$
  \lambda_1^D(Q_a(0)) = d \left(\frac{\pi}{2a}\right)^2
  = \left(\frac{\pi d}{2 R}\right)^2 =: \frac{c_d}{R^2}
  \,.
$$
For all $n \in \Nat^*$, we set
$$
  \psi_n(x) := \psi(x-x_n)
$$
(the first eigenfunction of $-\Delta_D^{Q_a(x_n)}$)
and extend it by zero to the whole~$\Omega$.
Then the functions $\psi_n$'s are mutually orthonormal in $\sii(\Omega)$
and satisfy $\|\nabla\psi_n\|_{\sii(\Omega)}^2 = c_d/R^2$.
Hence, choosing the $n$-dimensional subspace
$\mathscr{L}_n := \obal\{\psi_1,\dots,\psi_n\}$
in the minimax principle (Theorem~\ref{minimax}), we get
\begin{equation}\label{cylindrical.upper}
  \lambda_n^D(\Omega) \leq \frac{c_d}{R^2}
\end{equation}
for \emph{all} $n\in\Nat^*$.
Consequently, by Theorem~\ref{minimax},
$$
  \inf\sigma_\mathrm{ess}(-\Delta_D^\Omega)
  = \lim_{n\to\infty} \lambda_n^D(\Omega)
  \leq \frac{c_d}{R^2}
  \,.
$$
Since the argument holds for all $R \in (0,R_\mathrm{max})$,
we conclude with the stated inequality.
\end{proof}

As a consequence of Theorem~\ref{Thm.cylindrical}, 
we get the following implications:

\begin{center}
\begin{tabular}{clcr}
  & $\Omega$ is quasi-conical
  & $\Longrightarrow$ 
  & $\inf\sigma_\mathrm{ess}(-\Delta_D^\Omega) = 0$\,,
  \\
  & $\Omega$ is quasi-cylindrical
  & $\Longrightarrow$
  & $\sigma_\mathrm{ess}(-\Delta_D^\Omega) \not=\varnothing$\,,
  \\
  & $\Omega$ is quasi-bounded
  & $\Longleftarrow$ 
  & $\sigma_\mathrm{ess}(-\Delta_D^\Omega)=\varnothing$\,.
\end{tabular}
\end{center}

The first implication (in fact, much more)
has been established previously, 
see Corollary~\ref{Corol.positivity}. 
The last implication says that the quasi-boundedness 
is a \emph{necessary} condition for the discreteness of the spectrum
of the Dirichlet Laplacian
(by Theorem~\ref{Thm.bounded}, the boundedness is a sufficient condition).
It is the middle implication which is of interest for us
as regards quasi-cylindrical domains.
Let us highlight it as a corollary.

\begin{coro}
Let $\Omega \subset \Real^d$ be any quasi-cylindrical open set. 
Then 
$$
  \sigma_\mathrm{ess}(-\Delta_D^\Omega) \not=\varnothing \,.
$$
\end{coro}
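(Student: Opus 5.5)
The plan is to read the corollary off directly from Theorem~\ref{Thm.cylindrical}, the only work being to check that a quasi-cylindrical set has $R_\mathrm{max}>0$. By the Glazman classification, a quasi-cylindrical set $\Omega$ contains infinitely many pairwise disjoint congruent balls, say $B_{r}(x_n)$, $n\in\Nat^*$, with a common radius $r>0$. Hence the set in the definition of $R_\mathrm{max}$ contains $r$, so $R_\mathrm{max}\geq r>0$. Moreover, since $\Omega$ is not quasi-conical, it contains no arbitrarily large balls, so $R_\mathrm{max}<\infty$; this finiteness is not actually needed, because if $R_\mathrm{max}$ were infinite the right-hand side of~\eqref{ess.location} would be~$0$ and the conclusion would only be stronger.

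Applying~\eqref{ess.location}, we obtain $\inf\sigma_\mathrm{ess}(-\Delta_D^\Omega)\leq c_d/R_\mathrm{max}^2<\infty$. The convention in Theorem~\ref{minimax} says that $\sigma_\mathrm{ess}(H)=\varnothing$ exactly when $\lambda_\infty=\lim_{n\to\infty}\lambda_n=+\infty$. Since we have a finite upper bound on $\lambda_\infty$, the essential spectrum is non-empty.

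For a self-contained route, one can skip the supremum and use the concrete sequence from the proof of Theorem~\ref{Thm.cylindrical}. Inscribe cubes $Q_a(x_n)\subset B_r(x_n)$ with $r^2=da^2$ and translate the normalised Dirichlet ground state of the cube into each of them. This gives an orthonormal family with $\|\nabla\psi_n\|^2=c_d/r^2$, so $\lambda_n^D(\Omega)\leq c_d/r^2$ for every $n$. The limit $\lambda_\infty$ is therefore finite, and Theorem~\ref{minimax} again yields $\sigma_\mathrm{ess}\neq\varnothing$.

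There is no real obstacle here. The only point to state carefully is that the minimax limit $\lambda_\infty$ being finite is exactly what non-emptiness of the essential spectrum means in Theorem~\ref{minimax}.
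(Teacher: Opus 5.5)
Your proposal is correct and matches the paper: the corollary is read off from Theorem~\ref{Thm.cylindrical}. A quasi-cylindrical set has $R_\mathrm{max}>0$, so $\inf\sigma_\mathrm{ess}(-\Delta_D^\Omega)\leq c_d/R_\mathrm{max}^2<\infty$, and the essential spectrum is therefore non-empty by Theorem~\ref{minimax}. Your self-contained second route is just the proof of Theorem~\ref{Thm.cylindrical} run with the fixed radius $r$, and it is equally valid.
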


\subsection{Straight tubes}
The special class of quasi-cylindrical domains we shall consider
are obtained as a ``local'' perturbation of the \emph{straight tube}
\begin{equation}\label{straight}
  \Omega_0 := \Real \times \omega
  \,,
\end{equation}
where $\omega \subset \Real^{d-1}$ is an arbitrary bounded domain
(the \emph{cross-section} of a waveguide modelled by~$\Omega_0$),
see Figure~\ref{Fig.straight}.

\begin{figure}[h]
\begin{center}
\includegraphics[width=0.4\textwidth]{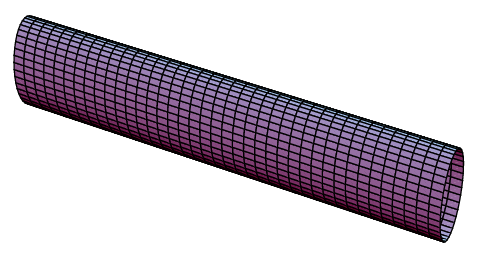}
\caption{A straight tube.}\label{Fig.straight}
\end{center}
\end{figure}

Since~$\Omega_0$ is a Cartesian product of two domains,
it is easily seen that 
\begin{equation}\label{separation}
  -\Delta_D^{\Omega_0} 
  \cong -\Delta_D^{\Real} \otimes I_\omega
  + I_{\Real} \otimes -\Delta_D^{\omega}
  \qquad \mbox{in} \qquad
  \sii(\Omega_0) \cong \sii(\Real) \times \sii(\omega)
  \,,
\end{equation}
where $I_{\Real}$ and $I_{\omega}$ denote the identity operators
on $\sii(\Real)$ and $\sii(\omega)$, respectively,
and~$\otimes$ denotes the closed tensor product~\cite[Sec.~8.2]{ABG}. 
This is the precise statement of the ``separation of variables'' in $\Omega_0$. 

Since the real axis~$\Real$ is a quasi-conical domain,
its Dirichlet spectrum is purely essential 
(see Corollary~\ref{Corol.positivity})
$$
  \sigma(-\Delta_D^{\Real})
  = \sigma_\mathrm{ess}(-\Delta_D^{\Real})
  = [0,\infty)
  \,.
$$

On the other hand, since~$\omega$ is bounded, 
its Dirichlet spectrum is purely discrete (see Theorem~\ref{Thm.bounded})
$$
  \sigma(-\Delta_D^{\omega})
  = \sigma_\mathrm{disc}(-\Delta_D^{\omega})
  =: \{E_1 < E_2 \leq E_3 \leq \dots \to +\infty \} 
  \,.
$$
Let~$\mathcal{J}_n$ denote the eigenfunction of $-\Delta_D^\omega$
corresponding to~$E_n$. We choose the eigenfunctions normalised to one 
in $\sii(\omega)$ and~$\mathcal{J}_1$ positive.

Since the spectrum of the operator
on the right-hand side of~\eqref{separation} is obtained 
as the sum of the individual spectra, it follows that 
the spectrum of $-\Delta_D^{\Omega_0}$ coincides with
the semi-axis $[E_1,\infty)$.
In particular, it is purely essential 
(see Figure~\ref{Fig.straight.spec}): 
\begin{equation}\label{spec.straight}
  \sigma(-\Delta_D^{\Omega_0}) 
  = \sigma_\mathrm{ess}(-\Delta_D^{\Omega_0}) 
  = [E_1,\infty) .
\end{equation}

Notice that $E_1>0$ (otherwise $\int_\omega |\nabla\mathcal{J}_1|^2 = 0$,
which would imply that $\mathcal{J}_1=\const$ almost everywhere in~$\omega$,
and the constant would have to be equal to zero due to the Dirichlet
boundary conditions).
Hence, the structure of the spectrum~\eqref{spec.straight} suggests
that we deal with a reasonable model 
for a semiconductor waveguide nanostructure
(the ionisation energy~$E_1$ is strictly positive).
We shall frequently use the following Poincar\'e inequality
\begin{equation}\label{Poincare}
  \forall \mathcal{J} \in W_0^{1,2}(\omega)
  \,, \qquad
  \|\nabla\mathcal{J}\|_{\sii(\omega)}^2 
  \geq E_1 \, \|\mathcal{J}\|_{\sii(\omega)}^2
  \,,
\end{equation}
which follows from the variational definition of~$E_1$
(\cf~Theorem~\ref{minimax}).

\begin{figure}[h!t]
\begin{center}
\includegraphics[width=0.4\textwidth]{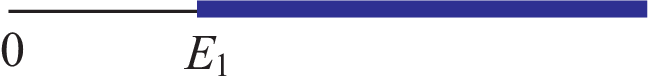}
\caption{Spectrum of a straight tube.}\label{Fig.straight.spec}
\end{center}
\end{figure}

Despite the fact that the tubes we consider are three-dimensional,
they are \emph{quasi-one-dimensional} in the sense that
there is only one ``infinite direction''.   
For straight tubes, 
using the criticality of~$\Real$ 
(\cf~Theorem~\ref{Thm.critical}),
we get the following criticality of $\Real\times\omega$. 

\begin{prop}[Criticality of straight tubes]\label{Prop.critical}
Let $V \in L^\infty(\Omega_0)$ be such that 
$V \stackrel[\not=]{}{\leq} 0$.
Then 
$$
  \inf\sigma(-\Delta_D^{\Omega_0} + V) < E_1
  \,. 
$$

\end{prop}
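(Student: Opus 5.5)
The plan is to use the variational characterisation~\eqref{Rayleigh} and to exhibit a trial function $\psi \in W_0^{1,2}(\Omega_0)$ with
$$
  Q[\psi] := \int_{\Omega_0} |\nabla\psi|^2 + \int_{\Omega_0} V|\psi|^2 - E_1 \int_{\Omega_0}|\psi|^2 < 0
  \,.
$$
Since $V \in L^\infty$, the form domain of $-\Delta_D^{\Omega_0}+V$ is still $W_0^{1,2}(\Omega_0)$. Therefore $Q[\psi]<0$ immediately gives $\inf\sigma(-\Delta_D^{\Omega_0}+V) < E_1$.

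The key step is to separate variables and mimic the proof of Theorem~\ref{Thm.virtual} in the case $d=1$. I take the product ansatz
$$
  \psi_n(x_1,x') := \varphi_n(x_1)\,\mathcal{J}_1(x')
  \,,
$$
where $\varphi_n(x_1) := \varphi(x_1/n)$ is the cut-off used in that proof, namely $0\le\varphi\le1$, $\varphi=1$ on $[-1,1]$ and $\varphi=0$ outside $[-2,2]$. Then $\psi_n \in W_0^{1,2}(\Omega_0)$.

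By Fubini and the identity $\|\nabla'\mathcal{J}_1\|^2_{\sii(\omega)} = E_1$ (valid for the normalised eigenfunction), the transverse gradient term cancels exactly against the $E_1$ term. This leaves
$$
  Q[\psi_n] = \|\varphi_n'\|_{\sii(\Real)}^2 + \int_{\Omega_0} V(x)\,\varphi_n(x_1)^2\,\mathcal{J}_1(x')^2\,\der x
  \,.
$$
The first term equals $n^{-1}\|\varphi'\|^2 \to 0$, exactly as in Theorem~\ref{Thm.virtual}.

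For the second term, $V\varphi_n^2\mathcal{J}_1^2$ is non-positive and decreases monotonically to $V\mathcal{J}_1^2$. The monotone convergence theorem therefore gives
$$
  \int_{\Omega_0} V \varphi_n^2 \mathcal{J}_1^2 \;\longrightarrow\; \int_{\Omega_0} V\,\mathcal{J}_1^2 \in [-\infty,0]
  \,.
$$
The limit may be $-\infty$, since $V$ need not be integrable.

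The step I expect to need care is showing that this limit is strictly negative. Here I use $V\le0$ with $V\not=0$, which means $V<0$ on a set of positive measure. I combine this with the positivity of $\mathcal{J}_1$ in $\omega$ given by Theorem~\ref{Thm.positive}, since $E_1$ is the discrete ground state of $-\Delta_D^\omega$ on the bounded domain $\omega$. Together these give $\int V\mathcal{J}_1^2<0$.

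Hence $Q[\psi_n]<0$ for all sufficiently large $n$, which proves the strict inequality. Normalisation is irrelevant here, since only the sign of $Q$ matters in the Rayleigh quotient.
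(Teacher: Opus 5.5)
Your proof is correct and follows the paper's argument: the same product trial function $\varphi_n(x_1)\mathcal{J}_1(x')$, exact cancellation of the transverse energy against $E_1$, vanishing longitudinal energy, and monotone convergence for the potential term. You also make explicit the positivity of $\mathcal{J}_1$ needed for the limit to be strictly negative, which the paper leaves implicit.

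One small point concerns the claim that $V\varphi_n^2\mathcal{J}_1^2$ decreases monotonically. This needs $\varphi(x_1/n)$ to be increasing in $n$, which holds only if $\varphi$ is non-increasing on $[0,\infty)$ and non-decreasing on $(-\infty,0]$; the paper's piecewise-linear $\varphi_n$ from \eqref{trial} has this automatically, but your stated properties of $\varphi$ do not guarantee it. Either add that requirement, or apply Fatou's lemma to $-V\varphi_n^2\mathcal{J}_1^2\ge 0$ to get $\limsup_n \int V\varphi_n^2\mathcal{J}_1^2 \le \int V\mathcal{J}_1^2<0$.
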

\begin{proof}
The proof is based on the variational characterisation
$$
  \inf\sigma(-\Delta_D^{\Omega_0} + V - E_1) = 
  \inf_{\stackrel[\psi\not=0]{}{\psi \in W_0^{1,2}(\Omega_0)}} 
  \frac{Q[\psi]}{\|\psi\|^2}
  \,,
$$
where 
$
  \|\psi\|^2 = \int_{\Omega_0} 
  |\psi(s,t)|^2 \, \der s \, \der t
$ 
and
$$
  Q[\psi] := \int_{\Omega_0} |\nabla\psi|^2 
  + \int_{\Omega_0} V(s,t) \, |\psi(s,t)|^2 \, \der s \, \der t
  - E_1 \int_{\Omega_0} |\psi(s,t)|^2 \, \der s \, \der t
  \,.
$$
Hence it is enough to find a function $\psi \in W_0^{1,2}(\Omega_0)$
such that $Q[\psi] < 0$.
The main idea is that this can be achieved 
for a trial function built from $(s,t)\mapsto\mathcal{J}_1(t)$,
a generalised eigenfunction of $-\Delta_D^{\Omega_0}$ 
corresponding to~$E_1$.
More specifically, let
$\psi_n(s,t):=\varphi_n(s) \, \mathcal{J}_1(t)$,
where (recall Figure~\ref{Fig.1D})
\begin{equation}\label{trial}
  \varphi_n(s) :=
  \begin{cases}
    1 & \mbox{if} \quad |s| \leq n \,,
    \\
    0 & \mbox{if} \quad |s| \geq 2n \,,
    \\
    \displaystyle
    \frac{2n-|s|}{n} & \mbox{otherwise} \,. 
  \end{cases}
\end{equation}
Note that 
$\varphi_n \to 1$ pointwise as $n \to \infty$
and $\|\varphi_n'\|_{\sii(\Real)}^2 = 2/n$.
The latter makes the ``longitudinal energy'' 
disappear as $n \to \infty$:
$$
  \int_{\Omega_0} |\partial_s\psi_n(s,t)|^2 
  \, \der s \, \der t 
  = \|\varphi_n'\|_{\sii(\Real)}^2
  \xrightarrow[n \to \infty]{} 0
  \,.
$$
On the other hand, the ``transversal energy'' 
is compensated by~$E_1$:
$$
  \int_{\Omega_0} |\nabla_{\!t}\psi_n(s,t)|^2 \, \der s \, \der t
  - E_1 \int_{\Omega_0} |\psi_n(s,t)|^2 \, \der s \, \der t
  =  0
  \,.
$$
It remains to notice that 
$$
  \int_{\Omega_0} V(s,t) \, |\psi_n(s,t)|^2 \, \der s \, \der t
  \xrightarrow[n \to \infty]{}  
  \int_{\Omega_0} V(s,t) \, |\mathcal{J}_1(t)|^2
  \, \der s \, \der t  < 0
$$
by the monotone convergence theorem 
(the limit can be $-\infty$).
\end{proof}

Of course, the most interesting situation is when~$V$
\emph{vanishes at infinity} in the sense that 
$$
  \lim_{s_0 \to \infty} 
  \esssup_{|s| > s_0,\, t\in\omega} |V(s,t)| = 0
  \,.
$$
In this case,
$$
 \sigma_\mathrm{ess}(-\Delta_D^{\Omega_0} + V) 
 = [E_1,\infty)
 \,,
$$
so the lowest point in the spectrum of 
$-\Delta_D^{\Omega_0} + V$
corresponds to a discrete eigenvalue.

\subsection{Curved tubes}
The straight tube~$\Omega_0$ can be considered as built
by translating the cross-section $\omega \subset \Real^{d-1}$ 
along the straight line $\{(s,0,\dots,0) :s\in\Real\}$ in~$\Real^d$.
A curved tube~$\Omega$ is obtained
by translating~$\omega$ along 
a general curve~$\Gamma$ in~$\Real^d$
with respect to an arbitrary moving frame. 
For simplicity, 
and also because of the physical motivation we have in mind,
let us restrict to $d=3$.

To rigorously implement this idea of definition,
let $\Gamma:\Real\to\Real^3$ be a 
$C^2$-smooth curve
which is (without loss of generality) 
parameterised by its arc-length 
(\ie~$|\Gamma'(s)|=1$ for all $s \in \Real$).
By the regularity hypothesis, 
the tangent vector field $T := \Gamma'$ is $C^1$-smooth
and the \emph{curvature} $\kappa := |\Gamma''|$ is continuous.
Let $(T,N_1,N_2)$ be a \emph{relatively parallel adapted frame} 
of~$\Gamma$ obeying the equations
(\cf~\cite{Bishop_1975,KSed,KZ3})
\begin{equation}\label{frame.parallel} 
\begin{pmatrix}
    T \\
    N_1 \\
    N_2
\end{pmatrix}'
=
\begin{pmatrix}
    0 & k_1 & k_2 \\
   -k_1 &  0 & 0 \\
    -k_2  & 0 & 0 
\end{pmatrix}
\begin{pmatrix}
T \\
    N_1 \\
    N_2
\end{pmatrix}
,
\end{equation}
where $k_1,k_2$ are continuous functions satisfying 
\begin{equation}\label{curvature} 
  k_1^2+k_2^2 = \kappa^2 \,.
\end{equation}

The general moving frame $(T,N_1^\theta,N_2^\theta)$
is then introduced by rotating the relatively parallel adapted frame
$(T,N_1,N_2)$ with respect to a given 
$C^1$-smooth rotation function 
$\theta:\Real\to\Real$:
$$
\begin{pmatrix}
    N_1^\theta \\
    N_2^\theta
\end{pmatrix}
=
\begin{pmatrix}
    \cos\theta & -\sin\theta  \\
    \sin\theta & \cos\theta
\end{pmatrix}
\begin{pmatrix}
    N_1 \\
    N_2
\end{pmatrix}
\,.
$$   
Using~\eqref{frame.parallel}, it is straightforward to check
that the new frame evolves along the curve via 
\begin{equation}\label{frame.general} 
\begin{pmatrix}
    T \\
    N_1^\theta \\
    N_2^\theta
\end{pmatrix}'
=
\begin{pmatrix}
    0 & k_1^\theta & k_2^\theta \\
   -k_1^\theta &  0 & -\theta' \\
    -k_2^\theta  & \theta' & 0 
\end{pmatrix}
\begin{pmatrix}
    T \\
    N_1^\theta \\
    N_2^\theta
\end{pmatrix}
,
\end{equation}
where 
$$
\begin{pmatrix}
    k_1^\theta \\
    k_2^\theta
\end{pmatrix}
=
\begin{pmatrix}
    \cos\theta & -\sin\theta  \\
    \sin\theta & \cos\theta
\end{pmatrix}
\begin{pmatrix}
    k_1 \\
    k_2
\end{pmatrix}
\,.
$$
The familiar Frenet frame corresponds to the special choice
$\theta'=-\tau$ (torsion)
together with $k_1=\kappa\cos\theta$ and $k_2=-\kappa\sin\theta$. 

With these preliminaries,
the general \emph{tube}~$\Omega$ is defined by 
\begin{equation}\label{tube}
  \Omega := 
  \Big\{
  \underbrace{
  \Gamma(s) + t_1 \, N_1^\theta(s) + t_2 \, N_2^\theta(s)
  }_{\mathscr{L}(s,t)}
  : \, (s,t) \in \Omega_0
  \Big\}
  \,.
\end{equation}
In this way, $\Omega$ can obviously be understood 
as a deformation of the straight tube~$\Omega_0$
(corresponding to $\Gamma(s) := (s,0,0)$).

It is clear from the equations of motion of the general 
moving frame~\eqref{frame.general} that there are 
two independent geometric effects in curved tubes:
\begin{itemize}
\item
$\Omega$ is \emph{bent}
\ $:\Longleftrightarrow$ \
$\kappa \not= 0$ (\ie~$\Gamma$ is not a straight line).
\item
$\Omega$ is 
\emph{twisted}
\ $:\Longleftrightarrow$ \
$\theta' \not= 0$ 
(\ie~$\omega$ is not translated parallelly along~$\Gamma$).
\end{itemize}
The definition of bending is visually clear. 
On the other hand, the definition of twisting may be confusing.
Indeed, even if $\theta'\not=0$ (and $\kappa=0$), 
it is possible that~$\Omega$ is congruent 
to the straight tube $\Omega_0$. 
For this reason, to have a non-trivially twisted tube,
it is also important to assume that the cross-section~$\omega$ 
is not circular.
By~$\omega$ being \emph{circular} we mean that
it is a disk or annulus centred at the origin of 
$\Real^2 \supset \omega$. 
(As usual, we identify open sets which differ 
by a set of capacity zero, 
in particular a disk with 
a countable number of points removed is also circular,
\cf~Remark~\ref{Rem.Hardy}.) 
Examples of purely bent and purely twisted tubes 
can be seen in Figures~\ref{Fig.bend} and~\ref{Fig.twist}, respectively.
Of course, $\Omega$~can be simultaneously bent and twisted,
as in Figure~\ref{f-tube}. 

\subsection{The tube as a Riemannian manifold}
Our strategy to deal with the curved geometry of the tube~$\Omega$
is to use the identification
\begin{equation}\label{identification}
  \Omega \cong (\Omega_0,G)
  \,,
  \qquad \mbox{where} \qquad
  G := (\nabla\mathscr{L}) \cdot (\nabla\mathscr{L})^\top
\end{equation}
is the metric tensor 
induced by the mapping 
$
  \mathscr{L}: \Omega_0 \to \Real^3
$
from~\eqref{tube}.  
Here the dot ``$\cdot$'' denotes the matrix multiplication.

In other words, we parameterise~$\Omega$ globally
by means of the ``coordinates'' $(s,t)$ of~\eqref{tube}.
To this aim, we need to impose natural restrictions
in order to ensure that
$\mathscr{L}:\Omega_0 \to \Omega$ is a diffeomorphism
(and thus~$\Omega$ an embedded submanifold of~$\Real^3$). 

Using~\eqref{frame.general}, we find
\begin{equation}\label{metric}
  G =
  \begin{pmatrix}
    f^2+f_1^2+f_2^2  & f_1 & f_2 \\
    f_1 & 1 & 0 \\
    f_2 & 0 & 1 \\
  \end{pmatrix}
  , \qquad
  \begin{aligned}
  f(s,t)
  &:= 1 - t_1 \, k_1^\theta(s) - t_2 \, k_2^\theta(s) \,,
  \\
  f_1(s,t)
  &:= t_2 \, \theta'(s) \,,
  \\
  f_2(s,t)
  &:= -t_1 \, \theta'(s) \,.
\end{aligned}
\end{equation}
Consequently,
$$
  |G| := \det(G) = f^2 \,.
$$
By virtue of the inverse function theorem,
the mapping~$\mathscr{L}$ induces a \emph{local} $C^1$-diffeomorphism
provided that the Jacobian~$f$ does not vanish on $\Omega_0$.
One has the uniform bounds
\begin{equation}\label{1<G<1}
  0 <
  1 - a \, \|\kappa\|_\infty
  \ \leq \ f(s,t) \ \leq \
  1 + a \, \|\kappa\|_\infty
  < \infty
\end{equation}
valid for every $(s,t) \in \Omega_0$,
where $\|\cdot\|_\infty := \|\cdot\|_{L^\infty(\Real)}$.
Consequently,
the positivity of~$f$ is guaranteed by the hypothesis
\begin{equation}\label{Ass.basic1}
  \kappa \in L^\infty(\Real)
  \qquad\mbox{and}\qquad
  a \, \|\kappa\|_\infty < 1
  \,,
\end{equation}
where 
\begin{equation}\label{quantity.a}
  a := \sup_{t\in\omega} |t|
  \,.
\end{equation}
Condition~\eqref{Ass.basic1} is a natural one 
to avoid ``local self-intersections'', 
see Figure~\ref{Fig.self} for a two-dimensional analogue.
 
To guarantee that $\Omega$~is an embedded submanifold of~$\Real^3$,
we have to ensure that $\mathscr{L}:\Omega_0 \to \Omega$
is a \emph{global} diffeomorphism.
This is the case if,
in addition to~\eqref{Ass.basic1}, 
we \emph{ad hoc} assume that
\begin{equation}\label{Ass.basic2}
  \mathscr{L} \quad \mbox{is injective} 
  \,.
\end{equation}

In other words, hypothesis~\eqref{Ass.basic1} ensures that 
$(\Omega_0,G)$ is a Riemannian manifold
and~$\mathscr{L}$ represents its \emph{immersion} in~$\Real^3$.
Both hypotheses~\eqref{Ass.basic1} and~\eqref{Ass.basic2}
then ensure that $(\Omega_0,G)$ is an \emph{embedded}
submanifold of~$\Real^3$. 
Giving up the geometrical interpretation of~$\Omega$
being a non-self-inter\-secting tube in~$\Real^3$,
it is possible to work under the hypothesis~\eqref{Ass.basic1} only. 

\begin{figure}[h!]
\begin{center}
\includegraphics[width=0.5\textwidth]{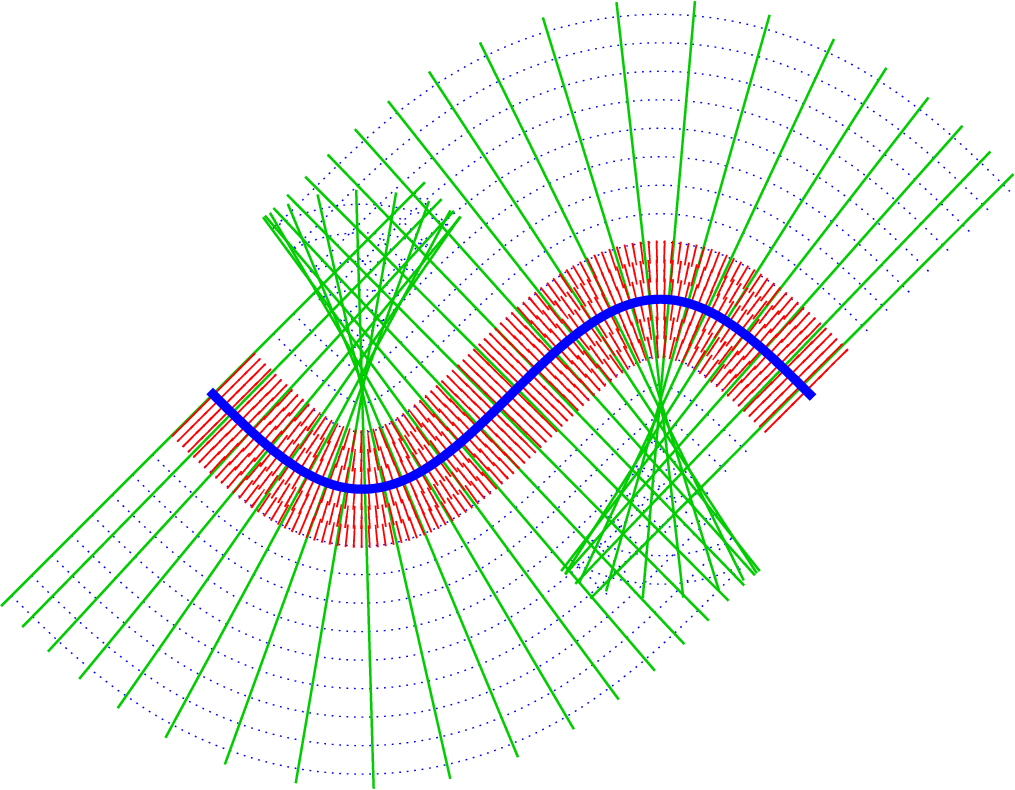}
\end{center}
\caption{A Euclidean two-dimensional tube (red) constructed 
by means of straight lines (green)
emanating from the base planar curve (blue).
A smallness restriction on the tube radius~$a$ must be imposed,
in order to avoid ``local self-intersections''.}
\label{Fig.self}
\end{figure}

\subsection{The tube as a quantum Hamiltonian}
As a particular consequence of the fundamental 
hypotheses~\eqref{Ass.basic1} and \eqref{Ass.basic2},  
$\Omega$~is an open set. 
Therefore the Dirichlet Laplacian $-\Delta_D^\Omega$ is well defined.
Moreover, recalling the identification~\eqref{identification},
we can identify $-\Delta_D^\Omega$
with an operator $H := U(-\Delta_D^\Omega)U^{-1}$
acting in the Hilbert space
\begin{equation}\label{Hilbert}
  \Hilbert := 
  \sii\big(\Omega_0,f(s,t) \, \der s \, \der t\big)
  \,,
\end{equation}
where 
$
  U : \sii(\Omega) \to \Hilbert:
  \{u \mapsto  u \circ \mathscr{L}\}
$.
By definition, 
$H$~is the operator associated with the quadratic form~$h$
in~$\Hilbert$ defined by 
$$
  h[\psi] := \delta_D^\Omega[U^{-1}\psi] 
  \,, \qquad
  \psi \in \dom h := U \dom(\delta_D^\Omega)
  \,.
$$  
In other words, one has to pass to the curvilinear coordinates~$(s,t)$
in the integral~\eqref{depending}. 
Using that $C_0^\infty(\Omega)$ is a core of~$\delta_D^\Omega$
and that $\mathscr{L}:\Omega_0 \to \Omega$ 
is a $C^1$-diffeomorphism, one has
\begin{equation}\label{form}
  h[\psi] = \big(\partial_i\psi,G^{ij}\partial_j\psi\big)
  \,, \qquad
  \psi \in \dom h = 
  \overline{C_0^1(\Omega_0)}^{\vertiii{\cdot}}
  \,,
\end{equation}
where $\vertiii{\psi} := \sqrt{h[\psi] + \|\psi\|^2}$,
$G^{ij}$ stands for the coefficients of the inverse metric 
\begin{equation}\label{inverse}
  G^{-1} = \frac{1}{f^2}
  \begin{pmatrix}
    1 & -f_1 & -f_2 \\
    -f_1 & f^2+f_1^2 & f_1 f_2 \\
    -f_2 & f_2 f_1 & f^2+f_2^2 \\
  \end{pmatrix}
\end{equation}
and the Einstein summation convention is adopted,
with the range of indices being $1,2,3$.
In the distributional sense, $H$~acts as the Laplace--Beltrami operator
\begin{equation}\label{LB}
 H =
  -|G|^{-1/2} \partial_i |G|^{1/2} G^{ij} \partial_j \,.
\end{equation}

In fact, \eqref{form}~is a general formula for any curvilinear coordinates.
Using the particular form of the inverse metric~\eqref{inverse},
we find
\begin{equation}\label{form.bis}
  h[\psi] = \| f^{-1} (\partial_s\psi - \theta'\partial_\tau \psi)\|^2
  + \|\nabla_{\!t}\psi\|^2
  \,,
\end{equation}
where $\nabla_{\!t} := (\partial_{t_1},\partial_{t_2})$
is the transverse gradient
and 
$\partial_\tau := \tau \cdot \nabla_{\!t}$ 
with $\tau := (t_2,-t_1)$ is the transverse angular derivative
(do not confuse the vector~$\tau$ with the torsion
that we shall never mention any more).
Here we implicitly assume that $\psi = \psi(s,t)$. 
Moreover, with an abuse of notation,
we denote by the same symbol~$\theta'$ the function
$(s,t) \mapsto \theta'(s)$. 

Similarly, the general formula~\eqref{LB} in our case reads
\begin{equation}\label{Hamiltonian}
  H = - f^{-1} (\partial_s-\theta'\partial_\tau) 
  f^{-1} (\partial_s-\theta'\partial_\tau)
  - f^{-1} \nabla_{\!t} \cdot f \nabla_{\!t}
  \,.
\end{equation}
\begin{prop}\label{Prop.equivalence}
In addition to~\eqref{Ass.basic1}, let us assume
\begin{equation}\label{Ass.twist}
  \theta' \in L^\infty(\Real)
  \,.
\end{equation}
Then there exist positive constants~$C_\pm$ 
depending on $\|\kappa\|_\infty$, $\|\theta'\|_\infty$ and~$a$
such that 
\begin{equation}\label{equivalence}
  \forall \psi \in C_0^1(\Omega_0)
  \,, \qquad
  C_- \, \|\psi\|_{W^{1,2}(\Omega_0)}
  \leq \vertiii{\psi} \leq 
  C_+ \, \|\psi\|_{W^{1,2}(\Omega_0)}
  \,.
\end{equation}
In particular, $\dom h = W_0^{1,2}(\Omega_0)$.
\end{prop}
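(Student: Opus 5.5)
We compare $h[\psi]+\|\psi\|^2$ with $\|\psi\|_{W^{1,2}(\Omega_0)}^2=\int_{\Omega_0}(|\partial_s\psi|^2+|\nabla_{\!t}\psi|^2+|\psi|^2)\,\der s\,\der t$ for $\psi\in C_0^1(\Omega_0)$, working directly from the explicit expression~\eqref{form.bis}. The norms in~\eqref{form.bis} are those of~$\Hilbert$, i.e.\ they carry the weight~$f$. By the uniform bounds~\eqref{1<G<1}, valid under~\eqref{Ass.basic1}, this weight and also $f^{-1}$ are bounded above and below by positive constants $1\pm a\|\kappa\|_\infty$. Hence every weighted norm is comparable to the corresponding unweighted $\sii(\Omega_0)$ norm, and the factor $f^{-2}$ inside the first term can be dropped or inserted at the cost of constants depending only on $a\|\kappa\|_\infty$. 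After this step we only need to compare $\|\partial_s\psi-\theta'\partial_\tau\psi\|^2+\|\nabla_{\!t}\psi\|^2+\|\psi\|^2$ (unweighted) with the Sobolev norm.

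For the upper bound, we use $|\partial_\tau\psi|=|\tau\cdot\nabla_{\!t}\psi|\leq |t|\,|\nabla_{\!t}\psi|\leq a|\nabla_{\!t}\psi|$ with $a$ from~\eqref{quantity.a}, together with~\eqref{Ass.twist}. This gives $|\partial_s\psi-\theta'\partial_\tau\psi|^2\leq 2|\partial_s\psi|^2+2\|\theta'\|_\infty^2a^2|\nabla_{\!t}\psi|^2$, which yields $C_+$ immediately.

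For the lower bound, we write $\partial_s\psi=(\partial_s\psi-\theta'\partial_\tau\psi)+\theta'\partial_\tau\psi$. This gives $\|\partial_s\psi\|^2\leq 2\|\partial_s\psi-\theta'\partial_\tau\psi\|^2+2\|\theta'\|_\infty^2a^2\|\nabla_{\!t}\psi\|^2$. The transverse term $\|\nabla_{\!t}\psi\|^2$ appears in $h$ on its own with coefficient one, so it absorbs the second piece: $\|\psi\|_{W^{1,2}}^2\leq C(h[\psi]+\|\psi\|^2)$ with $C$ depending on $a,\|\kappa\|_\infty,\|\theta'\|_\infty$. This reverse control of $\partial_s\psi$ is the only mildly delicate point, namely the need to avoid losing the mixed twisting term. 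The splitting above handles it because the transverse gradient is available separately in~$h$, so no smallness assumption on~$\theta'$ is needed.

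Finally, the domain identity follows from~\eqref{equivalence}. By~\eqref{form}, $\dom h$ is the closure of $C_0^1(\Omega_0)$ in $\vertiii{\cdot}$. Since the norms are equivalent on this set, that closure coincides with the closure in $W^{1,2}(\Omega_0)$, which is $W_0^{1,2}(\Omega_0)$ because $C_0^\infty\subset C_0^1$ is dense and $C_0^1(\Omega_0)\subset W_0^{1,2}(\Omega_0)$. The identification also uses that the underlying $\sii$ spaces, $\Hilbert$ and $\sii(\Omega_0)$, coincide as sets with equivalent norms, again by~\eqref{1<G<1}.
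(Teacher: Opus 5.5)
Your proof is correct and follows the paper's argument. In both, the weights are reduced to unweighted norms via~\eqref{1<G<1}, and the upper bound uses $|\partial_\tau\psi|\leq a|\nabla_{\!t}\psi|$. Your lower bound $\|\partial_s\psi\|^2\leq 2\|\partial_s\psi-\theta'\partial_\tau\psi\|^2+2\|\theta'\|_\infty^2a^2\|\nabla_{\!t}\psi\|^2$ is the paper's inequality $\|\partial_s\psi-\theta'\partial_\tau\psi\|^2\geq\frac{\delta}{1+\delta}\|\partial_s\psi\|^2-\delta\|\theta'\partial_\tau\psi\|^2$ with $\delta=1$; absorbing into the separately available transverse term plays the role of the paper's choice of small~$\delta$.
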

\begin{proof}
Let $\psi \in C_0^1(\Omega_0)$.
Recalling~\eqref{1<G<1},
\begin{multline*}
  \frac{\| \partial_s\psi - \theta'\partial_\tau \psi\|_{\Hilbert_0}^2}
  {1+a\,\|\kappa\|_\infty} 
  + (1-a\,\|\kappa\|_\infty) \, \|\nabla_{\!t}\psi\|_{\Hilbert_0}^2
  \\
  \leq h[\psi] \leq 
  \\
  \frac{\| \partial_s\psi - \theta'\partial_\tau \psi\|_{\Hilbert_0}^2}
  {1-a\,\|\kappa\|_\infty} 
  + (1+a\,\|\kappa\|_\infty) \, \|\nabla_{\!t}\psi\|_{\Hilbert_0}^2
  \,,
\end{multline*}
where $\Hilbert_0:=\sii(\Omega_0)$.
The upper bound in~\eqref{equivalence} is concluded by
$$
\begin{aligned}
  \| \partial_s\psi - \theta'\partial_\tau \psi\|_{\Hilbert_0}^2
  &\leq
  2 \, \|\partial_s\psi\|_{\Hilbert_0}^2 + 
  2 \, \|\theta'\partial_\tau \psi\|_{\Hilbert_0}^2
  \\
  &\leq
  2 \, \|\partial_s\psi\|_{\Hilbert_0}^2 + 
  2 \, \|\theta'\|_\infty^2 \, a^2 \, 
  \, \|\nabla_{\!t} \psi\|_{\Hilbert_0}^2
  \,,
\end{aligned} 
$$
where we have used the pointwise bound
$|\partial_\tau\psi| \leq |t| |\nabla_{\!t}\psi|$ 
and~\eqref{quantity.a}.
For the lower bound, we write
$$
\begin{aligned}
  \| \partial_s\psi - \theta'\partial_\tau \psi\|_{\Hilbert_0}^2
  &\geq
  \frac{\delta}{1+\delta} \, \|\partial_s\psi\|_{\Hilbert_0}^2
  -\delta \, \|\theta'\partial_\tau \psi\|_{\Hilbert_0}^2
  \\
  &\geq
  \frac{\delta}{1+\delta} \, \|\partial_s\psi\|_{\Hilbert_0}^2
  -\delta \, \|\theta'\|_\infty^2 \, a^2 \, 
  \, \|\nabla_{\!t} \psi\|_{\Hilbert_0}^2
\end{aligned} 
$$
with any positive~$\delta$.
It remains to choose~$\delta$ sufficiently small,
for instance 
$$
  \delta := \frac{(1-a\,\|\kappa\|_\infty)(1+a\,\|\kappa\|_\infty)}
  {1+\|\theta'\|_\infty^2 \, a^2}
  \,.
$$
This establishes the lower bound in~\eqref{equivalence}.
From the equivalence of norms,
the conclusion $\dom h = W_0^{1,2}(\Omega_0)$ readily follows.
\end{proof}

From now on, we always assume~\eqref{Ass.basic1} 
and~\eqref{Ass.basic2} as standing hypotheses.
(The latter can be omitted when~$\Omega$ is understood
as an immersed submanifold of~$\Real^3$
and the results below are restated for~$H$.) 
 
\subsection{Asymptotically straight tubes}
The essential spectrum of the Laplacian in a manifold is
determined by the behaviour of the metric at infinity
(and possibly at the boundary) only.
Inspecting the dependence of the coefficients of~\eqref{metric}
on large ``longitudinal distances''~$s$,
we see that the metric~$G$ converges to the Euclidean metric provided that
\begin{equation}\label{Ass.decay}
  \lim_{|s|\to\infty} \kappa(s) = 0
  \qquad\mbox{and}\qquad
  \lim_{|s|\to\infty} \theta'(s) = 0
  \,.
\end{equation}
Therefore the following stability result is not surprising.
\begin{theo}[Stability of the essential spectrum]\label{Thm.ess}
Under the hypotheses~\eqref{Ass.decay},
\begin{equation}\label{stability.ess}
  \sigma_\mathrm{ess}(-\Delta_D^\Omega) 
  = [E_1,\infty)
  \,.
\end{equation}
\end{theo}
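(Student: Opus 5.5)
We prove the two inclusions $[E_1,\infty) \subset \sigma_\mathrm{ess}(H)$ and $\inf\sigma_\mathrm{ess}(H) \geq E_1$ separately for the unitarily equivalent operator $H$ acting in~$\Hilbert$ (see~\eqref{Hilbert}). Since $[E_1,\infty)$ is an interval, the two inclusions together give~\eqref{stability.ess}.

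\emph{Inclusion $[E_1,\infty)\subset\sigma_\mathrm{ess}(H)$.} We adapt the construction of Theorem~\ref{Thm.conical} to the tube. Fix $\lambda = E_1 + k^2$ with $k \in \Real$. Set
$$
  \psi_n(s,t) := \varphi_n(s)\, e^{iks}\, \mathcal{J}_1(t)\, f(s,t)^{-1/2},
  \qquad
  \varphi_n(s) := n^{-1/2}\varphi\big((s-a_n)/n\big),
$$
where the shifts satisfy $a_n \to \infty$ fast enough that the supports are disjoint and lie in regions where $|\kappa|+|\theta'|\leq 1/n$; this is possible by~\eqref{Ass.decay}. The weight $f^{-1/2}$ normalises the functions in~$\Hilbert$. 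A Weyl sequence must lie in $\dom H$, so we work in the weak (form) sense instead: it suffices to show that $\sup_{\|\phi\|_{W^{1,2}}=1}|h(\phi,\psi_n)-\lambda(\phi,\psi_n)|\to 0$. This form-sense Weyl criterion is available in Davies' framework, and alternatively one can smooth $\psi_n$ in $s$. We expand $h$ using~\eqref{form.bis}. Every error term contains a factor $\kappa$, $\theta'$, or $\partial_s$ acting on~$\varphi_n$, and each of these is $O(1/n)$ on the support. The terms containing $\theta'\partial_\tau$ need $\nabla_t\mathcal{J}_1 \in \sii(\omega)$ and $|t|\le a$. There is one regularity issue: if $\kappa$ is only continuous, then $\partial_s f$ may not exist. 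To avoid this, we remove the weight and use $\psi_n = \varphi_n e^{iks}\mathcal{J}_1$ directly, estimating the error in the quotient form through the minimax/Weyl singular sequence. Because the supports are disjoint, the sequence is orthonormal and therefore converges weakly to zero, so $\lambda \in \sigma_\mathrm{ess}(H)$.

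\emph{Inclusion $\inf\sigma_\mathrm{ess}(H)\geq E_1$.} We use Neumann bracketing as in Proposition~\ref{Prop.urchin}. Fix $\eps>0$ and choose $s_0$ such that $|\kappa|,|\theta'|\leq\eps$ for $|s|>s_0$. Impose extra Neumann conditions on $\{|s|=s_0\}$. This gives $H \geq H^{\mathrm{int}}\oplus H^{\mathrm{ext}}$, and Corollary~\ref{Corol.minimax} transfers the inequality to the $\lambda_n$. The interior piece lives on the bounded Lipschitz set $(-s_0,s_0)\times\omega$, with Neumann conditions at the ends and Dirichlet conditions on the lateral boundary. Its form domain embeds compactly into $\sii$ by Proposition~\ref{Prop.equivalence} and Theorem~\ref{Thm.bounded.Neumann} (extend by reflection), so its essential spectrum is empty. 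For the exterior piece we drop the longitudinal term in~\eqref{form.bis}, which is non-negative, and apply Poincar\'e~\eqref{Poincare} together with~\eqref{1<G<1}:
$$
  h^{\mathrm{ext}}[\psi]\ge (1-a\eps)\|\nabla_t\psi\|^2_{\Hilbert_0}\ge (1-a\eps)E_1\|\psi\|^2_{\Hilbert_0}\ge \frac{1-a\eps}{1+a\eps}E_1\|\psi\|_\Hilbert^2 .
$$
Arguing as in~\eqref{urchin.lower}, we obtain $\inf\sigma_\mathrm{ess}(H)\ge \frac{1-a\eps}{1+a\eps}E_1$. Letting $\eps\to0$ gives $\inf\sigma_\mathrm{ess}(H)\ge E_1$.

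The lower bound is routine. The main obstacle is the first inclusion, specifically making the singular-sequence argument rigorous when the metric has only the stated low regularity ($\kappa$ continuous, $\theta'$ merely bounded). I would handle this by using the form-level Weyl criterion, which needs only $\psi_n\in\dom h$ and therefore avoids any differentiation of~$f$.
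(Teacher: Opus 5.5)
Your proposal is correct and takes essentially the same route as the paper. For $\inf\sigma_\mathrm{ess}(H)\geq E_1$ you use Neumann bracketing at $|s|=s_0$, drop the longitudinal term, and apply the Poincar\'e inequality~\eqref{Poincare} together with the bounds on $f$; for the reverse inclusion you apply the form-level Weyl criterion (convergence in $\Hilbert_{-1}$) to the unweighted sequence $\varphi_n(s)e^{iks}\mathcal{J}_1(t)$ pushed to infinity, and dropping the $f^{-1/2}$ weight, as you end up doing, is exactly how the paper avoids differentiating $f$. One small inaccuracy: $(-s_0,s_0)\times\omega$ need not be Lipschitz, since $\omega$ is an arbitrary bounded domain. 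Compactness of the interior form domain still holds if you extend by zero in $t$ and reflect in $s$.
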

\begin{proof}
The inclusion $\sigma_\mathrm{ess}(H) \subset [E_1,\infty)$
is obtained by a Neumann bracketing argument.
The opposite inclusion 
$\sigma_\mathrm{ess}(H) \supset [E_1,\infty)$ 
follows by the approximate eigenfunctions
\begin{equation}\label{singular.Weyl}
  \psi_n(s,t) := \varphi_n(s) \, e^{i k s} \, \mathcal{J}_1(t)
  \,,
\end{equation}
where $k \in \Real$,
$
\displaystyle
  \varphi_n(s) := n^{-1/2} \, \varphi(n^{-1}s-n)
$ 
with $n \in \Nat^*$ and $\varphi \in C_0^\infty(\Real)$ 
satisfying $\|\varphi\|_{\sii(\Real)}=1$. 
To avoid additional assumptions,
the Weyl criterion adapted to quadratic forms 
\cite[App.]{KL} must be used. In detail:

\fbox{$\sigma_\mathrm{ess}(H) \subset [E_1,\infty)$}
(Neumann bracketing)
 
Given any arbitrary positive number~$s_0$, 
we divide $\Omega_0$ into an interior and
an exterior part by considering:
$$
  I_\mathrm{int} := (-s_0,s_0)  
  \qquad\mbox{and}\qquad
  I_\mathrm{ext} := 
  (-\infty,-s_0) \cup (s_0,\infty)
  \,.
$$
We impose the Neumann condition on 
the interface $\Sigma_\pm := \{\pm s_0\} \times \omega$. 
On the level of forms, it leads to considering 
the quadratic form  
which acts as~$h$ in $\Omega_0$ 
but satisfies no continuity (in the Sobolev setting) on~$\Sigma_\pm$.  
More specifically, let us consider the quadratic form
$h_\mathrm{int}^N$ in 
$
  \Hilbert_\mathrm{int} :=
  \sii(I_\mathrm{int}\times\omega,f(s,t)\,\der s \, \der t)
$ 
defined by
$$
\begin{aligned}
  h_\mathrm{int}^N[\psi] 
  &:= (\partial_i\psi,G^{ij}\partial_j\psi)_{\Hilbert_\mathrm{int}}
  \,,
  \\
  \dom h_\mathrm{int}^N 
  &:= \{\psi \upharpoonright (I_\mathrm{int}\times\omega) : \ 
  \psi \in \dom h \}
  \,.
\end{aligned}  
$$
We denote by~$H_\mathrm{int}^N$ the operator 
associated with $h_\mathrm{int}^N$ in $\Hilbert_\mathrm{int}$.
Similarly, we introduce a form $h_\mathrm{ext}^N$ 
and the associated operator $H_\mathrm{ext}^N$ 
in the Hilbert space
$
  \Hilbert_\mathrm{ext} :=
  \sii(I_\mathrm{ext}\times\omega,f(s,t)\,\der s \, \der t)
$. 
We set $H^N := H_\mathrm{int}^N \oplus H_\mathrm{ext}^N$.
Since the form domain~$H^N$ is larger than the form domain of~$H$,
while the forms act in the same way, one has  
$
  H \geq H^N 
$.
By the minimax principle,
\begin{equation}\label{bracketing}
\begin{aligned}
  \inf\sigma_\mathrm{ess}(H) 
  &\geq \inf\sigma_\mathrm{ess}(H^N)
  \\
  &= \min\left\{\inf\sigma_\mathrm{ess}(H_\mathrm{int}^N),
  \inf\sigma_\mathrm{ess}(H_\mathrm{ext}^N)\right\}
  \\
  &= \inf\sigma_\mathrm{ess}(H_\mathrm{ext}^N)
  \\
  &= \inf\sigma(H_\mathrm{ext}^N)
  \,.
\end{aligned} 
\end{equation}
Here the first equality holds because $H_\mathrm{int}^N$
is an operator with compact resolvent.
To estimate the lowest point in 
the spectrum of $H_\mathrm{ext}^N$,
we use the crude bound $G^{-1} \geq \diag(0,1,1)$.
Then, for every $\psi \in \dom h_\mathrm{ext}^N$,
$$
\begin{aligned}
  h_\mathrm{ext}^N[\psi] 
  &\geq \|\nabla_{\!t}\psi\|_{\Hilbert_\mathrm{ext}}^2
  \\
  &\geq \big(\inf_{I_\mathrm{ext}\times\omega} f \big) \
  \|\nabla_{\!t}\psi\|_{\sii(I_\mathrm{ext}\times\omega)}^2
  \\
  &\geq E_1 \, \big(\inf_{I_\mathrm{ext}\times\omega} f \big) \ 
  \|\psi\|_{\sii(I_\mathrm{ext}\times\omega)}^2
  \\
  &\geq E_1 \, 
  \frac{\displaystyle\inf_{I_\mathrm{ext}\times\omega} f}
  {\displaystyle\sup_{I_\mathrm{ext}\times\omega} f} \, 
  \|\psi\|_{\Hilbert_\mathrm{ext}}^2
  \,.
\end{aligned} 
$$
Here the third inequality follows from~\eqref{Poincare}
with the help of Fubini's theorem. 
Using the first of the hypotheses~\eqref{Ass.decay}, 
we see that the spectrum~$H_\mathrm{ext}^N$
is estimated from below by~$E_1$ times a function of~$s_0$ 
tending to~$1$ as $s_0 \to \infty$.
Since~$s_0$ can be chosen arbitrarily large,
it follows from~\eqref{bracketing} that 
$\inf\sigma_\mathrm{ess}(H) \geq E_1$.

\fbox{$\sigma_\mathrm{ess}(H) \supset [E_1,\infty)$}
(Weyl criterion adapted to quadratic forms)

By the classical Weyl criterion
\cite[Thm.~7.22]{Weidmann},
it suffices to construct, 
for each $k \in \Real$,
a sequence 
$$
  \{\psi_n\}_{n=1}^\infty \subset \dom H
  \quad \mbox{such that} \quad
  \begin{aligned}
    \mbox{(i)} \quad & \liminf_{n\to\infty} \|\psi_n\|_\mathcal{H} > 0
    \,,
    \\
    \mbox{(ii)} \quad & \|[H-(k^2+E_1)]\psi_n\|_\mathcal{H} 
    \xrightarrow[n\to\infty]{}0 
    \,.
  \end{aligned}
$$ 
One is tempted to use a tensor product
of plane waves ``localised at infinity''
and the first Dirichlet eigenfunction~$\mathcal{J}_1$
in the cross-section.
Namely, let us consider~\eqref{singular.Weyl}.
Note that 
\begin{equation}\label{varphi}
\begin{aligned}
  \|\varphi_n\|_{\sii(\Real)} 
  &= \|\varphi\|_{\sii(\Real)} = 1 \,,
  \\
  \|\varphi_n'\|_{\sii(\Real)} 
  &= n^{-1} \, \|\varphi'\|_{\sii(\Real)} \to 0  \,,
  \\
  \|\varphi_n''\|_{\sii(\Real)} 
  &= n^{-2} \, \|\varphi''\|_{\sii(\Real)} \to 0 \,,
\end{aligned} 
\qquad\mbox{and}\qquad
  \inf\supp\varphi_n 
  \to \infty 
\end{equation}
as $n \to \infty$.
Then~(i) follows at once by using~\eqref{1<G<1}
and the fact that 
$\|\psi_n\|_{\Hilbert_0}=1$,
where $\Hilbert_0 := \sii(\Omega_0)$.
Moreover, under additional assumptions
about the decay of~$\kappa$ and $\theta'$ at infinity
(involving derivatives),
it is indeed possible to show that~$\psi_n$
is the desired sequence.

To avoid the additional assumptions,
we use the Weyl criterion adapted to quadratic forms 
\cite[App.]{KL}
requiring to construct a sequence
$$
  \{\psi_n\}_{n=1}^\infty \subset \dom h
  \quad \mbox{such that} \quad
  \begin{aligned}
    \mbox{(i)} \quad & 
    \liminf_{n\to\infty}\|\psi_n\|_\mathcal{H} > 0
    \,,
    \\
    \mbox{(ii')} \quad & \|[H-(k^2+E_1)]\psi_n\|_{\mathcal{H}_{-1}} 
    \xrightarrow[n\to\infty]{}0 
    \,.
  \end{aligned}
$$ 
The advantage is that the sequence is required to belong 
to the form domain instead of the operator domain.
What is more, the weaker convergence in the dual space
$\Hilbert_{-1} := \Hilbert_1^*$ is required,
where $\Hilbert_1 := \dom h$ is equipped with 
the norm~$\vertiii{\cdot}$ introduced below~\eqref{form}.
Note that $\Hilbert_1 \subset \Hilbert=\Hilbert^* \subset \Hilbert_{-1}$
and 
$$
   \|[H-(k^2+E_1)]\psi_n\|_{\mathcal{H}_{-1}} 
   = \sup_{\stackrel[\phi\not=0]{}{\phi\in\Hilbert_1}} 
   \frac{|h(\phi,\psi_n) - (k^2+E_1) (\phi,\psi_n)_{\Hilbert}|}
   {\|\phi\|_{\Hilbert_1}}
   \,.
$$
Therefore checking~(ii') reduces to an analysis 
on the level of the form~$h$. 

Let $\phi \in C_0^1(\Omega_0)$, a core of~$\mathcal{H}_1$.
We write  
\begin{multline*}
  |h(\phi,\psi_n) - (k^2+E_1) (\phi,\psi_n)_{\Hilbert}|
  \\
  \leq |h_1(\phi,\psi_n) - k^2 \, (\phi,\psi_n)_{\Hilbert}|
  + |h_2(\phi,\psi_n) - E_1 \, (\phi,\psi_n)_{\Hilbert}|
  \,,
\end{multline*}
where (with any $\psi \in \Hilbert_1$)
$$
  h_1[\psi] := 
  \| f^{-1} (\partial_s\psi - \theta'\partial_\tau \psi)\|_{\Hilbert}^2
  \qquad \mbox{and} \qquad
  h_2[\psi] := \|\nabla_{\!t}\psi\|_{\Hilbert}^2
  \,.
$$

For the ``transverse'' form~$h_2$, 
a repeated integration by parts 
using that $f(s,t)$ is linear in~$t$
yields
$$
\begin{aligned}
  h_2(\phi,\psi_n) - E_1 \, (\phi,\psi_n)_{\Hilbert}
  &= -(\phi,\nabla_{\!t}\psi_n \cdot \nabla_{\!t}f)_{\Hilbert_0}
  \\
  &= (\nabla_{\!t}\phi,\psi_n\nabla_{\!t}f)_{\Hilbert_0}
  = \Big(\nabla_{\!t}\phi,\psi_n\frac{\nabla_{\!t}f}{f}\Big)_{\Hilbert}
  \,.
\end{aligned}  
$$
Consequently, 
$$
\begin{aligned}
  \frac{|h_2(\phi,\psi_n) - E_1 \, (\phi,\psi_n)_{\Hilbert}|}
  {\|\phi\|_{\Hilbert_1}}
  &\leq \frac{\|\nabla_{\!t}\phi\|_\Hilbert}{\|\phi\|_{\Hilbert_1}} 
  \, \|\psi_n\|_\Hilbert \,
  \left\| \frac{\nabla_{\!t}f}{f} \right\|_{\infty,n}
  \\
  &\leq \|f\|_{\infty,n}^{1/2}
  \left\| \frac{\nabla_{\!t}f}{f} \right\|_{\infty,n}
  \xrightarrow[n\to\infty]{}0
  \,, 
\end{aligned}  
$$
where 
$
  \| \cdot \|_{\infty,n} := \| \cdot \|_{L^\infty(\supp\varphi_n\times\omega)}
$.
The convergence holds because 
the first assumption of~\eqref{Ass.decay} 
implies that
$\left\| \nabla_{\!t}f\right\|_{\infty,n} \to 0$
as $n \to \infty$.

For the ``longitudinal'' form~$h_1$, 
we further decompose
$$
  h_1(\phi,\psi_n) - k^2 \, (\phi,\psi_n)_{\Hilbert}
  = I_1 + I_2 + I_3 + I_4
  \,,
$$
where
$$
\begin{aligned}
  I_1 &:=  (f^{-1}\partial_s \phi,
  f^{-1}\partial_s \psi_n)_\Hilbert
  - k^2 \, (\phi,\psi_n)_{\Hilbert}
  \,,
  \\
  I_2 &:= - (f^{-1}\partial_s \phi,
  f^{-1}\theta'\partial_\tau \psi_n)_\Hilbert
  \,,
  \\
  I_3 &:= - (f^{-1}\theta'\partial_\tau \phi,
  f^{-1}\partial_s \psi_n)_\Hilbert
  \,,
  \\
  I_4 &:= (f^{-1}\theta'\partial_\tau \phi,
  f^{-1}\theta'\partial_\tau \psi_n)_\Hilbert
  \,.
\end{aligned}  
$$
We start to estimate the last integral as follows:
$$
\begin{aligned}
  \frac{|I_4|}{\|\phi\|_{\Hilbert_1}}
  &\leq a^2 \, \left\| \frac{\theta'}{f} \right\|_{\infty,n}^2
  \frac{\|\nabla_{\!t} \phi\|_\Hilbert}{\|\phi\|_{\Hilbert_1}} 
  \, \|\nabla_{\!t} \psi_n\|_\Hilbert 
  \\
  &\leq a^2 \, \left\| \frac{\theta'}{f} \right\|_{\infty,n}^2 
  \|f\|_{\infty,n}^{1/2} \, \sqrt{E_1}
  \xrightarrow[n\to\infty]{}0
  \,, 
\end{aligned} 
$$
where the convergence holds due to the second assumption of~\eqref{Ass.decay}.
Similarly,
$$
\begin{aligned}
  \frac{|I_3|}{\|\phi\|_{\Hilbert_1}}
  &\leq a \, \left\| \frac{\theta'}{f^2} \right\|_{\infty,n}
  \frac{\|\nabla_{\!t} \phi\|_\Hilbert}{\|\phi\|_{\Hilbert_1}} 
  \, \|\partial_s \psi_n\|_\Hilbert 
  \\
  &\leq a \, \left\| \frac{\theta'}{f} \right\|_{\infty,n}^2 
  \|f\|_{\infty,n}^{1/2} \, 
  \big(\|\varphi_n'\|_{\sii(\Real)} + |k| \, \|\varphi_n\|_{\sii(\Real)} \big) 
  \xrightarrow[n\to\infty]{}0
\end{aligned} 
$$
and
$$
\begin{aligned}
  \frac{|I_2|}{\|\phi\|_{\Hilbert_1}}
  &\leq a \, \left\| \frac{\theta'}{f^2} \right\|_{\infty,n}
  \frac{\|\partial_s \phi\|_\Hilbert}{\|\phi\|_{\Hilbert_1}} 
  \, \|\nabla_{\!t} \psi_n\|_\Hilbert 
  \\
  &\leq a \, \left\| \frac{\theta'}{f} \right\|_{\infty,n}^2 
  \|f\|_{\infty,n} \, C_-^{-1}
  \, \sqrt{E_1}
  \xrightarrow[n\to\infty]{}0
  \,,
\end{aligned} 
$$
where~$C_-$ is the constant from Proposition~\ref{Prop.equivalence}.
Finally, we write $I_1 = I_1^{(a)} + I_1^{(b)}$, where
$$
\begin{aligned}
  I_1^{(a)}  
  := \ & (\partial_s \phi,\partial_s \psi_n)_{\Hilbert_0}
  - k^2 \, (\phi,\psi_n)_{\Hilbert_0}
  \\
  = \ & (\phi,-\partial_s^2 \psi_n)_{\Hilbert_0}
  - k^2 \, (\phi,\psi_n)_{\Hilbert_0}
  \\
  = \ & -\big(\phi,(\varphi_n''+2ik\varphi_n') \, e^{iks} \, \mathcal{J}_1
  \big)_{\Hilbert_0}
\end{aligned}  
$$
and
$$
  I_1^{(b)}  
  :=  \big(\partial_s \phi,(f^{-1}-1)\partial_s \psi_n\big)_{\Hilbert_0}
  - k^2 \, \big(\phi,(f-1)\psi_n\big)_{\Hilbert_0}
  \,.
$$
Consequently,
$$
\begin{aligned}
  \frac{|I_1^{(a)}|}{\|\phi\|_{\Hilbert_1}}
  &\leq \frac{\|\phi\|_{\Hilbert_0}}{\|\phi\|_{\Hilbert_1}} \,
  \big(\|\varphi_n''\|_{\sii(\Real)} 
  + 2 \, |k| \, \|\varphi_n'\|_{\sii(\Real)} \big) 
  \\
  &\leq 
  \left\| \frac{1}{f} \right\|_{\infty,n}^{1/2}
  \big(\|\varphi_n''\|_{\sii(\Real)} 
  + 2 \, |k| \, \|\varphi_n'\|_{\sii(\Real)} \big) 
  \xrightarrow[n\to\infty]{}0
  \,,
\end{aligned}
$$
where the convergence holds due to~\eqref{varphi},
and 
$$
\begin{aligned}
  \frac{|I_1^{(b)}|}{\|\phi\|_{\Hilbert_1}}
  &\leq \frac{\|\partial_s \phi\|_{\Hilbert_0}}{\|\phi\|_{\Hilbert_1}} 
  \left\| \frac{1}{f} -1 \right\|_{\infty,n}
  \|\partial_s \psi_n\|_{\Hilbert_0}
  + k^2 \,
  \frac{\|\phi\|_{\Hilbert_0}}{\|\phi\|_{\Hilbert_1}} \,
  \|\psi_n\|_{\Hilbert_0}
  \\
  &\leq
  C_-^{-1} \, 
  \left\| \frac{1}{f} -1 \right\|_{\infty,n}
  \big(\|\varphi_n'\|_{\sii(\Real)} + |k| \, \|\varphi_n\|_{\sii(\Real)} \big)
  +  k^2 \, C_-^{-1} \xrightarrow[n\to\infty]{}0
  \,,
\end{aligned}  
$$
where the convergence holds
due to the first assumption of~\eqref{Ass.decay}.
\end{proof}

A brief history of Theorem~\ref{Thm.ess} is as follows.
In 1995,
Duclos and Exner~\cite{DE} established~\eqref{stability.ess}
for tubes of the cross-section being a disk,
assuming no twisting $\theta' = 0$ 
and under extra hypotheses about the decay 
of the curvature~$\kappa$ at infinity.
In 2005,
Chenaud, Duclos, Freitas and Krej\v{c}i\v{r}\'ik \cite{ChDFK}
considered tubes of arbitrary cross-sections,
removed the extra decay hypotheses,
but still assumed that there is no twisting.
In 2008,
Krej\v{c}i\v{r}\'ik \cite{K6-with-erratum}
established Theorem~\ref{Thm.ess}
under the present minimal hypotheses
(see \cite{KL} for generalisations to tubes 
of arbitrary dimension and codimension).

\begin{figure}[h!t]
\begin{center}
\begin{tabular}{cc}
& \bigskip \\
\includegraphics[width=0.47\textwidth]{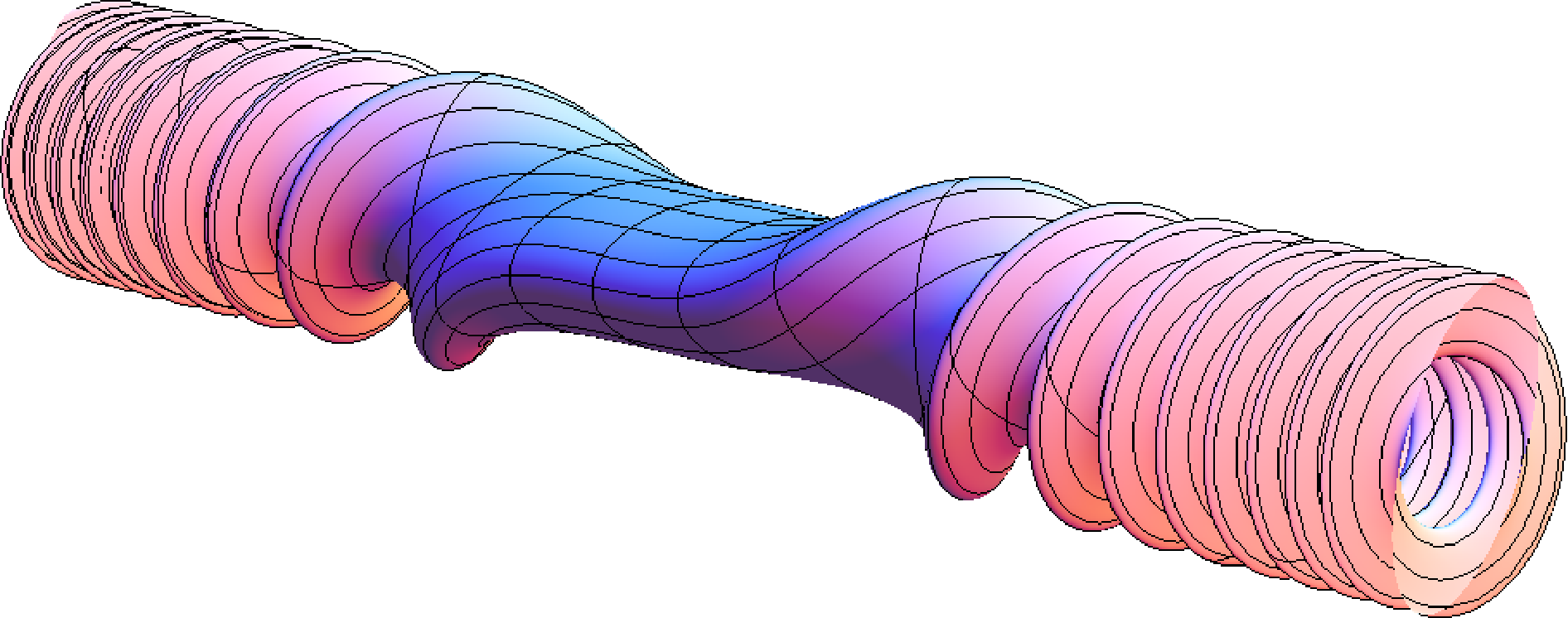}
& \includegraphics[width=0.47\textwidth]{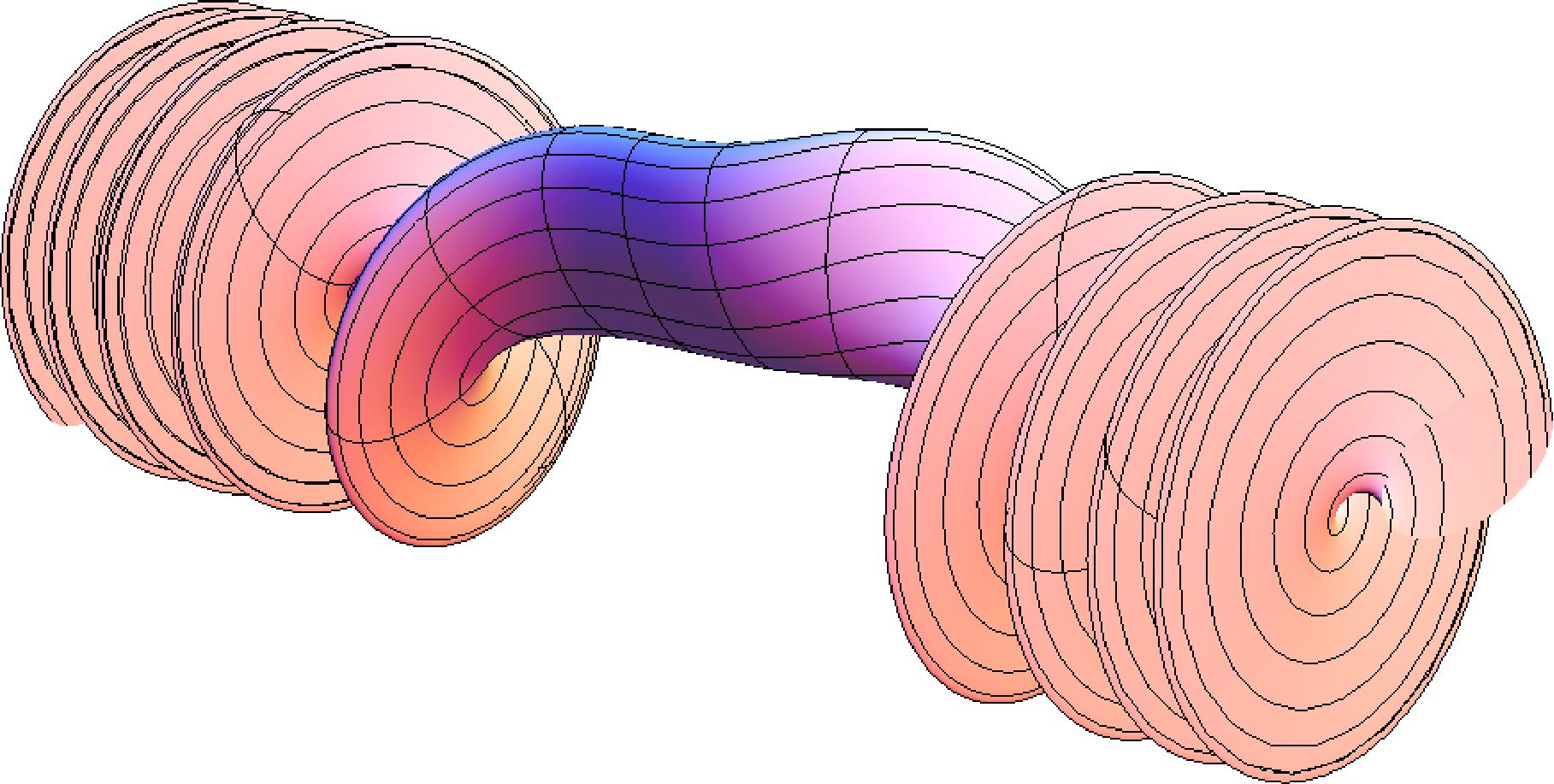}
\end{tabular}
\end{center}
\caption{Tubes with diverging twisting~\cite{K11}. 
A non-empty essential spectrum (left)
and purely discrete spectrum (right).}
\label{Fig.diverge}
\end{figure}

Theorem~\ref{Thm.ess} is optimal in the sense that 
there are examples of Euclidean tubes for which 
$
  \inf\sigma_\mathrm{ess}(-\Delta_D^{\Omega}) < E_1
$
or 
$
  \inf\sigma_\mathrm{ess}(-\Delta_D^{\Omega}) > E_1
$ 
if~$\Omega$ is not twisted but periodically bent 
\cite{KKriz,K6-with-erratum}
or not bent but periodically twisted
\cite{EKov_2005,K6-with-erratum,
Briet-Kovarik-Raikov-Soccorsi_2009}, 
respectively.
What is more, the spectrum can be purely discrete 
for tubes with asymptotically diverging twisting~\cite{K11},
see Figure~\ref{Fig.diverge}.

\begin{OProblem}
In the quasi-bounded realisation of Figure~\ref{Fig.diverge},
what are the \emph{Weyl-type asymptotics} 
for the accumulation of the discrete eigenvalues at infinity?
\end{OProblem}

Here the Weyl law must necessarily be non-standard
because $|\Omega|=\infty$.
In this direction, a Berezin-type upper bound for the eigenvalue moments
has been established in \cite{Barseghyan-Khrabustovskyi_2019}. 
 
\subsection{The effect of bending}\label{Sec.bend}
Now we restrict ourselves to tubes which are bent ($\kappa\not=0$)
but untwisted ($\theta'=0$),
see Figure~\ref{Fig.bend}.

\begin{figure}[h!]
\begin{center}
\includegraphics[width=0.6\textwidth]{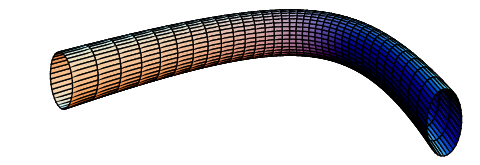}
\end{center}
\caption{A bent untwisted tube.}\label{Fig.bend}
\end{figure}

It turns out that bending is \emph{supercritical} 
in the sense that it gives rise to a spectrum below the energy~$E_1$.
\begin{theo}[Supercriticality]\label{Thm.bending}
Let $\kappa \not= 0$ and $\theta' = 0$.
Then
$$
  \inf\sigma(-\Delta_D^\Omega) < E_1
  \,.
$$
\end{theo}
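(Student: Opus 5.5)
We work in the curvilinear picture of the identification~\eqref{identification}: it is enough to show $\inf\sigma(H)<E_1$. With $\theta'=0$ the form~\eqref{form.bis} reduces to
$$
  h[\psi] = \|f^{-1/2}\partial_s\psi\|_{\Hilbert_0}^2 + \|f^{1/2}\nabla_{\!t}\psi\|_{\Hilbert_0}^2 ,
  \qquad \|\psi\|_\Hilbert^2 = \|f^{1/2}\psi\|_{\Hilbert_0}^2 ,
$$
where $f=1-t_1k_1-t_2k_2$. By the variational characterisation~\eqref{Rayleigh}, we need some $\psi\in\dom h = W_0^{1,2}(\Omega_0)$ (Proposition~\ref{Prop.equivalence}) with
$$
  Q[\psi]:=h[\psi]-E_1\|\psi\|_\Hilbert^2<0 .
$$
The plan mirrors Proposition~\ref{Prop.critical}: construct a test function that approximates the generalised eigenfunction at the threshold. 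If the transverse part gives no sign, add a small perturbation and exploit the linearity of the first-order term.

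\textbf{Step 1.} Take the substitution $\psi = f^{-1/2}\phi$, which turns the weighted transverse form into an unweighted one plus a potential. Since $f$ is affine in $t$, $\Delta_t f^{1/2}$ can be computed explicitly. Integrating by parts in $t$ (legitimate because $\phi$ vanishes on $\partial\omega$) gives
$$
  \|f^{1/2}\nabla_t(f^{-1/2}\phi)\|^2 = \|\nabla_t\phi\|^2 - \frac{1}{4}\int \frac{\kappa^2}{f^2}\,|\phi|^2 ,
$$
using $|\nabla_t f|^2=k_1^2+k_2^2=\kappa^2$ from~\eqref{curvature}. Precisely, $-f^{-1/2}\Delta_t f^{1/2}= \tfrac14|\nabla_t f|^2/f^2$ with a sign to be checked. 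In these variables the operator is $-\partial_s f^{-1}\partial_s$ (with $f$-dependent corrections) plus $-\Delta_t - \kappa^2/(4f^2)$. This is a straight tube with a nonpositive, nontrivial effective potential $V=-\kappa^2/(4f^2)$, so the criticality of Proposition~\ref{Prop.critical} suggests the result.

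\textbf{Step 2.} Choose $\phi_n(s,t)=\varphi_n(s)\mathcal{J}_1(t)$ with $\varphi_n$ from~\eqref{trial}. Then $\|\nabla_t\phi_n\|^2-E_1\|\phi_n\|^2=0$ by Fubini, and $\int V|\phi_n|^2\to\int V\mathcal{J}_1^2<0$ by monotone convergence. This limit is strictly negative because $\kappa\neq0$ on an open set, $\mathcal{J}_1>0$ by Theorem~\ref{Thm.positive}, and $f$ is bounded by~\eqref{1<G<1}. The longitudinal term is $\|f^{-1/2}\partial_s(f^{-1/2}\phi_n)\|^2$. It contains $\varphi_n'$ pieces, which are bounded by $\|\varphi_n'\|^2\to0$ via~\eqref{1<G<1}, and $\varphi_n\,\partial_s f^{-1/2}$ pieces involving $k_i'$.

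\textbf{Main obstacle.} Only continuity of $k_i$ is assumed, so $\partial_s f$ may not exist. I would therefore avoid Step 1 in $s$ and use the substitution only transversally: keep $\psi_n=\varphi_n(s)\mathcal{J}_1(t)$ directly in the original form, and compute the transverse part exactly. Using $\int_\omega t_i\,\partial_{t_i}(\mathcal{J}_1^2)$-type identities, the linear-in-$t$ part of $f$ gives
$$
  \int_\omega f\,|\nabla_t\mathcal{J}_1|^2-E_1\int_\omega f\,\mathcal{J}_1^2 = -\sum_i k_i\int_\omega t_i\bigl(|\nabla_t\mathcal{J}_1|^2-E_1\mathcal{J}_1^2\bigr),
$$
which has no sign. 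To fix this, perturb $\psi=\varphi_n\mathcal{J}_1+\varepsilon\,\eta(s)\chi(t)$ with $\eta\in C_0^\infty$ localised where $\kappa\neq0$ and suitable $\chi\in W^{1,2}_0(\omega)$. For fixed $\varepsilon$ and large $n$ the unperturbed part contributes $o(1)$ plus the sign-indefinite curvature term, while the cross term is linear in $\varepsilon$ and nonzero for a suitable choice (e.g.\ $\chi=\partial_{t_i}$-related functions, or $\chi=(f^{-1/2}-1)\mathcal{J}_1$-type corrections mimicking Step 1). A quadratic-in-$\varepsilon$ argument then yields $Q<0$. Identifying a $\chi$ that makes the cross term strictly negative without differentiating $k_i$ in $s$ is the hard part. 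If it fails, I would first approximate $\kappa$ by smooth curvatures, or assume $\kappa\in C^1$ and run Step 1 directly.
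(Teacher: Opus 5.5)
\paragraph{The transverse term vanishes, and you need that.}
Your proposal has a genuine gap exactly where you flag ``the hard part'', and a miscomputation just before it makes things look worse than they are. The transverse curvature term is not sign-indefinite: it vanishes identically for every $s$.

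Test the weak equation $-\Delta_t\mathcal{J}_1=E_1\mathcal{J}_1$ with $t_i\mathcal{J}_1\in W_0^{1,2}(\omega)$, and use $\int_\omega \mathcal{J}_1\partial_{t_i}\mathcal{J}_1=\frac12\int_\omega\partial_{t_i}(\mathcal{J}_1^2)=0$. This gives $\int_\omega t_i|\nabla_{\!t}\mathcal{J}_1|^2=E_1\int_\omega t_i\mathcal{J}_1^2$. Equivalently, $\int_\omega f\,(|\nabla_{\!t}\mathcal{J}_1|^2-E_1\mathcal{J}_1^2)=\frac12\int_\omega \mathcal{J}_1^2\,\Delta_t f=0$, because $f$ is affine in $t$. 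Hence $Q[\varphi_n\mathcal{J}_1]\to0$, with no integrability of $\kappa$ needed.

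This is not cosmetic. Your quadratic-in-$\eps$ argument works only if the unperturbed part tends to zero. With a fixed positive remainder $T$ you would need $Q(\psi_n,\phi)^2>T\,Q[\phi]$, and nothing in your argument provides that.

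\paragraph{The missing perturbation.}
You never produce an admissible $\chi$ with a strictly negative cross term. The candidates you name are problematic. First, $\partial_{t_i}\mathcal{J}_1$ is in general not in $W_0^{1,2}(\omega)$. Second, $(f^{-1/2}-1)\mathcal{J}_1$ depends on $s$ through $k_i$, so $\partial_s\phi$ involves $k_i'$ and $\phi$ need not even lie in the form domain.

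The missing idea is to use an $s$-independent transverse profile, $\phi(s,t)=j(s)\,(\alpha\cdot t)\,\mathcal{J}_1(t)$ with $0\le j\in C_0^\infty(\Real)$. Once $n$ is so large that $\varphi_n=1$ on $\supp j$, the longitudinal cross term vanishes because $\varphi_n'=0$ there. Integrating by parts in $t$ alone, i.e.\ testing the transverse equation with $f\,(\alpha\cdot t)\,\mathcal{J}_1$, and using $f=1-t_1k_1-t_2k_2$, gives
\[
  Q(\psi_n,\phi)=\frac12\int_\Real j(s)\,\alpha\cdot\nabla_{\!t}f(s)\,\der s
  =-\frac12\int_\Real j(s)\,\big(\alpha_1k_1(s)+\alpha_2k_2(s)\big)\,\der s .
\]
More generally, any $s$-independent $\chi\in W_0^{1,2}(\omega)$ with $\int_\omega\chi\,\partial_{t_i}\mathcal{J}_1\neq0$ would do. Since $\kappa\neq0$ and the $k_i$ are continuous, you can choose $\alpha$ and $\supp j$ on an interval where $\alpha\cdot k>0$, making the cross term negative. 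This uses only continuity of $k_i$. Since $Q[\psi_n]\to0$ and the $\eps$-terms are independent of $n$, small $\eps>0$ and then large $n$ give $Q<0$.

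\paragraph{Your fallbacks do not repair this.}
Assuming $\kappa\in C^1$, or smoothing $\kappa$, changes the hypotheses and the domain, and you give no transfer argument back to the original tube. Even with $C^1$ curvature, Step~2 does not close. The $\varphi_n\,\partial_s f^{-1/2}$ piece tends to the positive quantity $\frac14\int f^{-4}|\partial_s f|^2\mathcal{J}_1^2$. That quantity is not dominated by $\frac14\int \kappa^2f^{-2}\mathcal{J}_1^2$ in general, for instance when the curvature is small and rapidly oscillating.
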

\begin{proof}
The main idea is that $(s,t)\mapsto\mathcal{J}_1(t)$
is a ``zero point'' of the functional 
$Q[\psi] := h[\psi] - E_1 \, \|\psi\|^2$,
which is not a ``minimiser''.

\fbox{Zero point}
In detail, for every $n \in \Nat^*$, set
$\psi_n(s,t):=\varphi_n(s)\mathcal{J}_1(t)$,
where the sequence $\{\varphi_n\}_{n \in \Nat^*}$ 
is defined in~\eqref{trial}.
We claim that
\begin{equation}\label{first}
  Q[\psi_n] := h[\psi_n] - E_1 \, \|\psi_n\|^2
  \xrightarrow[n\to\infty]{} 0
  \,,
\end{equation}
as in the proof of Proposition~\ref{Prop.critical}.
Indeed, the ``longitudinal energy'' of~\eqref{form.bis}
disappears as $n \to \infty$:
$$
  \|\partial_s\psi_n\|^2 
  = \int_{\Omega_0} 
  \frac{|\varphi_n'(s)|^2 \, |\mathcal{J}_1(t)|^2}{f(s,t)} 
  \, \der s \, \der t
  \leq \frac{\|\varphi_n'\|_{\sii(\Real)}^2 
  \, \|\mathcal{J}_1\|_{\sii(\omega)}^2   }
  {1-a \, \|\kappa\|_\infty} 
  \xrightarrow[n\to\infty]{} 0 \,.
$$
The striking fact behind~\eqref{first} is that 
the ``transverse energy'' is compensated by~$E_1$,
despite the presence of the Jacobian~$f$ 
in the Hilbert space~\eqref{Hilbert}.
This follows by an explicit integration by parts 
and the observation that $f(s,t)$ is \emph{linear} in~$t$:
$$
\begin{aligned}
  \|\nabla_{\!t}\psi_n\|^2 - E_1 \, \|\psi_n\|^2
  &= \int_{\Omega_0} 
  |\varphi_n(s)|^2 \, |\nabla_{\!t}\mathcal{J}_1(t)|^2 
  \, f(s,t) \, \der s \, \der t
  \\
  & \qquad - E_1 \int_{\Omega_0} 
  |\varphi_n(s)|^2 \, |\mathcal{J}_1(t)|^2 
  \, f(s,t) \, \der s \, \der t
  \\
  &= - \int_{\Omega_0} 
  |\varphi_n(s)|^2 \, \mathcal{J}_1(t) \, \nabla_{\!t}\mathcal{J}_1(t) 
  \cdot
  \nabla_{\!t} f(s,t) \, \der s \, \der t
  \\
  &= - \frac{1}{2} \int_{\Omega_0} 
  |\varphi_n(s)|^2 \, \nabla_{\!t}|\mathcal{J}_1(t)|^2 
  \cdot
  \nabla_{\!t} f(s,t) \, \der s \, \der t
  \\
  &= \frac{1}{2} \int_{\Omega_0} 
  |\varphi_n(s)|^2 \, |\mathcal{J}_1(t)|^2 
  \cdot
  \Delta_{t} f(s,t) \, \der s \, \der t = 0 \,.
\end{aligned}  
$$

\fbox{Not minimiser}
In the second step, one considers a small perturbation
$$
  \psi_{n,\eps}(s,t) := \psi_{n}(s,t) + \eps \, \phi(s,t)
  \,, 
$$
where $\eps \in \Real$ and $\phi \in \dom h$ is real-valued.
Then
\begin{equation}\label{quadratic}
  Q[\psi_{n,\eps}] = Q[\psi_{n}] + 2 \, \eps \, Q(\psi_n,\phi)
  + \eps^2 \, Q[\phi] \,.
\end{equation}
Here the first term on the right-hand side is independent of~$\eps$
and vanishes as $n \to \infty$.
The last term is independent of~$n$
and can be made negligible with respect to the middle term
by taking~$\eps$ small.
It remains to show that~$\phi$ can be chosen 
in such a way that  
$$
  \left.
  \frac{\partial Q[\psi_{n,\eps}]}{\partial \eps} 
  \right|_{\eps = 0}
  = 2 \, Q(\psi_n,\phi)
  < 0 
$$
for all sufficiently large~$n$.
In fact, by taking 
$$
  \phi(s,t) := j(s) \, \xi(t) \, \mathcal{J}_1(t)
  \,,
$$
where $j \in C_0^\infty(\Real)$ and~$\xi$ will be determined 
in a moment, $Q(\psi_n,\phi)$ is independent of~$n$ 
for all sufficiently large~$n$. 
This follows from the fact $\varphi_n=1$ 
on the support of~$j$ for all sufficiently large~$n$. 
More specifically, we write $Q=Q_1+Q_2$, where
$$
  Q_1(\psi_n,\phi) 
  := \big(f^{-1}\partial_s \psi_n,f^{-1}\partial_s\phi\big)
  = \int_{\Omega_0} 
  \frac{\varphi_n'(s) \, \mathcal{J}_1(t) \, \partial_s\phi(s,t)}
  {f(s,t)} \, \der s \, \der t 
  = 0
$$
for all sufficiently large~$n$
(because $\varphi_n'=0$ on the support of~$j$)
and
$$
\begin{aligned}
  Q_2(\psi_n,\phi) 
  :=\ & (\nabla_{\!t} \psi_n,\nabla_{\!t} \phi)
  - E_1 \, (\psi_n,\phi)
  \\
  =\ &  \int_{\Omega_0} 
  \varphi_n(s) \, \nabla_{\!t}\mathcal{J}_1(t) 
  \cdot \nabla_{\!t} \phi(s,t)
  \, f(s,t) \, \der s \, \der t
  \\
  & \qquad - E_1 \int_{\Omega_0} 
  \varphi_n(s) \, \mathcal{J}_1(t)
  \, \phi(s,t) \, f(s,t) \, \der s \, \der t
  \\
  =\ &
  -\int_{\Omega_0} 
  \varphi_n(s) \, \nabla_{\!t}\mathcal{J}_1(t) 
  \, \phi(s,t) \cdot
  \nabla_{\!t} f(s,t) \, \der s \, \der t
  \\
  =\ &
  -\frac{1}{2} \int_{\Omega_0} 
  \nabla_{\!t}|\mathcal{J}_1(t)|^2 
  \, j(s) \, \xi(t) \cdot
  \nabla_{\!t} f(s,t) \, \der s \, \der t
  \\
  =\ &
  \frac{1}{2} \int_{\Omega_0} 
  |\mathcal{J}_1(t)|^2 
  \, j(s) \, \nabla_{\!t}\xi(t) \cdot
  \nabla_{\!t} f(s,t) \, \der s \, \der t
  \,,
\end{aligned} 
$$  
where the last but one equality holds for all sufficiently large~$n$
(because $\varphi_n=1$ on the support of~$j$). 
Choosing (note that~$\theta$ is necessarily constant 
by the hypothesis $\theta'=0$)
$$
  \xi(t) := 
  \begin{pmatrix}
  \alpha_1 & \alpha_2
  \end{pmatrix}
  \begin{pmatrix}
    \cos\theta & \sin\theta \\
    -\sin\theta & \cos\theta 
  \end{pmatrix}
  \begin{pmatrix}
    t_1 \\ t_2
  \end{pmatrix}
$$
with any $\alpha_1,\alpha_2 \in \Real$
and recalling that~$\mathcal{J}_1$ is normalised to~$1$ in $\sii(\omega)$, 
we arrive at
$$
  Q_2(\psi_n,\phi) = \frac{1}{2} \int_\Real 
  j(s) \, [\alpha_1 k_1(s) + \alpha_2 k_2(s)] \, \der s
$$
for all sufficiently large~$n$.
We claim that there exists a bounded interval $I \subset \Real$
and numbers $\alpha_1,\alpha_2 \in \Real$ such that 
$$
  \alpha_1 k_1 + \alpha_2 k_2 < 0 
  \qquad \mbox{on} \qquad I
  \,.
$$
Indeed, this follows by the hypothesis $\kappa \not=0$,
the relationship~\eqref{curvature} and the continuity of~$\kappa$.  
Choosing~$j$ non-negative and $\supp j := I$,
we eventually get the desired result that 
$$
  \mbox{$Q_2(\psi_n,\phi)$ is negative and independent of~$n$
  for all sufficiently large~$n$.}
$$

\fbox{Conclusion} 
In summary, the sum of the last two terms 
on the right-hand side of~\eqref{quadratic} 
are independent of~$n$ for all sufficiently large~$n$ 
and can be made negative by choosing~$\eps$ positive 
and sufficiently small. 
Then we choose~$n$ so large that the sum with the first term 
on the right-hand side of~\eqref{quadratic} remains negative.
\end{proof}

For asymptotically straight tubes, 
Theorem~\ref{Thm.bending} 
together with the stability of 
the essential spectrum (Theorem~\ref{Thm.ess})
implies the existence of discrete eigenvalues,
see Figure~\ref{Fig.bend.spec}.

\begin{coro}\label{Corol.bending} 
Let $\kappa \not= 0$ and $\theta' = 0$.
Assume in addition that
$\displaystyle \lim_{|s|\to\infty} \kappa(s) = 0$.
Then
$$
  \sigma_\mathrm{disc}(-\Delta_D^\Omega) \cap (0,E_1) 
  \not= \varnothing
  \,.
$$
\end{coro}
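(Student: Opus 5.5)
The plan is to combine Theorem~\ref{Thm.bending} with Theorem~\ref{Thm.ess} and the minimax principle (Theorem~\ref{minimax}). First, since $\theta'=0$ identically, the second condition of~\eqref{Ass.decay} holds trivially. Together with the extra hypothesis $\kappa(s)\to 0$ as $|s|\to\infty$, this means Theorem~\ref{Thm.ess} applies and gives $\sigma_\mathrm{ess}(-\Delta_D^\Omega)=[E_1,\infty)$. Hence $\lambda_\infty=\inf\sigma_\mathrm{ess}(-\Delta_D^\Omega)=E_1$.

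Second, since $\kappa\not=0$ and $\theta'=0$, Theorem~\ref{Thm.bending} gives $\lambda_1=\inf\sigma(-\Delta_D^\Omega)<E_1=\lambda_\infty$. By part~(2) of Theorem~\ref{minimax}, every $\lambda_n$ strictly below $\lambda_\infty$ is a discrete eigenvalue. So $\lambda_1\in\sigma_\mathrm{disc}(-\Delta_D^\Omega)\cap(-\infty,E_1)$, and by Theorem~\ref{Thm.positive} it is moreover simple with a positive eigenfunction.

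Third, it remains to check that $\lambda_1>0$. Proposition~\ref{Prop.positivity} only gives $\lambda_1\geq 0$, so strict positivity needs an argument. I would argue by contradiction: if $\lambda_1=0$, its eigenfunction $\psi$ satisfies $\|\nabla\psi\|^2=0$, so $\psi$ is constant on the connected open set $\Omega$. Because $\Omega$ has infinite volume (equivalently, because of the Dirichlet condition), the only square-integrable constant is $\psi=0$, which is a contradiction. Alternatively, one can bound the form from below by the transverse part: the crude estimate $G^{-1}\geq\diag(0,1,1)$ from the proof of Theorem~\ref{Thm.ess}, combined with~\eqref{Poincare} and~\eqref{1<G<1}, gives
$$
  h[\psi]\geq E_1\,\frac{1-a\|\kappa\|_\infty}{1+a\|\kappa\|_\infty}\,\|\psi\|^2 ,
$$
and the constant on the right is positive by~\eqref{Ass.basic1}.

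No step is a real obstacle, since the corollary follows directly from Theorems~\ref{Thm.ess} and~\ref{Thm.bending}. The only care needed is to verify the hypotheses of Theorem~\ref{Thm.ess} and to justify strict positivity rather than mere non-negativity.
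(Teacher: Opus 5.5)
Your proposal is correct and follows the paper's route: the paper obtains the corollary directly by combining Theorem~\ref{Thm.bending} with the stability of the essential spectrum from Theorem~\ref{Thm.ess}, through the minimax principle. The paper leaves the strict positivity $\lambda_1>0$ implicit; your extra step proving it is valid, and either of your two arguments is enough.
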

\begin{figure}[h]
\begin{center}
\includegraphics[width=0.4\textwidth]{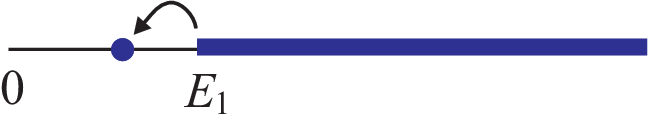}
\caption{Spectrum of a bent tube.}\label{Fig.bend.spec}
\end{center}
\end{figure}

The moral of this section is that 
$$
  \fbox{bending acts as an attractive interaction}
$$
in the sense that it diminishes the spectrum
with respect to the spectrum of 
straight tubes (\cf~Theorem~\ref{Thm.bending}). 
If the bent tube is asymptotically straight,
then the spectrum below~$E_1$ is discrete
(\cf~Corollary~\ref{Corol.bending}).
For tubes which are not asymptotically straight, however,
the effect of bending can be so strong that it pushes down
the essential spectrum too. For instance, 
this always happens for periodically bent tubes
(\ie~$\theta'=0$ and $\kappa\not=0$ is periodic).
This is clear from Theorem~\ref{Thm.bending} 
and the fact that periodic systems admit 
no discrete eigenvalues. 
 
A brief history of the effect of bending is as follows. 
In 1992,
physicists Goldstone and Jaffe~\cite{GJ} 
essentially devised the present variational proof.
In 1995,
the proof was mathematically rectified 
by Duclos and Exner~\cite{DE}.
However, their trial function requires an extra smoothness
of the base curve.
In 2005,
Chenaud, Duclos, Freitas and Krej\v{c}i\v{r}\'ik \cite{ChDFK}
removed this extra requirement.
In 2025,
Baldelli and Krej\v{c}i\v{r}\'ik \cite{BK5}
generalised Theorem~\ref{Thm.bending} 
to the nonlinear setting of the $p$-Laplacian.
 
The supercritical effect of bending is unwanted for quantum waveguides,
where the discrete eigenvalues correspond to \emph{bound states}.
Contrary to what happens in classical physics, 
a quantum particle gets trapped in a bent tube.
To make the transport in the waveguide more stable,
the following question arises:
\begin{center}
\fbox{How to get rid of the discrete spectrum ?}
\end{center}

\subsection{The effect of twisting}\label{Sec.twist}
Now we restrict ourselves to tubes 
which are twisted ($\theta'\not=0$)
but unbent ($\kappa=0$),
see Figure~\ref{Fig.twist}.

\begin{figure}[h!]
\begin{center}
\includegraphics[width=0.6\textwidth]{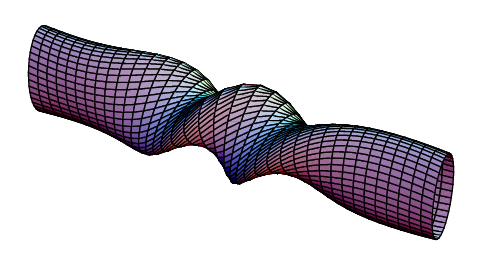}
\end{center}
\caption{A twisted unbent tube.}\label{Fig.twist}
\end{figure}

The effect of twisting is more subtle because of the following result.
\begin{prop}\label{Prop.twist}
Let $\kappa = 0$.
Assume in addition 
$\displaystyle \lim_{|s|\to\infty} \theta'(s) = 0$.
Then
$$
  \sigma(-\Delta_D^\Omega) =
  \sigma_\mathrm{ess}(-\Delta_D^\Omega) 
  = [E_1,\infty)
  \,.
$$
\end{prop}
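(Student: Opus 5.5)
Since $\kappa=0$ and $\lim_{|s|\to\infty}\theta'(s)=0$, both hypotheses~\eqref{Ass.decay} hold. Theorem~\ref{Thm.ess} therefore gives
$$
  \sigma_\mathrm{ess}(-\Delta_D^\Omega)=[E_1,\infty).
$$
It remains to show that there is no spectrum below~$E_1$, that is $\inf\sigma(-\Delta_D^\Omega)\geq E_1$. Then $\sigma(-\Delta_D^\Omega)\subset[E_1,\infty)=\sigma_\mathrm{ess}$, so the discrete spectrum below $E_1$ is empty. A discrete eigenvalue inside $[E_1,\infty)$ is impossible, because discrete eigenvalues are isolated points of the spectrum and no point of the interval is isolated. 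This yields the claimed equalities.

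For the lower bound I use the unitarily equivalent operator~$H$ with the form~\eqref{form.bis}. Since $\kappa=0$, we have $f\equiv1$ and $\Hilbert=\sii(\Omega_0)$, so
$$
  h[\psi]=\|\partial_s\psi-\theta'\partial_\tau\psi\|^2+\|\nabla_{\!t}\psi\|^2 .
$$
By Proposition~\ref{Prop.equivalence} the form domain is $W_0^{1,2}(\Omega_0)$, so it suffices to work with $\psi\in C_0^1(\Omega_0)$. Drop the first, non-negative term. For each fixed~$s$, the function $\psi(s,\cdot)$ lies in $W_0^{1,2}(\omega)$, so the Poincaré inequality~\eqref{Poincare} combined with Fubini's theorem gives
$$
  \|\nabla_{\!t}\psi\|^2\geq E_1\|\psi\|^2 .
$$
Hence $h[\psi]\geq E_1\|\psi\|^2$, and by density and~\eqref{Rayleigh} we get $\inf\sigma(H)\geq E_1$.

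I see no real obstacle. The substantive input is the stability of the essential spectrum, already established in Theorem~\ref{Thm.ess}, and the only point needing care is the density argument in passing from $C_0^1$ to the form domain, which Proposition~\ref{Prop.equivalence} covers. The key structural fact is that without bending the Jacobian equals one, so the twisting enters only through a non-negative longitudinal term and cannot lower the spectrum below~$E_1$.
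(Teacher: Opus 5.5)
Your proof is correct and follows the paper's route: Theorem~\ref{Thm.ess} gives $\sigma_\mathrm{ess}=[E_1,\infty)\subset\sigma$, and since $f\equiv1$ when $\kappa=0$, dropping the longitudinal term and applying the Poincar\'e inequality~\eqref{Poincare} with Fubini's theorem yields $h\geq E_1$. The final step is simpler than you make it: $\sigma_\mathrm{ess}\subset\sigma\subset[E_1,\infty)=\sigma_\mathrm{ess}$ already forces equality, and Proposition~\ref{Prop.equivalence} is applicable because a continuous $\theta'$ vanishing at $\pm\infty$ is bounded.
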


\begin{proof}
By Theorem~\ref{Thm.ess}, $[E_1,\infty) \subset \sigma(H)$.
The opposite inclusion follows from the fact that $f=1$ if $\kappa=0$: 
\begin{equation}\label{Poincare.pre} 
  h[\psi] \geq \|\nabla_{\!t}\psi\|^2 
  = \int_{\Omega_0} |\nabla_{\!t}\psi(s,t)|^2 \, \der s \, \der t 
  \geq  E_1 \int_{\Omega_0} |\psi(s,t)|^2 \, \der s \, \der t 
  = E_1 \, \|\psi\|^2 
\end{equation}
for every $\psi \in \dom h$.
Here the second inequality follows from
the Poincar\'e inequality~\eqref{Poincare}
with the help of Fubini's theorem.
\end{proof}

Proposition~\ref{Prop.twist} implies that, 
for asymptotically straight unbent tubes, 
the spectrum is purely essential and coincides with that 
of straight tubes, see Figure~\ref{Fig.straight.spec}. 
So, there is no effect twisting on the first glance.
Contrary to the case of straight tubes
(\cf~Proposition~\ref{Prop.critical}), 
however, twisted tubes are \emph{subcritical}
in the sense that
there is always a Hardy-type inequality 
whenever the tube is non-trivially twisted.  

\begin{theo}[Local Hardy inequality]\label{Thm.Hardy.local}
Let $\kappa = 0$. 
Assume that $\theta'\not=0$ and~$\omega$ is not circular.
Let $I \subset \Real$ be any bounded interval on which 
$\theta'$~is not identically equal to zero.
Then there is a positive constant~$\lambda_1^I$ 
(depending on $\theta'\upharpoonright I$ and~$\omega$)
such that 
\begin{equation}\label{Hardy.local}
  H - E_1 \geq \lambda_1^I \ \chi_{I \times \omega} \,.
\end{equation}
\end{theo}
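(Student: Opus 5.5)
Since $\kappa=0$, we have $f=1$, so the Hilbert space is $\sii(\Omega_0)$ and, by~\eqref{form.bis},
$$
  h[\psi]-E_1\|\psi\|^2 = \|\partial_s\psi-\theta'\partial_\tau\psi\|^2 + \|\nabla_{\!t}\psi\|^2 - E_1\|\psi\|^2 ,
$$
where the last two terms are non-negative by~\eqref{Poincare} and Fubini. The plan is to discard everything outside the strip $I\times\omega$, since every piece of the form is non-negative there. This reduces~\eqref{Hardy.local} to the positivity of
$$
  \lambda_1^I := \inf_{\stackrel[\psi\not=0]{}{\psi \in W^{1,2}(I\times\omega)}}
  \frac{\|\partial_s\psi-\theta'\partial_\tau\psi\|_{\sii(I\times\omega)}^2 + \|\nabla_{\!t}\psi\|_{\sii(I\times\omega)}^2 - E_1\|\psi\|_{\sii(I\times\omega)}^2}{\|\psi\|_{\sii(I\times\omega)}^2}\,,
$$
where the functions vanish on $I\times\partial\omega$ and no condition is imposed at $\partial I\times\omega$. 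This is a Neumann-type restriction, as in the bracketing of Theorem~\ref{Thm.ess}. Restricting any $\psi\in\dom h=W_0^{1,2}(\Omega_0)$ (Proposition~\ref{Prop.equivalence}) to $I\times\omega$ gives an admissible function, so $h[\psi]-E_1\|\psi\|^2\geq\lambda_1^I\|\chi_{I\times\omega}\psi\|^2$, which is~\eqref{Hardy.local}. Clearly $\lambda_1^I\geq0$, so the task is to exclude $\lambda_1^I=0$.

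The infimum is the bottom of the spectrum of the self-adjoint operator on the bounded Lipschitz set $I\times\omega$ associated with this form. The form is bounded below, and (by the same argument as in Proposition~\ref{Prop.equivalence}) it is equivalent to the $W^{1,2}$ norm on the functions vanishing on $I\times\partial\omega$. That space embeds compactly in $\sii(I\times\omega)$ (Theorem~\ref{Thm.bounded.Neumann} applied to a box containing the set, or the extension property). The spectrum is therefore discrete and the infimum is attained. Suppose $\lambda_1^I=0$ with minimiser $\psi$. Then both non-negative pieces vanish. First, $\|\nabla_{\!t}\psi(s,\cdot)\|^2=E_1\|\psi(s,\cdot)\|^2$ for a.e.\ $s$, so by the simplicity of $E_1$ (Theorem~\ref{Thm.positive}) we get $\psi(s,t)=\varphi(s)\mathcal{J}_1(t)$ with $\varphi\in W^{1,2}(I)$. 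Second, $\partial_s\psi=\theta'\partial_\tau\psi$, that is, $\varphi'(s)\mathcal{J}_1(t)=\theta'(s)\varphi(s)\partial_\tau\mathcal{J}_1(t)$.

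The key step, and the main obstacle, is to derive a contradiction from this identity using non-circularity. Multiplying by $\mathcal{J}_1$ and integrating over $\omega$ gives $\varphi'(s)=\theta'(s)\varphi(s)\,(\mathcal{J}_1,\partial_\tau\mathcal{J}_1)$. Since $\mathcal{J}_1$ is real, $\partial_\tau$ is antisymmetric, and $\partial_\tau\mathcal{J}_1$ vanishes on $\partial\omega$, the inner product is zero, so $\varphi'=0$ and $\varphi$ is a non-zero constant. Then $\theta'(s)\,\partial_\tau\mathcal{J}_1=0$, and choosing $s$ in a set of positive measure where $\theta'\neq0$ yields $\partial_\tau\mathcal{J}_1=0$ in $\omega$. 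This says $\mathcal{J}_1$ is rotationally invariant. It remains to show that a positive Dirichlet ground state with $\partial_\tau\mathcal{J}_1=0$ forces $\omega$ to be circular, that is, a disk or annulus centred at the origin up to capacity zero. I would argue this as follows: $\mathcal{J}_1$ is constant on circles centred at the origin and positive in $\omega$, so every circle meeting $\omega$ lies in $\omega$ up to the vanishing set. Since $\omega$ is connected, $\omega$ is a union of full circles, hence radial and circular. This contradicts the hypothesis, so $\lambda_1^I>0$; it clearly depends only on $\theta'\upharpoonright I$ and~$\omega$. The delicate points are justifying the pointwise identities a.e.\ and the boundary behaviour in this last topological step.
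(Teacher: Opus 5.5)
Your proposal is correct and follows essentially the paper's route. You restrict the form (using $f=1$ and the Poincar\'e inequality~\eqref{Poincare} outside~$I$) to a Neumann-type form on $I\times\omega$ and attain $\lambda_1^I$ by compactness. You then get $\psi=\varphi\,\mathcal{J}_1$ from the transverse equality, kill the cross term via $(\mathcal{J}_1,\partial_\tau\mathcal{J}_1)_{\sii(\omega)}=0$, and reach a contradiction with $\theta'\not=0$ on~$I$ and the non-circularity of~$\omega$. You also sketch the implication $\partial_\tau\mathcal{J}_1=0\Rightarrow\omega$ circular, which the paper simply asserts.

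There are two small slips. First, that inner product vanishes because $\mathcal{J}_1\in W_0^{1,2}(\omega)$ and $\divergence\tau=0$, not because $\partial_\tau\mathcal{J}_1$ vanishes on~$\partial\omega$. That claim is false in general: for smooth~$\omega$, $\partial_\tau\mathcal{J}_1=(\tau\cdot n)\,\partial_n\mathcal{J}_1$ on~$\partial\omega$, which is generally non-zero. Second, $I\times\omega$ need not be Lipschitz, since $\omega$ is an arbitrary bounded domain. Compactness should instead come from extending by zero in~$t$ to a box $I\times Q$ with $Q\supset\omega$.
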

\begin{proof}
Given a bounded interval $I \subset \Real$,
let us consider the quadratic form in $\sii(I\times\omega)$
defined by 
$$
\begin{aligned}
  h^I[\psi] &:= 
  \|\partial_s\psi-\theta'\partial_\tau\psi\|_{\sii(I\times\omega)}^2
  + \|\nabla_{\!t}\psi\|_{\sii(I\times\omega)}^2 
  \,,
  \\
  \dom h^I &:= \big\{
  \psi\!\upharpoonright\! (I\times\omega)
  : \ \psi\in W_0^{1,2}(\Real\times\omega)
  \big\}
  \,.
\end{aligned}  
$$
The corresponding operator~$H^I$ acts as~\eqref{LB} 
and satisfies Neumann boundary conditions on $(\partial I) \times \omega$. 
Since~$I$ is bounded, $H^I$~is an operator with compact resolvent.
Consequently, the infimum
$$
  \lambda_1^I := 
  \inf_{\stackrel[\psi\not=0]{}{\psi\in \dom h^I}}
  \frac{h^I[\psi] - E_1 \, \|\psi\|_{\sii(I\times\omega)}^2}
  {\,\|\psi\|_{\sii(I\times\omega)}^2} 
$$
is achieved by a positive function $\psi_1 \in \dom h^I$. 
One immediately has 
$$
  h[\psi] - E_1 \, \|\psi\|^2
  \geq h^I[\psi] - E_1 \, \|\psi\|_{\sii(I\times\omega)}^2
  \geq \lambda_1^I \, \|\psi\|_{\sii(I\times\omega)}^2
  = \lambda_1^I \ \|\chi_I \, \psi\|^2
$$
for every $\psi \in \dom h$,
similarly as in~\eqref{Poincare.pre} 
crucially employing that $f=1$ if the tube is unbent.
We claim that $\lambda_1^I$ is positive
if the interval~$I$ is chosen in such a way
that $\theta'\not=0$ on~$I$.  
By contradiction, assume that $\lambda_1^I=0$.
Then 
\begin{equation}\label{2eqs}  
\begin{aligned}
  \|\partial_s\psi_1-\theta'\partial_\tau\psi_1\|_{\sii(I\times\omega)}^2
  &=0 
  \,, \\
  \|\nabla_{\!t}\psi_1\|_{\sii(I\times\omega)}^2
  - E_1 \, \|\psi_1\|_{\sii(I\times\omega)}^2 &= 0
  \,.
\end{aligned}  
\end{equation}
Writing $\psi_1(s,t) = \varphi(s)\,\mathcal{J}_1(t) + \phi(s,t)$,
where 
$
  (\mathcal{J}_1,\phi(s,\cdot))_{\sii(\omega)} = 0
$
for almost every $s \in I$,
we deduce from the second equality in~\eqref{2eqs} that $\phi=0$.
The first identity in~\eqref{2eqs} is then equivalent to
\begin{multline*}
  \|\varphi'\|_{\sii(I)}^2 \, 
  \|\mathcal{J}_1\|_{\sii(\omega)}^2
  + \|\theta' \varphi\|_{\sii(I)}^2 \, 
  \|\partial_\tau\mathcal{J}_1\|_{\sii(\omega)}^2
  \\
  - 2 \, (\mathcal{J}_1,\partial_\tau\mathcal{J}_1)_{\sii(\omega)}
  \, (\varphi',\theta'\varphi)_{\sii(I)}
  = 0
  \,.
\end{multline*}
Since 
%
  $(\mathcal{J}_1,\partial_\tau\mathcal{J}_1)_{\sii(\omega)}=0$
%
by an integration by parts, it follows that~$\varphi$ must be constant
and that
$$
  \|\theta'\|_{\sii(I)} = 0
  \qquad\mbox{or}\qquad
  \|\partial_\tau\mathcal{J}_1\|_{\sii(\omega)} = 0
  \,.
$$
However, this is impossible under the stated assumptions because
$\|\theta'\|_{\sii(I)}$ vanishes if, and only if,
$\theta'=0$ almost everywhere in~$I$,
and $\partial_\tau\mathcal{J}_1 = 0$ identically in~$\omega$
if, and only if, $\omega$~is circular.
\end{proof}

The inequality of Theorem~\ref{Thm.Hardy.local} is particularly 
interesting in the setting of Proposition~\ref{Prop.twist}.
Indeed, if the twist vanishes at infinity, the spectrum starts with~$E_1$,
so there can be no positive \emph{constant}~$c$ such that 
$H-E_1 \geq c$ (Poincar\'e inequality) holds.  
However, there can be a non-trivial non-negative 
\emph{function} (Hardy weight)
$\rho:\Real\times\omega \to \Real$ such that 
$H-E_1 \geq \rho$ (Hardy inequality) holds.

We call the Hardy inequality of Theorem~\ref{Thm.Hardy.local} \emph{local}
because the Hardy weight is compactly supported there.
However, there is always a way how to deduce a \emph{global} 
Hardy inequality (\ie~with an everywhere positive Hardy weight)
from the local one.
One approach is based on a standard argument of partition of unity 
subordinated to a finitely local covering 
\cite[Lem.~3.1]{Pinchover-Tintarev_2007},
which was first applied to twisted tubes in \cite{BK5}.
Here we present an alternative approach based on 
rather tedious estimates,  
which was first applied to twisted tubes in~\cite{EKK}
and under the present minimal hypotheses in \cite{K-Padova}. 
The advantage of the latter approach is that it
yields an explicit form for the Hardy weight. 

\begin{theo}[Global Hardy inequality or Subcriticality]\label{Thm.Hardy}
Let $\kappa = 0$. 
Assume that $\theta'\not=0$ and~$\omega$ is not circular.
Then there exists a positive constant~$c$ such that 
\begin{equation}\label{Hardy}
  H - E_1 \geq c \, \rho \,.
\end{equation}
where $\rho(s,t) := (1+s^2)^{-1}$.
\end{theo}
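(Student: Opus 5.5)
The plan is to combine the local Hardy inequality of Theorem~\ref{Thm.Hardy.local} with the one-dimensional Hardy inequality of Theorem~\ref{Thm.Hardy.1D}, applied in the longitudinal variable~$s$. Shifting $s$ if necessary, fix a bounded interval $I=(-s_0,s_0)$ on which $\theta'\not\equiv 0$. Let $\lambda_1^I>0$ be the constant from~\eqref{Hardy.local}, so that for every $\psi\in\dom h = W_0^{1,2}(\Omega_0)$
$$
  h[\psi]-E_1\|\psi\|^2 \geq \lambda_1^I\,\|\psi\|_{\sii(I\times\omega)}^2 .
$$
The goal is to transfer this local control in $I\times\omega$ to the global weight $(1+s^2)^{-1}$.

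The main step is a weighted estimate on the exterior region $|s|>s_0$. Take a smooth cut-off $\eta=\eta(s)$ with $0\leq\eta\leq1$, $\eta=0$ on $[-s_0/2,s_0/2]$, $\eta=1$ outside~$I$, and $|\eta'|\leq C/s_0$ supported in~$I$.
\begin{itemize}
\item For fixed~$t$, the function $s\mapsto\eta(s)\psi(s,t)$ vanishes near $s=0$. Theorem~\ref{Thm.Hardy.1D} on each half-line then gives
$$
  \int_\Real \frac{|\eta\psi|^2}{s^2}\,\der s \leq 4\int_\Real |\partial_s(\eta\psi)|^2\,\der s .
$$
\item The right-hand side cannot be bounded by $\|\partial_s\psi\|^2$ directly, because the form only controls $\partial_s\psi-\theta'\partial_\tau\psi$. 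I would therefore apply the same inequality to the complex modulus, or better, use the ``gauge'' $\partial_s-\theta'\partial_\tau$. This works if one first projects onto the ground state: write $\psi=\varphi(s)\mathcal{J}_1(t)+\phi$ with $\phi(s,\cdot)\perp\mathcal{J}_1$.
\item The component~$\phi$ is harmless. Since $E_2-E_1>0$, the transverse part gives $h-E_1\geq (E_2-E_1)\|\phi\|^2$ in the appropriate sense, and $(1+s^2)^{-1}\leq1$.
\item For the $\varphi$-part, use
$$
  \|(\partial_s-\theta'\partial_\tau)\psi\|^2 \geq \tfrac12\|\varphi'\|^2 - C\bigl(\|\theta'\|_\infty^2\|\partial_\tau\phi\|^2+\dots\bigr),
$$
exploiting $(\mathcal{J}_1,\partial_\tau\mathcal{J}_1)=0$. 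A cleaner choice is to write $\|\partial_s\psi\|\leq\|(\partial_s-\theta'\partial_\tau)\psi\|+\|\theta'\|_\infty a\|\nabla_t\psi\|$. This reduces the problem to controlling $\|\varphi'\|$.
\end{itemize}

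Assemble the estimates as
$$
  \int \frac{|\psi|^2}{1+s^2} \leq \|\psi\|^2_{\sii(I\times\omega)} + C\int\frac{|\eta\psi|^2}{s^2},
$$
then bound the last term by $\|\partial_s(\eta\psi)\|^2\leq 2\|\partial_s\psi\|^2+2\|\eta'\|_\infty^2\|\psi\|_{\sii(I\times\omega)}^2$. Terms localized in~$I$ are controlled by $(\lambda_1^I)^{-1}(h-E_1)[\psi]$. The remaining task is to control $\|\partial_s\psi\|^2$, or at least its $\varphi$-part, by a multiple of $h[\psi]-E_1\|\psi\|^2$ plus the localized term.

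\textbf{Main obstacle.} This control of $\|\partial_s\psi\|^2$ is where I expect the difficulty. The form gives $\|(\partial_s-\theta'\partial_\tau)\psi\|^2$, and the cross term involves $\theta'$ on the whole line, not just in~$I$. I would handle it by convex interpolation: for $\epsilon\in(0,1)$,
$$
  h-E_1 \geq \epsilon(h-E_1)+(1-\epsilon)\lambda_1^I\chi_I .
$$
Then split $\epsilon(h-E_1)$ into $\epsilon\|(\partial_s-\theta'\partial_\tau)\psi\|^2+\epsilon(\|\nabla_t\psi\|^2-E_1\|\psi\|^2)$. The second bracket dominates $(E_2-E_1)\|\phi\|^2$, which absorbs the $\theta'\partial_\tau$ error. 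The error acting on $\varphi\mathcal{J}_1$ is $\theta'\varphi\,\partial_\tau\mathcal{J}_1$, which has no positive counterpart; it is orthogonal to the $\varphi'\mathcal{J}_1$ term only after using $(\mathcal{J}_1,\partial_\tau\mathcal{J}_1)=0$. The resulting contribution $\|\theta'\varphi\|^2\|\partial_\tau\mathcal{J}_1\|^2$ is nonnegative and can simply be dropped. This yields $\|(\partial_s-\theta'\partial_\tau)\psi\|^2\gtrsim\|\varphi'\|^2-C\|\nabla_t\phi\|^2$.

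Choosing $\epsilon$ small, and using that the $\phi$-part is bounded by the spectral gap, gives $c\int(1+s^2)^{-1}|\psi|^2\leq h[\psi]-E_1\|\psi\|^2$ on a core. Density then extends this to all of $\dom h$.
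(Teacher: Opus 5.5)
There is a genuine gap. As written, your argument proves the theorem only under the extra hypothesis $\theta'\in L^\infty(\Real)$. The statement does not contain it: only $\theta\in C^1$ is assumed, the paper explicitly has diverging twisting in mind, and its constant depends on $\theta'$ only through $\lambda_1^I$.

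The dependence on $\|\theta'\|_\infty$ enters exactly where you separate $\partial_s$ from $\theta'\partial_\tau$. The $\mathcal{J}_1$-coefficient of $(\partial_s-\theta'\partial_\tau)\psi$ is $\varphi'+\theta'(\partial_\tau\mathcal{J}_1,\phi)_{L^2(\omega)}$. Absorbing the second summand into the gap term $(E_2-E_1)\|\phi\|^2$ costs $\|\theta'\|_\infty^2\|\partial_\tau\mathcal{J}_1\|^2$, which may be infinite. The same goes for your triangle-inequality variant and for the identification $\dom h=W_0^{1,2}(\Omega_0)$; work on $C_0^1(\Real\times\omega)$ instead, which is a core of $h$ by definition.

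Even for bounded $\theta'$, two steps need repair.
\begin{enumerate}
\item $\|\theta'\varphi\|^2\|\partial_\tau\mathcal{J}_1\|^2$ cannot ``simply be dropped''. The full expansion also contains cross terms such as $\Re(\theta'\varphi\,\partial_\tau\mathcal{J}_1,\partial_s\phi)$, which involve $\partial_s\phi$ and are not bounded by $\|\nabla_t\phi\|^2$. Project onto $\mathcal{J}_1$ instead.
\item Putting the same $\epsilon$ in front of both parts of $h-E_1$ absorbs nothing, since you would still need $\|\theta'\|_\infty^2\|\partial_\tau\mathcal{J}_1\|^2\lesssim E_2-E_1$. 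The small factor must multiply $\|(\partial_s-\theta'\partial_\tau)\psi\|^2$ alone.
\end{enumerate}
With these fixes and bounded $\theta'$ you obtain $Q\gtrsim\|\varphi'\|^2+\|\phi\|^2+\|\chi_I\varphi\|^2$, and the subcriticality of $-\partial_s^2+\chi_I$ on $\Real$ finishes the proof.

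The missing idea is that no separation is needed at all. Since $\partial_\tau$ is skew-symmetric in $L^2(\omega)$, one has $\Re(u(s,\cdot),\partial_\tau u(s,\cdot))_{L^2(\omega)}=0$ for every $s$, and $\theta'$ depends on $s$ only. Hence, in the completing-the-square proof of the one-dimensional Hardy inequality, the $\theta'$ cross term vanishes identically, and
\[
  \|(\partial_s-\theta'\partial_\tau)(\eta\psi)\|^2
  \geq \frac14\,\Big\|\frac{\eta\psi}{s-s_0}\Big\|^2
\]
for any cut-off $\eta$ vanishing near the centre $s_0$ of $I$ and equal to $1$ outside $I$. Equivalently, $g(s):=\|\psi(s,\cdot)\|_{L^2(\omega)}$ satisfies the diamagnetic-type bound $|g'|\leq\|(\partial_s-\theta'\partial_\tau)\psi(s,\cdot)\|_{L^2(\omega)}$.

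Your own assembly then goes through with $\partial_s$ replaced by the twisted derivative. Indeed, $(\partial_s-\theta'\partial_\tau)(\eta\psi)=\eta(\partial_s-\theta'\partial_\tau)\psi+\eta'\psi$. So the left-hand side is at most
\[
  2\|(\partial_s-\theta'\partial_\tau)\psi\|^2+2\|\eta'\|_\infty^2\|\chi_I\psi\|^2
  \leq 2Q[\psi]+2\|\eta'\|_\infty^2\|\chi_I\psi\|^2 ,
\]
because the transverse part of $Q$ is non-negative by the Poincar\'e inequality. Finally, $\|\chi_I\psi\|^2\leq Q[\psi]/\lambda_1^I$. No ground-state decomposition, no spectral gap and no bound on $\theta'$ outside $I$ are needed.

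The paper proceeds the same way. The only difference is that it handles the cut-off through the exact splitting $\psi=\eta\psi+(1-\eta)\psi$, again killing the cross terms by the skew-symmetry. It then interpolates $Q=(1-\beta)Q+\beta Q$ with the local Hardy inequality.
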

\begin{proof}
Let $\psi \in C_0^1(\Real\times\omega)$, a core of $\dom h$.

$\circ$
By Theorem~\ref{Thm.Hardy.local}, 
\begin{equation}\label{Hardy.local.bis}
  Q[\psi] := h[\psi] - E_1 \, \|\psi\|^2 
  \geq \lambda_1^I \, \|\chi_I \psi\|^2
  \,,
\end{equation}
where $\lambda_1^I$ is positive and $I \subset \Real$
is any bounded interval on which~$\theta'$ 
is not identically equal to zero.
Here we abbreviate $\chi_I := \chi_{I \times \omega}$.
Let us also recall the orthogonality relation
\begin{equation}\label{orthogonality}
  \Re\, (\psi,\partial_\tau\psi)_{\sii(\omega)} = 0
  \,,
\end{equation}
which was essentially used in the proof of Theorem~\ref{Thm.Hardy.local}
and follows by an integration by parts.

$\circ$
Let us write $I = (s_0-R,s_0+R)$,
so that $s_0 \in \Real$ is the centre of the interval~$I$
and $R > 0$ its half-width.
Let us define an auxiliary cut-off function 
$\eta \in C_0^\infty(\Real \setminus \{s_0\})$
such that $0 \leq \eta \leq 1$
and $\eta(s) := 1$ if $|s-s_0| > R$.
We denote by the same symbol~$\eta$ the function 
$(s,t) \mapsto \eta(s)$ on $\Real\times\omega$,
and similarly for its derivatives.
We write
\begin{equation*}
  \psi = \eta\psi + (1-\eta)\psi \,.
\end{equation*}

$\circ$
Applying this decomposition, 
one has the identity
\begin{multline*} 
  \|\partial_s\psi-\theta'\partial_\tau\psi\|^2
  = \|\partial_s(\eta\psi)-\theta'\partial_\tau(\eta\psi)\|^2
  + \|\partial_s((1-\eta)\psi)-\theta'\partial_\tau((1-\eta)\psi)\|^2
  \\
  + I_1 + I_2 + I_3 + I_4
  \,,
\end{multline*}
where 
$$
\begin{aligned}
  I_1 :=&\ 2 \, \Re \big(
  \partial_s(\eta\psi),
  \partial_s((1-\eta)\psi)
  \big)
  \\
  =&\ 2 \, \big(\partial_s\psi,\eta(1-\eta)\partial_s\psi\big)
  + 2 \, \Re\big(\psi,[\eta(1-\eta)]'\partial_s\psi\big) 
  + 2 \, \big(\psi,\eta'(1-\eta)'\psi\big)
  \\
  =&\ 2 \, \big(\partial_s\psi,\eta(1-\eta)\partial_s\psi\big)
  - \big(\psi,[\eta(1-\eta)]''\psi\big) 
  - 2 \, \big(\psi,\eta'^2\psi\big)
  \\
  \geq&\ 
  2 \, \underbrace{\big(\partial_s\psi,\eta(1-\eta)\partial_s\psi\big)}_{A \geq 0}
  - \underbrace{\|[\eta(1-\eta)]''+2\eta'^2\|_\infty}_{C>0} 
  \, \|\chi_I\psi\|^2 \,,
\end{aligned}  
$$
$$
  I_2 := 2 \, \Re \big(
  \theta'\partial_\tau(\eta\psi),
  \theta'\partial_\tau((1-\eta)\psi)
  \big)  
  = 2 \, \underbrace{\big(
  \theta'\partial_\tau\psi,
  \eta(1-\eta)\theta'\partial_\tau\psi)
  \big)}_{B \geq 0}  
  \,,
$$
(using~\eqref{orthogonality})
$$
\begin{aligned}
  I_3 := - 2 \, \Re \big(
  \theta'\partial_\tau(\eta\psi),
  \partial_s((1-\eta)\psi)
  \big)  
  &= - 2 \, \Re \big(
  \theta'\eta\partial_\tau\psi,
  (1-\eta)\partial_s\psi
  \big)  
  \\
  &\geq 
  - 2 \, \sqrt{A} \, \sqrt{B}
\end{aligned}  
$$
and, similarly,
$$
\begin{aligned}
  I_4 := - 2 \, \Re \big(
  \partial_s(\eta\psi),
  \theta'\partial_\tau((1-\eta)\psi)
  \big)  
  &= - 2 \, \Re \big(
  \eta\partial_s\psi,
  \theta'(1-\eta)\partial_\tau\psi 
  \big)  
  = I_3
  \\
  &\geq  
  - 2 \, \sqrt{A} \, \sqrt{B}
  \,.
\end{aligned}  
$$
Using that 
$
  2A+2B-4\sqrt{A}\sqrt{B} 
  = 2 \, \big(\sqrt{A}-\sqrt{B}\big)^2 
  \geq 0
$, 
we therefore arrive at 
\begin{multline*}
  \|\partial_s\psi-\theta'\partial_\tau\psi\|^2
  \\
  \geq \|\partial_s(\eta\psi)-\theta'\partial_\tau(\eta\psi)\|^2
  + \|\partial_s((1-\eta)\psi)-\theta'\partial_\tau((1-\eta)\psi)\|^2
  - C \, \|\chi_I\psi\|^2
  \,.  
\end{multline*}
Neglecting the second term on the right-hand side
and using~\eqref{Poincare}, we eventually get
\begin{equation}\label{Hardy.delta} 
  Q[\psi]
  \geq \|\partial_s(\eta\psi)-\theta'\partial_\tau(\eta\psi)\|^2
  - C \, \|\chi_I\psi\|^2
  \,.  
\end{equation}

$\circ$
Since $\eta\psi$ vanishes at $s=s_0$, 
one has a classical version of the Hardy inequality~\eqref{Hardy.1D}
for the first term on the right-hand side of~\eqref{Hardy.delta}: 
\begin{equation}\label{Hardy.1D.QW}
  \|\partial_s(\eta\psi)-\theta'\partial_\tau(\eta\psi)\|^2
  \geq \frac{1}{4} \, \left\|\frac{\eta\psi}{s-s_0}\right\|^2 
  \,,
\end{equation}
where~$s$ denotes the function $(s,t) \mapsto s$.
Indeed, for every $\alpha \in \Real$, one has
\begin{multline*}
  \left\|\partial_s(\eta\psi)-\theta'\partial_\tau(\eta\psi)
  - \alpha \frac{\eta\psi}{s-s_0}\right\|^2
  = \left\|\partial_s(\eta\psi)-\theta'\partial_\tau(\eta\psi)\right\|^2
  + \alpha^2 \left\|\frac{\eta\psi}{s-s_0}\right\|^2 
  \\
  + 2 \, \alpha \, \Re\left(\partial_s(\eta\psi)-\theta'\partial_\tau(\eta\psi),
  \frac{\eta\psi}{s-s_0}\right)
  \,.
\end{multline*}
Recalling~\eqref{orthogonality},
one has
$$
\begin{aligned}
  2 \, \Re\left(\partial_s(\eta\psi)-\theta'\partial_\tau(\eta\psi),
  \frac{\eta\psi}{s-s_0}\right)
  &= 2 \, \Re\left(\partial_s(\eta\psi),
  \frac{\eta\psi}{s-s_0}\right)
  \\
  &= \int_{\Omega_0} \frac{\partial_s|\eta\psi|^2}{s-s_0}
  = \int_{\Omega_0} \frac{|\eta\psi|^2}{(s-s_0)^2}
  \,,
\end{aligned}
$$
where the last equality follows by an integration by parts.
Choosing $\alpha := -1/2$, we get the desired claim.
In summary, plugging~\eqref{Hardy.1D.QW} into~\eqref{Hardy.delta}, 
we arrive at
\begin{equation}\label{Hardy.1D.bis}
  Q[\psi]
  \geq \frac{1}{4} \, \left\|\frac{\eta\psi}{s-s_0}\right\|^2 
  - C \, \|\chi_I\psi\|^2
  \geq \frac{1}{4} \, \left\|\tilde\rho \, \eta\psi\right\|^2 
  - C \, \|\chi_I\psi\|^2
  \,,
\end{equation}
where $\tilde{\rho}(s,t) := [1+(s-s_0)^2]^{-1/2}$.

$\circ$
The negative term on the right-hand side of~\eqref{Hardy.1D.bis}
can be controlled by~\eqref{Hardy.local.bis}.
In more detail, given any $\beta \in \Real$, 
let us interpolate between the estimates~\eqref{Hardy.local.bis}
and~\eqref{Hardy.1D.bis} as follows: 
$$
  Q[\psi] = (1-\beta) \, Q[\psi] + \beta \, Q[\psi] 
  \geq [(1-\beta) \, \lambda_1^I - C \, \beta] \, \|\chi_I\psi\|^2
  + \frac{\beta}{4} \, \left\|\tilde\rho \, \eta\psi\right\|^2 
  \,.
$$
Choosing~$\beta$ sufficiently small, for instance
$$
  \beta := 
  \frac{4\lambda_1^I}{1+4(\lambda_1^I+C)}
$$
which makes the constants standing in front 
of $\|\chi_I\psi\|^2$ and $\left\|\tilde\rho \, \eta\psi\right\|^2$ 
equal, we get
$$
  Q[\psi] \geq 
  \frac{\beta}{4}
  \left(
  \|\chi_I \psi\|^2 + \left\|\tilde\rho \, \eta\psi\right\|^2
  \right)
  \geq 
  \frac{\beta}{4} 
  \left(
  \|\chi_I \tilde\rho \, \psi\|^2 +\left\|\tilde\rho \, \eta\psi\right\|^2 
  \right)
  \geq 
  \frac{\beta}{4} \, \left\|\tilde\rho\psi\right\|^2
  \,.
$$

$\circ$
The last obtained inequality means 
$H-E_1 \geq \frac{\beta}{4}\tilde{\rho}$.
The ultimate inequality~\eqref{Hardy} follows by the choice
$$
  c := \frac{\beta}{4}
  \, \sup_{s \in \Real} \frac{1+s^2}{1+(s-s_0)^2}
  \,.
$$
This concludes the proof of Theorem~\ref{Thm.Hardy}.
\end{proof}

The moral of this section is that 
$$
  \fbox{twisting acts as a repulsive interaction}
$$
in the sense that it has the tendency to raise the spectrum
with respect to the spectrum of straight tubes. 
This is subtle for asymptotically straight twisted unbent tubes,
for which the spectrum as a set coincides with the spectrum
of straight tubes. 
In this case, the repulsiveness is quantified by the existence
of Hardy-type inequalities at the threshold~$E_1$ of the spectrum
(\cf~Theorem~\ref{Thm.Hardy}),
where the function~$\rho$ necessarily vanishes at infinity.

For tubes which are not asymptotically straight, however,
the effect of twisting can be so strong that it pushes up
the essential spectrum too. For instance, 
this always happens for periodically twisted tubes
(\ie~$\kappa=0$, 
$\omega$ is not circular
and $\theta'\not=0$ is periodic).
This is clear from the proof of Theorem~\ref{Thm.Hardy} 
yielding the Poincar\'e inequality
$H-E_1 \geq \lambda_1^I > 0$, 
where~$I$ is the period of twisting. 
What is more, the spectrum can be purely discrete 
for tubes with asymptotically diverging twisting~\cite{K11},
see Figure~\ref{Fig.diverge}.

The existence of the Hardy inequality in twisted waveguides
can be used to eliminate the unwanted bound states. 
Indeed, consider an unbent and non-trivially twisted tubes
satisfying the second hypothesis of~\eqref{Ass.decay}.
Starting to bend it under the first hypothesis of~\eqref{Ass.decay},
there will be no discrete spectrum for all sufficiently weak bendings
\cite{EKK,K6-with-erratum}. 
 
A brief history of the effect of twisting is as follows.
In 2008,
Ekholm, Kova\v{r}\'ik and Krej\v{c}i\v{r}\'ik \cite{EKK}
established Theorem~\ref{Thm.Hardy} under extra hypotheses.
Subsequently,
Krej\v{c}i\v{r}\'ik  \cite{K6-with-erratum}
devised an alternative, more robust approach, 
which yields the subcriticality under the present minimal hypotheses 
(see also~\cite{KZ1}).
In 2025,
Baldelli and Krej\v{c}i\v{r}\'ik \cite{BK5}
generalised Theorem~\ref{Thm.Hardy} 
to the nonlinear setting of the $p$-Laplacian
(and higher dimensions for the first time).

%
\bibliography{bib}
\end{document}